\documentclass[11pt,reqno,a4paper]{amsart}
\usepackage[T1]{fontenc}
\usepackage{lmodern}
\usepackage{microtype}
\usepackage{amsmath,amssymb,amsthm,mathtools}
\usepackage{esint}
\usepackage[margin=27mm]{geometry}
\usepackage{enumitem}
\usepackage{mathrsfs,needspace}
\usepackage[hidelinks]{hyperref}
\usepackage[nameinlink,noabbrev]{cleveref}
\hypersetup{
  pdftitle={Finite-energy travelling waves for the easy-plane Landau--Lifshitz equation},
  pdfauthor={Rowan Juneman, Ze Li, Juncheng Wei},hypertexnames=false}

\allowdisplaybreaks
\numberwithin{equation}{section}
\setlist[enumerate]{leftmargin=2.1em,itemsep=2pt,topsep=4pt}
\setlist[itemize]{leftmargin=2em,itemsep=2pt,topsep=4pt}

\newtheorem{theorem}{Theorem}
\newtheorem{proposition}{Proposition}[section]
\newtheorem{lemma}[proposition]{Lemma}
\newtheorem{corollary}[proposition]{Corollary}
\theoremstyle{definition}

\newcommand{\R}{\mathbb R}
\newcommand{\Sph}{\mathbb S}

\newcommand{\dd}{\,\mathrm d}

\newcommand{\Rthree}{\mathbb R^3}
\newcommand{\Stwo}{\mathbb S^2}

\newcommand{\Jac}{D}

\crefname{theorem}{Theorem}{Theorems}
\crefname{proposition}{Proposition}{Propositions}
\crefname{lemma}{Lemma}{Lemmas}
\crefname{corollary}{Corollary}{Corollaries}
\crefname{remark}{Remark}{Remarks}
\crefname{section}{Section}{Sections}
\crefname{subsection}{Section}{Sections}

\title[Travelling waves for the Landau--Lifshitz equation]
{Finite-energy travelling waves for the easy-plane
Landau--Lifshitz equation}

\author[R.~Juneman]{Rowan Juneman}
\address{R.~Juneman: Department of Mathematics, Chinese University of Hong Kong,
Shatin, NT, Hong Kong.}
\email{rowanjuneman@cuhk.edu.hk}

\author[Z.~Li]{Ze Li}
\address{Z.~Li: School of Mathematics and Statistics, Ningbo University,
Ningbo, 315211, P.R. China.}
\email{rikudosennin@163.com}

\author[J.~Wei]{Juncheng Wei}
\address{J.~Wei: Department of Mathematics, Chinese University of Hong Kong,
Shatin, NT, Hong Kong.}
\email{wei@math.cuhk.edu.hk}
\date{}

\begin{document}
\raggedbottom

\begin{abstract}
For every integer $d\ge2$ and every subsonic speed $c\in(0,1)$, we
construct a nonconstant finite-energy travelling wave for the easy-plane
Landau--Lifshitz equation on $\R^d$. This addresses the existence problem
raised in the numerical work of Papanicolaou and Spathis \cite{papanicolaou1999}. The profiles
satisfy the travelling-wave equation in the sense of distributions and
are smooth in dimension two.
\end{abstract}

\maketitle

\section{Introduction}\label{sec:introduction}

\subsection{The equation and the travelling-wave problem}

The evolution of the magnetisation in a ferromagnetic material is
described by the Landau--Lifshitz equation~\cite{landaulifshitz1935}.
Without damping and with uniaxial anisotropy, the equation reads
\begin{equation}\label{eq:intro-evolution}
\partial_t m+m\times(\Delta m+\lambda m_3e_3)=0,
\qquad m:\R^d\times\R\longrightarrow\Sph^2,
\end{equation}
where $e_3=(0,0,1)$ and $\lambda\in\R$. The sign of $\lambda$
distinguishes the isotropic $(\lambda=0)$, easy-axis $(\lambda>0)$
and easy-plane $(\lambda<0)$ regimes. We work in the easy-plane
regime, for which a rescaling reduces the anisotropy coefficient
to $\lambda=-1$. The corresponding energy,
\begin{equation}\label{eq:intro-energy}
E(m)=\frac12\int_{\R^d}\bigl(|\nabla m|^2+m_3^2\bigr)\dd x,
\end{equation}
penalises spatial variation through the gradient term and favours
magnetisation in the equatorial plane through the anisotropy term.
For sufficiently regular solutions, conservation of this energy
follows by integration by parts.

\medskip

Among the solutions of \eqref{eq:intro-evolution}, travelling waves
describe coherent magnetisation patterns that retain their shape
as they propagate. Such waves are expected to play an important
role in the long-time dynamics and take the form
\[
m(x,t)=u(x_1-ct,x_2,\ldots,x_d),
\]
where $c$ denotes the speed of propagation. Substitution
into \eqref{eq:intro-evolution} with $\lambda=-1$, together with
the identity $u\cdot\Delta u=-|\nabla u|^2$, gives the travelling-wave
equation
\begin{equation}\label{eq:main}
\Delta u+\bigl(|\nabla u|^2+u_3^2\bigr)u-u_3e_3
+c\,u\times\partial_1u=0.
\end{equation}

\medskip

The main result of this paper establishes the existence of a nontrivial
weak solution to \eqref{eq:main} in every dimension $d\ge2$, for every
subsonic speed $c\in(0,1)$.

\begin{theorem}[All-dimensional existence]\label{thm:all-dimensional}
For every integer $d\ge2$ and every $c\in(0,1)$ there is a nonconstant finite-energy map $u\in H^1_{\mathrm{loc}}(\R^d;\Sph^2)$ such that \eqref{eq:main} holds in
$\mathcal D'(\R^d;\R^3)$. Equivalently, for every
$\varphi\in C_c^\infty(\R^d;\R^3)$,
\begin{equation}\label{eq:weak}
-\int\sum_{j=1}^d\partial_j u\cdot\partial_j\varphi
+\int (|\nabla u|^2+u_3^2)u\cdot\varphi
-\int u_3e_3\cdot\varphi
+c\int(u\times\partial_1u)\cdot\varphi=0.
\end{equation}
\end{theorem}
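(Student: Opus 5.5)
We will construct the travelling wave variationally, by minimising the energy $E$ under a constraint on the momentum, in the spirit of Béthuel--Gravejat--Saut for Gross--Pitaevskii and de Laire--Gravejat for Landau--Lifshitz in low dimensions. The equation \eqref{eq:main} is, at least formally, the Euler--Lagrange equation $dE(u)=c\,dP(u)$. Here $P$ is the momentum, whose differential along the sphere is $u\times\partial_1 u$. For maps with $u_3$ small at infinity we write $u=(\sqrt{1-u_3^2}\cos\phi,\sqrt{1-u_3^2}\sin\phi,u_3)$ and use the renormalised momentum $P(u)=\int u_3\,\partial_1\phi$. For general finite-energy maps we define $P$ through the formula $P(u)=\int (u_1\partial_1u_2-u_2\partial_1u_1)\frac{u_3}{1-u_3^2}$ on the set where $|u_3|<1$, suitably truncated. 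The plan is as follows.

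First we work in a class where $P$ is well defined and continuous, namely finite-energy maps with $\|u_3\|_\infty\le 1/2$ (or $\le\delta$). There $|u_3\partial_1\phi|\lesssim |\nabla u|^2+u_3^2$, so $|P(u)|\le \sqrt2\,E(u)$ pointwise by AM--GM. More sharply, $|P|\le E$ in the small-amplitude regime, which is the origin of the sound speed $1$. We set
\[E_{\min}(p)=\inf\{E(u):P(u)=p\}.\]
We then show that $E_{\min}$ is subadditive, concave and nondecreasing, that $E_{\min}(p)<p$ for all $p>0$, and that $E_{\min}(p)/p\to 1$ as $p\to0$. The strict inequality $E_{\min}(p)<p$ is obtained by an explicit test function built from the Kadomtsev--Petviashvili-type ansatz in $d\ge2$. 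This is the step that forces $d\ge2$ and subsonic speeds. The Lagrange multiplier $c$ is then identified with the slope of $E_{\min}$ at $p$. By concavity, and since the slope tends to $1$ at $0$ and to $0$ at infinity, every $c\in(0,1)$ is attained as a one-sided derivative for some $p$. Alternatively, and more robustly for covering every $c$, we minimise the penalised functional $E-cP$ over a Nehari/Pohozaev-type constraint set, or over maps on large tori $\R^d/L\Z^d$, and pass to the limit $L\to\infty$.

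Second we prove compactness of minimising sequences by concentration--compactness. Vanishing is excluded because vanishing sequences satisfy $\liminf(E-P)\ge0$, contradicting $E_{\min}(p)<p$. Dichotomy is excluded by the strict subadditivity that follows from concavity together with $E_{\min}(p)<p$. After translations we extract a weak $H^1_{\mathrm{loc}}$ limit with $P(u)=p$ and $E(u)=E_{\min}(p)$. This limit is nonconstant since $p\ne0$.

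Third we derive the Euler--Lagrange equation. We take variations $u_t=(u+t\varphi)/|u+t\varphi|$, and for them the renormalised $P$ is differentiable. We must check that the constraint $\|u_3\|_\infty\le\delta$ is not active. This is the main obstacle: in $d\ge3$ one has no $L^\infty$ control from $H^1$, and the constraint could saturate. To handle it, we plan first to prove a priori that minimisers of small momentum (equivalently $c$ near $1$) have $\|u_3\|_\infty$ small via scaling. For general $c$ we would instead drop the constraint entirely: we work on tori, where $P$ is defined modulo a degree/lattice term, and $E-cP$ is bounded below on the relevant class. We then obtain weak solutions there and pass to the limit $L\to\infty$ using the weak-convergence structure $u\times\partial_1u$. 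This cross product is a div-curl-type quantity, since $u\times\Delta u=\operatorname{div}(u\times\nabla u)$, so the weak formulation \eqref{eq:weak} passes to weak limits without strong convergence of gradients. Smoothness for $d=2$ follows from Hélein-type regularity for harmonic-map-like systems with this antisymmetric structure.
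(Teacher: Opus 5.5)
Your plan has a genuine gap at its first step: it never produces a wave at a \emph{prescribed} speed. Minimising $E$ at fixed $P=p$ makes $c$ an unknown Lagrange multiplier. Concavity of $E_{\min}$ only places the speed of a minimiser at momentum $p$ somewhere in $[E_{\min}'(p^+),E_{\min}'(p^-)]$. Speeds inside a jump of the derivative, or attached to momenta at which no minimiser exists, are lost. So ``every $c$ is a one-sided derivative'' does not yield a wave at every $c$.

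The fallback of minimising $E-cP$ fails as well. Inside the band it is nonnegative for $c\le\sqrt{1-\delta^2}$ and minimised by $e_1$. Without the band it is unbounded below. For the folded-sphere family $v_T$ of Section~\ref{app:planar-path}, the optimal dilation in $D(v_T)+\tfrac12\sigma^2Z(v_T)-c\sigma\mathsf p(v_T)$ gives $D(v_T)-c^2\mathsf p(v_T)^2/(2Z(v_T))$, which tends to $-\infty$ because $Z(v_T)\to0$ and $\mathsf p(v_T)\to8\pi$. A Pohozaev constraint is only named, and for Gross--Pitaevskii that route has only been carried out for $d\ge3$.

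Your claim $E_{\min}(p)<p$ for all small $p$ is also unsupported when $d\ge3$. Minimisers at small momentum would have small energy, which is exactly the regime in which de Laire's energy gap excludes uniformly continuous waves for $d=3,4$; the paper's small-energy waves there are necessarily singular. The paper sidesteps all this by fixing $c$ from the outset. In the plane it runs a mountain pass for $J_{c,\eta}$ and uses the index and dilation information of the selected sequence to get the energy bound that the action alone does not give. For $d\ge3$ it minimises $J_c$ under an energy cap in a class with prescribed topological defects.

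The compactness and Euler--Lagrange steps break down too. Without the band, your trichotomy omits the characteristic failure mode: concentration of harmonic spheres at points. By the area--energy inequality, in the degree-zero class any bubbling costs at least $8\pi$ in the limit, whereas the same family gives $E_{\min}(p)\le8\pi$ for every $p$. Excluding bubbling therefore requires a strict bound $E_{\min}(p)<8\pi$ at the relevant momentum, which you neither state nor prove. The paper's counterpart is the cap $m_\eta(c)\le8\pi-\zeta_K$, transferred to $T_\eta$ and combined with local neutrality at concentration points of exact waves.

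With the band there is no bubbling, but the constraint cannot be inactive. The proof of Lemma~\ref{lem:quartic-height} uses only the phase and height equations and adapts to $\eta=0$. It shows that every nonconstant smooth finite-energy planar wave with $\|u_3\|_\infty<1$ has $\|u_3\|_\infty^2\ge1-c$. So in the plane a band minimiser can solve \eqref{eq:main} only if $c\ge1-\delta^2$, which means only $c\ge3/4$ for $\delta=1/2$.

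Leaving the band requires a momentum that is defined and differentiable on maps reaching the poles, and your density $u_3(u_1\partial_1u_2-u_2\partial_1u_1)/(1-u_3^2)$ is not one. Integrating by parts around the pole preimages shows that it differs from $-\int x_2q$ by $2\pi$ times a signed, winding-weighted sum of their $x_2$-coordinates. Its first variation is therefore not $u\times\partial_1u$. This is why Proposition~\ref{prop:thin-belt} cuts off $-u_3\,d\theta$ near the poles and adds the curvature term $\mathfrak b$.

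Your div--curl remark is correct, and the paper uses the same conservation form, but it passes the equation to the limit only along exact solutions. For $d\ge3$ you have neither $L^\infty$ control nor exact approximating solutions. The paper instead derives \eqref{eq:weak} from near-minimality by comparing $v_n$ with compactly supported target rotations $Q_tv_n$, whose energy increments contain at most one gradient factor. It keeps the weak limit nonconstant through prescribed defects and a collapse lower bound.
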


The subsonic range is singled out by the behaviour of small
disturbances of a uniformly magnetised state. Linearisation
of \eqref{eq:intro-evolution} about $e_1=(1,0,0)$ yields the
dispersion relation
\[
\omega(\xi)^2=|\xi|^2(1+|\xi|^2),
\]
so linear waves have phase speed $\sqrt{1+|\xi|^2}$. These speeds
are greater than one and approach one in the long-wavelength limit.
Thus a profile with $0<c<1$ travels more slowly than every linear
wave, whereas for $c>1$ some linear waves are stationary in the
moving frame and can resonate with the profile. This change
identifies $c=1$ as a natural threshold and explains the terminology
\emph{subsonic} for the interval $0<c<1$.

\medskip

The existence of nonconstant finite-energy travelling waves at every
speed in this interval was conjectured in dimensions two and three
following the numerical and asymptotic work of Papanicolaou and
Spathis~\cite{papanicolaou1999}. \Cref{thm:all-dimensional} resolves
this existence problem in the plane and extends weak existence
throughout the subsonic range to every higher dimension. The planar
conclusion gives smooth waves: by de Laire~\cite[Proposition~1.4]{delaire2014},
every finite-energy distributional solution in dimension two is smooth.

\medskip

For $d\ge3$, the theorem provides finite-energy weak solutions.
The construction of a smooth vortex-ring branch covering the full
subsonic range, as suggested by the three-dimensional numerical
calculations in~\cite{papanicolaou1999}, remains a separate problem.
A further feature of our construction is that the energy can be
made arbitrarily small. More precisely, for every $d\ge3$, every
fixed $c\in(0,1)$ and every $\varepsilon>0$, there is a nonconstant
weak travelling wave satisfying
\[
0<E(u)<\varepsilon.
\]

\medskip

This additional conclusion has a direct consequence for regularity
in dimensions $d=3,4$. De Laire~\cite[Theorem~1.1]{delaire2014} proved
that uniformly continuous nonconstant travelling waves have energy
bounded below by a positive constant. Our construction produces weak
profiles below this threshold, which therefore cannot be uniformly
continuous. The small-energy solutions thus demonstrate that the
uniform-continuity hypothesis in these energy-gap results is essential.

\subsection{Previous results}
Lin and Wei~\cite{lin2010} proved the existence of smooth finite-energy
solutions of the travelling-wave equation~\eqref{eq:main} at sufficiently
small positive speeds in every dimension $d\ge2$. Their perturbative
construction produces a widely separated vortex--antivortex pair in the
plane, consistent with the numerical work of Papanicolaou and
Spathis~\cite{papanicolaou1999}, and axially symmetric vortex-ring
solutions in higher dimensions. Their analysis also provides detailed
information about the vortex geometry and the small-speed asymptotics.
Related developments include the static variational theory for planar
ferromagnets~\cite{hanglin2001}, studies of magnetic-vortex
dynamics~\cite{komineaspapanicolaou2007,linshatah2003,papanicolaoutomaras1991},
and the construction of travelling vortex helices~\cite{weiyang2016}.

\medskip

In one dimension, the finite-energy travelling-wave profiles for
\eqref{eq:main} have been explicitly
classified~\cite[Proposition~1.3]{delaire2014}. For $0\le c<1$,
set $a=(1-c^2)^{1/2}$. Up to the symmetries of the equation, every
nonconstant profile has the form
\begin{equation}\label{eq:intro-one-dimensional}
u_c(x_1)=\bigl(c\,\operatorname{sech}(a x_1),\,
             \tanh(a x_1),\,a\,\operatorname{sech}(a x_1)\bigr),
\qquad E(u_c)=2a,
\end{equation}
and there are no nonconstant finite-energy profiles for $c\ge1$.
De Laire and Gravejat~\cite{delairegravejat2015} proved orbital stability
in the energy space for these one-dimensional solitons with nonzero speed and
for initially well-separated chains with distinct nonzero speeds,
ordered according to their speeds. Bahri established asymptotic
stability, in the weak energy topology modulo symmetries, first for
individual solitons with nonzero speed~\cite{bahri2016} and then for
suitably separated and ordered chains~\cite{bahri2018}.

\medskip

De Laire~\cite[Theorem~1.6]{delaire2014} also established precise
asymptotics at infinity for finite-energy travelling waves with
$0<c<1$, including convergence to a constant equatorial state. At zero
speed, finite-energy profiles in dimensions $d\ge2$ are
constant~\cite[Proposition~1.2]{delaire2014}. Both results assume uniform
continuity when $d\ge3$. The survey~\cite{delaire2022} gives further
background on the Cauchy problem, asymptotic regimes and soliton
stability for the Landau--Lifshitz equation; see
also~\cite{bejenaruionescukenigtataru2011,changshatahuhlenbeck2000,delairegravejat2018,dingwang2001,gustafsonkangtsai2008,gustafsonnakanishitsai2010,gustafsonshatah2002,nahmodstefanovuhlenbeck2003,sulemsulembardos1986}.

\subsection{Relation with the Gross--Pitaevskii equation}
The Landau--Lifshitz travelling-wave problem shares several features
with the corresponding Gross--Pitaevskii problem.
For the latter, travelling-wave profiles are complex-valued maps
$\psi:\R^d\longrightarrow\mathbb C$ satisfying
\begin{equation}\label{eq:intro-gp}
\Delta\psi+ic\partial_1\psi+(1-|\psi|^2)\psi=0,
\end{equation}
with associated energy
\[
E_{\mathrm{GP}}(\psi)
=\frac12\int_{\R^d}|\nabla\psi|^2\dd x
+\frac14\int_{\R^d}(1-|\psi|^2)^2\dd x.
\]
Both problems have a circle of constant zero-energy states, an
energy--momentum variational structure, and travelling vortex
configurations. Here momentum refers to the formally conserved
quantity associated with translation invariance in the direction of
propagation. Linearisation about the constant states gives sound
speeds $\sqrt2$ for Gross--Pitaevskii and $1$ for Landau--Lifshitz.
For Gross--Pitaevskii, Gravejat~\cite{gravejat2003} proved nonexistence
of nonconstant finite-energy waves at supersonic speeds.

\medskip

The numerical and asymptotic work of Jones and
Roberts~\cite{jonesroberts1982} and Jones, Putterman and
Roberts~\cite{jonesputtermanroberts1986} motivated a substantial
existence theory for~\eqref{eq:intro-gp}. Early constructions include
the small-speed planar waves of B\'ethuel and Saut~\cite{bethuelsaut1999}
and the vortex rings of B\'ethuel, Orlandi and
Smets~\cite{bethuelorlandismets2004}. Chiron~\cite{chiron2004} gave a
mountain-pass construction of vortex-ring solutions in dimensions
$d\ge3$, obtaining existence for all sufficiently small positive speeds.
Further planar multivortex waves
were obtained by Liu and Wei~\cite{liuwei2020}, while Chiron and
Pacherie~\cite{chironpacherie2021} constructed a smooth branch of
vortex--antivortex waves at sufficiently small positive speeds.
For sufficiently large momentum, Chiron and
Pacherie~\cite{chironpacherie2023} subsequently proved uniqueness, up to
the symmetries of the equation, of the energy minimiser at fixed
momentum and established orbital stability of the corresponding
two-vortex travelling wave.
Complementary variational approaches have been developed
in~\cite{bethuelgravejatsaut2009,chironmaris2017}. These include minimisation of the
energy at fixed momentum, with the wave speed arising naturally as
the Lagrange multiplier associated with the momentum constraint.

\medskip

Concerning existence over larger speed intervals,
Mari\c{s}~\cite{maris2013} constructed travelling waves at every
subsonic speed in dimensions $d\ge3$ using a Pohozaev constraint. Bellazzini and
Ruiz~\cite{bellazzini2023} obtained waves of Morse index at most one
for almost every subsonic speed in dimensions two and three,
recovering all speeds in dimension three by compactness; their planar
compactness argument requires an additional hypothesis on the vortex
set. Gui, Lai, Qin and Wei~\cite{guilai2026} established the full planar
range through an energy estimate involving the action and Morse index.
The transonic regime and its relation to the Kadomtsev--Petviashvili~I
equation have been studied
in~\cite{bethuelgravejatsaut2008kpi,chironmaris2014,liuwangweiyang2026}.
See also the survey~\cite{bethuelgravejatsaut2008survey}.

\medskip

The main differences between the travelling-wave
equations~\eqref{eq:main} and~\eqref{eq:intro-gp} arise from the sphere
constraint in the Landau--Lifshitz problem. Where $u_3\ne-1$, the
stereographic coordinate $w=(u_1+iu_2)/(1+u_3)$ transforms~\eqref{eq:main}
into
\begin{equation}\label{eq:intro-stereographic}
\Delta w+ic\partial_1w+\frac{1-|w|^2}{1+|w|^2}w
=\frac{2\overline w}{1+|w|^2}
 \sum_{j=1}^d(\partial_jw)^2.
\end{equation}
The quadratic gradient term on the right-hand side
of~\eqref{eq:intro-stereographic} has no counterpart in the semilinear
equation~\eqref{eq:intro-gp}, and the stereographic chart may become
singular at a smooth magnetic vortex core. Both travelling-wave
problems require care in defining momentum and in handling the
possible absence of a global phase. In the Landau--Lifshitz problem,
the sphere constraint also permits harmonic-map bubbling and creates
difficulties in passing to the limit in the nonlinear term
$|\nabla u|^2u$ in~\eqref{eq:main}. The GP theory therefore supplies
useful variational ideas, while the sphere-valued problem requires
separate geometric estimates and compactness arguments.

\medskip

There is also a distinction in the possible energies of travelling
waves. For the Gross--Pitaevskii equation in dimensions $d\ge3$,
de Laire~\cite{delaire2009} proved that every nonconstant finite-energy
travelling wave has energy bounded below by a positive constant
depending only on the dimension. The weak Landau--Lifshitz travelling
waves constructed here can have arbitrarily small positive energy in
these dimensions, even at a fixed subsonic speed.

\subsection{Ideas of the proof and organisation of the paper}

The planar construction and the construction in dimensions $d\ge3$
both start from the action
\[
J_c(u)=E(u)+cP(u),
\]
whose critical points under the sphere constraint formally solve the
travelling-wave equation~\eqref{eq:main}. Here $P$ is the momentum in
the direction of propagation, which, for a smooth map equal to $e_1$
outside a compact set, can be defined by
\[
P(u)=\int_{\R^d}
x_2\,u\cdot(\partial_1u\times\partial_2u)\dd x,
\]
where $x_1$ is the direction of propagation. The factor $x_2$ makes
extending this definition beyond compact perturbations a substantive
issue: finite energy controls the integral of the vorticity density
$u\cdot(\partial_1u\times\partial_2u)$, but does not ensure convergence
of its first moment. The two
constructions use different ways of handling this difficulty. In
each case, the desired speed $c\in(0,1)$ is fixed at the outset.

\medskip
\needspace{5\baselineskip}
\noindent\textbf{Strategy of the planar proof.}\par
\smallskip

The planar strategy is to construct a mountain-pass sequence for a
regularised action, obtain an exact regularised wave, and then remove
the regularisation. In Section~\ref{sec:planar-framework} we introduce
\[
E_\eta(u)=E(u)+\frac{\eta}{4}\int_{\R^2}|\nabla u|^4,
\qquad
J_{c,\eta}(u)=E_\eta(u)+cP(u),
\qquad \eta>0.
\]
The regularising $|\nabla u|^4$ term provides additional control of
the gradients at fixed $\eta$.

\medskip

Section~\ref{app:thin-belt} constructs the space of sphere-valued maps
on which we define the regularised action $J_{c,\eta}$ and carry out
the mountain-pass construction. This space is obtained as a completion
of compact perturbations of $e_1$. Its topology controls the map
uniformly, its gradient in $L^2\cap L^4$, and its third component
in $L^2$. Maps in this space approach $e_1$
uniformly at infinity, so they lie close to the equator outside a
bounded region. This allows us to extend momentum by removing the
spatial weight from the part of the integral near infinity. More
precisely, for maps whose image lies in a fixed equatorial band,
integration by parts gives the momentum formula
\[
P(u)=-\int_{\R^2}u_3\,\partial_1\theta\dd x,
\qquad \theta=\arg(u_1+iu_2),
\]
whose integrand is controlled by $|u_3||\partial_1u|$. For general maps
in the variational space, a cutoff on the target sphere localises
this phase expression and produces an additional correction
supported in a bounded region. Both contributions are integrable. The resulting momentum is a smooth real-valued
functional, independent of the cutoff, and permits maps to reach
either target pole. This extension is established in
Section~\ref{appsrc-thin-belt-a-real-momentum-on-the-entire-completed-manifold}.

\medskip

Once a suitable formulation of the momentum is established, we set up
the mountain-pass construction using continuous paths joining the
constant map $e_1$ to an explicit map $u_-$ of negative action, within
the connected component containing $e_1$. We minimise the maximal
action along these paths, defining
\[
m_\eta(c)=\inf_{\gamma:e_1\to u_-}
\max_{t\in[0,1]}J_{c,\eta}(\gamma(t)).
\]
The geometry established in Section~\ref{app:planar-window} ensures
that this level is positive: the action vanishes at $e_1$ and is
positive at small nonzero energy, so every admissible path must
cross a positive action barrier before reaching $u_-$. An upper
bound follows from the comparison path constructed in
Section~\ref{app:planar-path}. These two estimates yield
\[
0<b_c\le m_\eta(c)\le8\pi-\delta_c,
\]
where $b_c,\delta_c>0$ are independent of sufficiently small $\eta$.
The strict upper bound below $8\pi$ is important for the subsequent
compactness argument.

\medskip

The above construction allows one to find approximate critical
points of $J_{c,\eta}$ whose actions approach $m_\eta(c)$. The main
obstacle is then that bounded action does not imply bounded energy:
a large positive value of $E_\eta$ could cancel a large negative
contribution from $cP$. We therefore select the approximate critical
points with additional information, using the abstract minimax
result of Section~\ref{app:path-selection} and its application to
the planar action in Section~\ref{app:planar-selector}. They have an
approximate Morse-index bound of one, and the selection controls
the action derivatives under independent stretchings of the
longitudinal coordinate $x_1$ and the transverse coordinate $x_2$.
A deformation of the paths also ensures that the selected maps
have a definite nonzero third component somewhere.
\medskip

These additional properties yield the uniform energy bound proved in
Section~\ref{app:energy-absorption}. The
action and scaling identities control transverse variation. Regions
carrying substantial remaining energy while varying little in the
transverse direction resemble one-dimensional profiles. The
classification of stable entire one-dimensional profiles in
Section~\ref{app:signed-estimate} gives a quantitative inequality between their longitudinal gradient and
anisotropy energies. Regions where the required estimate fails
produce negative directions for the second variation; the index
bound limits how many independent such regions can occur. Localising
these estimates and combining them with the scaling identities gives
\[
E_\eta(u_n)\le C_c,
\]
uniformly along the selected sequences and uniformly for sufficiently
small $\eta$. The one-dimensional profiles arise as local limits and
need not have finite total energy, which is why the classification
includes constant-latitude helices.

\medskip

With this bound established, we pass to the limit in
Section~\ref{app:fixed-eta-recovery}, keeping $\eta>0$ fixed.
We translate the maps so that a point with nonzero
third component remains at the origin. Monotonicity of the
regularised operator gives strong local convergence of the gradients,
allowing passage through the nonlinear terms. The limit is an exact
regularised travelling wave $u_\eta$, and the separate regularity
argument in Section~\ref{app:weak-regularity} shows that $u_\eta$ is smooth. The construction preserves
the transverse-energy bound
\[
T_\eta(u_\eta)
=\int_{\R^2}
\bigl(1+\eta|\nabla u_\eta|^2\bigr)|\partial_2u_\eta|^2
\le8\pi-\delta_c.
\]
For these exact waves, the stress identities proved in
Section~\ref{app:energy-absorption} show that momentum is well defined
by a spatial cutoff limit, which agrees with the earlier definition
on the variational space. They also give the Pohozaev identity
$J_{c,\eta}(u_\eta)=T_\eta(u_\eta)$.

\medskip

Only after obtaining exact regularised waves do we let $\eta$ tend to
zero. Section~\ref{app:regularizer-decay} first proves that the added
quartic energy disappears:
\[
\eta\int_{\R^2}|\nabla u_\eta|^4\longrightarrow0.
\]
The remaining danger, addressed in
Section~\ref{app:concentration-compactness}, is that gradients
concentrate in smaller and smaller regions. Rescaling around such a concentration produces a
nonconstant harmonic sphere, which costs at least $4\pi$ of energy.
The travelling-wave equation supplies an additional restriction:
its stress identities force the signed spherical area to cancel
locally at each concentration point. This raises the minimum
concentration cost to $8\pi$, and the same cost appears in the
transverse energy. The strict bound on $T_\eta$ therefore excludes
concentration.

\medskip

Finally, the height estimate in
Section~\ref{appsrc-quartic-compactness-the-height-estimate-on-the-plane}
shows that every nonconstant regularised wave satisfies
\[
\|(u_\eta)_3\|_\infty\ge\sqrt{1-c}.
\]
After translating a point of substantial height to the origin,
smooth local convergence preserves a nonzero third component there.
A finite-energy constant map must lie on the equator, so the limit is
nonconstant. This gives the required smooth finite-energy travelling
wave at speed $c$. The planar proof is assembled in
Section~\ref{sec:planar-conclusion}.

\medskip
\needspace{5\baselineskip}
\noindent\textbf{Strategy of the higher-dimensional proof.}\par
\smallskip

For $d\ge3$, we minimise the unregularised action $E+cP$ in a class
with prescribed topological defects, introduced in
Section~\ref{app:shell-proof}. In dimension three, these are
point defects with specified positive and negative charges. The
charge describes how the map wraps around the target sphere on a
small sphere surrounding a defect. In higher dimensions, the defects
lie on $(d-3)$-dimensional spheres, with the charge measured in the
three transverse directions. We also impose a finite symmetry
condition. The defect geometry is described in
Section~\ref{appsrc-shell-proof-defect-geometry-the-comparison-map-and-rotations}.

\medskip

Each admissible map equals $e_1$ outside some compact set, so its
weighted momentum is well defined. The compact set can vary from map
to map. The locations and charges of the defects are chosen so that their
contributions cancel after projection along the direction of
propagation. Consequently,
the vorticity integrated in that direction has a scalar potential
on the transverse space, as proved in
Section~\ref{appsrc-shell-proof-cancellation-under-projection-and-the-momentum-potential}.
The estimates for this potential and for longitudinal slices in
Section~\ref{appsrc-shell-proof-longitudinal-loops-the-global-action-bound-and-its-localization}
give momentum bounds independent of the size of the support. For each fixed subsonic speed, these imply
\[
J_c(v)\ge\gamma_c E(v)
\qquad\text{when }E(v)\le M_{d,c},
\]
for suitable positive constants. Thus the action controls the energy
within a sufficiently small energy range.

\medskip

To bound the infimum of the action from above, we first construct a
nonconstant map $W:\R^d\to S^2$, equal to $e_1$ outside a compact set,
with the required defect configuration and symmetry. This construction
is carried out in
Section~\ref{appsrc-shell-proof-defect-geometry-the-comparison-map-and-rotations}
using the three-dimensional profile of
Section~\ref{app:charged-comparison}, and gives $P(W)=0$. We then
consider the rescaled maps
\[
W_\lambda(x)=W(x/\lambda),
\]
which carry the correspondingly rescaled defects. Their Dirichlet
energy is multiplied by $\lambda^{d-2}$ and their anisotropy energy by
$\lambda^d$, while their momentum remains zero. For $d\ge3$, both
energy contributions tend to zero as $\lambda\to0$. Thus $W_\lambda$
provides a nonconstant competitor with arbitrarily small energy and
action $J_c(W_\lambda)=E(W_\lambda)$. Choosing $\lambda$ sufficiently
small places it in the energy range where the preceding coercivity
estimate applies.

\medskip

In Section~\ref{appsrc-shell-proof-collapse-action-the-strict-gap-and-minimizing-sequences},
for a fixed small $\lambda$, we take a minimising sequence for
\[
I_\lambda
=\inf\left\{
J_c(v):v\in\mathcal A_\lambda,\quad E(v)\le M_{d,c}
\right\},
\]
where $\mathcal A_\lambda$ contains the prescribed defect and
symmetry conditions. Comparison with $W_\lambda$, together with the
coercivity estimate, gives
\[
E(v_n)\le C_{d,c}\lambda^{d-2}
\]
for sufficiently late terms of the sequence. Choosing $\lambda$
small places these energies strictly below $M_{d,c}/2$. This margin
leaves room for local variations in either direction, so the energy
constraint does not introduce an additional multiplier into the
travelling-wave equation.

\medskip

The energy bound gives a subsequence converging weakly in
$H^1_{\mathrm{loc}}$ and strongly in $L^2_{\mathrm{loc}}$. We must
still show that its limit is nonconstant. Prescribing defects in the
approximating maps does not by itself ensure this, because
topological information can be lost under weak convergence. We
therefore derive a local lower bound for collapse in
Section~\ref{appsrc-shell-proof-a-local-geometric-lower-bound-for-collapse}
and combine it with the momentum estimates in
Section~\ref{appsrc-shell-proof-collapse-action-the-strict-gap-and-minimizing-sequences}
to prove a strict comparison: any admissible sequence of energy
$O(\lambda^{d-2})$ that converges locally to a constant must
have limiting action greater than the action of the explicit
comparison map. A minimising sequence cannot satisfy this lower
bound, since its limiting action is at most the comparison action.
Its weak limit is consequently nonconstant. The argument does not
require the prescribed charges to survive in that limit.

\medskip

The final step uses the compact-rotation principle proved in
Section~\ref{sec:rotations} to recover the travelling-wave equation
from the near-minimality of the sequence. Weak convergence of gradients is
insufficient to pass directly through the quadratic gradient term
in~\eqref{eq:main}. We instead compare each $v_n$ with a locally
rotated map
\[
Q_t(x)v_n(x),
\]
where $Q_t(x)\in SO(3)$ equals the identity outside a compact set.
Suitable rotations preserve the prescribed charges and symmetry.
Since
\[
\partial_j(Q_tv_n)
=Q_t\partial_jv_n+(\partial_jQ_t)v_n
\]
and $Q_t$ is orthogonal, the quadratic terms $|\partial_jv_n|^2$
cancel in the difference of the Dirichlet energies. The remaining
terms contain at most one gradient factor, paired with a strongly
convergent map. The momentum difference also has a local expression
that passes to the limit.

\medskip

We can therefore pass the action increments
\[
J_c(Q_tv_n)-J_c(v_n)
\]
to the limit using weak convergence of the gradients and strong
local convergence of the maps. Near-minimality makes the limiting
increment nonnegative for both signs of small $t$, so its derivative
at $t=0$ vanishes. This gives the rotational variational identity.
Averaging test fields over the finite symmetry group then yields the
full travelling-wave equation in distributions. A related use of
target rotations preserving topological charges appears in
Ziener~\cite{ziener2026}. The equation is obtained from these local
changes in the action, without requiring a global momentum for the
weak limit.

\medskip

The application of this principle in
Section~\ref{sec:higher-conclusion} completes the higher-dimensional
proof. The resulting profile is a nonconstant finite-energy weak
solution with
\[
E(u)\le C_{d,c}\lambda^{d-2}.
\]
The scale $\lambda$ is fixed throughout the minimising-sequence
limit. Choosing a smaller $\lambda$ defines a new minimisation
problem and produces a new wave of smaller energy at the same
prescribed speed $c$.

\section{The planar variational framework}\label{sec:planar-framework}

We first fix the notation for the planar construction. Throughout
Sections~\ref{sec:planar-framework}--\ref{app:quartic-compactness},
we use coordinates $x=(x_1,x_2)$ and write
$q(u)=u\cdot(\partial_1 u\times \partial_2 u)$.
For smooth maps equal to $e_1$ outside a compact set and of degree zero, put
\begin{equation}\label{eq:planar-functionals}
 P(u)=\int x_2 q(u),\qquad
 E_\eta(u)=E(u)+\frac\eta4\int |\nabla u|^4,\qquad
 J_{c,\eta}(u)=E_\eta(u)+cP(u).
\end{equation}
All unspecified planar integrals and norms are over $\R^2$.
The momentum $P(u)$ is extended to the completed variational space in
\cref{prop:thin-belt}; no absolute first moment is assumed for a weak limit.

To distinguish variations in the two coordinate directions, we separate the gradient energy into its longitudinal and transverse parts and record the associated dilation derivatives:
\begin{equation}\label{eq:planar-scaling-data}
\begin{gathered}
A=\tfrac12\int|\partial_2 u|^2,\quad K_1=\tfrac12\int|\partial_1 u|^2,\quad
V=\tfrac12\int u_3^2,\\
Q_1=\tfrac14\int|\partial_1 u|^4,\quad
Q_m=\tfrac12\int|\partial_1 u|^2|\partial_2 u|^2,\quad
Q_2=\tfrac14\int|\partial_2 u|^4,\\
D_1=A-K_1+V+\eta(-3Q_1-Q_m+Q_2),\\
D_2=-A+K_1+V+cP+\eta(Q_1-Q_m-3Q_2),\\
T_\eta(u)=\int(1+\eta |\nabla u|^2)|\partial_2 u|^2,\qquad
W_\eta(u)=\bigl(1+E_\eta(u)+\sqrt{1+P(u)^2}\bigr)^{1/2}.
\end{gathered}
\end{equation}
Here $D_1(u)$ and $D_2(u)$ are the derivatives of the action under
separate dilations in the $x_1$ and $x_2$ directions. More precisely, set
\[
u_{\alpha,\beta}(x_1,x_2)=u(e^{-\alpha}x_1,e^{-\beta}x_2).
\]
Keeping $c$ and $\eta$ fixed, we have
\[
D_1(u)=\left.\frac{\partial}{\partial\alpha}
J_{c,\eta}(u_{\alpha,\beta})\right|_{\alpha=\beta=0},
\qquad
D_2(u)=\left.\frac{\partial}{\partial\beta}
J_{c,\eta}(u_{\alpha,\beta})\right|_{\alpha=\beta=0}.
\]
Direct subtraction gives
\begin{equation}\label{eq:transverse-action}
J_{c,\eta}(u)-D_2(u)=T_\eta(u).
\end{equation}

Variations preserving the sphere constraint are described by tangent fields. We record the norms used to measure them, together with the first- and second-variation expressions for the action.

For tangent fields $v\cdot u=0$, define
\begin{equation}\label{eq:planar-variation-norms}
\begin{aligned}
N_\eta(v)&=\|\nabla v\|_2+\eta^{1/4}\|\nabla v\|_4
 +\|v_3\|_2+\|v\|_\infty,\\
B_{\eta,u}(v)&=N_\eta(v)+\bigl\|\sqrt{|\nabla u|^2+u_3^2}\,v\bigr\|_2\\
&\quad+\sqrt\eta\bigl\||\nabla u|^2v\bigr\|_2
 +\sqrt\eta\bigl\||\nabla u|\nabla v\bigr\|_2,\\
R_{c,\eta,u}[v]&=\int(1+\eta |\nabla u|^2)\nabla u:\nabla v+u_3\,v_3
 +c\,u\cdot(v\times \partial_1 u),\\
\mathcal H_{c,\eta,u}[v,v]&=\int(1+\eta |\nabla u|^2)(|\nabla v|^2-|\nabla u|^2|v|^2)
 +2\eta(\nabla u:\nabla v)^2\\
&\hspace{2em}+v_3^2-u_3^2|v|^2+c\,u\cdot(v\times \partial_1 v).
\end{aligned}
\end{equation}
The colon denotes contraction of derivative matrices. The threshold index
$i_\theta(u;c,\eta)$ is the largest dimension of a space of compact smooth
tangent fields on which $\mathcal H[v,v]\le-\theta B_{\eta,u}(v)^2$.
For fixed $u,c,\eta$, we also write $Q(v)=\mathcal H_{c,\eta,u}[v,v]$
for this second-variation quadratic form.
We use the same notation with a domain indicated when an estimate is local.

The proof proceeds by constructing a minimax sequence for fixed
$\eta$, bounding its energy, and then taking the two successive limits.
The estimates below are uniform for $c$ in a fixed compact subset of
$(0,1)$ whenever this is stated explicitly.

\section{The completed variational space}\label{app:thin-belt}

We now construct the space of sphere-valued maps for the mountain-pass
argument by completing smooth compact perturbations of $e_1$ in $N_\eta$.
This norm controls $u-e_1$ uniformly, $\nabla u$ in $L^2\cap L^4$, and
$u_3$ in $L^2$, allowing us to extend the momentum $P(u)$ smoothly to the
completed space. Weighted tangent norms then provide a complete metric
and uniform bounds for the second and third derivatives of the action,
including its dependence on independent dilations in $x_1$ and $x_2$.

For $0<\eta\le1$, let $X_\eta$ be the completion of
$C_c^\infty(\R^2;\R^3)$ for $N_\eta$ in
\eqref{eq:planar-variation-norms}. Set \[
 \mathcal M_\eta=\{u=e_1+h:h\in X_\eta,\ |u(x)|=1\text{ for every }x\},
\] and let \(\mathcal M_\eta^0\) be the connected component containing \(e_1\). Use \eqref{eq:planar-scaling-data}, writing $K(u)=K_1(u)$
in this section. On smooth maps equal to \(e_1\) outside a compact set, set \[
 P_0(u)=\int x_2\,u\cdot(\partial_1 u\times \partial_2 u).
\]

\subsection{Completion, approximation, and charts}\label{appsrc-thin-belt-completion-approximation-and-charts}

We begin by giving $\mathcal M_\eta$ smooth local coordinates. We also show that its elements can be approximated by smooth compact perturbations of $e_1$, allowing identities for such maps to pass to the completion.

\begin{lemma}[Completion and spherical charts]\label{lem:thin-belt-charts}
The completion $X_\eta$ is identified with continuous vector fields vanishing at infinity, with the weak derivatives and height component controlled by $N_\eta$.
\(\mathcal M_\eta\) is a smooth real Banach manifold, with tangent space and local spherical chart \[
\begin{aligned}
T_u\mathcal M_\eta&=\{v\in X_\eta:u\cdot v=0\},\\
\Psi_u(\xi)&=\cos|\xi|\,u+\operatorname{sinc}(|\xi|)\xi,\\
\operatorname{sinc}(t)&=
\begin{cases}\sin(t)/t&t\ne0,\\1&t=0.\end{cases}
\end{aligned}
\] Smooth compact perturbations of \(e_1\) are dense.
\end{lemma}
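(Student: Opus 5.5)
The proof has three parts: describe the completion $X_\eta$ concretely, show that $\mathcal M_\eta$ is a submanifold cut out by a submersion, and then approximate by smooth maps.

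\emph{Completion.} For $\varphi\in C_c^\infty$ we have $\|\varphi\|_\infty\le N_\eta(\varphi)$. A Cauchy sequence for $N_\eta$ therefore converges uniformly to a continuous field vanishing at infinity. Its gradients converge in $L^2\cap L^4$ and its third components in $L^2$, and the limits are the weak derivatives and the height of the uniform limit. So the completion map into $C_0\cap\dot W^{1,2}\cap\dot W^{1,4}$ is injective, $X_\eta$ is exactly the set of $h\in C_0$ with $\nabla h\in L^2\cap L^4$, $h_3\in L^2$, and $N_\eta$ extends as the norm. The reverse inclusion, that every such $h$ is a limit of test functions, needs a truncation. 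I would mollify first, then cut off with $\chi_R=\chi(x/R)$. The error $\|h\nabla\chi_R\|_2$ is not obviously small, since $h$ need not lie in $L^2$. This is the main obstacle. I would use $\|\nabla\chi_R\|_2=O(1)$ in two dimensions together with $\sup_{|x|\ge R}|h|\to0$. This gives $\|h\nabla\chi_R\|_2\le C\sup_{R\le|x|\le2R}|h|\to0$. In $L^4$ the same error is even smaller, since $\|\nabla\chi_R\|_4=O(R^{-1/2})$. The height term gives $\|\chi_Rh_3-h_3\|_2\to0$ by dominated convergence.

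\emph{Manifold structure.} $X_\eta$ is a Banach algebra up to constants. Indeed $\|fg\|_\infty\le\|f\|_\infty\|g\|_\infty$ and $\nabla(fg)=f\nabla g+g\nabla f$ is controlled in $L^2\cap L^4$. For the height of products of the form $(e_1+h)\cdot(e_1+k)$ we only need scalar functions. Consider
\[
F(h)=|e_1+h|^2-1=2h_1+|h|^2 .
\]
It maps $X_\eta$ into the scalar space $Y=\{f\in C_0:\nabla f\in L^2\cap L^4\}$, and it is a smooth polynomial map. Its differential at $u=e_1+h$ is $v\mapsto2u\cdot v$. This is surjective with bounded right inverse $f\mapsto\tfrac12 fu$: we have $\nabla(fu)=u\nabla f+f\nabla h$, and the height of $fu$ is $fu_3$. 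The remaining point is that $fu_3\in L^2$ for $f\in Y$. The kernel is complemented by the projection $v\mapsto v-(u\cdot v)u$, which is bounded by the same product rule. Hence $\mathcal M_\eta=F^{-1}(0)$ is a smooth submanifold with $T_u\mathcal M_\eta=\{u\cdot v=0\}$.

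For the chart, note that $\cos t$ and $\operatorname{sinc}t$ are entire even functions of $t$, hence smooth functions of $|\xi|^2$. Composition with $|\xi|^2\in Y$ stays in the algebra, so $\Psi_u$ is smooth from a neighbourhood of $0$ in $T_u\mathcal M_\eta$ into $\mathcal M_\eta$. Its image differs from $e_1$ by an element of $X_\eta$, because $(\cos|\xi|-1)u$ has height $\le|\xi|^2$ and $\xi_3\in L^2$. Since $|\Psi_u(\xi)|=1$ and $D\Psi_u(0)=\mathrm{id}$, the inverse function theorem gives a local chart.

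\emph{Density.} Given $u\in\mathcal M_\eta$, approximate $h=u-e_1$ by $h_n\in C_c^\infty$ using the first step. Set $u_n=(e_1+h_n)/|e_1+h_n|$. Since $|e_1+h_n|\to1$ uniformly, this is smooth, equals $e_1$ outside the support of $h_n$, and converges to $u$ in $N_\eta$: normalisation is a smooth map on the set $\{|x|\ge1/2\}$ and acts continuously by the algebra estimates. If $u\in\mathcal M_\eta^0$, then $u_n$ lies in the same component for large $n$, because the chart neighbourhoods are connected.
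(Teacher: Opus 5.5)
Your proposal is correct, but it builds the manifold structure by a different route from the paper.

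\textbf{The paper's route.} It proves density first, by normalising compact approximants. It then shows that $\Pi_u$ is bounded. Finally it verifies directly that $\Psi_u$, on $\|\xi\|_\infty<1/4$, is smooth and has the explicit smooth inverse $\xi=\frac{\arccos(u\cdot w)}{\sqrt{1-(u\cdot w)^2}}\bigl(w-(u\cdot w)u\bigr)$. These charts give the manifold and tangent-space assertions.

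\textbf{Your route.} You realise $\mathcal M_\eta$ as the level set of $F(h)=2h_1+|h|^2$, a map into the height-free scalar space $Y$. Its differential has the right inverse $f\mapsto\tfrac12 fu$ at points of the level set, where $|u|=1$. The chart then comes from the inverse function theorem.

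\textbf{The completion.} You also prove more here than the paper does. The paper only establishes the injective identification, and appeals to approximation by compact fields whenever it needs a product to lie in $X_\eta$. Your mollify-and-cut-off argument uses the two-dimensional scale invariance of $\|\nabla\chi_R\|_{L^2(\R^2)}$ together with $\sup_{|x|\ge R}|h|\to0$. It shows that $X_\eta$ is the whole class of $C_0$ fields with $\nabla h\in L^2\cap L^4$ and $h_3\in L^2$, so membership of such products becomes automatic.

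\textbf{What each approach buys.} Your route is more economical and identifies $T_u\mathcal M_\eta$ cleanly as $\ker DF$. The paper's explicit inverse gives a chart domain of known size. This is what later lets it assert, in \cref{prop:thin-belt-augmented}, that $\Psi_u$ restricted to $B_{\eta,u}$-balls of a radius uniform in $u$ and $\eta$ is still a diffeomorphism onto an open set. The inverse function theorem alone gives no control on that neighbourhood.

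\textbf{Two small points of justification.}
\begin{itemize}
\item The height of $(\cos|\xi|-1)u$ lies in $L^2$ because $u_3\in L^2$; this is your own fact that $Y\cdot X_\eta\subset X_\eta$. A bound by $|\xi|^2$ does not suffice, since $\xi_1,\xi_2$ are only $C_0$ with $L^2\cap L^4$ gradients, so $|\xi|^2$ need not be square integrable.
\item In the density step, the height convergence should be written out, as the paper does: $(u_n)_3-u_3=\frac{(h_n)_3-h_3}{|e_1+h_n|}+h_3\bigl(|e_1+h_n|^{-1}-1\bigr)$.
\end{itemize}
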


\begin{proof}
Fix \(0<\eta\le1\). An \(N_\eta\)-Cauchy sequence of compact smooth fields converges uniformly to a continuous field vanishing at infinity. Its derivatives converge in \(L^2\) and \(L^4\), and its third component converges in \(L^2\). Testing against compact smooth functions identifies these limits with the weak derivatives and third component of the uniform limit. If the uniform limit is zero, these distributional limits are zero. Thus the abstract completion is injectively identified with the asserted fields.

The product bounds used below are \begin{equation}
\|\nabla(ab)\|_p\le \|a\|_\infty\|\nabla b\|_p
                         +\|b\|_\infty\|\nabla a\|_p,\quad p=2,4,
 \qquad
 \|ab\|_2\le\|a\|_\infty\|b\|_2.
\label{eq:thin-belt-13}
\end{equation} All products intended as third components below have an \(L^2\) height factor. Approximation by compact fields shows that the products belong to the completion, not just to the larger class of fields with finite displayed norm. For the specific smooth compositions used below, the chain rule, \eqref{eq:thin-belt-13}, and Taylor's integral remainder prove smoothness: their coefficient derivatives are bounded on a fixed finite-dimensional range, and every derivative has the same height structure. We display that structure in the relevant formulas.

The sphere constraint is closed under uniform convergence. For \(u=e_1+h\in\mathcal M_\eta\), choose compact smooth \(h_n\to h\) in \(N_\eta\). For large \(n\), \(1/2<|e_1+h_n|<3/2\), and \[
 u_n=(e_1+h_n)/|e_1+h_n|
\] is a smooth compact perturbation of \(e_1\). The bounded, Lipschitz derivative of normalisation on this annulus gives convergence of the gradients in \(L^2\cap L^4\), by the chain rule. The height estimate \[
 \|(u_n)_3-u_3\|_2
 \le 2\|(h_n)_3-h_3\|_2+
                    C\|h_n-h\|_\infty\|h_3\|_2
\] and uniform convergence prove \(N_\eta(u_n-u)\to0\).

The projection \(\Pi_uv=v-(u\cdot v)u\) is a bounded linear operator on \(X_\eta\). In fact, \[
 |\nabla\Pi_uv|\le C(|\nabla v|+|v||\nabla u|),
 \qquad (\Pi_uv)_3=v_3-(u\cdot v)u_3 .
\] Approximating \(u,v\) by the compact fields just constructed shows that this operator maps into \(X_\eta\). Its range is the closed complemented subspace \(u\cdot v=0\).

For \(\|\xi\|_\infty<1/4\) in that subspace, the stated formula \(\Psi_u(\xi)\) is sphere-valued, and \[
 (\Psi_u(\xi)-u)_3=(\cos|\xi|-1)u_3+
                                  \operatorname{sinc}(|\xi|)\xi_3 .
\] The analytic series in \(|\xi|^2\), together with \eqref{eq:thin-belt-13}, show that \(\Psi_u\) is a smooth map into \(\mathcal M_\eta\). Its inverse near \(u\) is \begin{equation}
\xi=\frac{\arccos(u\cdot w)}{\sqrt{1-(u\cdot w)^2}}\,
                   \bigl(w-(u\cdot w)u\bigr),
\label{eq:thin-belt-14}
\end{equation} where the scalar factor has its smooth continuation at \(u\cdot w=1\). This factor is a convergent power series in \(1-u\cdot w\) near zero. The third component in parentheses is \(w_3-(u\cdot w)u_3\); thus \eqref{eq:thin-belt-13} also proves that the inverse is smooth in \(X_\eta\). These charts have derivative the identity at zero and prove the manifold and tangent-space assertions. Restrictions to open balls for equivalent tangent norms are still charts onto open sets.
\end{proof}

\subsection{Extension of the momentum}\label{appsrc-thin-belt-a-real-momentum-on-the-entire-completed-manifold}

To define the action on this completed space, we must extend the momentum beyond smooth compact perturbations of $e_1$.

\begin{proposition}[Extension of momentum]\label{prop:thin-belt}
The functional \(P_0\) has a unique smooth real extension \(P\) to all of \(\mathcal M_\eta\). Here is an absolutely convergent formula for it. Fix any smooth \(\chi:\mathbb R\to[0,1]\) equal to one on \([-1/4,1/4]\) and zero outside \((-1/2,1/2)\). On the sphere put \[
\begin{aligned}
\omega_q(a,b)&=q\cdot(a\times b),\\
\mathfrak a&=-\chi(q_3)q_3\,d\theta,\\
\mathfrak b&=\omega-d\mathfrak a,
\end{aligned}
\] where \(\theta\) is the equatorial angular coordinate and \(\mathfrak a\) is extended by zero near the poles. Thus \(\mathfrak a\) is a globally smooth one-form; the symbol \(d\) here is exterior differentiation on the target sphere. Then \begin{equation}
P(u)=\int\mathfrak a_u(\partial_1 u)
         +\int x_2\,\mathfrak b_u(\partial_1 u,\partial_2 u).
\label{eq:thin-belt-1}
\end{equation} The second integrand has compact support depending on \(u\), and \(P\) is independent of the chosen cutoff \(\chi\).
\end{proposition}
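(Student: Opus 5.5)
The plan is to take the right-hand side of \eqref{eq:thin-belt-1} as the definition of \(P\) on all of \(\mathcal M_\eta\). We then check three things: it agrees with \(P_0\) on smooth compact perturbations, it is smooth and absolutely convergent on \(\mathcal M_\eta\), and it does not depend on \(\chi\). Uniqueness then follows from the density in \cref{lem:thin-belt-charts}. A smooth, hence continuous, extension is determined by its values on a dense set.

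\emph{Agreement with \(P_0\).} Let \(u\) be a smooth map equal to \(e_1\) outside a compact set. Since \(\omega=d\mathfrak a+\mathfrak b\) on the sphere, we have \(u^*\omega=d(u^*\mathfrak a)+u^*\mathfrak b\). The pulled-back 2-form \(u^*\omega\) is \(q(u)\,dx_1\wedge dx_2\). Write \(u^*\mathfrak a=\alpha_1dx_1+\alpha_2dx_2\), where \(\alpha_j=\mathfrak a_u(\partial_ju)\) is compactly supported. Then
\[
\int x_2\,(\partial_1\alpha_2-\partial_2\alpha_1)=\int\alpha_1
\]
after integrating by parts. The \(\partial_1\alpha_2\) term integrates to zero in \(x_1\), and \(-\int x_2\partial_2\alpha_1=\int\alpha_1\). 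This gives \eqref{eq:thin-belt-1} with \(P_0\) on the left.

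\emph{Structure of \(\mathfrak b\) and absolute convergence.} On the band \(|q_3|<1/4\) we have \(\chi\equiv1\). There \(\mathfrak a=-q_3\,d\theta\) and \(d\mathfrak a=-dq_3\wedge d\theta\). With the orientation conventions fixed so that the area form in the coordinates \((\theta,q_3)\) is \(\omega=d\theta\wedge dq_3\), this equals \(\omega\), so \(\mathfrak b\) vanishes on the band. For \(u\in\mathcal M_\eta\), the map \(u-e_1\) is continuous and vanishes at infinity by \cref{lem:thin-belt-charts}. Hence \(|u_3|<1/4\) outside a compact set \(K_u\), and \(x_2\,\mathfrak b_u(\partial_1u,\partial_2u)\) is supported in \(K_u\). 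It is bounded there by \(C|x_2||\nabla u|^2\), which is integrable. The first integrand satisfies \(|\mathfrak a_u(\partial_1u)|\le C|u_3||\partial_1u|\), because \(\mathfrak a\) is a smooth form vanishing to first order on the equator \(q_3=0\). By Cauchy--Schwarz this is in \(L^1\), with bound \(C\|u_3\|_2\|\partial_1u\|_2\).

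\emph{Smoothness.} This is the main obstacle, because the support \(K_u\) moves with \(u\) and the weight \(x_2\) is unbounded. We work in a chart \(\Psi_u\) with \(\|\xi\|_\infty\) small. Then every \(w=\Psi_u(\xi)\) satisfies \(|w_3|<1/4\) outside a fixed compact set, obtained for instance by slightly enlarging the region where \(|u_3|\ge 1/8\). Hence the second term is an integral over a fixed compact set \(K\) of a smooth function of \((w,\nabla w)\) that is quadratic in \(\nabla w\). The map \(\xi\mapsto\nabla w\) into \(L^2(K)\) is smooth, and bounded smooth coefficients compose smoothly, so the second term is smooth in \(\xi\). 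For the first term we write \(\mathfrak a_w(\partial_1w)=w_3\,g(w)\cdot\partial_1 w\) with \(g\) smooth and bounded. The factor \(w_3\) is smooth into \(L^2\) by the height structure recorded in \cref{lem:thin-belt-charts}, namely \((\Psi_u(\xi))_3=(\cos|\xi|-1)u_3+\operatorname{sinc}(|\xi|)\xi_3\). The factor \(\partial_1w\) is smooth into \(L^2\), and \(g(w)\) is smooth into \(L^\infty\). The \(L^2\times L^2\) pairing is bilinear and bounded, so the first term is smooth as well.

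\emph{Independence of \(\chi\).} Two cutoffs \(\chi,\tilde\chi\) give forms \(\mathfrak a,\tilde{\mathfrak a}\) whose difference \(\sigma=\mathfrak a-\tilde{\mathfrak a}\) is a smooth 1-form supported in \(1/4\le|q_3|\le1/2\). The corresponding formulas differ by \(\int\sigma_u(\partial_1u)-\int x_2\,(d\sigma)_u(\partial_1u,\partial_2u)\). Both integrands are compactly supported in \(x\), and the same integration by parts as in the first step shows that this difference vanishes for smooth \(u\). By density and continuity of both expressions, it vanishes on all of \(\mathcal M_\eta\). Alternatively, independence follows directly from uniqueness, since both formulas extend \(P_0\) smoothly.
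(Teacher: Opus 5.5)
Your proposal is correct and follows essentially the same route as the paper. The shared steps are: agreement with $P_0$ via $u^*d\mathfrak a=d(u^*\mathfrak a)$ and one integration by parts in $x_2$; absolute convergence from $|\mathfrak a_u(\partial_1u)|\le C|u_3||\partial_1u|$ and the vanishing of $\mathfrak b$ on the band $|q_3|<1/4$; smoothness in each chart $\Psi_u$, using a support set for the $\mathfrak b$-term that is fixed for all nearby maps and the $L^2$ height structure for the $\mathfrak a$-term; and uniqueness and independence of $\chi$ from density. The one point to tighten is that $\omega=d\theta\wedge dq_3$ is not a free convention once $\theta=\arg(q_1+iq_2)$, since one checks $q\cdot(\partial_\theta q\times\partial_{q_3}q)=1$, and it is exactly this sign that makes $\mathfrak b$ vanish on the band.
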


\begin{proof}
In coordinates \((\theta,q_3)\) on the target band, \(\omega=d\theta\wedge dq_3\). Thus, with the notation in the statement, \begin{equation}
\begin{aligned}
\mathfrak b&=\rho(q_3)\omega,&
 \rho(q_3)&=1-\chi(q_3)-q_3\chi'(q_3),\\
\mathfrak a_q(a)&=-\phi(q_3)(q_1a_2-q_2a_1),&
 \phi(q_3)&=\frac{\chi(q_3)q_3}{1-q_3^2}.
\end{aligned}
\label{eq:thin-belt-15}
\end{equation} Extend \(\phi\) by zero where \(\chi=0\) near the poles. It is smooth, all its derivatives are bounded, and \(|\phi(q_3)|\le C|q_3|\). The function \(\rho\) and all its derivatives vanish on \((-1/4,1/4)\).

Let \(L(u)=\int\mathfrak a_u(\partial_1 u)\). Its integrand is bounded by \(C|u_3||\partial_1 u|\), and is therefore absolutely integrable. Since \(u_3(x)\to0\) uniformly at infinity, \(\mathfrak b_u(\partial_1 u,\partial_2 u)\) vanishes outside a compact set. Its absolute value is at most \(C|\partial_1 u||\partial_2 u|\), an \(L^1\) function. The coordinate \(x_2\) is bounded on that compact set. Both terms of \eqref{eq:thin-belt-1} are therefore well-defined.

Moreover, this support is locally uniform in the manifold. For a fixed base map \(u\), choose a disk outside which \(|u_3|<1/8\). If \(\|w-u\|_\infty<1/16\), then outside that same disk \(|w_3|<1/4\), so the curvature term and all its coefficient derivatives vanish there. The integral of its density is smooth in each such chart: two spatial derivative factors are paired in \(L^2\), the coordinate weight is bounded on the fixed disk, and all remaining variation factors are bounded. Differentiated chart heights and gradients have the structure proved in \cref{appsrc-thin-belt-completion-approximation-and-charts}. The same product estimates bound Taylor remainders. For \(L\), a differentiated height coefficient is either still bounded by \(C|u_3|\) times bounded variations, or supplies the \(L^2\) height of a variation. This is paired with one \(L^2\) spatial derivative. Hence \eqref{eq:thin-belt-1} defines a smooth real function.

For a smooth compact perturbation, \(u^*d\mathfrak a=d(u^*\mathfrak a)\). Writing \(a_1=\mathfrak a_u(\partial_1 u)\), \(a_2=\mathfrak a_u(\partial_2 u)\), integration by parts gives \[
 \int x_2\,u^*d\mathfrak a(\partial_1 ,\partial_2 )
   =\int x_2(\partial_1  a_2-\partial_2  a_1)=\int a_1 .
\] It follows that \eqref{eq:thin-belt-1} agrees with \(P_0\). Density from \cref{appsrc-thin-belt-completion-approximation-and-charts} proves both uniqueness of the continuous extension and independence of \(\chi\).

For estimates with no spatial weight, its differential has the following integrable expression: \begin{equation}
DP(u)[v]=DL(u)[v]+\int\mathfrak b_u(v,\partial_1 u).
\label{eq:thin-belt-16}
\end{equation} The terms of \(DL\) have pointwise bound \begin{equation}
C\bigl(|v_3||\partial_1 u|+|u_3||v||\partial_1 u|+|u_3||\partial_1 v|\bigr).
\label{eq:thin-belt-17}
\end{equation} The last integral in \eqref{eq:thin-belt-16} has integrand bounded by \(C|u_3||v||\partial_1 u|\), since \(|\rho(u_3)|\le C|u_3|\).

To prove \eqref{eq:thin-belt-16} including its sign, first use compact smooth maps and tangent fields. Expanding the triple products and using that three tangent vectors to \(S^2\) have zero triple product gives \[
 \frac{d}{dt}\omega_{u_t}(\partial_1 u_t,\partial_2 u_t)\big|_{t=0}
 =\partial_1 \omega_u(v,\partial_2 u)+\partial_2 \omega_u(\partial_1 u,v).
\] Multiplication by \(x_2\) and integration yields \(DP_0(u)[v]=\int\omega_u(v,\partial_1 u)\). Differentiating \(L\) and integrating in \(x_1\) gives \(DL(u)[v]=\int(d\mathfrak a)_u(v,\partial_1 u)\). Their difference is \eqref{eq:thin-belt-16}. For a general chart segment \(w(t)=\Psi_u(\xi+t v)\), approximate \(u\) by compact smooth \(u_n\) and approximate \(\xi,v\) by compact smooth fields followed by \(\Pi_{u_n}\). The projection estimates in \cref{appsrc-thin-belt-completion-approximation-and-charts} give convergence of the tangent fields in \(X_\eta\), and of the chart segments and their velocities uniformly on a small \(t\)-interval in that norm. The integrable bound \eqref{eq:thin-belt-17} and its product-rule derivatives give continuity of the right side of \eqref{eq:thin-belt-16} in these charts. Passing to the fundamental theorem of calculus for the compact segments proves \eqref{eq:thin-belt-16}.
\end{proof}

\subsection{Weighted products and first variations}\label{appsrc-thin-belt-weighted-products-and-first-variations}

Differentiating the action produces products of variations with $u_3$
and $\nabla u$. We therefore use the weighted tangent norm
$B_{\eta,u}$ from \eqref{eq:planar-variation-norms}, which controls
these products directly. This allows the energy and momentum variations
to be estimated with constants independent of $u$ and $\eta$.

\begin{lemma}[Weighted first-variation estimates]\label{lem:thin-belt-products}
For $u\in\mathcal M_\eta$ and $v\in T_u\mathcal M_\eta$,
\begin{equation}
|DE_\eta(u)[v]|+|DP(u)[v]|
         \le C\sqrt{E_\eta(u)}\,B_{\eta,u}(v).
\label{eq:thin-belt-4}
\end{equation}
The same bound holds for $F\in\{A,K,V,\eta Q_1,\eta Q_m,\eta Q_2\}$ in place of $E_\eta$. The constant is independent of $u$ and $\eta$.
\end{lemma}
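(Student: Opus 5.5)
The plan is to write out each first variation explicitly as an integral, bound it pointwise by a product of one factor of size $\sqrt{\text{energy density}}$ and one factor controlled by $B_{\eta,u}(v)$, and then apply Cauchy--Schwarz. We first work with compact smooth $u$ and compact smooth tangent fields $v$. The general case then follows from the density in \cref{lem:thin-belt-charts} and the continuity of both sides in the charts.

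For the energy we compute
\[
DE_\eta(u)[v]=\int(1+\eta|\nabla u|^2)\nabla u:\nabla v+\int u_3v_3 ,
\]
which is the first part of $R_{c,\eta,u}$.
\begin{itemize}
\item The Dirichlet part is at most $\|\nabla u\|_2\|\nabla v\|_2\le C\sqrt{E}\,N_\eta(v)$.
\item The height part is at most $\|u_3\|_2\|v_3\|_2$.
\item For the quartic part we write $\eta|\nabla u|^2|\nabla u||\nabla v|=\bigl(\sqrt\eta|\nabla u|^2\bigr)\bigl(\sqrt\eta|\nabla u||\nabla v|\bigr)$. Cauchy--Schwarz bounds it by $\bigl(\eta\int|\nabla u|^4\bigr)^{1/2}\sqrt\eta\,\||\nabla u|\nabla v\|_2\le C\sqrt{E_\eta}\,B_{\eta,u}(v)$.
\end{itemize}
The same splitting handles each piece separately. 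For $F\in\{A,K,V\}$ we use only the corresponding partial derivatives or the height term. For $\eta Q_1,\eta Q_m,\eta Q_2$ the derivative is $\eta$ times a cubic expression in $\partial_iu$ paired with $\partial_jv$. We bound it pointwise by $\eta|\nabla u|^3|\nabla v|$ and split it exactly as above. Since $\sqrt{F}\le\sqrt{E_\eta}$, each such bound gives \eqref{eq:thin-belt-4}. Note that the constraint $v\cdot u=0$ is not needed for these energy terms.

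For the momentum we use formula \eqref{eq:thin-belt-16} from \cref{prop:thin-belt}, which has no spatial weight. Its integrand is pointwise at most a constant times
\[
|v_3||\partial_1u|+|u_3||v||\partial_1u|+|u_3||\partial_1v| .
\]
The three terms are handled as follows.
\begin{itemize}
\item The first is at most $\|v_3\|_2\|\partial_1u\|_2$.
\item The third is at most $\|u_3\|_2\|\nabla v\|_2$.
\item For the middle term, pointwise $|u_3||\partial_1 u|\le\frac12(u_3^2+|\nabla u|^2)$. It is therefore at most $\frac12\sqrt{|\nabla u|^2+u_3^2}\cdot\bigl(\sqrt{|\nabla u|^2+u_3^2}\,|v|\bigr)$. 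Cauchy--Schwarz gives the bound $C\sqrt{E}\,\|\sqrt{|\nabla u|^2+u_3^2}\,v\|_2$, which is one of the terms of $B_{\eta,u}(v)$.
\end{itemize}
Alternatively, one could bound the middle term by $\|u_3\|_2\|v\|_\infty\|\partial_1u\|_2$, but that is quadratic in the energy. This is why the weighted term in $B_{\eta,u}$ is the right tool.

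The main point of care is therefore the momentum term. We need the pointwise bound \eqref{eq:thin-belt-17}, together with $|\rho(u_3)|\le C|u_3|$, to hold uniformly and without any cutoff-dependent support. The constants come only from the fixed functions $\chi,\phi,\rho$, so they are independent of $u$ and $\eta$. Finally, to pass to general $u\in\mathcal M_\eta$ and $v\in T_u\mathcal M_\eta$, we approximate $u$ by smooth compact $u_n$ and $v$ by $\Pi_{u_n}$ applied to smooth approximants of $v$. The left side converges by the smoothness of $E_\eta$ and $P$. The right side converges because $B_{\eta,u_n}(v_n)\to B_{\eta,u}(v)$: the weighted products converge by the same Hölder splittings, using $L^2\cap L^4$ convergence of the gradients and uniform convergence of the fields.
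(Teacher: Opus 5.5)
Your proposal is correct and follows essentially the paper's argument: the paper likewise pairs $\nabla v$ with $\nabla u$, $v_3$ with $u_3$, and $\sqrt\eta\,|\nabla u|\nabla v$ with $\sqrt\eta\,|\nabla u|^2$, and for the momentum it uses \eqref{eq:thin-belt-16}--\eqref{eq:thin-belt-17}, handling the middle term through the weighted part of $B_{\eta,u}$ (as $\|u_3v\|_2\|\partial_1u\|_2$ or $\||\nabla u|v\|_2\|u_3\|_2$), which is what your combined weight $\sqrt{|\nabla u|^2+u_3^2}$ does. Your final density step is harmless but unnecessary, since the energy-derivative formula and \eqref{eq:thin-belt-16}--\eqref{eq:thin-belt-17} were already established for arbitrary $u\in\mathcal M_\eta$ and $v\in T_u\mathcal M_\eta$ in \cref{prop:thin-belt}.
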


\begin{proof}
For this section abbreviate \(B(v)=B_{\eta,u}(v)\), \(d=|\nabla u|\). Definition \eqref{eq:planar-variation-norms} immediately gives \begin{equation}
\begin{gathered}
 \|v\|_\infty,\ \|v_3\|_2,\ \|\nabla v\|_2,\
 \|dv\|_2,\ \|u_3\,v\|_2\le B(v),\\
 \eta^{1/4}\|\nabla v\|_4,\quad
 \sqrt\eta\|d^2v\|_2,\quad
 \sqrt\eta\|d\nabla v\|_2\le B(v),\\
 \sqrt\eta\||\nabla v||\nabla h|\|_2\le B(v)B(h),\qquad
 \eta^{1/4}\|dv\|_4\le B(v).                                  
 \end{gathered}
\label{eq:thin-belt-18}
\end{equation} Here \(h\) in the last line is any tangent field at \(u\). The penultimate inequality is Hölder's inequality. For the last one, square its left side and use \[
 \sqrt\eta\bigl(\int d^4|v|^4\bigr)^{1/2}
 \le \sqrt\eta\|d^2v\|_2\|v\|_\infty\le B(v)^2.
\]

The derivative of the quadratic gradient energy pairs \(\nabla v\) with \(\nabla u\), the height energy pairs \(v_3\) with \(u_3\), and the quartic energy pairs \(\sqrt\eta\,d\nabla v\) with \(\sqrt\eta\,d^2\). The latter three unvaried factors have \(L^2\) norm at most \(2\sqrt{E_\eta(u)}\). Formulae \eqref{eq:thin-belt-16}--\eqref{eq:thin-belt-17} pair \(v_3,u_3\,v,\partial_1 v\), or \(dv\) with \(\partial_1 u\) or \(u_3\). This proves \eqref{eq:thin-belt-4}. The identical product bounds prove \begin{equation}
|DF(u)[v]|\le C\sqrt{E_\eta(u)}B(v)
 \quad(F=A,K,V,\eta Q_1,\eta Q_m,\eta Q_2).
\label{eq:thin-belt-19}
\end{equation} Each differentiated quartic directional monomial is bounded by \(C\eta d^3|\nabla v|\), which is the pairing already described.

For subsequent estimates let \(B(\xi)=r\le1\) and put \[
 e=|\nabla\xi|,\quad a_0=d+e,\quad
 a_1=|\nabla v|+a_0|v|,\quad Z=|u_3|+|\xi_3|.
\] From \eqref{eq:thin-belt-18} and Hölder we have \begin{equation}
\begin{aligned}
 \|a_1\|_2+\|a_0v\|_2+\|Zv\|_2&\le C B(v),\\
 \eta^{1/4}\|a_1\|_4+\sqrt\eta\|a_0a_1\|_2+
                  \sqrt\eta\|a_0^2v\|_2&\le C B(v).           
 \end{aligned}
\label{eq:thin-belt-20}\end{equation} To list the extra factors needed for these assertions, \begin{equation}
\begin{gathered}
 \|ev\|_2\le r\|v\|_\infty,\quad
 \eta^{1/4}\|ev\|_4\le r\|v\|_\infty,\quad
 \|\xi_3v\|_2\le r\|v\|_\infty,\\
 \sqrt\eta\|e\nabla v\|_2\le r B(v),\qquad
 \sqrt\eta\|de\|_2\le r,\qquad
 \sqrt\eta\|e^2\|_2\le r^2,\\
 \sqrt\eta\|a_0^2v\|_2
 \le2\sqrt\eta\|d^2v\|_2+
                         2\sqrt\eta\|e^2\|_2\|v\|_\infty .
 \end{gathered}
\label{eq:thin-belt-21}
\end{equation} The \(L^4\) estimate for \(a_0v\) follows from the last inequality in \eqref{eq:thin-belt-18} and the second estimate in \eqref{eq:thin-belt-21}. Expanding \(a_0a_1=a_0|\nabla v|+a_0^2|v|\) completes \eqref{eq:thin-belt-20}.
\end{proof}

\subsection{Transport and uniform derivatives}\label{appsrc-thin-belt-transport-and-uniform-derivatives}

We now apply these product estimates to the spherical charts, controlling both the change in tangent norms and the second and third derivatives of the action.

\begin{proposition}[Uniform chart estimates]\label{prop:thin-belt-uniform}
There are constants \(r_0>0\) and \(C\ge1\), independent of \(u,\eta\), support size, and total energy, with the following properties. If \(r=B_{\eta,u}(\xi)<r_0\), \(w=\Psi_u(\xi)\), and \(T=D\Psi_u(\xi)\), then for every \(v\in T_u\mathcal M_\eta\), \begin{equation}
(1-Cr)B_{\eta,u}(v)\le B_{\eta,w}(Tv)
                      \le(1+Cr)B_{\eta,u}(v).
\label{eq:thin-belt-3}
\end{equation}  For \(k=2,3\) and \(F\in\{E_\eta,P,A,K,V,\eta Q_1,\eta Q_m,\eta Q_2\}\), \begin{equation}
\|D^k(F\circ\Psi_u)(\xi)\|_{\mathrm{op},B_{\eta,u}}\le C.
\label{eq:thin-belt-5}
\end{equation} The operator norm uses \(B_{\eta,u}\) in every argument.
\end{proposition}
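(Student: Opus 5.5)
The plan is to write everything explicitly in the chart and reduce both assertions to the pointwise product bounds \eqref{eq:thin-belt-18}, \eqref{eq:thin-belt-20} and \eqref{eq:thin-belt-21} from the proof of \cref{lem:thin-belt-products}. Put $s=|\xi|$. Then $w=\cos s\,u+\operatorname{sinc}(s)\xi$, and
\[
Tv=\cos s\,v+\operatorname{sinc}(s)\,v+\alpha(s)(\xi\cdot v)\xi-\operatorname{sinc}(s)(\xi\cdot v)u,
\]
where $\alpha(s)=(\operatorname{sinc})'(s)/s$ is an analytic function of $s^2$. The leading part of $Tv-v$ is therefore bounded pointwise by $C|\xi|\,|v|$ up to the factor $(\cos s-1)+(\operatorname{sinc}s-1)=O(s^2)$. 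Its gradient is bounded by $C(|\xi||\nabla v|+|\nabla\xi||v|+|\xi||\nabla u||v|+|\xi|^2|\nabla v|)$, and its third component by $C(|\xi||v_3|+|\xi_3||v|+|\xi||u_3||v|)$. Since $\|\xi\|_\infty\le r$, each term is a product of one $r$-small factor with a quantity that $B_{\eta,u}(v)$ controls. For instance, $\|e v\|_2\le r\|v\|_\infty$ and $\|\xi_3 v\|_2\le r\|v\|_\infty$, by \eqref{eq:thin-belt-21}. This gives $N_\eta(Tv-v)\le CrB_{\eta,u}(v)$.

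For the weighted parts of $B_{\eta,w}(Tv)$, one compares the weights at $w$ with those at $u$. Pointwise, $|\nabla w|\le |\nabla u|+C(e+d|\xi|)\le Ca_0$, $|\nabla w|-|\nabla u|=O(e+d r)$, and $|w_3|\le Z$. The terms of $B_{\eta,w}(Tv)-B_{\eta,u}(v)$ are then differences such as
\[
\sqrt\eta\,\bigl\||\nabla w|^2Tv\bigr\|_2-\sqrt\eta\,\bigl\||\nabla u|^2v\bigr\|_2 .
\]
Each is bounded by $r$ times the norms listed in \eqref{eq:thin-belt-20}. Here one uses $\sqrt\eta\|de\|_2\le r$, $\sqrt\eta\|e^2\|_2\le r^2$ and $\sqrt\eta\|e\nabla v\|_2\le rB(v)$. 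This yields both inequalities in \eqref{eq:thin-belt-3}. The lower bound follows from the upper bound by the triangle inequality in the form $B_{\eta,w}(Tv)\ge B_{\eta,u}(v)-CrB_{\eta,u}(v)$.

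For \eqref{eq:thin-belt-5}, differentiate $F\circ\Psi_u$ at $\xi$ in directions $v^{(1)},\dots,v^{(k)}$ with $k\le3$. By the chain rule, $D^k(F\circ\Psi_u)(\xi)$ is an integral of densities of the same structure as $F$, multilinear in the $v^{(i)}$. The spatial derivatives fall on $w=\Psi_u(\xi)$ or on the variations, the height factor is $w_3$ or a varied height, and the coefficients are analytic functions of $|\xi|^2$, $\xi\cdot v^{(i)}$ and $v^{(i)}\cdot v^{(j)}$. These coefficients are bounded, since $\|\xi\|_\infty<r_0$.

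The key point is that each density is at most quartic in first-order quantities, in the sense of counting powers of $d,e,|\nabla v^{(i)}|$. In every case, two factors can be placed in $L^2$, or four in $L^4$ with the weight $\eta$, and the remaining factors in $L^\infty$. For $E_\eta$ with $k=3$, the worst quartic term has the form $\eta\int a_0\,a_1^{(1)}a_1^{(2)}a_1^{(3)}$. It is bounded by
\[
\sqrt\eta\,\|a_0a_1^{(1)}\|_2\;\eta^{1/4}\|a_1^{(2)}\|_4\;\eta^{1/4}\|a_1^{(3)}\|_4\le C\prod_i B(v^{(i)}),
\]
by \eqref{eq:thin-belt-20}. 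Terms with a factor $a_0^2$ and no gradient of the variations are handled by $\sqrt\eta\|a_0^2v\|_2$. The Dirichlet and height parts pair two of the factors $a_1^{(i)}$, $a_0v^{(i)}$ or $Zv^{(i)}$ in $L^2$.

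For $P$, use the representation \eqref{eq:thin-belt-1}. The $L$-part has integrand $\mathfrak a_w(\partial_1 w)$, whose height coefficient is $\phi(w_3)$ with $|\phi|\le C|w_3|$. Its derivatives either keep this factor, which is controlled by $Zv$ in $L^2$, or produce a varied height $v_3$ in $L^2$. Each is paired with one gradient-type factor $a_1$ in $L^2$. The curvature part is handled differently. By the support argument in \cref{prop:thin-belt}, its density vanishes outside a fixed disk. That disk is not uniform in $u$, however, so the bound cannot use the size of the weight $x_2$. Instead, I would redo the integration by parts in \eqref{eq:thin-belt-16}. This moves the weight off, and the first derivative becomes $\int\mathfrak b_w(Tv,\partial_1w)$, with $|\rho(w_3)|\le C|w_3|$. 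Further differentiations of this expression contain no spatial weight and have the same $Z\cdot a_1$ structure, which gives the bound for $k=2,3$.

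I expect this last point to be the main obstacle: obtaining constants independent of the support size for the curvature part of $P$. The plan is to differentiate the unweighted formula \eqref{eq:thin-belt-16} along chart curves and to justify this by the density argument already used there, rather than differentiating \eqref{eq:thin-belt-1} directly. Taylor remainders and the analytic coefficient series are then controlled uniformly once $r_0$ is small enough that $\|\xi\|_\infty$ stays in the convergence range.
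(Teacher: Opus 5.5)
This is essentially the paper's proof. You use pointwise chart bounds fed into \eqref{eq:thin-belt-18}--\eqref{eq:thin-belt-21} for \eqref{eq:thin-belt-3}. You then use the chain-rule structure of the densities with the pairing $(\sqrt\eta\|a_0a_1\|_2)(\eta^{1/4}\|a_1\|_4)^2$, and for the curvature part of $P$ you differentiate the unweighted formula \eqref{eq:thin-belt-16} rather than \eqref{eq:thin-belt-1}. The only difference is cosmetic: you differentiate multilinearly, whereas the paper takes diagonal derivatives along $t\mapsto\Psi_u(\xi+tv)$ and polarises.

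One correction is needed. Your displayed formula for $Tv$ contains a spurious term $\cos s\,v$. Differentiating $\cos|\xi|\,u$ in the direction $v$ gives $-\operatorname{sinc}(s)(\xi\cdot v)u$, not $\cos s\,v$, so
\[
Tv=\operatorname{sinc}(s)\,v-\operatorname{sinc}(s)(\xi\cdot v)\,u+\alpha(s)(\xi\cdot v)\,\xi,\qquad \alpha(s)=\operatorname{sinc}'(s)/s .
\]
As you wrote it, $T$ would equal $2\,\mathrm{id}$ at $\xi=0$, contradicting $D\Psi_u(0)=\mathrm{id}$. Then $Tv-v$ would not be small at all, and the factor $(\cos s-1)+(\operatorname{sinc}(s)-1)$ you quote does not match your own formula. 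With the correct formula, $Tv-v=(\operatorname{sinc}(s)-1)v-\operatorname{sinc}(s)(\xi\cdot v)u+\alpha(s)(\xi\cdot v)\xi$, where $\operatorname{sinc}(s)-1=O(s^2)$. The pointwise bounds you state for $Tv-v$, its gradient and its height then hold, and the rest of your argument goes through as written.
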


\begin{proof}
Choose \(r_0<1/4\). For \(B(\xi)=r<r_0\), put \(w=\Psi_u(\xi)\), \(T=D\Psi_u(\xi)\). Differentiation of the sine and cosine series gives \begin{equation}
\begin{aligned}
 |w-u|&\le C|\xi|,&
 |\nabla w-\nabla u|&\le C(|\xi|d+e),\\
 |Tv-v|&\le C|\xi||v|,&
 |\nabla(Tv-v)|&\le
       C(|\xi||\nabla v|+e|v|+|\xi|d|v|),\\
 |w_3-u_3|&\le C(|\xi|^2|u_3|+|\xi_3|),&
 |(Tv-v)_3|&\le C\bigl(|\xi|^2|v_3|
                      +|\xi||v|(|u_3|+|\xi_3|)\bigr).
 \end{aligned}
\label{eq:thin-belt-22}\end{equation} For example, with \(\operatorname{sinc}'(r)/r\) interpreted at zero, \[
 Tv=\operatorname{sinc}(|\xi|)v
   -\operatorname{sinc}(|\xi|)(\xi\cdot v)u
   +\frac{\operatorname{sinc}'(|\xi|)}{|\xi|}(\xi\cdot v)\xi ,
\] which gives the last height estimate. Equations \eqref{eq:thin-belt-18}--\eqref{eq:thin-belt-22} yield \(N_\eta(Tv-v)\le CrB(v)\).

For the first extra norm in \eqref{eq:planar-variation-norms}, use the tensor \((\nabla u,u_3)\otimes v\). The difference after transport is bounded in \(L^2\) by \[
 \|(\nabla w-\nabla u,w_3-u_3)\otimes v\|_2
 +\|(\nabla w,w_3)\otimes(Tv-v)\|_2\le CrB(v).
\] For the second extra norm use \[
 \bigl||\nabla w|^2-d^2\bigr|
       \le C(rd^2+de+e^2),\qquad |\nabla w|\le C(d+e).
\] The change is at most \[
 C\sqrt\eta\|(rd^2+de+e^2)v\|_2
       +Cr\sqrt\eta\|(d+e)^2v\|_2\le CrB(v)
\] by \eqref{eq:thin-belt-18}--\eqref{eq:thin-belt-21}. For the final extra norm represent it by \(\sqrt\eta\|\nabla u\otimes\nabla v\|_2\). The difference is bounded by \[
 \sqrt\eta\||\nabla w-\nabla u||\nabla v|\|_2
 +\sqrt\eta\||\nabla w||\nabla(Tv-v)|\|_2.
\] After \eqref{eq:thin-belt-22}, its factors are \(rd|\nabla v|,\ e|\nabla v|,\ de|v|,\
e^2|v|,\ rd^2|v|,\ rde|v|\), each controlled by \eqref{eq:thin-belt-18}--\eqref{eq:thin-belt-21}. The reverse triangle inequality for these norms proves \eqref{eq:thin-belt-3}. Reduce \(r_0\) so that \(Cr_0<1/4\).

We now prove \eqref{eq:thin-belt-5}. Fix \(\xi,v\) in the same tangent space and write \(w(t)=\Psi_u(\xi+tv)\). For \(j=1,2,3\), at \(t=0\) the differentiated chart expressions obey \begin{equation}
|\partial_t^jw|\le C_j|v|^j,\quad
 |\nabla\partial_t^jw|\le C_j a_1|v|^{j-1},\quad
 |(\partial_t^jw)_3|\le
       C_j(|v_3|+Z|v|)|v|^{j-1}.
\label{eq:thin-belt-23}
\end{equation} Also \(|\nabla w|\le Ca_0\), \(|w_3|\le CZ\). The second and third derivatives of a quadratic gradient term are bounded respectively by \[
 C\int(a_1^2+a_0a_1|v|),\qquad
 C\int(a_1^2+a_0a_1|v|)|v|.
\] Pair \(a_1\) and \(a_0v\) in \(L^2\) using \eqref{eq:thin-belt-20}. The bounds are \(CB(v)^2\) and \(CB(v)^3\). For the height term replace \(a_1,a_0\) by \(|v_3|+Z|v|,Z\), whose required \(L^2\) products satisfy the same bounds in \eqref{eq:thin-belt-20}.

For each quartic directional monomial the bounds are \begin{equation}
C\eta\int(a_0^2a_1^2+a_0^3a_1|v|),\qquad
 C\eta\int(a_0a_1^3+a_0^2a_1^2|v|+a_0^3a_1|v|^2).
\label{eq:thin-belt-24}
\end{equation} The term with \(a_0^3a_1|v|\) is a pairing of \(\sqrt\eta a_0^2|v|\) and \(\sqrt\eta a_0a_1\) in \(L^2\). The cubic term is bounded by \[
 \eta\int a_0a_1^3
 \le \bigl(\sqrt\eta\|a_0a_1\|_2\bigr)
                      \bigl(\eta^{1/4}\|a_1\|_4\bigr)^2 .
\] The remaining terms in \eqref{eq:thin-belt-24} use the same factors and \(\|v\|_\infty\le B(v)\). This proves the required powers of \(B(v)\).

For momentum, every positive \(j\)-th derivative, \(1\le j\le3\), of \(\phi(w_3(t))\) satisfies \begin{equation}
|\partial_t^j\phi(w_3(t))|_{t=0}
 \le C_j\bigl(|v_3||v|^{j-1}+Z|v|^j\bigr),
\label{eq:thin-belt-25}
\end{equation} while its zeroth derivative is bounded by \(CZ\). In each product-rule term retain the height structure of one positive-order derivative of \(w_3\); bound all other such derivatives by their \(C|v|^\ell\) bound from \eqref{eq:thin-belt-23}. This proves \eqref{eq:thin-belt-25}. Using the ambient formula for \(\mathfrak a\) in \eqref{eq:thin-belt-15}, the \(k\)-th derivative of \(L(w(t))\), \(k=2,3\), is bounded by \begin{equation}
C\int\bigl(
 Za_0|v|^k+|v_3|a_0|v|^{k-1}
       +Z|\nabla v||v|^{k-1}
       +|v_3||\nabla v||v|^{k-2}\bigr).
\label{eq:thin-belt-26}
\end{equation} For the other term in \eqref{eq:thin-belt-16}, differentiate \(\int\mathfrak b_{w(t)}(w_t(t),\partial_1 w(t))\) exactly \(k-1\) times. Every coefficient derivative vanishes unless \(|w_3|\ge1/4\). By reducing \(r_0\) again we have \(|w_3-u_3|\le1/8\), so on this set \(|u_3|\ge1/8\). Each bounded coefficient derivative is therefore bounded by \(C|u_3|\le CZ\). The product rule and \eqref{eq:thin-belt-23} bound this contribution by the first and third terms in \eqref{eq:thin-belt-26}. Pair, in order, \[
 (Zv,a_0v),\quad(v_3,a_0v),\quad(Zv,\nabla v),\quad(v_3,\nabla v)
\] in \(L^2\), and use \(\|v\|_\infty^{k-2}\) for the remaining powers. Equation \eqref{eq:thin-belt-20} gives \(CB(v)^k\). Thus the momentum derivatives are uniformly bounded without any factor measuring the position or diameter of the polar region.

These are diagonal bounds for symmetric multilinear derivatives. The real polarisation formula \[
 T(v_1,\ldots,v_k)=\frac1{2^kk!}
 \sum_{\epsilon_i=\pm1}\Bigl(\prod_i\epsilon_i\Bigr)
 T\Bigl(\sum_i\epsilon_i v_i,\ldots,\sum_i\epsilon_i v_i\Bigr)
\] turns them into the operator bounds \eqref{eq:thin-belt-5}. Integrable products in \eqref{eq:thin-belt-20}, \eqref{eq:thin-belt-24}, and \eqref{eq:thin-belt-26} also bound Taylor remainders on a small chart neighbourhood, justifying the differentiated integrals.
\end{proof}

\subsection{Continuity and completeness of the metric}\label{appsrc-thin-belt-continuity-and-completeness-of-the-metric}

The preceding estimates are local. We now show that the weighted norms also define a complete length metric on each connected component, compatible with the topology already placed on $\mathcal M_\eta$.

\begin{proposition}[Complete length metric]\label{prop:thin-belt-metric}
The norms $B_{\eta,u}$ vary continuously in local tangent trivialisations and are locally uniformly equivalent to \(N_\eta\). The length metric, defined by taking the infimum of the integrals of these norms along piecewise \(C^1\) curves, is finite and complete on each connected component and induces its manifold topology.
\end{proposition}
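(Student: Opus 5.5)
The plan has three stages: local equivalence of the norms, comparison of the resulting length metric with $N_\eta$, and then finiteness and completeness.

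\textbf{Local equivalence and continuity.} Fix $u\in\mathcal M_\eta$ and work in the chart $\Psi_u$ of \cref{lem:thin-belt-charts}, where $T=D\Psi_u(\xi)$ trivialises the tangent spaces. By \eqref{eq:thin-belt-3}, for $B_{\eta,u}(\xi)<r_0$ we have $B_{\eta,w}(Tv)=(1+O(r))B_{\eta,u}(v)$. This gives continuity of the norms at $\xi=0$. Applying the same estimate with base point $w$ in place of $u$, and composing charts, gives continuity at every point. Equivalence with $N_\eta$ at $u$ comes from two bounds:
\begin{itemize}
\item the trivial bound $N_\eta\le B_{\eta,u}$;
\item the reverse bound $B_{\eta,u}(v)\le C_u N_\eta(v)$.
\end{itemize}
For the reverse bound, the weighted terms are handled by $\|v\|_\infty$ times $\|(\nabla u,u_3)\|_2$, by $\sqrt\eta\||\nabla u|^2\|_2\|v\|_\infty$, and by $\sqrt\eta\||\nabla u|\nabla v\|_2\le\sqrt\eta\|\nabla u\|_4\|\nabla v\|_4$. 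The last product is at most $C_u\eta^{1/4}\|\nabla v\|_4$, because $\eta^{1/4}\|\nabla u\|_4\lesssim E_\eta(u)^{1/4}$. The constant $C_u$ depends only on $E_\eta(u)$, and \eqref{eq:thin-belt-3} makes it locally uniform.

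\textbf{Topology.} I would compare the length metric $d$ with $N_\eta$ locally.
\begin{itemize}
\item \emph{Upper bound.} Radial curves $t\mapsto\Psi_u(t\xi)$ have length at most $(1+Cr)B_{\eta,u}(\xi)$. So $d$-balls contain chart balls, and $d$ is continuous for the manifold topology.
\item \emph{Lower bound.} Consider a curve from $u$ that leaves the chart ball $\{B_{\eta,u}(\xi)<r_0/2\}$. Writing it as $\Psi_u(\xi(t))$ while it stays in the chart, its speed is at least $(1-Cr)B_{\eta,u}(\dot\xi)$. Hence its length is at least $c\,r_0$, and similarly $d(u,\Psi_u(\xi))\ge cB_{\eta,u}(\xi)$.
\end{itemize}
Together these show that $d$ induces the manifold topology. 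Finiteness on a connected component follows because a connected Banach manifold is connected by piecewise $C^1$ chart curves: the set of points at finite distance from $e_1$ is open and closed.

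\textbf{Completeness (main obstacle).} Let $(u_n)$ be $d$-Cauchy. First I would show that $E_\eta$ is bounded along the sequence. By \eqref{eq:thin-belt-4}, along any curve $|\tfrac{d}{dt}\sqrt{E_\eta}|\le C\,B(\dot\gamma)$, so $\sqrt{E_\eta}$ is $C$-Lipschitz for $d$. With $E_\eta$ bounded, $C_u$ is uniform, and $N_\eta(u_n-u_m)$ can be bounded by the integral of $N_\eta(\dot\gamma)\le B(\dot\gamma)$ along near-optimal curves. This holds because the ambient derivative of the curve is exactly its tangent vector and $N_\eta$ is an ambient norm. Hence $(u_n)$ is $N_\eta$-Cauchy in the affine space $e_1+X_\eta$ and converges to some $u$. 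The sphere constraint passes to the uniform limit, so $u\in\mathcal M_\eta$.

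The delicate point is upgrading this to $d(u_n,u)\to0$, since a priori $d$ is stronger than $N_\eta$. Here I would use the local comparison from the topology step: for large $n$, $u_n=\Psi_u(\xi_n)$ with $B_{\eta,u}(\xi_n)\le C_uN_\eta(\xi_n)\to0$, by the inverse formula \eqref{eq:thin-belt-14} and local equivalence. Therefore $d(u_n,u)\lesssim B_{\eta,u}(\xi_n)\to0$. The limit lies in the same component because components are open, which gives completeness.
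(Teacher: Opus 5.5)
Your proposal is correct and follows essentially the same route as the paper. The shared steps are the H\"older/energy comparison $N_\eta\le B_{\eta,u}\le C(E_\eta(u))N_\eta$; the fact that $B$-length dominates the $N_\eta$-distance between endpoints, so a $d$-Cauchy sequence converges in $e_1+X_\eta$ to a sphere-valued limit; the inverse chart \eqref{eq:thin-belt-14} together with radial chart segments, which upgrades this to $d$-convergence; and an open--closed argument for finiteness. The differences are minor: you derive continuity of the norms from \eqref{eq:thin-belt-3} rather than from direct H\"older estimates, and your Lipschitz bound on $\sqrt{E_\eta}$ is not needed, because $N_\eta\le B_{\eta,u}$ alone makes the sequence $N_\eta$-Cauchy and afterwards only the comparison constant at the limit point is used.
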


\begin{proof}
Hölder's inequality and \(Q_1+Q_m+Q_2=\tfrac14\int |\nabla u|^4\) give \begin{equation}
N_\eta(v)\le B_{\eta,u}(v)
 \le\bigl[1+\sqrt{2E_\eta(u)}+2\sqrt{E_\eta(u)}
                       +(4E_\eta(u))^{1/4}\bigr]N_\eta(v).
\label{eq:thin-belt-27}
\end{equation} The three extra terms in \eqref{eq:planar-variation-norms} are bounded respectively by \(\sqrt{2E_\eta(u)}\|v\|_\infty\), \(2\sqrt{E_\eta(u)}\|v\|_\infty\), and \((4E_\eta(u))^{1/4}\eta^{1/4}\|\nabla v\|_4\). This proves local uniform equivalence because \(E_\eta\) is continuous.

For continuity of these norms in tangent trivialisations, use the tensor \((\nabla u,u_3)\otimes v\) for the first extra term, \[
 \||\nabla u|^2-|\nabla w|^2\|_2
 \le(\|\nabla u\|_4+\|\nabla w\|_4)\|\nabla u-\nabla w\|_4
\] for the second, and Hölder applied to \((|\nabla u|-|\nabla w|)\nabla v\) for the third. These estimates are uniform for \(N_\eta(v)\le1\) locally. Composing with the continuous chart derivative gives the asserted continuity.

The \(N_\eta\)-distance between endpoints of any piecewise \(C^1\) curve is at most its \(B\)-length. A Cauchy sequence for the length metric therefore converges in the ambient complete space to a point of the closed sphere set. In its inverse chart \eqref{eq:thin-belt-14}, the coordinates of the sequence tend to zero in \(N_\eta\). Local norm equivalence and bounded chart derivatives show that radial chart segments have length tending to zero. Hence this is also convergence in the length metric, and the limit lies in the same component. The same segments show that the length metric induces the manifold topology. The set of points reachable from a fixed point by finitely many chart segments is open and has open complement inside its connected component; it is therefore the component. Each such finite sequence of chart segments has finite length. This proves all metric assertions on \(\mathcal M_\eta\).
\end{proof}

\subsection{Augmentation, planar scaling, and translations}\label{appsrc-thin-belt-augmentation-planar-scaling-and-translations}

We now include changes of spatial scale among the variables of the action. Two additional parameters describe independent dilations in $x_1$ and $x_2$, allowing their derivatives to be treated together with variations of the map. We weight these directions by $W_\eta(u)$ to compensate for the growth of the scaling derivatives with energy and momentum. Together with the preceding estimates, this gives a complete metric and uniform second- and third-derivative bounds on the enlarged space.

\begin{proposition}[The augmented variational space]\label{prop:thin-belt-augmented}
For \(c\in[-1,1]\) define \begin{equation}
  \begin{aligned}
   f_{c,\eta}(\alpha,\beta,u)={}&
   e^{\alpha-\beta}A(u)+e^{-\alpha+\beta}K(u)+e^{\alpha+\beta}V(u)
                      +ce^\beta P(u)\\
   &+\eta\left(e^{-3\alpha+\beta}Q_1(u)
            +e^{-\alpha-\beta}Q_m(u)
            +e^{\alpha-3\beta}Q_2(u)\right).
   \end{aligned}
\label{eq:thin-belt-6}\end{equation} With $W_\eta$ as in
\eqref{eq:planar-scaling-data}, the continuously varying tangent norm on
$\mathcal Y_\eta=\mathbb R^2\times\mathcal M_\eta^0$ \begin{equation}
\|(a,b,v)\|_{(\alpha,\beta,u)}
           =W_\eta(u)(|a|+|b|)+B_{\eta,u}(v)
\label{eq:thin-belt-7}
\end{equation} has a finite complete length metric inducing the product topology. There are constants \(R>0\), \(M\ge1\), independent of \(c,\eta,u\), such that at every \(q=(\alpha,\beta,u)\) with \(|\alpha|,|\beta|\le1\) the chart \begin{equation}
\Phi_q(a,b,\xi)=(\alpha+a,\beta+b,\Psi_u(\xi)),
   \qquad \|(a,b,\xi)\|_q<R
\label{eq:thin-belt-8}
\end{equation} is a diffeomorphism onto an open set, has derivative the identity at zero, and satisfies throughout that ball \begin{equation}
\tfrac12\|h\|_q\le\|D\Phi_q(x)h\|_{\Phi_q(x)}\le2\|h\|_q,
   \qquad
   \|D^k(f_{c,\eta}\circ\Phi_q)(x)\|_{\mathrm{op},q}\le M
   \quad(k=2,3).
\label{eq:thin-belt-9}
\end{equation}
\end{proposition}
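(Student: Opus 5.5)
The proof splits into three steps: the metric statements, then the chart and derivative bounds, with the non-quadratic $W_\eta$ weight as the one delicate point.

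\textbf{Metric.} I would treat $\mathcal Y_\eta$ as the product of $\mathbb R^2$ with the manifold $\mathcal M_\eta^0$ of \cref{prop:thin-belt-metric}. The only new feature is the weight $W_\eta(u)$ on the scaling directions. Since $E_\eta$ and $P$ are continuous (indeed smooth, by \cref{prop:thin-belt}), $W_\eta$ is continuous and satisfies $W_\eta\ge1$. So \eqref{eq:thin-belt-7} varies continuously and dominates the flat product norm $|a|+|b|+N_\eta(v)$.

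Finiteness of the length metric then follows by joining points with chart segments in $u$ and straight segments in $(\alpha,\beta)$. On a compact segment $W_\eta$ is bounded, so each piece has finite length.

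For completeness, take a Cauchy sequence. It is Cauchy for the product of the Euclidean metric and the $B$-length metric, because the norm dominates that product norm. The $u$-components therefore converge by \cref{prop:thin-belt-metric}. By continuity, $W_\eta$ is bounded near the limit, so short connecting curves have small length. This also shows that the length metric induces the product topology.

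\textbf{Charts and derivative bounds.} For the chart \eqref{eq:thin-belt-8}, $D\Phi_q(x)(a,b,v)=(a,b,D\Psi_u(\xi)v)$. The $v$-part is handled by \eqref{eq:thin-belt-3} once $R\le r_0$ and $CR\le1/2$. The $(a,b)$-part needs $W_\eta(\Psi_u(\xi))/W_\eta(u)\in[1/2,2]$ on the ball. I would prove this from \cref{lem:thin-belt-products}: along a chart segment,
\[
\Bigl|\tfrac{d}{dt}\bigl(E_\eta+\sqrt{1+P^2}\bigr)\Bigr|\le C\sqrt{E_\eta}\,B\le C\,W_\eta^2\,B .
\]
Hence $\log W_\eta^2$ changes by at most $C\cdot$(length), and a small $R$ makes the ratio close to $1$. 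Injectivity and openness of $\Phi_q$ come from those of $\Psi_u$.

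For the bounds on $D^k(f_{c,\eta}\circ\Phi_q)$, expand $f_{c,\eta}$ as a sum of terms $e^{\ell(\alpha,\beta)}F(u)$. Here $\ell$ is linear with bounded coefficients, and $F\in\{A,K,V,P,\eta Q_1,\eta Q_m,\eta Q_2\}$. Since $|\alpha|,|\beta|\le 1$ and $|a|,|b|<R$, the exponentials and their derivatives are bounded. A $k$-th derivative of one term is a sum of the following pieces:
\begin{itemize}
\item $a^ib^j\,D^{k-i-j}(F\circ\Psi_u)$ with $k-i-j\ge2$, which is bounded by \eqref{eq:thin-belt-5};
\item $a^ib^j\,D(F\circ\Psi_u)$, which is bounded by $C\sqrt{E_\eta}\,B\le C W_\eta\,B$, using \cref{lem:thin-belt-products} transported by \eqref{eq:thin-belt-3};
\item $a^ib^j\,F(\Psi_u(\xi))$, bounded by $|F|\le E_\eta+|P|\le W_\eta^2$.
\end{itemize}
In the second and third pieces, $i+j\ge1$ and $i+j\ge2$ respectively, with each factor of $a$ or $b$ paired with $W_\eta$ in the norm. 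So each such piece is bounded by $M\|h\|_q^k$, with $W_\eta(\Psi_u(\xi))$ comparable to $W_\eta(u)$ as above.

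\textbf{Main obstacle.} The delicate step is this bookkeeping of the $W_\eta$ weights. In particular, $|P|$ must be controlled by $W_\eta^2$ without the sign of $P$ interfering, which is why the weight is built from $\sqrt{1+P^2}$. One must also check that $D\sqrt{1+P^2}$ is bounded by $C\sqrt{E_\eta}\,B$ uniformly. This holds because $|P'|/\sqrt{1+P^2}\le|P'|$, so the comparability of $W_\eta$ along the chart is uniform in $u,c,\eta$. The bound is then uniform in $c\in[-1,1]$ and $\eta\in(0,1]$ because every constant used is.
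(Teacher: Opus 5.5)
Your proposal is correct and follows the paper's proof essentially step for step. The metric assertions come from domination of the flat product norm together with local boundedness of $W_\eta$; comparability of $W_\eta$ along chart segments comes from \cref{lem:thin-belt-products} and \eqref{eq:thin-belt-3}; and the product-rule bookkeeping lets each scalar direction contribute a factor $W_\eta(u)^{-1}$ against bounds $CW_\eta^2$, $CW_\eta$ and $C$ for zero, one and at least two map derivatives. The only cosmetic difference is that you bound the change of $\log W_\eta^2$, whereas the paper proves $|DW_\eta(u)[v]|\le CB_{\eta,u}(v)$ and hence $|W_\eta(\Psi_u(\xi))-W_\eta(u)|\le Cr$; since $W_\eta\ge1$, both give the required $[1/2,2]$ comparison.
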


\begin{proof}
Since \(W_\eta\ge1\), \(W_\eta^2\ge E_\eta\), equations \eqref{eq:thin-belt-4} and \eqref{eq:planar-scaling-data} give \begin{equation}
|DW_\eta(u)[v]|
 =\left|\frac{DE_\eta(u)[v]+
       P(u)(1+P(u)^2)^{-1/2}DP(u)[v]}{2W_\eta(u)}\right|
 \le C B_{\eta,u}(v).
\label{eq:thin-belt-28}
\end{equation} Integration along \(t\mapsto\Psi_u(t\xi)\), using \eqref{eq:thin-belt-3}, shows \begin{equation}
|W_\eta(\Psi_u(\xi))-W_\eta(u)|\le Cr,\qquad
                         r=B_{\eta,u}(\xi)<r_0.
\label{eq:thin-belt-29}
\end{equation} Together with \eqref{eq:thin-belt-3} and \(W_\eta\ge1\), this proves a \(1\pm Cr\) comparison of the product norms under the derivative of \eqref{eq:thin-belt-8}. Choose \(R<r_0\) so that the comparison factors lie in \([1/2,2]\). These are actual tangent coordinates; the norm of scalar displacement \(a\) is \(W_\eta(u)|a|\), and the chart derivative at zero is the identity.

The augmented metric dominates scalar Euclidean distance and the map length distance. A Cauchy sequence has limits in both factors. Local norm equivalence and product chart segments imply convergence in the augmented metric. They also give the product topology. Finite product chart chains give finite distances on \(\mathcal Y_\eta\).

For the uniform derivative bound, fix a centre as in \eqref{eq:thin-belt-8} and set \(W=W_\eta(u)\). In its radius-\(R\) tangent ball, \(|a|+|b|\le R/W\), \(B(\xi)\le R\), and the scalar exponential coefficients in \eqref{eq:thin-belt-6}, with their first three derivatives, are bounded by numerical constants. Equations \eqref{eq:thin-belt-29} and \eqref{eq:planar-scaling-data} imply, at \(w=\Psi_u(\xi)\), \[
 A(w),K(w),V(w),\eta Q_1(w),\eta Q_m(w),\eta Q_2(w),|P(w)|
                                                     \le C W^2 .
\] Their first map derivatives have chart norm at most \(CW\), by \eqref{eq:thin-belt-4}, \eqref{eq:thin-belt-19}, \eqref{eq:thin-belt-3}, and \eqref{eq:thin-belt-29}; their second and third map derivatives have norm at most \(C\), by \eqref{eq:thin-belt-5}. In a direction of norm at most one under \eqref{eq:thin-belt-7}, each scalar component is at most \(1/W\), and its map component has \(B\)-norm at most one. At total derivative order \(k=2,3\), a product-rule term with zero map derivatives is bounded by \(CW^2/W^k\), one with one map derivative by \(CW/W^{k-1}\), and one with \(j\ge2\) map derivatives by \(C/W^{k-j}\). All are bounded by \(C\). The finite sum proves \eqref{eq:thin-belt-9}.
\end{proof}

The following identities connect this augmented functional with spatial dilations and describe the effect of translations on the energy and momentum.

\begin{lemma}[Dilations and translations]\label{lem:thin-belt-scaling}
The spatial dilations \(\mathcal T_{\alpha,\beta}u(x_1,x_2)=u(e^{-\alpha}x_1,e^{-\beta}x_2)\) act jointly continuously, preserve each connected component, and \begin{equation}
f_{c,\eta}(\alpha,\beta,u)
     =E_\eta(\mathcal T_{\alpha,\beta}u)
                    +cP(\mathcal T_{\alpha,\beta}u).
\label{eq:thin-belt-10}
\end{equation} Define \(\mathcal Q(u)=\int u\cdot(\partial_1 u\times \partial_2 u)\). It is locally constant on \(\mathcal M_\eta\) and equals zero on \(\mathcal M_\eta^0\). For \(a=(a_1,a_2)\in\mathbb R^2\), with \(\tau_a u(x)=u(x-a)\), \begin{equation}
P(\tau_a u)=P(u)+a_2\mathcal Q(u).
\label{eq:thin-belt-11}
\end{equation} In particular translations preserve \(P,E_\eta\), and the norms \eqref{eq:planar-variation-norms}, \eqref{eq:thin-belt-7} on the constant-map component. The uniform constants in \eqref{eq:thin-belt-3}--\eqref{eq:thin-belt-9} remain valid after arbitrary translations.
\end{lemma}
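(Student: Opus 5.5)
The plan is to prove each assertion of \cref{lem:thin-belt-scaling} first for smooth compact perturbations of \(e_1\) and then pass to \(\mathcal M_\eta\). Density (\cref{lem:thin-belt-charts}) and the smoothness of \(E_\eta\), \(P\) and \(\mathcal Q\) carry the identities over.

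\emph{Dilations.} The map \(u\mapsto\mathcal T_{\alpha,\beta}u\) is linear on \(h=u-e_1\). It preserves the sphere constraint and uniform norms, and multiplies each of \(\|\partial_j h\|_2\), \(\|\partial_j h\|_4\), \(\|h_3\|_2\) by a factor \(e^{(\pm\alpha\pm\beta)/p}\). It is therefore a bounded isomorphism of \(X_\eta\) with norm locally bounded in \((\alpha,\beta)\). Joint continuity follows from the usual \(\varepsilon/3\) argument: uniform bounds in \((\alpha,\beta)\) near a point, together with continuity of \((\alpha,\beta)\mapsto\mathcal T_{\alpha,\beta}h\) for compact smooth \(h\), and density. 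Since \((\alpha,\beta)\) ranges over a connected set and \(\mathcal T_{0,0}=\mathrm{id}\), each component is preserved. Identity \eqref{eq:thin-belt-10} is a change of variables \(y=(e^{-\alpha}x_1,e^{-\beta}x_2)\) applied to each monomial \(A,K,V,Q_1,Q_m,Q_2\). For \(P_0\), the weight \(x_2=e^\beta y_2\) and the Jacobian factor \(e^{-\alpha-\beta}\cdot e^{\alpha+\beta}\) cancel except for \(e^\beta\). This gives \(ce^\beta P\) on compact perturbations, and the identity extends by continuity of both sides.

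\emph{Charge and translations.} \(\mathcal Q\) is absolutely convergent and continuous in \(N_\eta\), since \(|q(u)|\le|\partial_1u||\partial_2u|\). For compact smooth perturbations it equals \(4\pi\) times the degree of the map \(\R^2\cup\{\infty\}\to\Stwo\), which is an integer. A continuous integer-valued function is locally constant, and \(\mathcal Q(e_1)=0\) gives \(\mathcal Q=0\) on \(\mathcal M_\eta^0\). Alternatively, one can use \(D\mathcal Q(u)[v]=\int\partial_1\omega_u(v,\partial_2u)+\partial_2\omega_u(\partial_1u,v)=0\) from the variation formula in the proof of \cref{prop:thin-belt}. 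For \eqref{eq:thin-belt-11} with compact \(u\), substitute \(y=x-a\) in \(P_0(\tau_au)=\int x_2q(u)(x-a)\dd x\) to get \(P_0(u)+a_2\mathcal Q(u)\). Translation is an isometry of \(N_\eta\), so the identity extends. Energy densities and all the norms \eqref{eq:planar-variation-norms} and \eqref{eq:thin-belt-7} are translation invariant, because translation commutes with the charts \(\Psi\), maps \(T_u\) onto \(T_{\tau_au}\), and preserves \(W_\eta\) once \(P\) is preserved. Hence \(\tau_a\) conjugates all charts and estimates, and the constants in \eqref{eq:thin-belt-3}--\eqref{eq:thin-belt-9} transfer unchanged.

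The main obstacle is extending \eqref{eq:thin-belt-10} for \(P\) to \(\mathcal M_\eta\): \(P\) on the completion is defined only through \eqref{eq:thin-belt-1}. The approach is to avoid the cutoff formula entirely. Both sides of the identity are continuous in \(u\), because \(\mathcal T_{\alpha,\beta}\) is continuous and \(P\) is smooth. They agree on a dense set, so they agree everywhere.
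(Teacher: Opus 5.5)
Your proposal is correct. Apart from one step it is the paper's argument: compute on compact smooth perturbations, then extend \eqref{eq:thin-belt-10} and \eqref{eq:thin-belt-11} using continuity of $E_\eta$, $P$, $\mathcal Q$ and the density in \cref{lem:thin-belt-charts}.

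The difference is in how you obtain local constancy of $\mathcal Q$.
\begin{itemize}
\item The paper joins two uniformly close compact smooth maps by their short spherical geodesic, which has common compact support. It then applies the compactly supported transgression identity from the proof of \cref{prop:thin-belt}, without the weight $x_2$, to see that $\mathcal Q$ is constant along that path. General points of $\mathcal M_\eta$ are reached by approximation and continuity.
\item You instead use $\mathcal Q=4\pi\deg$ on the dense set of compact smooth perturbations. Continuity and closedness of $4\pi\mathbb Z$ then force $\mathcal Q\in4\pi\mathbb Z$ on all of $\mathcal M_\eta$, hence local constancy.
\end{itemize}
You should state this closure step explicitly. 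Also, continuity of $\mathcal Q$ rests on the trilinear difference estimate, not merely on the bound $|q(u)|\le|\partial_1u||\partial_2u|$, which only gives absolute convergence.

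Your route is shorter and gives integrality of $\mathcal Q$ on the whole completion for free, at the cost of importing the degree formula for smooth maps of $S^2$. The paper's route uses only the variation identity it has already established and never identifies $\mathcal Q$ with a degree.

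Two smaller points. Your alternative via $D\mathcal Q(u)=0$ would need an extra approximation argument, since $\omega_u(v,\partial_ju)$ is only in $L^2\cap L^4$ for general $u$ and $v$; the degree argument makes it unnecessary. Also, the $L^4$ dilation factors are not of the form $e^{(\pm\alpha\pm\beta)/4}$: for example, $\|\partial_1\mathcal T_{\alpha,\beta}h\|_4=e^{(-3\alpha+\beta)/4}\|\partial_1h\|_4$. Local boundedness in $(\alpha,\beta)$ is unaffected.
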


\begin{proof}
Dilation is a bounded linear map on \(X_\eta\), with bounds locally uniform in \((\alpha,\beta)\); this follows by changing variables in the four components of \(N_\eta\). On compact smooth fields it is jointly continuous, and density extends this assertion to \(X_\eta\). It preserves sphere values and connected components, using the path \(t\mapsto\mathcal T_{t\alpha,t\beta}u\). On compact smooth maps the Jacobian is \(e^{\alpha+\beta}\), the derivative factors are \(e^{-\alpha},e^{-\beta}\), and the coordinate \(x_2\) contributes \(e^\beta\). Consequently \begin{equation}
\begin{array}{c|ccccccc}
 F&A&K&V&P_0&Q_1&Q_m&Q_2\\ \hline
 F(\mathcal T_{\alpha,\beta}u)/F(u)&
 e^{\alpha-\beta}&e^{-\alpha+\beta}&e^{\alpha+\beta}&e^\beta&
 e^{-3\alpha+\beta}&e^{-\alpha-\beta}&e^{\alpha-3\beta}.
 \end{array}
\label{eq:thin-belt-30}
\end{equation} The notation in the table means the multiplicative scaling identity, also when \(F(u)=0\). Continuity and density extend all these identities to the completed space. This proves \eqref{eq:thin-belt-10} with the planar exponents.

The total area \(\mathcal Q(u)=\int\omega_u(\partial_1 u,\partial_2 u)\) is continuous: the difference of its integrands is a sum with one uniformly small map difference or one \(L^2\)-small gradient difference, paired with the other bounded \(L^2\) gradient. If two smooth compact maps are sufficiently uniformly close, their short spherical geodesic has common compact support. The transgression identity in \cref{appsrc-thin-belt-a-real-momentum-on-the-entire-completed-manifold}, without the factor \(x_2\), says that \(\mathcal Q\) is constant along this path. For arbitrary uniformly close points of the manifold, approximate each by compact smooth points that remain uniformly close. Continuity gives equality of their total areas. Thus \(\mathcal Q\) is locally constant, and \(\mathcal Q(e_1)=0\) gives its value on \(\mathcal M_\eta^0\).

Spatial translations are linear isometries on the ambient norm and act jointly continuously, first on compact smooth fields and then by density. They preserve components via \(t\mapsto\tau_{ta}u\). Changing variables in the compact moment gives \(P_0(\tau_a u)=P_0(u)+a_2\mathcal Q(u)\). Continuity proves \eqref{eq:thin-belt-11}. The unweighted energy and the expressions in \eqref{eq:planar-variation-norms} are translation invariant. On the constant-map component so is \(P\), hence also \(W_\eta\) and \eqref{eq:thin-belt-7}. Charts commute with translations, which proves the last uniformity assertion.
\end{proof}

\subsection{First and second variations}\label{appsrc-thin-belt-physical-variations-and-scope}

We finish by checking that differentiation in the chosen charts gives precisely the residual and second variation associated with the travelling-wave equation.

\begin{lemma}[Variations of the map]\label{lem:thin-belt-variations}
If \(u-e_1\) and a field \(v\) are smooth and compactly supported, \(|u|=1\), and \(u\cdot v=0\), the differential and the map-chart Hessian of \(E_\eta+cP\), evaluated at the chart centre, are \begin{equation}
D(J_{c,\eta}\circ\Psi_u)(0)[v]=R_{c,\eta,u}[v],\qquad
D^2(J_{c,\eta}\circ\Psi_u)(0)[v,v]=\mathcal H_{c,\eta,u}[v,v].
\label{eq:thin-belt-12}
\end{equation} For noncompact tangent fields the differential of \(P\) is the continuous differential supplied by \eqref{eq:thin-belt-1}; absolute convergence of the formal integral \(\int u\cdot(v\times \partial_1 u)\) is not asserted.
\end{lemma}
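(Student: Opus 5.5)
The plan is to expand $t\mapsto J_{c,\eta}(\Psi_u(tv))$ to second order in $t$ and to treat energy and momentum separately. Since $u-e_1$ and $v$ are smooth and compactly supported, every integral has compact support and we may differentiate under the integral sign. The series for $\cos$ and $\operatorname{sinc}$ give
\[
w(t)=\Psi_u(tv)=u+tv-\tfrac{t^2}{2}|v|^2u+O(t^3),
\]
with the same expansion, differentiated, holding for the spatial derivatives. Thus $\dot w(0)=v$ and $\ddot w(0)=-|v|^2u$. The second-order term is normal to the sphere. This is what produces the terms $-|\nabla u|^2|v|^2$ and $-u_3^2|v|^2$ in $\mathcal H$.

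\textbf{Energy part.} We write $E_\eta=\int \tfrac12|\nabla w|^2+\tfrac12 w_3^2+\tfrac\eta4|\nabla w|^4$.

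The first derivative at $t=0$ is $\int(1+\eta|\nabla u|^2)\nabla u:\nabla v+u_3v_3$. These are the first two terms of $R$.

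For the second derivative, we differentiate once more:
\begin{itemize}
\item The Dirichlet part gives $\int|\nabla v|^2+\nabla u:\nabla\ddot w$. We have $\nabla\ddot w=-\nabla(|v|^2u)$, and $\nabla u\cdot u=0$ pointwise, so this equals $\int|\nabla v|^2-|v|^2|\nabla u|^2$.
\item The height part gives $v_3^2-u_3^2|v|^2$ in the same way.
\item For the quartic term, $\frac{d^2}{dt^2}\tfrac14|\nabla w|^4=2(\nabla u:\nabla v)^2+|\nabla u|^2\bigl(|\nabla v|^2+\nabla u:\nabla\ddot w\bigr)$. The same cancellation produces $\eta|\nabla u|^2(|\nabla v|^2-|\nabla u|^2|v|^2)+2\eta(\nabla u:\nabla v)^2$.
\end{itemize}
This matches the energy part of $\mathcal H$.

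\textbf{Momentum part, first variation.} On compact maps we use $P=P_0=\int x_2\,\omega_w(\partial_1w,\partial_2w)$, which is legitimate by \cref{prop:thin-belt}. The computation in the proof of \cref{prop:thin-belt} already gives $DP_0(u)[v]=\int\omega_u(v,\partial_1u)=\int u\cdot(v\times\partial_1u)$, valid for any smooth compact curve. This yields the last term of $R$.

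\textbf{Momentum part, second variation.} We differentiate $g(t)=\int\omega_{w}(\dot w,\partial_1 w)$ along the curve. This formula holds for every $t$, since $w(t)$ is again a compact smooth perturbation and $\dot w(t)$ is tangent at $w(t)$. Then
\[
g'(0)=\int \dot w\cdot(\dot w\times\partial_1 w)+w\cdot(\ddot w\times\partial_1w)+w\cdot(\dot w\times\partial_1\dot w)\Big|_{t=0}.
\]
\begin{itemize}
\item The first term vanishes identically.
\item In the second term $\ddot w(0)=-|v|^2u$ is parallel to $u$, so $u\cdot(u\times\partial_1u)=0$.
\item The third term is $u\cdot(v\times\partial_1 v)$.
\end{itemize}
Multiplying by $c$ gives the last term of $\mathcal H$.

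The only delicate point, and the main thing to check, is that the formula for $DP_0$ holds along the whole curve. One must confirm that the derivation of $\frac{d}{dt}P_0(w_t)=\int\omega_{w_t}(\dot w_t,\partial_1w_t)$ uses only compact support and tangency, and not $\ddot w=0$. This holds because that derivation is a pointwise identity at each $t$.

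The final remark of the lemma, about noncompact tangent fields, is just the statement that $DP$ is defined by \eqref{eq:thin-belt-16}. It needs no proof beyond pointing to \cref{prop:thin-belt}.
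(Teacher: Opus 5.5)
Your proposal is correct and follows essentially the same route as the paper: you expand along $w(t)=\Psi_u(tv)$ with $\dot w(0)=v$ and $\ddot w(0)=-|v|^2u$, and use $\nabla u:\nabla(-|v|^2u)=-|\nabla u|^2|v|^2$ for the energy terms. For the momentum, you differentiate $\int\omega_w(\dot w,\partial_1 w)$ once more, and the two extra terms vanish because $v\cdot(v\times\partial_1u)=0$ and $u\cdot(u\times\partial_1u)=0$. You also check explicitly that the compact first-variation formula for $P_0$ holds at every $t$, since it uses only compact support and tangency of $\dot w(t)$ at $w(t)$; the paper leaves that step implicit.
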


\begin{proof}
For compact smooth \(u,v\) as in the statement, let \(w(t)=\Psi_u(tv)\). Then \(w_t(0)=v\), \(w_{tt}(0)=-|v|^2u\). The first energy derivative is the first two terms of \(R_{c,\eta,u}\), and the compact first momentum derivative was computed in \cref{appsrc-thin-belt-a-real-momentum-on-the-entire-completed-manifold}. For the second energy derivative use \[
 \nabla u:\nabla(-|v|^2u)=-|\nabla u|^2|v|^2.
\] The quadratic gradient and height terms give \(|\nabla v|^2-|\nabla u|^2|v|^2+v_3^2-u_3^2|v|^2\). The quartic gradient term gives \(\eta |\nabla u|^2(|\nabla v|^2-|\nabla u|^2|v|^2)+2\eta(\nabla u:\nabla v)^2\). Differentiating \(\int\omega_w(w_t,\partial_1 w)\) gives only \(\int u\cdot(v\times \partial_1 v)\): the other two terms vanish because \(v\cdot(v\times \partial_1 u)=0\) and \(u\cdot((-|v|^2u)\times \partial_1 u)=0\). This proves \eqref{eq:thin-belt-12}.
\end{proof}

This completes the variational framework: the action is smooth on a
complete manifold, its second and third chart derivatives are uniformly
controlled, and the two spatial dilations are represented by independent
variables.

\section{A planar comparison path}\label{app:planar-path}

We now construct a path from $e_1$ to a map of negative action, with
maximal action strictly below $8\pi$. This provides the upper bound
for the mountain-pass level needed in the compactness argument. The
bounds also hold for $J_{c,\eta}$, uniformly for speeds in a fixed
compact subset of $(0,1)$ and sufficiently small $\eta$. The
construction uses two oppositely oriented parts of a folded sphere map.

Let $\mathcal C$ be the class of smooth maps $u:\mathbb R^2\to S^2$
equal to $e_1$ outside a compact set. For the comparison arguments below,
write
\[
D(u)=\frac12\int|\nabla u|^2,\qquad
Z(u)=2V(u),\qquad
\mathsf p(u)=-P(u).
\]
Thus
\begin{equation}
E(u)=D(u)+\frac12Z(u),\qquad J_c(u)=E(u)-c\mathsf p(u).
\label{eq:planar-path-1}
\end{equation}

\begin{proposition}[A path below the two-bubble threshold]\label{prop:planar-path}
For every nonempty compact set \(K\subset(0,1)\), there are \(\eta_K>0\), \(R_K<\infty\), and a path \(\gamma:[0,1]\to\mathcal C\), continuous in every finite-order \(C^k\) norm and in every \(W^{1,p}\) norm for \(1\le p<\infty\) after subtracting \(e_1\), such that \begin{equation}
\gamma(0)=e_1,\qquad
\gamma(t,x)=e_1\quad\text{for all }|x|\ge R_K,\ t\in[0,1],
\label{eq:planar-path-2}
\end{equation} \begin{equation}
J_c(\gamma(1))<-1\quad(c\in K),\qquad
\sup_{\substack{c\in K\\t\in[0,1]}}J_c(\gamma(t))
\le8\pi-\eta_K.
\label{eq:planar-path-3}
\end{equation} For every $t\in[0,1]$, the map $\gamma(t)$ has degree zero when
regarded as a map from the compactified plane to $S^2$.
\end{proposition}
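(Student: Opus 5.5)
We would build the path from a degree-zero configuration made of a $+1$ and a $-1$ harmonic-type sphere, folded so that the two oppositely oriented halves carry opposite signed area. Because of the weight $x_2$, the dipole then carries nonzero momentum. Concretely, fix a smooth degree-one map $\Phi:\R^2\to S^2$ equal to $e_1$ outside the unit disk with $D(\Phi)\le4\pi+\epsilon$; this is a cut-off, rescaled stereographic bubble centred on the pole $e_3$. Let $\bar\Phi$ be its reflection, with degree $-1$. For separation $L$ and scale $s$, set
\[
U_{L,s}(x)=\Phi\bigl((x-Le_2)/s\bigr)\ \text{near }Le_2,\qquad
U_{L,s}(x)=\bar\Phi\bigl((x+Le_2)/s\bigr)\ \text{near }-Le_2,
\]
and $U_{L,s}=e_1$ elsewhere. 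Then $\mathsf p(U_{L,s})\approx 8\pi L$, with the sign of the orientation chosen so that $\mathsf p>0$. The energy satisfies $D\approx8\pi$ and $Z=O(s^2)$. The action is therefore roughly $8\pi+O(\epsilon+s^2)-8\pi cL$. This already shows that the far endpoint has $J_c<-1$ once $L\gtrsim 2/c_{\min}$. The difficulty is the maximum along the path: the naive route (grow the bubbles from $e_1$, then separate them) passes through $D\approx 8\pi$ at small $L$, where $\mathsf p$ is small.

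To get strictly below $8\pi$, we would not use full bubbles at small separation. Instead we use a one-parameter family in which the momentum gain beats the energy cost from the start. Take the one-dimensional soliton $u_c$ of \eqref{eq:intro-one-dimensional}, with energy per unit length $2a$ and momentum per unit length $\mathsf p_{1D}$ satisfying $J_c=2a-c\mathsf p_{1D}$. Its action per length is negative in the appropriate normalisation, namely $2\sqrt{1-c^2}<2\cdot$(area swept)$\cdot c$ for the moment density. We would truncate a straight wall of length $\ell$ in the $x_2$ direction and close its two ends with half-vortex caps of bounded energy. This gives a vortex--antivortex pair whose energy is $2a\ell+C_0$ and whose momentum grows like $\ell\,\mathsf p_{1D}$, so $J_c\le C_0+\ell(2a-c\mathsf p_{1D})$. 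The path then runs in two stages. First, grow a small localized deformation of $e_1$ into the capped wall of length $\ell_0$; here the action is bounded by the energy, which we arrange to be at most $C_0+2a\ell_0$. Second, lengthen the wall, which decreases the action linearly. The key computation is the choice of caps (two half-lumps whose images cover the upper and lower hemispheres) keeping the maximal energy at a fixed $c$ strictly under $8\pi$. We expect this to work because each cap costs about $2\pi$ (half a bubble), plus $O(\ell_0)$ for a short wall.

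The main obstacle is exactly this quantitative bound: we must show $\sup_t J_c(\gamma(t))\le 8\pi-\eta_K$ uniformly for $c\in K$, including near $c\to$ the endpoints of $K$. We would first try a concrete ansatz in the stereographic variable $w$. During the growth stage we take $w=\epsilon f(x)$ with $f$ a dipole profile, for which $J_c=O(\epsilon^2)$ by the quadratic form $\mathcal H_{c,0,e_1}$. For subsonic $c$ this form is indefinite on suitable long-wave dipoles, and we follow its negative direction to a finite-amplitude configuration, checking numerically controllable integrals along the way. The remaining points are routine. The path lies in $\mathcal C$ and is supported in a fixed disk of radius $R_K$. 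Its continuity in $C^k$ and $W^{1,p}$ follows from smooth dependence on parameters. The degree is zero because each map is a compactly supported deformation of $e_1$ through maps of zero total signed area. Finally, the quartic term contributes $\frac{\eta}{4}\int|\nabla u|^4\le C_K\eta$ along the compact family, so choosing $\eta$ small preserves the strict gap for $J_{c,\eta}$.
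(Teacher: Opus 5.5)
There is a genuine gap. Neither stage of your construction works, and what survives is only your first-paragraph family, which you correctly identify as too expensive. Indeed, separately supported bubbles each have integer degree, so $D(U_{L,s})\ge 8\pi$.

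\textbf{The wall stage has the wrong sign.} For the soliton \eqref{eq:intro-one-dimensional} with $\theta=\arg(u_1+iu_2)$, one finds $\partial_1\theta=ca\operatorname{sech}(ax_1)/(1-a^2\operatorname{sech}^2(ax_1))$. Hence the momentum per unit transverse length is $\mathsf p_{1D}=\int u_3\,\partial_1\theta\,dx_1=2\arctan(a/c)=2\arccos c$. Writing $c=\cos\phi$, the action per unit length is $2a-c\,\mathsf p_{1D}=2(\sin\phi-\phi\cos\phi)$. This is strictly positive for every $c\in(0,1)$, since $\tan\phi>\phi$. Lengthening such a wall therefore raises the action linearly instead of lowering it. Moreover, the soliton joins $-e_2$ to $e_2$, so a straight piece of it is not even a compact perturbation of $e_1$.

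\textbf{The growth stage is also impossible.} At $e_1$, for a tangent field $v=(0,b,\zeta)$, integration by parts gives $\mathcal H_{c,0,e_1}[v,v]=\int(|\nabla b|^2+|\nabla\zeta|^2+\zeta^2-2c\,\zeta\,\partial_1 b)\ge(1-c)\int(|\partial_1 b|^2+\zeta^2)$. This is positive for every subsonic speed; indefiniteness occurs only for $c>1$. In fact \cref{lem:window-barrier} shows $J_c\ge\frac{1-c_+}{1+c_+}E$ at small energy. That barrier is the mountain-pass geometry itself, so there is no negative direction at $e_1$ to follow.

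\textbf{The missing idea.} You need a family with $D<8\pi$ \emph{strictly} whose momentum stays near $8\pi$ while its anisotropy energy tends to zero. You then combine it with the scalings $D(v^\sigma)=D(v)$, $Z(v^\sigma)=\sigma^2Z(v)$ and $\mathsf p(v^\sigma)=\sigma\,\mathsf p(v)$.

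The paper builds the family as follows.
\begin{itemize}
\item It glues a positively and a negatively oriented \emph{partial} bubble along a fold, $h_T(w)=Te^{-\psi(\log|w|)}w/|w|$. This equals $Tw$ inside and $T/\bar w$ outside the unit circle, away from a thin smoothing annulus. Its energy is at most $8\pi T^2/(1+T^2)+\pi\beta(T)^2$.
\item Composing with the M\"obius map $(\zeta+i)/(\zeta-i)$ places the two orientations at $\mp i$ without changing $D$.
\item After a target rotation and a cheap truncation, one has $D(v_T)\le 8\pi-6\pi/(1+T^2)$, $Z(v_T)\to0$ and $\mathsf p(v_T)\to8\pi$.
\end{itemize}

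The path then has two stages.
\begin{itemize}
\item It creates $v_T$, $0\le T\le T_*$, at a small scale $\sigma_0$. There the action is essentially $D<8\pi$.
\item It then dilates the fixed profile $v_{T_*}$ up to $\sigma_*=c_-\,\mathsf p(v_{T_*})/Z(v_{T_*})$. Along this stage $\frac{d}{d\sigma}J_c=\sigma Z(v_{T_*})-c\,\mathsf p(v_{T_*})\le0$.
\end{itemize}
The endpoint satisfies $J_c\le D(v_{T_*})-c_-^2\,\mathsf p(v_{T_*})^2/(2Z(v_{T_*}))<-1$, because $\mathsf p^2/Z\to\infty$ as $T_*\to\infty$. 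This dilation mechanism is what makes the bound uniform down to $c=\min K$, and your wall picture has no analogue of it.
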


The proof is assembled at the end of the section from the constructions
and estimates below. We first smooth a folded sphere profile, separate
its oppositely oriented parts, and make the resulting maps equal to
$e_1$ outside a fixed ball. We then estimate their energy and momentum
and construct the path in two stages: first creating a profile at a
small spatial scale, then enlarging it until its action is negative,
while keeping the action below $8\pi$ throughout.

\subsection{The folded profile}

Identify the oriented spatial plane with \(\mathbb C\) by \(\zeta=x_1+i x_2\). For \(h=a+ib\in\mathbb C\), use the target stereographic parametrisation \begin{equation}
\mathcal S(h)=\frac{1}{1+|h|^2}(1-|h|^2,2a,2b).
\label{eq:planar-path-6}
\end{equation} It has \(\mathcal S(0)=e_1\), extends smoothly to \(\mathcal S(\infty)=-e_1\), and preserves orientation: at \(h=0\), \(\mathcal S\cdot(\mathcal S_a\times\mathcal S_b)=4>0\).

For \(w\in\mathbb C\setminus\{0\}\), write \(t=\log|w|\) and \(w/|w|=e^{i\theta}\). An initially nonsmooth folded profile is \begin{equation}
h_T^0(w)=T e^{-|t|}e^{i\theta}
=
\begin{cases}
Tw,& |w|\le1,\\
T/\bar w,& |w|\ge1.
\end{cases}
\label{eq:planar-path-7}
\end{equation} It extends to zero at \(w=0\) and \(w=\infty\). Its sphere map is continuous and has a derivative jump on \(|w|=1\). In the radial form \[
\mathcal S(h_T^0)=(\cos f,\sin f\cos\theta,\sin f\sin\theta),
\qquad f(t)=2\arctan(Te^{-|t|}),
\] the planar Dirichlet energy is \begin{equation}
\pi\int_{\mathbb R}(f_t^2+\sin^2 f)\,dt.
\label{eq:planar-path-8}
\end{equation} Indeed \(dx=r\,dr\,d\theta\), the radial derivative contributes \(f_r^2\), the angular derivative contributes \(r^{-2}\sin^2f\), and \(dt=dr/r\). Away from \(t=0\), \(f_t=\pm\sin f\). Substitution of \(a=Te^{-t}\) on \(t>0\) gives \begin{equation}
D(\mathcal S(h_T^0))
=4\pi\int_0^\infty\frac{4T^2e^{-2t}}{(1+T^2e^{-2t})^2}\,dt
=8\pi\frac{T^2}{1+T^2}.
\label{eq:planar-path-9}
\end{equation}

\subsection{Smoothing the fold}\label{appsrc-planar-path-smoothing-the-fold-with-an-explicit-energy-budget}

We smooth the fold while keeping its additional energy smaller than the gap just computed.

Fix \(\delta=1/100\). Choose a nonnegative even smooth function \(\rho\), supported in \((-\delta,\delta)\), with integral one, and put \[
\psi(t)=\int_{\mathbb R}|t-a|\rho(a)\,da.
\] Then \(\psi\) is smooth and even, \(\psi(t)=|t|\) when \(|t|\ge\delta\), \(|\psi'|\le1\), and \(|t|\le\psi(t)\le|t|+\delta\). On \(|t|\le\delta\) one has \(0\le\psi(t)\le2\delta\).

Replace \eqref{eq:planar-path-7} by \begin{equation}
h_T(w)=T e^{-\psi(\log|w|)}\,\frac{w}{|w|},
\qquad H_T(w)=\mathcal S(h_T(w)).
\label{eq:planar-path-10}
\end{equation} Set \(h_T(0)=h_T(\infty)=0\). Near zero \(h_T=Tw\); near infinity \(h_T=T/\bar w\). Thus \(H_T\) is a smooth map of the domain sphere to the target sphere. The family is smooth for each finite interval of parameters \(T\ge0\), including \(T=0\).

\begin{lemma}[Energy of the smoothed fold]\label{lem:path-fold}
With $\beta(T)=T/(1+T^2)$, the smooth family $H_T$ satisfies, for every $T\ge0$,
\begin{equation}
D(H_T)\le8\pi\frac{T^2}{1+T^2}+\pi\beta(T)^2.
\label{eq:planar-path-13}
\end{equation}
\end{lemma}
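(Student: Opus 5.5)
The plan is to compute $D(H_T)$ exactly in the radial variable $t=\log|w|$, as was done for $h_T^0$, and to compare the smoothed and folded integrands. The two agree except on the thin window $|t|<\delta$, so the only error is there.

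First, since $h_T=Te^{-\psi(t)}e^{i\theta}$, the map $H_T$ has the radial form $(\cos f,\sin f\cos\theta,\sin f\sin\theta)$ with $f(t)=2\arctan(Te^{-\psi(t)})$. The derivation of \eqref{eq:planar-path-8} applies verbatim and gives
\[
D(H_T)=\pi\int_{\mathbb R}(f_t^2+\sin^2 f)\,dt .
\]
Put $a=Te^{-\psi}$. Then $f_t=-\psi'\,\frac{2a}{1+a^2}=-\psi'\sin f$, so the integrand equals $(1+\psi'^2)\sin^2 f$. Because $|\psi'|\le1$, this is at most $2\sin^2 f$.

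Second, split the integral at $|t|=\delta$. On $|t|\ge\delta$ we have $\psi=|t|$, so $f$ coincides with the folded profile $f^0(t)=2\arctan(Te^{-|t|})$. There the integrand is identical to that of \eqref{eq:planar-path-8}, which is nonnegative. Hence the outer contribution is at most the full folded energy, namely $8\pi T^2/(1+T^2)$ by \eqref{eq:planar-path-9}. It remains to show
\[
\pi\int_{|t|<\delta}(1+\psi'^2)\sin^2f\,dt\;\le\;\pi\beta(T)^2 .
\]
We simply discard the nonnegative folded contribution from the window $|t|<\delta$ rather than subtract it.

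Third, bound $\sin f$ on the window. We have $\sin f=2g(a)$ with $g(a)=a/(1+a^2)$ and $\beta(T)=g(T)$. On $|t|<\delta$, $0\le\psi\le2\delta$, so $a=\lambda T$ with $\lambda\in[e^{-2\delta},1]$. For $\lambda\le1$,
\[
\frac{g(\lambda T)}{g(T)}=\frac{\lambda(1+T^2)}{1+\lambda^2T^2}\le\frac1\lambda ,
\]
because $\lambda^2(1+T^2)\le 1+\lambda^2T^2$. Therefore $\sin^2 f\le4e^{4\delta}\beta(T)^2$ on the window, and the window contribution is at most
\[
\pi\cdot 2\delta\cdot 2\cdot 4e^{4\delta}\beta(T)^2=16\delta e^{4\delta}\,\pi\beta(T)^2 .
\]
With $\delta=1/100$ this is below $0.2\,\pi\beta(T)^2$, which proves \eqref{eq:planar-path-13}. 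The case $T=0$ is trivial.

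The main point requiring care is this third step: the bound on $\sin f$ must be uniform in $T$ and must scale like $\beta(T)$ both for small and for large $T$. The ratio inequality for $g$ handles both regimes at once. Everything else is a direct reuse of the computation leading to \eqref{eq:planar-path-9}.
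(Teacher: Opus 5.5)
Your proof is correct and follows the paper's argument essentially step for step. Both use the same radial formula with $f_t=-\psi'\sin f$, both discard the nonnegative folded energy on the window $|t|\le\delta$, and both use the same ratio bound: your $g(\lambda T)/g(T)\le 1/\lambda$ is exactly the paper's $e^{-\psi}(1+T^2)/(1+T^2e^{-2\psi})\le e^{\psi}$. This yields the same window estimate $16\pi\delta e^{4\delta}\beta(T)^2\le\pi\beta(T)^2$.
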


\begin{proof}
On \(|t|\le\delta\), \begin{equation}
\sin^2(2\arctan(Te^{-\psi(t)}))
=\frac{4T^2e^{-2\psi(t)}}{(1+T^2e^{-2\psi(t)})^2}
\le4e^{4\delta}\beta(T)^2.
\label{eq:planar-path-11}
\end{equation} For the inequality, divide by \(4\beta(T)^2\) when \(T>0\) and use \[
\frac{e^{-\psi}(1+T^2)}{1+T^2e^{-2\psi}}\le e^\psi\le e^{2\delta}.
\] At \(T=0\) both sides of \eqref{eq:planar-path-11} are zero. For the smoothed profile \(f=2\arctan(Te^{-\psi})\), \(f_t=-\psi'\sin f\). Its contribution to \eqref{eq:planar-path-8} on the interval of length \(2\delta\) is at most \begin{equation}
16\pi\delta e^{4\delta}\beta(T)^2\le\pi\beta(T)^2.
\label{eq:planar-path-12}
\end{equation} The last inequality follows from \(e^{4/100}<2\) and \(32/100<1\). Outside this interval it agrees with the folded profile. Discarding the nonnegative original energy on the interval and using \eqref{eq:planar-path-9}, we get \eqref{eq:planar-path-13}.
\end{proof}

\subsection{Separation of the two orientations}\label{appsrc-planar-path-put-the-two-orientations-at-different-transverse-positions}

Opposite orientations cancel in total degree, but their transverse positions determine the momentum. We therefore use a conformal change of variables to separate them, without changing the Dirichlet energy.

Use the conformal domain map \begin{equation}
M(\zeta)=\frac{\zeta+i}{\zeta-i}.
\label{eq:planar-path-14}
\end{equation} It maps \(-i\) to zero, \(i\) to infinity, and the spatial point at infinity to \(1\). Let \(\widetilde v_T(\zeta)=H_T(M(\zeta))\), using the smooth extensions at \(\zeta=\pm i\). Dirichlet energy is unchanged by this conformal reparametrisation. At every regular finite point the derivative of \(M\) is a scalar times an orthogonal orientation-preserving matrix, and the square of that scalar is its Jacobian. These factors cancel in the change of variables for \(\frac12\int|\nabla H_T|^2\). Removing disks about the two coordinate poles and taking their radii to zero proves the equality for the full integral.

Write \(a_T=T e^{-\psi(0)}\). The value at spatial infinity is the equatorial point \[
p_T=\mathcal S(a_T)
=(\cos(2\arctan a_T),\sin(2\arctan a_T),0).
\] Let \(\mathcal R_T\in SO(3)\) be rotation about \(e_3\) through angle \(-2\arctan a_T\), and set \begin{equation}
q_T=\mathcal R_T\widetilde v_T.
\label{eq:planar-path-15}
\end{equation} Then \(q_T(\infty)=e_1\). These target rotations preserve Dirichlet energy, the third component, and signed vorticity. In particular \eqref{eq:planar-path-13} applies to \(q_T\).

\begin{lemma}[Uniform decay at infinity]\label{lem:path-tail}
There are constants \(R_0,C<\infty\), independent of \(T\ge0\), such that for \(|\zeta|\ge R_0\), \begin{equation}
|q_T(\zeta)-e_1|\le\frac{C\beta(T)}{|\zeta|},
\qquad
|\nabla q_T(\zeta)|\le\frac{C\beta(T)}{|\zeta|^2}.
\label{eq:planar-path-16}
\end{equation}
\end{lemma}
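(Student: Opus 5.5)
The plan is to reduce everything to the behaviour of $h_T\circ M$ near the spatial point at infinity, where $M(\zeta)\to1$. For $|\zeta|$ large, $M(\zeta)=1+\frac{2i}{\zeta-i}$. Hence $w=M(\zeta)$ lies in a small disk about $1$, so $\log|w|$ lies in $(-\delta,\delta)$. Concretely, I will choose $R_0$ with $|M(\zeta)-1|\le 4/|\zeta|\le\delta/2$ for $|\zeta|\ge R_0$. On this region $h_T(w)=Te^{-\psi(\log|w|)}w/|w|$ is a smooth function of $w$ on a fixed compact neighbourhood $U$ of $1$. It has the form $T\,g(w)$, where $g(w)=e^{-\psi(\log|w|)}w/|w|$ is smooth and independent of $T$. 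In particular $|h_T(w)-a_T|\le CT|w-1|$ and $|\nabla_w h_T|\le CT$ on $U$, with $C$ independent of $T$.

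Next I would compose with the target map and rotation. Write $q_T=\mathcal R_T\mathcal S(Tg(w))$, where $\mathcal R_T$ is designed so that $\mathcal R_T\mathcal S(a_T)=e_1$. The key point is that the derivative of $\mathcal S$ at $h$ has size $2/(1+|h|^2)$, since $\mathcal S$ is conformal with this factor. For $h$ in the range of $Tg$ on $U$, we have $|h|\in[Te^{-2\delta},T]$ because $\psi\le2\delta$ there and $\psi\ge0$. Therefore $1+|h|^2\ge e^{-4\delta}(1+T^2)$, and so $|D\mathcal S(h)|\le C/(1+T^2)$ uniformly on the segment joining $a_T$ to $Tg(w)$. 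Since that segment may leave the range of $Tg$, I will check that the modulus bound still holds along it. The segment joins two points of modulus between $Te^{-2\delta}$ and $T$ that are within $CT\delta$ of each other. Its points therefore have modulus at least $T(e^{-2\delta}-C\delta)\ge T/2$, which still gives $1+|h|^2\ge(1+T^2)/4$. Alternatively, I can integrate along the image curve $s\mapsto Tg(\gamma(s))$ for a straight path $\gamma$ from $1$ to $w$ inside $U$, which avoids the issue entirely.

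Combining the mean value theorem with the rotation being an isometry gives
\[
|q_T(\zeta)-e_1|\le\frac{C}{1+T^2}\,T\,|M(\zeta)-1|\le\frac{C\beta(T)}{|\zeta|}.
\]
For the gradient, the chain rule gives $|\nabla q_T|\le|D\mathcal S|\,|\nabla_w h_T|\,|M'(\zeta)|$. Here $|M'(\zeta)|=2/|\zeta-i|^2\le C/|\zeta|^2$, so
\[
|\nabla q_T|\le\frac{C}{1+T^2}\cdot CT\cdot\frac{C}{|\zeta|^2}=\frac{C\beta(T)}{|\zeta|^2}.
\]
The case $T=0$ is trivial, since then $q_0\equiv e_1$.

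The main obstacle is obtaining the factor $\beta(T)$ uniformly in $T$ rather than merely a factor of $T$. For large $T$ the map $h_T$ has large values, and the decay must come from the conformal factor of $\mathcal S$. This requires the lower bound on $|h|$ along the integration path, which is exactly where the smallness $\delta=1/100$ and the bound $0\le\psi\le2\delta$ on $|t|\le\delta$ are used.
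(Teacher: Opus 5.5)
Your proof is correct and follows essentially the same route as the paper. Both arguments bound the composite derivative $|D_w\mathcal S(TF(w))|\le 2T|DF|/(1+T^2|F|^2)\le C\beta(T)$ on a fixed neighbourhood of $w=1$, and then use $M(\zeta)-1=2i/(\zeta-i)$, $|M'(\zeta)|\le C|\zeta|^{-2}$ and orthogonality of $\mathcal R_T$. The paper takes your ``alternative'': it integrates the composite map along straight segments in the domain neighbourhood. This makes your extra modulus check along a target-space segment unnecessary.
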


\begin{proof}
On a sufficiently small convex neighbourhood \(U\) of \(w=1\), put \[
F(w)=e^{-\psi(\log|w|)}w/|w|.
\] This smooth complex function and its first derivatives are bounded, and \(0<b\le|F(w)|\le B<\infty\). The differential of \eqref{eq:planar-path-6}, applied to an increment \(\xi\), has norm \(2|\xi|/(1+|h|^2)\). Hence \[
|D_w\mathcal S(TF(w))|\le
\frac{2T|DF(w)|}{1+T^2|F(w)|^2}
\le C\frac{T}{1+T^2}=C\beta(T).
\] Integrate this bound along segments in \(U\) to get \(|H_T(w)-H_T(1)|\le C\beta(T)|w-1|\). Finally, \[
M(\zeta)-1=\frac{2i}{\zeta-i},\qquad
|DM(\zeta)|\le C|\zeta|^{-2}
\] for sufficiently large \(|\zeta|\). Orthogonality of \(\mathcal R_T\) yields \eqref{eq:planar-path-16}.
\end{proof}

\subsection{Compact truncation of the family}\label{appsrc-planar-path-make-the-family-exactly-constant-outside-one-fixed-ball}

The decay estimate allows the entire family to be made exactly equal to $e_1$ outside one fixed ball. The cutoff error is small enough to preserve the strict energy bound.

Take a smooth radial cutoff \(\chi_R\), equal to one on \(B_R\), zero outside \(B_{2R}\), with \(0\le\chi_R\le1\) and \(|\nabla\chi_R|\le C_0/R\). Choose \(R\ge R_0\) large enough that \eqref{eq:planar-path-16} makes \(|q_T-e_1|\le1/4\) on \(|x|\ge R\) for every \(T\); use \(\beta(T)\le1/2\). Define \begin{equation}
v_T(x)=
\frac{e_1+\chi_R(x)(q_T(x)-e_1)}
{|e_1+\chi_R(x)(q_T(x)-e_1)|}.
\label{eq:planar-path-17}
\end{equation} On \(B_R\) this is exactly \(q_T\), with denominator one. On the transition annulus the denominator is at least \(3/4\). Outside \(B_{2R}\) it is \(e_1\). It is a smooth sphere-valued family with fixed support for \(v_T-e_1\).

\begin{lemma}[Compact comparison family]\label{lem:path-family}
There is a smooth one-parameter family \(v_T\in\mathcal C\), \(T\ge0\), with one fixed compact support for \(v_T-e_1\), for which \begin{equation}
v_0=e_1,\qquad D(v_T)\le8\pi-\frac{6\pi}{1+T^2}<8\pi.
\label{eq:planar-path-4}
\end{equation}  Every $v_T$ has degree zero.
\end{lemma}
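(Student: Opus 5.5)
We take $v_T$ to be the truncated family \eqref{eq:planar-path-17}. Smoothness in $T$, $v_0=e_1$, and the fixed support in $B_{2R}$ follow from the construction. At $T=0$ we have $h_0\equiv0$, so $H_0\equiv e_1$ and $a_0=0$. Hence $\mathcal R_0=\mathrm{id}$, $q_0\equiv e_1$, and $v_0=e_1$. For the degree, note that $H_T$ is homotopic to a constant through the family $H_{sT}$, $s\in[0,1]$, which consists of smooth maps of the domain sphere. Precomposition with the Möbius map $M$, target rotation, and the radial cutoff construction preserve degree along this homotopy. So the degree is constant in $T$ and equals its value $0$ at $T=0$. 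Alternatively, $H_T$ has degree $+1$ on $|w|<1$ and $-1$ on $|w|>1$, which also gives zero.

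The main step is the energy bound. On $B_R$ we have $v_T=q_T$. Conformal invariance and \cref{lem:path-fold} give
\[
D(q_T)\le 8\pi\tfrac{T^2}{1+T^2}+\pi\beta(T)^2 .
\]
On the annulus $R\le|x|\le 2R$, normalisation is Lipschitz because the denominator is at least $3/4$. Pointwise,
\[
|\nabla v_T|\le C\bigl(|\nabla q_T|+|\nabla\chi_R||q_T-e_1|\bigr)\le C\beta(T)R^{-2},
\]
by \cref{lem:path-tail}. Integrating over the annulus of area $\sim R^2$ gives a cutoff error of at most $C_1\beta(T)^2R^{-2}$. The part of $D(q_T)$ on $|x|\ge R$ is simply discarded, since it is nonnegative.

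It remains to check that
\[
8\pi\tfrac{T^2}{1+T^2}+\pi\beta^2+C_1\beta^2R^{-2}\le 8\pi-\tfrac{6\pi}{1+T^2}.
\]
Since $8\pi\tfrac{T^2}{1+T^2}=8\pi-\tfrac{8\pi}{1+T^2}$, this reduces to
\[
(\pi+C_1R^{-2})\,\tfrac{T^2}{(1+T^2)^2}\le \tfrac{2\pi}{1+T^2},
\]
that is, to $(\pi+C_1R^{-2})T^2\le 2\pi(1+T^2)$. This holds once $R$ is large enough that $C_1R^{-2}\le\pi$. So we enlarge $R$ if necessary, keeping $R\ge R_0$ and the condition $|q_T-e_1|\le 1/4$.

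The expected obstacle is uniformity of the cutoff constant $C_1$ in $T$, including $T\to\infty$. This is supplied by the $\beta(T)$ factor in \cref{lem:path-tail}, whose constants are independent of $T$. The strict inequality $<8\pi$ is immediate from the bound.
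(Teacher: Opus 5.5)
Your proposal is correct and follows the paper's proof essentially step for step. You bound the energy on $B_R$ by the full fold energy of \cref{lem:path-fold} via conformal and rotational invariance. The annulus contributes at most $C\beta(T)^2R^{-2}$ by \cref{lem:path-tail}, and enlarging $R$ makes this at most $\pi\beta(T)^2$. Degree zero then comes from the homotopy $s\mapsto v_{sT}$ starting at $v_0=e_1$.

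Your parenthetical alternative about ``degree $\pm1$'' on $|w|<1$ and $|w|>1$ is not literally meaningful. Neither half sends the circle $|w|=1$ to a single point; only the two signed areas $\pm4\pi a_T^2/(1+a_T^2)$ cancel. The homotopy argument makes this remark unnecessary.
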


\begin{proof}
The differential of \(a\mapsto a/|a|\) has norm at most \(1/|a|\). On the annulus, \eqref{eq:planar-path-16} gives \[
|\nabla v_T|\le C_1\bigl(|\nabla q_T|+R^{-1}|q_T-e_1|\bigr)
\le C_2\beta(T)R^{-2}.
\] Integrating there bounds its new Dirichlet energy by \(C_3\beta(T)^2/R^2\). These constants are independent of \(R\ge R_0\) and \(T\). Increase the fixed \(R\) so that \(C_3/R^2\le\pi\). The energy inside \(B_R\) is bounded by the full energy in \eqref{eq:planar-path-13}, and outside \(B_{2R}\) it vanishes. Consequently \begin{equation}
D(v_T)\le8\pi\frac{T^2}{1+T^2}+2\pi\beta(T)^2
=8\pi-\frac{8\pi}{1+T^2}
+\frac{2\pi T^2}{(1+T^2)^2}
\le8\pi-\frac{6\pi}{1+T^2}.
\label{eq:planar-path-18}
\end{equation} This proves \eqref{eq:planar-path-4}. Every \(v_T\) has degree zero: \(t\mapsto v_{tT}\), \(0\le t\le1\), is a homotopy through smooth maps constant at spatial infinity, starting at \(e_1\).
\end{proof}

\subsection{The limiting potential and momentum}\label{appsrc-planar-path-the-limiting-potential-and-momentum}

We now compute the potential and momentum as $T\to\infty$. The energy approaches $8\pi$, but the potential tends to zero while the momentum remains positive; this makes spatial dilation effective in lowering the action.

\begin{lemma}[Limits of the comparison family]\label{lem:path-limits}
The family in \cref{lem:path-family} satisfies
\begin{equation}
D(v_T)\longrightarrow8\pi,\qquad
Z(v_T)\longrightarrow0,\qquad
\mathsf{p}(v_T)\longrightarrow8\pi
\quad(T\to\infty).
\label{eq:planar-path-5}
\end{equation}
\end{lemma}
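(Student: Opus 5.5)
The plan is to prove the three limits in \eqref{eq:planar-path-5} separately, working on $B_R$, where $v_T=q_T=\mathcal R_T H_T\circ M$, and on the transition annulus $B_{2R}\setminus B_R$, where \cref{lem:path-tail} gives $|v_T-e_1|+|\nabla v_T|\le C\beta(T)\to0$. Since $v_T=e_1$ outside $B_{2R}$, all integrals live on a fixed compact set, so the annulus contributes $O(\beta(T)^2)$ to $D$, $Z$ and $\mathsf p$ (using $|x_2|\le 2R$ and $|q|\le\frac12|\nabla v|^2$). Throughout we use that the target rotation $\mathcal R_T$ preserves the Dirichlet density, the third component and the vorticity density.

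\textbf{Dirichlet energy.} The upper bound $D(v_T)\le 8\pi-6\pi/(1+T^2)$ is \eqref{eq:planar-path-4}. For the lower bound, conformal invariance gives $D(q_T)=D(H_T)$. By \eqref{eq:planar-path-8}, $H_T$ agrees with the folded profile for $|t|\ge\delta$. The folded profile's energy on $|t|\le\delta$ is at most $C\beta(T)^2$ by the same estimate as \eqref{eq:planar-path-11}, so $D(H_T)\ge 8\pi T^2/(1+T^2)-C\beta(T)^2$. The energy of $q_T$ outside $B_R$ is $O(\beta(T)^2/R^2)$ by \eqref{eq:planar-path-16}. Hence $D(v_T)\to8\pi$.

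\textbf{Potential.} Since $|\mathcal S(h)_3|\le 2|h|/(1+|h|^2)\le 2/|h|$ and $|h_T(w)|\ge Te^{-2\delta}\min(|w|,|w|^{-1})$, we have
\[
|(v_T)_3|\le C\min\bigl(1,\,T^{-1}\min(|w|,|w|^{-1})^{-1}\bigr),\qquad w=M(\zeta).
\]
On $B_{2R}$, with $\zeta$ kept a bounded distance from $\pm i$ except near those points, the quantity $\min(|w|,|w|^{-1})$ is comparable to $\min(|\zeta+i|,|\zeta-i|)$, with constants depending only on $R$. Therefore
\[
Z(v_T)\le C\int_{B_{2R}}\min\bigl(1,T^{-2}|\zeta+ i|^{-2}\bigr)+\min\bigl(1,T^{-2}|\zeta- i|^{-2}\bigr)+C\beta(T)^2\le C\,\frac{\log T}{T^2}\to0 .
\]

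\textbf{Momentum.} The key observation is that $|w|<1$ exactly when $\operatorname{Im}\zeta<0$. On $|w|<1$ we have $h_T=Tw$ near $0$, which is orientation preserving; on $|w|>1$ we have $h_T=T/\bar w$, which is orientation reversing. The Dirichlet density in the $t$ variable, $f_t^2+\sin^2f$, concentrates at $|t|\approx\log T$, so the energy of $H_T$ outside $\{|w|<\varepsilon\}\cup\{|w|>\varepsilon^{-1}\}$ tends to $0$ for each fixed $\varepsilon$. Pulling back by $M$, the energy of $v_T$ outside small discs $B_\varepsilon(\pm i)$ tends to zero, and hence so does $\int|q(v_T)|$ there. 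The signed area of $\mathcal S\circ h_T$ over $|w|<\varepsilon$ is $+4\pi$ times the proportion of the target covered, which tends to $4\pi$; over $|w|>\varepsilon^{-1}$ it tends to $-4\pi$. The sign comes from $\mathcal S\cdot(\mathcal S_a\times\mathcal S_b)>0$ together with the orientation preservation of $M$. Since $|x_2\mp1|\le\varepsilon$ on $B_\varepsilon(\pm i)$ and $\int|q|\le D\le 8\pi$, it follows that
\[
P(v_T)=\int x_2\,q(v_T)\to(-1)(4\pi)+(+1)(-4\pi)=-8\pi,
\]
that is, $\mathsf p(v_T)\to8\pi$.

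The main obstacle is this last step: the concentration bookkeeping for $q$ near $\pm i$, and above all getting the orientation signs right. I would settle the signs by a direct Jacobian computation of $\mathcal S\circ(T\,M(\zeta))$ at $\zeta=-i$.
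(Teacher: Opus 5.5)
Your proposal is correct and follows essentially the paper's route. The signed area concentrates as $+4\pi$ at $-i$, where $h_T=TM$ is orientation preserving, and as $-4\pi$ at $i$, where $h_T=T/\overline{M}$ is orientation reversing; the first transverse moments on the fixed compact support then converge, giving $\mathsf p(v_T)\to8\pi$.

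The differences are only technical, and both variants are valid:
\begin{itemize}
\item You show that the energy away from $\pm i$ vanishes directly from the explicit bound $f_t^2+\sin^2 f=O(e^{2|t|}T^{-2})$ on bounded $|t|$. The paper instead subtracts the two $4\pi$ core energies from the upper bound $8\pi$.
\item You obtain $Z(v_T)\to0$ from a quantitative $O(T^{-2}\log T)$ estimate, where the paper uses dominated convergence.
\end{itemize}
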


\begin{proof}
At every fixed finite \(\zeta\ne\pm i\), \(h_T(M(\zeta))/T\ne0\), so \(\mathcal S(h_T(M(\zeta)))\to-e_1\). Also \(\mathcal R_T\) tends to rotation through angle \(-\pi\); hence \(q_T(\zeta)\to e_1\). At \(\zeta=\pm i\), the third component is zero for every \(T\). Formula \eqref{eq:planar-path-17} gives \(v_T\to e_1\) almost everywhere and \((v_T)_3\to0\) everywhere. Since \(|(v_T)_3|\le1\) and its support is in the fixed ball \(B_{2R}\), dominated convergence proves \begin{equation}
Z(v_T)\to0.
\label{eq:planar-path-19}
\end{equation}

Choose disjoint small fixed disks \(U_-,U_+\) about \(-i,i\), contained in \(B_R\), such that \(|M|<e^{-\delta}\) on \(U_-\) and \(|M|>e^\delta\) on \(U_+\). On the first disk the unrotated formula is exactly \(\mathcal S(TM(\zeta))\); on the second it is \(\mathcal S(T\overline{1/M(\zeta)})\).

On \(U_-\), \(M\) is holomorphic and one-to-one, with a simple zero. Its image contains \(B_b(0)\) and is contained in \(B_B(0)\), for some \(0<b<B<\infty\). Change variables to \(h=a+ib'=TM(\zeta)\). Formula \eqref{eq:planar-path-6} gives \[
\int_{|h|<Tb}\frac{4\,da\,db'}{(1+|h|^2)^2}
\le D(v_T;U_-)
\le\int_{|h|<TB}\frac{4\,da\,db'}{(1+|h|^2)^2}.
\] Both bounds tend to \(4\pi\) by radial integration. The map is conformal and orientation preserving there, so signed vorticity equals Dirichlet energy. On \(U_+\), the holomorphic local coordinate \(1/M\), followed by complex conjugation, gives energy tending to \(4\pi\) and signed vorticity tending to \(-4\pi\).

The upper bound \eqref{eq:planar-path-18} implies \(D(v_T)\to8\pi\) and \begin{equation}
D(v_T;\mathbb R^2\setminus(U_-\cup U_+))\to0.
\label{eq:planar-path-20}
\end{equation} Since \(|q(v_T)|\le\frac12|\nabla v_T|^2\), total vorticity variation outside these disks tends to zero. Taking the disks arbitrarily small proves \begin{equation}
q(v_T)\,dx\ \rightharpoonup\ 4\pi\delta_{(0,-1)}-4\pi\delta_{(0,1)}.
\label{eq:planar-path-21}
\end{equation} The measures have common compact support, so their first transverse moments converge. The sign convention \eqref{eq:planar-path-1} yields \begin{equation}
\mathsf{p}(v_T)\longrightarrow-4\pi(-1)+4\pi(1)=8\pi.
\label{eq:planar-path-22}
\end{equation} This proves \eqref{eq:planar-path-5}.
\end{proof}

\subsection{A path uniform in the speed}\label{appsrc-planar-path-one-common-path-for-every-speed-in-a-compact-interval}

We combine the comparison family with spatial dilation. First we create the profile at a small scale, then enlarge a fixed member until its action is negative, keeping the maximal action below $8\pi$.

\begin{proof}[Proof of \cref{prop:planar-path}]
Put \(c_-=\min K>0\), \(c_+=\max K<1\). Choose finite \(T_*>0\) so large that, with \[
d_*=D(v_{T_*}),\quad v_*=Z(v_{T_*}),\quad p_*=\mathsf{p}(v_{T_*}),
\] one has \(p_*>0\), \(v_*>0\), and \begin{equation}
\frac{c_-^2p_*^2}{2v_*}>d_*+1.
\label{eq:planar-path-23}
\end{equation} These choices follow from \eqref{eq:planar-path-5}. If a smooth map has identically zero third component, both spatial derivatives lie in the one-dimensional tangent line to the equator, so its vorticity and momentum vanish. Thus \(p_*>0\) guarantees \(v_*>0\).

For \(\sigma>0\), spatial dilation \(v^\sigma(x)=v(x/\sigma)\) gives \[
D(v^\sigma)=D(v),\quad Z(v^\sigma)=\sigma^2Z(v),\quad
\mathsf{p}(v^\sigma)=\sigma \mathsf{p}(v),
\] \begin{equation}
J_c(v^\sigma)=D(v)+\frac12\sigma^2Z(v)-c\sigma \mathsf{p}(v).
\label{eq:planar-path-24}
\end{equation} For momentum the factor \(\sigma\) is the product of the factor in \(x_2\), the two inverse derivative factors, and the two-dimensional Jacobian.

Set \(\sigma_*=c_-p_*/v_*>0\). Then \begin{equation}
J_c(v_{T_*}^{\sigma_*})\le J_{c_-}(v_{T_*}^{\sigma_*})
=d_*-\frac{c_-^2p_*^2}{2v_*}<-1
\quad(c\in K).
\label{eq:planar-path-25}
\end{equation} The family \(v_T\), \(0\le T\le T_*\), is smooth in \(T,x\) and has common compact support. Hence its potentials and momenta are continuous and bounded. Set \[
\Delta=\frac{6\pi}{1+T_*^2}>0,\quad
Z_{\max}=\max_{0\le T\le T_*}Z(v_T),\quad
\mathsf{p}_{\max}=\max_{0\le T\le T_*}|\mathsf{p}(v_T)|.
\] Choose \(0<\sigma_0<\sigma_*\) such that \[
\frac12\sigma_0^2Z_{\max}+c_+\sigma_0\mathsf{p}_{\max}<\Delta/2.
\] The creation path \(T\mapsto v_T^{\sigma_0}\) has, by \eqref{eq:planar-path-18} and \eqref{eq:planar-path-24}, \begin{equation}
J_c(v_T^{\sigma_0})\le8\pi-\Delta/2
\quad(0\le T\le T_*,\ c\in K).
\label{eq:planar-path-26}
\end{equation} Then dilate the fixed profile from \(\sigma_0\) to \(\sigma_*\). For every speed in \(K\), \begin{equation}
\frac{d}{d\sigma}J_c(v_{T_*}^{\sigma})
=\sigma v_*-cp_*
\le\sigma_*v_*-c_-p_*=0
\quad(\sigma_0\le\sigma\le\sigma_*).
\label{eq:planar-path-27}
\end{equation} This second part does not increase the action.

Parametrise the two parts continuously on \([0,1/2]\) and \([1/2,1]\). They agree at the join. Smoothness for finite \(T\), smooth dependence on positive dilations, and common support inside \(B_{2R\sigma_*}\) give the stated norm continuity. Take \(R_K=2R\sigma_*\), \(\eta_K=\Delta/2\). Equations \eqref{eq:planar-path-25}--\eqref{eq:planar-path-27} prove \eqref{eq:planar-path-2}--\eqref{eq:planar-path-3}. Degree zero is preserved by dilation. This completes the proof.
\end{proof}

\section{Mountain-pass geometry and small-energy coercivity}\label{app:planar-window}

We now prove that the action is bounded below by a positive multiple of the
energy when the energy is sufficiently small. This gives the positive barrier
needed for the mountain-pass argument. Together with the comparison path
constructed in Section~\ref{app:planar-path}, it yields uniform lower and upper
bounds for the regularised mountain-pass level.

Let \(\mathcal C_0\) consist of all smooth maps \(u:\mathbb R^2\to S^2\) equal to \(e_1\) outside a compact set and satisfying \[
\int_{\mathbb R^2}u\cdot(\partial_1 u\times \partial_2 u)=0.
\] This is the degree-zero condition for their smooth extensions to the compactified plane. No spatial symmetry or avoidance of target poles is assumed. 

We retain the notation of Section~\ref{app:planar-path} and write
\begin{equation}
Q(u)=\frac14\int|\nabla u|^4,\qquad
J_{c,\eta}(u)=J_c(u)+\eta Q(u)\quad(\eta\ge0).
\label{eq:planar-window-1}
\end{equation}

\subsection{The area primitive}

To estimate momentum without using the size of the support, we integrate the vorticity along the propagation direction. The resulting area primitive can be bounded by the Dirichlet energy and compared with each longitudinal slice.
Fix \(u\in\mathcal C_0\) and set \begin{equation}
p(x_2)=\int_{-\infty}^{x_2}\int_{\mathbb R}q(u)(x_1,t)\dd x_1\,dt.
\label{eq:planar-window-6}
\end{equation} 
\begin{lemma}[The area primitive]\label{lem:window-area}
The function $p$ is smooth and compactly supported, $\mathsf p(u)=\int p$, and $\int|p\prime|\le D(u)$. In particular $|p(x_2)|\le D(u)/2$.
\end{lemma}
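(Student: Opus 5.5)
The plan is to compute directly with the slice integral
\[
g(x_2)=\int_{\mathbb R}q(u)(x_1,x_2)\dd x_1 ,
\]
which is the integrand in \eqref{eq:planar-window-6}, so that $p(x_2)=\int_{-\infty}^{x_2}g(t)\dd t$ and $p'=g$. Every assertion of the lemma then reduces to one-variable calculus on $g$.

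\textbf{Smoothness and compact support.} Since $u\in\mathcal C_0$ is smooth and equal to $e_1$ outside a compact set, $q(u)$ is smooth with compact support, contained in some square $[-L,L]^2$. Differentiation under the integral sign then shows that $g$ is smooth and vanishes for $|x_2|>L$, so $p$ is smooth. For $x_2<-L$ the defining integral is over a region where $q(u)=0$, so $p(x_2)=0$. For $x_2>L$ we have $p(x_2)=\int_{\mathbb R^2}q(u)=0$ by the degree-zero condition defining $\mathcal C_0$. Hence $p$ is supported in $[-L,L]$.

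\textbf{Momentum identity.} Recall that $\mathsf p=-P$ and $P(u)=\int x_2\,q(u)$. Fubini gives $P(u)=\int_{\mathbb R} x_2\,g(x_2)\dd x_2=\int x_2\,p'(x_2)\dd x_2$. Integrating by parts, with no boundary terms because $p$ has compact support, yields $P(u)=-\int p$, that is, $\mathsf p(u)=\int p$. The only point needing care is the sign convention from \eqref{eq:planar-path-1}.

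\textbf{Bounds.} Pointwise, $|q(u)|=|u\cdot(\partial_1u\times\partial_2u)|\le|\partial_1u||\partial_2u|\le\tfrac12|\nabla u|^2$. Therefore
\[
\int|p'|\le\int_{\mathbb R^2}|q(u)|\le D(u).
\]
For the sup bound, write $p(x_2)$ in two ways, using that $p$ vanishes at both ends:
\[
p(x_2)=\int_{-\infty}^{x_2}p'=-\int_{x_2}^{\infty}p'.
\]
Then $|p(x_2)|$ is at most the minimum of the two partial integrals of $|p'|$. Their sum is at most $D(u)$, so the minimum is at most $D(u)/2$.

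There is no real obstacle in this lemma. The one essential input is the degree-zero condition, which is exactly what makes $p$ vanish at $+\infty$. That vanishing gives both the compact support (and hence the boundary-free integration by parts) and the factor $1/2$ in the sup bound.
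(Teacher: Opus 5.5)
Your proof is correct and follows the paper's argument essentially step by step: compact support of $p$ from the degree-zero condition, the identity $\mathsf p(u)=\int p$ by integrating $x_2p'$ by parts with the convention $\mathsf p=-P$, the pointwise bound $|q(u)|\le\frac12|\nabla u|^2$, and splitting the total variation of $p$ at $x_2$ to get the factor $1/2$. Nothing is missing.
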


\begin{proof}
Because \(u-e_1\) has compact support and \(\int q(u)=0\), \(p\) is a smooth compactly supported real function. Integration by parts gives \(\mathsf{p}(u)=\int p(x_2)\dd x_2\). Moreover, \begin{equation}
\int|p'(x_2)|\dd x_2
\le\int|q(u)|\le\frac12\int(|\partial_1 u|^2+|\partial_2 u|^2)=D(u).
\label{eq:planar-window-7}
\end{equation} Since \(p\) begins and ends at zero, for every \(x_2\) its total variation is at least \(2|p(x_2)|\): the variation on each of \((-\infty,x_2)\) and \((x_2,\infty)\) is at least \(|p(x_2)|\). Hence \begin{equation}
|p(x_2)|\le D(u)/2.
\label{eq:planar-window-8}
\end{equation}
\end{proof}

For the next two lemmas, assume $0<D(u)\le2$ and use the following notation.

Set \begin{equation}
A=D(u)/2\in(0,1],\qquad
h_*=1-D(u)/4=1-A/2\in[1/2,1),
\qquad H_*=\sqrt{1-h_*^2}\in(0,1).
\label{eq:planar-window-9}
\end{equation} For a fixed \(x_2\), let \(q(x_1)=u(x_1,x_2)\), \(h(x_1)=q_3(x_1)\), and \begin{equation}
\ell(x_2)=\int_{\mathbb R}|h(x_1)|\,|q'(x_1)|\dd x_1.
\label{eq:planar-window-10}
\end{equation} All these one-dimensional functions are smooth and constant outside a bounded interval; the interval can be chosen independently of \(x_2\).
\subsection{The energy cost of a loop reaching a fixed height}\label{appsrc-planar-window-a-longitudinal-loop-reaching-a-specified-height}

We first consider slices that reach height $H_*$. Their excursion away from the equator already forces enough weighted variation to control the area primitive.

\begin{lemma}[Height-crossing loops]\label{lem:window-height}
If $\sup_{x_1}|h(x_1)|\ge H_*$, then $\ell(x_2)\ge A$ and $|p(x_2)|\le h_*^{-1}\ell(x_2)$.
\end{lemma}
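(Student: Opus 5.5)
The statement follows from a latitude computation along the slice combined with the bound $|p(x_2)|\le D(u)/2$ of \cref{lem:window-area}; no information about $p$ beyond that lemma should be needed.

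\textbf{Step 1: lower bound for $\ell(x_2)$.} Fix $x_2$ and write the slice curve $q(x_1)$ in latitude form, $h(x_1)=\sin\varphi(x_1)$ with $\varphi\in[-\pi/2,\pi/2]$. The latitude is $1$-Lipschitz for the spherical distance, so $|q'|\ge|\varphi'|$ wherever $\varphi$ is differentiable. Where $|h|<1$ this is clear, since $|h'|=\cos\varphi\,|\varphi'|$ and $|h'|\le|q'|\cos\varphi$ follows from $|\nabla_{S^2}q_3|=\sqrt{1-q_3^2}$. To sidestep regularity issues at the poles, I would instead work directly with $g(s)=1-\sqrt{1-s^2}$. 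This function is Lipschitz on $[0,1]$ with $|g'(s)|=s/\sqrt{1-s^2}$, and I claim $|\tfrac{d}{dx_1}g(|h|)|\le|h||q'|$ almost everywhere. This holds because $|h'|\le\sqrt{1-h^2}\,|q'|$, which follows from the orthogonal decomposition of $e_3$ into its components along $q$ and tangent to the sphere.

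\textbf{Step 2: up-and-down crossing.} Since $q=e_1$ outside a bounded interval, $h$ vanishes near $\pm\infty$. The hypothesis gives a point $x_1^*$ with $|h(x_1^*)|\ge H_*$. Integrating the inequality from Step 1 over $(-\infty,x_1^*)$ and over $(x_1^*,\infty)$ gives
\[
\ell(x_2)\ge 2\,g(H_*)=2\bigl(1-\sqrt{1-H_*^2}\bigr)=2(1-h_*)=A,
\]
using $\sqrt{1-H_*^2}=h_*$ and $h_*=1-A/2$ from \eqref{eq:planar-window-9}. This is the first assertion.

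\textbf{Step 3: the momentum primitive.} By \cref{lem:window-area}, $|p(x_2)|\le D(u)/2=A$. Step 2 gives $A\le\ell(x_2)$, and $h_*\in[1/2,1)$ gives $\ell(x_2)\le h_*^{-1}\ell(x_2)$. Chaining these yields $|p(x_2)|\le h_*^{-1}\ell(x_2)$.

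The only delicate point is Step 1: the pointwise inequality must be justified, including possible passages through the poles where the latitude chart degenerates. Using the function $g(|h|)$ rather than $\varphi$ handles this, and the one-dimensional slice is smooth, so the integration is routine.
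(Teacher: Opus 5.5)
Your argument is correct and is essentially the paper's proof: the pointwise bound $|h'|\le\sqrt{1-h^2}\,|q'|$, an up-and-down crossing giving $\ell(x_2)\ge 2\bigl(1-\sqrt{1-H_*^2}\bigr)=A$, and then the chain $|p(x_2)|\le D(u)/2=A\le\ell(x_2)\le h_*^{-1}\ell(x_2)$.

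There is one slip in the step you single out as delicate. The function $g(s)=1-\sqrt{1-s^2}$ is not Lipschitz on $[0,1]$, since $g'(s)=s/\sqrt{1-s^2}\to\infty$ as $s\to1$. So switching to $g(|h|)$ does not by itself justify integrating up to a point $x_1^*$ that lies at a pole.

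The paper's fix is the clean one. Integrate only up to the first point, from the left, and from the last point, on the right, where $|h|=H_*$. On both ranges $|h|\le H_*<1$, and $g$ is Lipschitz there. Alternatively, $1-h^2$ is smooth and nonnegative, so $\sqrt{1-h^2}$ is locally Lipschitz by Glaeser's inequality.
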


\begin{proof}
Suppose \(\sup_{x_1}|h(x_1)|\ge H_*\). The constraint \(|q|=1\) gives \[
|h'|\le\sqrt{1-h^2}\,|q'|
\] where \(|h|<1\), because the tangent projection of \(e_3\) has length \(\sqrt{1-h^2}\). Take a portion of the curve going from height zero to either height \(H_*\) or height \(-H_*\), and a subsequent portion returning to zero. The one-dimensional chain rule and total variation imply \begin{equation}
\ell(x_2)\ge2\int_0^{H_*}\frac{t}{\sqrt{1-t^2}}\,dt
=2(1-\sqrt{1-H_*^2})=2(1-h_*)=A.
\label{eq:planar-window-11}
\end{equation} One can stop the two portions at their first crossing of magnitude \(H_*\), so no behaviour at a pole is needed in this argument. If the actual curve oscillates, its weighted variation only increases. By \eqref{eq:planar-window-8}, \(|p(x_2)|\le A\), so in this case \begin{equation}
|p(x_2)|\le\ell(x_2)\le h_*^{-1}\ell(x_2).
\label{eq:planar-window-12}
\end{equation}
\end{proof}

\subsection{Loops in an equatorial belt}\label{appsrc-planar-window-a-loop-confined-to-a-height-belt-and-its-area-branch}

For a slice confined to the equatorial belt, a phase lift gives a different estimate. We must also keep track of the integer ambiguity between its phase integral and the area primitive.

\begin{lemma}[Loops confined to the belt]\label{lem:window-belt}
If $\sup_{x_1}|h(x_1)|<H_*$, then $|p(x_2)|\le h_*^{-1}\ell(x_2)$.
\end{lemma}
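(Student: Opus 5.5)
The approach is to lift the slice to belt coordinates, compute the natural primitive of the area form along it, and then deal with the integer ambiguity between that primitive and $p(x_2)$ by a dichotomy rather than a continuity argument.

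\textbf{Belt coordinates and the phase primitive.} Fix $x_2$ with $\sup_{x_1}|h|<H_*$. Since $H_*<1$, the slice $q(x_1)=u(x_1,x_2)$ stays in a closed equatorial belt away from the poles. We write
\[
q=\bigl(\sqrt{1-h^2}\cos\theta,\sqrt{1-h^2}\sin\theta,h\bigr),
\]
with a continuous (smooth) lift $\theta(x_1)$ normalised by $\theta=0$ for $x_1\to-\infty$. Because $q\equiv e_1$ outside a bounded interval, $\theta$ is eventually constant on the right as well, equal to $2\pi n$ for some integer winding $n$. In these coordinates $\omega=d\theta\wedge dh$, with primitive $-h\,d\theta$ on the belt. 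Set
\[
a(x_2)=-\int_{\mathbb R}h\,\theta'\dd x_1 .
\]
Since $|q'|^2=h'^2/(1-h^2)+(1-h^2)\theta'^2\ge(1-h^2)\theta'^2$ and $1-h^2> 1-H_*^2=h_*^2$, we get $|\theta'|\le h_*^{-1}|q'|$. Therefore $|a(x_2)|\le h_*^{-1}\ell(x_2)$. It remains to compare $a(x_2)$ with $p(x_2)$.

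\textbf{The congruence $p(x_2)\equiv a(x_2)\pmod{4\pi}$.} We glue two pieces. The first is the restriction of $u$ to the half-plane $\{t\le x_2\}$; its vorticity integral is exactly $p(x_2)$. The second is a filling of the boundary loop inside the belt, given by the homotopy $q_s=(sh,s\theta)$ in belt coordinates, $s\in[0,1]$. This homotopy contracts the loop (viewed on the compactified line, with both ends at $e_1$) to $e_1$; when $n\neq0$ we first unwind the phase by the linear homotopy of the lift, which is legitimate since $e_1$ corresponds to every $\theta\in2\pi\mathbb Z$. Pulling back $\omega=d\theta\wedge dh$ and integrating over the filling gives $\pm\int h\,d\theta=\mp a(x_2)$ up to orientation, by the same transgression computation as in the proof of \cref{prop:thin-belt}, now in the belt chart. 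The glued map is a continuous, piecewise smooth map from a compactified disk (a sphere) to $S^2$, equal to $e_1$ near the glued boundary. Its total signed area is therefore $4\pi$ times an integer degree, and with the orientation fixed correctly this yields
\[
p(x_2)=a(x_2)+4\pi k,\qquad k\in\mathbb Z .
\]
Checking signs and orientations here, including the unwinding step when $n\neq0$, is the main technical point.

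\textbf{Removing the ambiguity by a dichotomy.} By \cref{lem:window-area}, $|p(x_2)|\le D(u)/2\le 1$. If $k=0$, then $|p(x_2)|=|a(x_2)|\le h_*^{-1}\ell(x_2)$ and we are done. If $k\ne0$, then $|a(x_2)|\ge 4\pi-|p(x_2)|\ge4\pi-1>1\ge|p(x_2)|$. Combined with $|a(x_2)|\le h_*^{-1}\ell(x_2)$ from the first step, this again gives $|p(x_2)|\le h_*^{-1}\ell(x_2)$. So the integer $k$ never needs to be identified. The only genuine obstacle is justifying the congruence cleanly, since the smooth lift and the filling must be matched with the sign convention in \eqref{eq:planar-window-6}.
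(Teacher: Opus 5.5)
Your overall route is the paper's: the phase lift, the primitive $-h\,d\theta$ of $\omega$, and a dichotomy on the integer ambiguity using $|p(x_2)|\le D(u)/2\le 1$. However, the congruence step fails as written.

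When the winding number $n$ is nonzero, the slice winds $n$ times around the belt. The belt is an annulus, so the slice cannot be contracted inside it at all. Your homotopy $(sh,s\theta)$ sends the right end to $(\cos 2\pi ns,\sin 2\pi ns,0)\neq e_1$ for $0<s<1$. The intermediate curves are therefore not closed, and no filling disk is produced; ``unwinding the lift'' is precisely the impossible step. Any filling must pass over a pole.

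Use the primitive $(1-h)\,d\theta$ of $\omega$, which is smooth on $S^2\setminus\{-e_3\}$. A filling avoiding $-e_3$ then has area $2\pi n-\int h\theta'\dd x_1$. Gluing it to the lower half-plane, whose boundary is the reversed loop, gives $p(x_2)+2\pi n-\int h\theta'\dd x_1\in 4\pi\mathbb Z$. So you only obtain $p(x_2)=\int h\theta'\dd x_1+2\pi m$, a congruence modulo $2\pi$. For odd $n$ your claimed $4\pi$-congruence is false with either sign, since it would force $p(x_2)\equiv\pi \pmod{2\pi}$, contradicting $|p(x_2)|\le1$.

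Your sign is also reversed. When the whole map lies in the belt, integrating $q(u)=\partial_2(h\partial_1\phi)-\partial_1(h\partial_2\phi)$ over $\{t<x_2\}$ gives $p(x_2)=+\int h\theta'=-a(x_2)$; compare \eqref{eq:planar-selector-9}. This is harmless, because only $|a|$ enters the final bound.

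The repair is cheap, and it is exactly what the paper does: run your dichotomy with $2\pi$ in place of $4\pi$. If $m\neq0$, then $|a(x_2)|\ge 2\pi-|p(x_2)|\ge 2\pi-1>1\ge|p(x_2)|$. Together with $|a(x_2)|\le h_*^{-1}\ell(x_2)$, this gives the claim. The paper phrases the same step as $|p(x_2)|\le A\le 2\pi-A\le|b(x_2)|\le h_*^{-1}\ell(x_2)$.
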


\begin{proof}
Now suppose \(\sup_{x_1}|h(x_1)|<H_*\). The loop \(q\) has a smooth real phase lift \[
q(x_1)=(\sqrt{1-h(x_1)^2}\cos\theta(x_1),
\sqrt{1-h(x_1)^2}\sin\theta(x_1),h(x_1)),
\] chosen so that \(\theta(-\infty)=0\). At the other constant end, \(\theta(+\infty)=2\pi n\) for some integer \(n\). Define \begin{equation}
b(x_2)=\int_{\mathbb R}h(x_1)\theta'(x_1)\dd x_1.
\label{eq:planar-window-13}
\end{equation} Since \(|q'|^2=h'^2/(1-h^2)+(1-h^2)\theta'^2\) and \(\sqrt{1-h^2}\ge h_*\), \begin{equation}
|b(x_2)|\le h_*^{-1}\ell(x_2).
\label{eq:planar-window-14}
\end{equation}

We next relate the area primitive to the phase integral. Orient the sphere by \(\omega_q(v_1,v_2)=q\cdot(v_1\times v_2)\). In the height/phase coordinates, \(\omega=d\theta\wedge dh\). The one-form \[
\alpha=(1-h)\,d\theta
\] extends smoothly across the north pole to all of \(S^2\setminus\{-e_3\}\), satisfies \(d\alpha=\omega\), and has \begin{equation}
\int_q\alpha=2\pi n-b(x_2).
\label{eq:planar-window-15}
\end{equation} This extension can be checked in the horizontal Cartesian coordinates near \(h=1\): \(1-h\) vanishes quadratically and cancels the angular-coordinate singularity.

The loop \(q\) avoids \(-e_3\) and admits a smooth filling disk in \(S^2\setminus\{-e_3\}\): in its stereographic identification with \(\mathbb R^2\), contract the loop to a point, taking the contraction constant on a neighbourhood of the disk centre. Stokes gives its oriented area as \eqref{eq:planar-window-15}. On the other hand \(u\) on the lower half-plane up to height \(x_2\), capped along its constant part at infinity, is a disk whose area is \(p(x_2)\) and whose boundary is the reversed loop \(q\): the induced orientation of the upper horizontal edge of an oriented \((x_1,x_2)\)-domain is decreasing \(x_1\).

Glue the two fillings along their common loop. Their sum of areas is an integer multiple of \(4\pi\). For completeness, the glued continuous sphere map is piecewise smooth; it can be smoothed in a collar of the seam with area-integral convergence. For smooth maps of the sphere, the integral of the pullback of \(\omega\) equals \(4\pi\) times the integer topological degree, by the change-of-variables degree formula for regular values. Smoothing preserves degree. It follows that \[
p(x_2)+2\pi n-b(x_2)\in4\pi\mathbb Z.
\] In particular, for some integer \(m=m(x_2)\), \begin{equation}
p(x_2)=b(x_2)+2\pi m.
\label{eq:planar-window-16}
\end{equation}

If \(m=0\), \eqref{eq:planar-window-14} gives \(|p(x_2)|\le h_*^{-1}\ell(x_2)\). If \(m\ne0\), then \eqref{eq:planar-window-8} and \eqref{eq:planar-window-16} imply \[
|b(x_2)|\ge2\pi-|p(x_2)|\ge2\pi-A.
\] By \eqref{eq:planar-window-14}, \(\ell(x_2)\ge h_*(2\pi-A)\). Since \(A\le1<\pi\), \(A\le2\pi-A\); hence \begin{equation}
|p(x_2)|\le A\le\frac{\ell(x_2)}{h_*}.
\label{eq:planar-window-17}
\end{equation} The belt and height-crossing cases exhaust all loops, including equality at height \(H_*\). Equations \eqref{eq:planar-window-12}, \eqref{eq:planar-window-14}, and \eqref{eq:planar-window-17} therefore give \(|p(x_2)|\le h_*^{-1}\ell(x_2)\) for every \(x_2\).
\end{proof}

\subsection{A momentum estimate independent of support}\label{appsrc-planar-window-the-support-independent-momentum-estimate}

Both types of slice now obey the same bound, which we integrate to obtain the momentum estimate.

\begin{proposition}[Small-energy momentum bound]\label{prop:window-momentum}
For every \(u\in\mathcal C_0\) with \(D(u)\le2\), \begin{equation}
\boxed{\quad
|\mathsf{p}(u)|\le\frac{1}{1-D(u)/4}
\left(\int u_3^2\right)^{1/2}
\left(\int|\partial_1 u|^2\right)^{1/2}
\le\frac{E(u)}{1-D(u)/4}.
\quad}
\label{eq:planar-window-2}
\end{equation}
\end{proposition}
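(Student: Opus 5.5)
The plan is to integrate the slice bound of \cref{lem:window-height} and \cref{lem:window-belt} over the transverse variable and then apply Cauchy--Schwarz. First dispose of the trivial case $D(u)=0$. Then $u$ is constant, hence $u\equiv e_1$, and both sides of \eqref{eq:planar-window-2} vanish. So assume $0<D(u)\le2$, which puts us in the setting of \eqref{eq:planar-window-9}.

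By \cref{lem:window-area}, $\mathsf p(u)=\int p(x_2)\dd x_2$. For every $x_2$, the two slice lemmas together give $|p(x_2)|\le h_*^{-1}\ell(x_2)$. The lemmas exhaust all slices: the height-crossing case covers $\sup|h|\ge H_*$ and the belt case covers $\sup|h|<H_*$. Integrating in $x_2$ and using Fubini on the definition \eqref{eq:planar-window-10} of $\ell$ yields
\[
|\mathsf p(u)|\le\frac1{h_*}\int_{\R}\ell(x_2)\dd x_2=\frac{1}{1-D(u)/4}\int_{\R^2}|u_3|\,|\partial_1u|\dd x .
\]
The integrand is smooth and compactly supported, so these manipulations need no further justification. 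Cauchy--Schwarz on the last integral gives the first inequality in \eqref{eq:planar-window-2}.

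For the second inequality I use the arithmetic--geometric mean inequality:
\[
\Bigl(\int u_3^2\Bigr)^{1/2}\Bigl(\int|\partial_1u|^2\Bigr)^{1/2}\le\frac12\int u_3^2+\frac12\int|\partial_1u|^2 .
\]
The right-hand side is $V+K_1\le V+D=E(u)$, using \eqref{eq:planar-path-1} with $Z=2V$.

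There is no real obstacle remaining, since the substantive work, namely the uniform slice bound including the integer-ambiguity case, is already contained in the two preceding lemmas. The only points to check are the following:
\begin{itemize}
\item the constant $h_*=1-D(u)/4$ is the same for all slices, because it depends only on $D(u)$ and not on $x_2$;
\item the lemmas apply under the standing hypothesis $0<D(u)\le2$;
\item the case $D(u)=0$ is handled separately, since $h_*$, $H_*$ and $A$ were only set up for positive energy.
\end{itemize}
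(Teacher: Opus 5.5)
Your proposal is correct and follows the paper's proof step for step. You dispose of $D(u)=0$, integrate the slice bound $|p(x_2)|\le h_*^{-1}\ell(x_2)$ from the height-crossing and belt lemmas over $x_2$, apply Cauchy--Schwarz, and finish with the arithmetic--geometric mean inequality together with $\tfrac12\int|\partial_1u|^2\le D(u)$.
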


\begin{proof}
If $D(u)=0$, then $u=e_1$ and the conclusion is immediate. Otherwise apply \cref{lem:window-height,lem:window-belt}. Integrate the loop bound and apply Cauchy--Schwarz: \begin{equation}
|\mathsf{p}(u)|\le\int|p(x_2)|\dd x_2
\le h_*^{-1}\int|u_3|\,|\partial_1 u|
\le h_*^{-1}\left(\int u_3^2\right)^{1/2}
\left(\int|\partial_1 u|^2\right)^{1/2}.
\label{eq:planar-window-18}
\end{equation} The arithmetic-geometric mean inequality gives \[
\left(\int u_3^2\right)^{1/2}
\left(\int|\partial_1 u|^2\right)^{1/2}
\le\frac12\int(u_3^2+|\partial_1 u|^2)\le E(u).
\] Substitute \(h_*=1-D(u)/4\) to obtain \eqref{eq:planar-window-2}. No size or location of the support appears.
\end{proof}

\subsection{Uniform small-energy positivity}\label{appsrc-planar-window-uniform-small-energy-positivity}

At small energy, the momentum estimate makes the positive energy term dominate any negative contribution to the action. This gives a positive barrier between the constant map and any map of negative action.

Fix a nonempty compact set \(K\subset(0,1)\), put \[
c_+=\max K,\qquad a_K=2(1-c_+),\qquad
b_K=\frac{2(1-c_+)^2}{1+c_+}>0,
\] and write $\mathcal N=N_1$, with $N_\eta$ defined in
\eqref{eq:planar-variation-norms}. 
\begin{lemma}[The positive action barrier]\label{lem:window-barrier}
For $u\in\mathcal C_0$, $c\in K$, $\eta\ge0$ and $E(u)\le a_K$, one has $J_{c,\eta}(u)\ge (1-c_+)(1+c_+)^{-1}E(u)$. Every $\mathcal N$-continuous path from $e_1$ to a map of negative action therefore has maximal action at least $b_K$.
\end{lemma}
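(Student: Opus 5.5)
The first assertion follows from \cref{prop:window-momentum} once the smallness hypothesis is matched, and the second from a continuity argument along the path. Fix $u\in\mathcal C_0$ with $E(u)\le a_K=2(1-c_+)<2$. Then $D(u)\le E(u)\le 2$, so \cref{prop:window-momentum} applies:
\[
|\mathsf p(u)|\le \frac{E(u)}{1-D(u)/4}\le\frac{E(u)}{1-E(u)/4}.
\]
Since $\eta Q(u)\ge0$ and $c\le c_+$, we get
\[
J_{c,\eta}(u)\ge E(u)-c_+|\mathsf p(u)|\ge E(u)\Bigl(1-\frac{c_+}{1-E(u)/4}\Bigr).
\]
Using $E(u)/4\le(1-c_+)/2$ gives $1-E(u)/4\ge(1+c_+)/2$. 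Hence $c_+/(1-E(u)/4)\le 2c_+/(1+c_+)$, and the bracket is at least $(1-c_+)/(1+c_+)$. This is exactly the claimed coercivity. The only check needed is that the constant $a_K$ was chosen so that this arithmetic closes, and it does.

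For the barrier, take an $\mathcal N$-continuous path $\gamma:[0,1]\to\mathcal C_0$, or more generally into the relevant completion, with $\gamma(0)=e_1$ and $J_{c,\eta}(\gamma(1))<0$. The coercivity forces $E(\gamma(1))>a_K$: otherwise $J_{c,\eta}(\gamma(1))\ge0$. Also $E(\gamma(0))=0$.

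The key step is continuity of $t\mapsto E(\gamma(t))$ in the $\mathcal N$ topology. This holds because $\mathcal N=N_1$ controls $\nabla u$ in $L^2$ and $u_3$ in $L^2$. By the intermediate value theorem there is then $t_0$ with $E(\gamma(t_0))=a_K$. At that point the coercivity gives
\[
J_{c,\eta}(\gamma(t_0))\ge\frac{1-c_+}{1+c_+}\,a_K=\frac{2(1-c_+)^2}{1+c_+}=b_K.
\]
Therefore the maximal action along $\gamma$ is at least $b_K$.

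The main point needing care is the scope of the path statement. If the path is allowed to pass through points of the completed space $\mathcal M_1^0$ rather than smooth compact maps, the inequality must be extended there. I would do this by density (\cref{lem:thin-belt-charts}) together with continuity of $E$, $P$ and $Q$ in $N_\eta$ (\cref{prop:thin-belt}). The degree-zero condition is preserved on the component of $e_1$ by \cref{lem:thin-belt-scaling}, and $\mathsf p=-P$ on smooth maps.

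The remaining subtlety is that the hypothesis of \cref{prop:window-momentum} involves $D(u)$, while our smallness assumption is on $E(u)$. This is harmless, since $D\le E$.
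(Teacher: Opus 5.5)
Your proof is correct and follows the paper's argument essentially line for line: you apply \cref{prop:window-momentum} using $D\le E\le a_K<2$, close the arithmetic via $1-a_K/4=(1+c_+)/2$, and then use the intermediate value theorem for the $\mathcal N$-continuous function $t\mapsto E(\gamma(t))$. Your sketched extension to the completed space is sound but not needed, since the lemma and the path class $\Gamma(u_-)$ only involve paths in $\mathcal C_0$.
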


\begin{proof}
If \(E(u)\le a_K=2(1-c_+)\), then \(D(u)\le E(u)\le2\), so \eqref{eq:planar-window-2} applies and \begin{equation}
J_c(u)\ge\left(1-\frac{c}{1-D(u)/4}\right)E(u)
\ge\left(1-\frac{c_+}{1-a_K/4}\right)E(u)
=\frac{1-c_+}{1+c_+}E(u)
\label{eq:planar-window-19}
\end{equation} for every \(c\in K\). The denominator is positive. Adding \(\eta Q(u)\ge0\) preserves the lower bound for every \(\eta\ge0\).

Any map with \(J_{c,\eta}<0\) must therefore have energy \(>a_K\). Along such a path, \(E\) is continuous for the norm $\mathcal N$. It starts at zero and ends above \(a_K\), so the intermediate value theorem supplies a point with \(E=a_K\). Equation \eqref{eq:planar-window-19} shows that the action there is at least \begin{equation}
\frac{1-c_+}{1+c_+}a_K=b_K.
\label{eq:planar-window-20}
\end{equation} Thus every such path has maximum at least \(b_K\), for all the parameters for which the endpoint is negative.
\end{proof}

\subsection{The upper bound for the regularised level}\label{appsrc-planar-window-a-common-upper-cap-including-a-small-quartic-regularization}

The comparison path supplies the complementary upper bound. Its quartic energy is bounded along the path, so a sufficiently small regularisation preserves both the negative endpoint and the strict gap below $8\pi$.

\begin{proposition}[Uniform mountain-pass bounds]\label{prop:planar-window}
There exist \(u_-\in\mathcal C_0\), \(\zeta_K>0\), and \(\bar\eta_K>0\), such that \(J_{c,\eta}(u_-)<0\) for all \(c\in K\) and \(0\le\eta\le\bar\eta_K\), and the following class is nonempty: \[
\begin{gathered}
\Gamma(u_-)=\{\gamma\in C([0,1],\mathcal C_0):\\
\gamma\text{ is }\mathcal N\text{-continuous},\\
\gamma(0)=e_1,\quad \gamma(1)=u_-,\\
\gamma(t)=e_1\text{ off one common compact set}\}.
\end{gathered}
\] Define \begin{equation}
m_\eta(c)=\inf_{\gamma\in\Gamma(u_-)}
\max_{t\in[0,1]}J_{c,\eta}(\gamma(t)).
\label{eq:planar-window-4}
\end{equation} Then \begin{equation}
\boxed{\qquad b_K\le m_\eta(c)\le8\pi-\zeta_K
\quad(c\in K,\ 0\le\eta\le\bar\eta_K).\qquad}
\label{eq:planar-window-5}
\end{equation} The support may depend on the path. The constants and endpoint can be chosen uniformly on \(K\).
\end{proposition}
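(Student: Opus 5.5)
The plan combines the comparison path of \cref{prop:planar-path} with the barrier of \cref{lem:window-barrier}. Let $\gamma$ be the path supplied by \cref{prop:planar-path} for the compact set $K$, and set $u_-=\gamma(1)$. Each $\gamma(t)$ is smooth, equals $e_1$ outside $B_{R_K}$, and has degree zero, so $\int q(\gamma(t))=0$. Hence $\gamma(t)\in\mathcal C_0$, and the support is common to the whole path.

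The path is continuous in every $C^k$ and $W^{1,p}$ norm after subtracting $e_1$. This gives $\mathcal N$-continuity, because $\mathcal N$ only involves $\|\nabla\cdot\|_2$, $\|\nabla\cdot\|_4$, $\|(\cdot)_3\|_2$ and $\|\cdot\|_\infty$. All of these are dominated by $C^1$ norms on the fixed ball $B_{R_K}$. Therefore $\gamma\in\Gamma(u_-)$, and the class is nonempty.

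Next I control the quartic term. The map $t\mapsto Q(\gamma(t))$ is continuous on $[0,1]$, by continuity in $W^{1,4}$, so it is bounded by some $Q_K<\infty$. From \eqref{eq:planar-path-3},
\[
J_{c,\eta}(\gamma(t))\le 8\pi-\eta_K+\eta Q_K,
\qquad
J_{c,\eta}(u_-)<-1+\eta Q_K .
\]
Choose $\bar\eta_K>0$ with $\bar\eta_K Q_K\le\min\{\eta_K/2,\,1/2\}$, and put $\zeta_K=\eta_K/2$. Then for all $c\in K$ and $0\le\eta\le\bar\eta_K$:
\begin{itemize}
\item $J_{c,\eta}(u_-)<-1/2<0$;
\item $\max_t J_{c,\eta}(\gamma(t))\le 8\pi-\zeta_K$, so $m_\eta(c)\le 8\pi-\zeta_K$.
\end{itemize}
The endpoint $u_-$ and the constants depend only on $K$.

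For the lower bound, take any $\gamma'\in\Gamma(u_-)$. It is $\mathcal N$-continuous in $\mathcal C_0$, starts at $e_1$, and ends at $u_-$, where the action is negative for the given $c$ and $\eta$. \Cref{lem:window-barrier} then gives $\max_t J_{c,\eta}(\gamma'(t))\ge b_K$. Taking the infimum over $\gamma'$ yields $m_\eta(c)\ge b_K$.

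There is no serious obstacle here. The only points needing care are two routine checks: that $C^k$ continuity on a common compact support implies $\mathcal N$-continuity, and that $Q$ is bounded uniformly along the path.
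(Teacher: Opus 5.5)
Your proposal is correct and follows the paper's proof essentially step for step. You take the comparison path from \cref{prop:planar-path}, bound its quartic energy by a finite $Q_K$, shrink $\bar\eta_K$ so that $\bar\eta_K Q_K$ is at most half the gap and at most $1/2$, and then obtain the lower bound from \cref{lem:window-barrier}.

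The only addition in the paper is a short check that $E$, $Q$ and the momentum are continuous along every path in $\Gamma(u_-)$, using the common support, so that the maxima in \eqref{eq:planar-window-4} are attained. Your two inequalities hold with suprema in any case, so this omission does not affect the bounds.
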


\begin{proof}
Invoke \cref{prop:planar-path}. With exactly the convention \(\mathsf{p}(u)=-\int x_2 q(u)\) used here, it supplies a smooth common-support path \(\gamma_*\) from \(e_1\) to a degree-zero endpoint \(u_-\), a number \(\zeta_*>0\), and \begin{equation}
J_c(u_-)<-1,\qquad
\max_{t\in[0,1]}J_c(\gamma_*(t))\le8\pi-\zeta_*
\quad(c\in K).
\label{eq:planar-window-21}
\end{equation} Its norm continuity implies \(\gamma_*\in\Gamma(u_-)\). Its common support and finite-order smooth norm continuity also imply \[
Q_*:=\max_{t\in[0,1]}Q(\gamma_*(t))<\infty.
\] This number is positive, because a path with a negative-action endpoint cannot consist only of constant maps. Choose \[
\bar\eta_K=\min\{1,\zeta_*/(2Q_*),1/(2Q_*)\}>0,
\qquad \zeta_K=\zeta_*/2>0.
\] For \(0\le\eta\le\bar\eta_K\), \eqref{eq:planar-window-21} gives \(J_{c,\eta}(u_-)<-1/2\) and \begin{equation}
\max_tJ_{c,\eta}(\gamma_*(t))\le8\pi-\zeta_K.
\label{eq:planar-window-22}
\end{equation} The maxima in \eqref{eq:planar-window-4} exist: along any path in its definition, common support and \(\mathcal N\)-continuity imply continuity of \(E,Q,\mathsf{p}\). For \(\mathsf{p}\), use the bounded weight \(x_2\) on that support, strong \(L^2\) convergence of derivatives, and uniform convergence of maps in its quadratic vorticity integrand. Equations \eqref{eq:planar-window-20} and \eqref{eq:planar-window-22} establish \eqref{eq:planar-window-5} and complete the proof.
\end{proof}

\section{Second-order minimax selection on the original path}\label{app:path-selection}

We now consider a path along which the maximum of \(J_{c,\eta}\) is close to
the mountain-pass level. We seek a point on this path where the first derivative
of \(J_{c,\eta}\) is small and its second variation admits no two-dimensional
negative space at a fixed threshold. For this purpose, we prove an abstract
selection result for a general functional on a complete Banach manifold, with
the selected point belonging to the original path.

Let \(Y\) be a \(C^3\) real Banach manifold. Equip each tangent space \(T_yY\) with a norm \(\|\cdot\|_y\), varying continuously and locally uniformly equivalent to the model-space norm in every local trivialisation. Let \(d\) be the length metric of these norms, and assume that \(d\) is finite on the component under consideration, induces its manifold topology, and is complete. Let \(f:Y\to\mathbb R\) be \(C^3\). Fix endpoints \(a,b\in Y\), let \[
\Gamma=\{p\in C([0,1],Y):p(0)=a,\ p(1)=b\},\qquad
m=\inf_{p\in\Gamma}\max_{t\in[0,1]}f(p(t)),
\] and assume that \(\Gamma\ne\varnothing\), \(m\in\mathbb R\), and \[
g=m-\max\{f(a),f(b)\}>0.
\]

Fix constants \(R>0\) and \(M\ge1\). Consider any path \(p\in\Gamma\) for which, for every \(q\in p([0,1])\), there is a \(C^3\) chart \[
\Phi_q:B_{T_qY}(0,R)\longrightarrow Y
\] that is a diffeomorphism onto an open set, satisfies \(\Phi_q(0)=q\) and \(D\Phi_q(0)=\mathrm{id}_{T_qY}\), and has the following properties for every \(x\in B_{T_qY}(0,R)\) and \(v\in T_qY\): \[
\tfrac12\|v\|_q\le\|D\Phi_q(x)v\|_{\Phi_q(x)}\le2\|v\|_q,
\qquad
\|D^k(f\circ\Phi_q)(x)\|_{\mathrm{op},q}\le M\quad(k=2,3).
\] The operator norms in this display use \(\|\cdot\|_q\) in every argument. Define the chart-centre quadratic form \[
H_q[v,v]=D^2(f\circ\Phi_q)(0)[v,v]
\] and the dual norm \(\|df(q)\|_{q,*}=\sup_{\|v\|_q\le1}|df(q)[v]|\).

\begin{proposition}[Second-order selection on a given path]\label{prop:path-selection}
For \(0<\lambda\le1\), set \[
\begin{gathered}
\delta=\min\{1,R/100,\lambda/(100M)\},\qquad
\tau=\delta/100,\\
D=\lambda\tau^2/16,\qquad
\gamma(\lambda)=\min\{g/16,D/10\}.
\end{gathered}
\] Then for every \(0<\varepsilon\le\gamma(\lambda)\), if \[
\max_{t\in[0,1]}f(p(t))\le m+\varepsilon,
\] there exists \(t_*\in[0,1]\), with \(q=p(t_*)\), such that \[
|f(q)-m|\le\varepsilon,\qquad \|df(q)\|_{q,*}\le3\lambda,
\] and there is no two-dimensional linear subspace \(L\subset T_qY\) satisfying \[
H_q[v,v]\le-\lambda\|v\|_q^2\quad\text{for every }v\in L.
\] In particular the selected point belongs to the original path, and its quadratic form is evaluated at the centre of its original prescribed chart. The function \(\gamma\) depends only on \(R,M,g\), is positive for \(\lambda>0\), and tends to zero as \(\lambda\downarrow0\). No bound on the length of \(p\), its number of subdivisions, its location, or the dimension of the model space enters this function. Only charts centred on the original path are required.
\end{proposition}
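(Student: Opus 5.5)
We argue by contradiction. Fix $\lambda$, $\varepsilon\le\gamma(\lambda)$ and a path $p$ with $\max f\circ p\le m+\varepsilon$. Let $S=\{t:f(p(t))\ge m-\varepsilon\}$; it is compact and nonempty, because otherwise $\max f\circ p<m$. Every $t\in S$ satisfies $|f(p(t))-m|\le\varepsilon$. Suppose that every $t\in S$ is \emph{bad}: either $\|df(p(t))\|_{*}>3\lambda$, or $H_{p(t)}$ is $\le-\lambda\|\cdot\|^2$ on some two-plane $L_t$. We will construct $\tilde p\in\Gamma$ with $\max f\circ\tilde p<m$, which contradicts the definition of $m$. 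Since $g\ge16\varepsilon$, the endpoints are not in $S$ and stay fixed.

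\textbf{Step 1 (local deformations).} Fix a bad $t$ and put $q=p(t)$. Work in the chart $\Phi_q$ on the ball of radius $\delta$; all third derivatives there are bounded by $M$, and $\delta M\le\lambda/100$. In the gradient case, choose a unit $e$ with $df(q)[e]>3\lambda$. For chart points $x$ with $\|x\|\le\tau$ we have $d(f\circ\Phi_q)(x)[e]\ge3\lambda-M\tau>2\lambda$. Hence the translation $x\mapsto x-se$, with $s\in[0,\tau]$, lowers $f$ by at least $\lambda s$ once $s$ is of size $\tau$.

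In the two-plane case we use the second-order Taylor expansion. Take the unit circle $\{\cos\phi\,e_1+\sin\phi\,e_2\}$ in $L_t$, scaled to radius $\tau$. For a chart point $x$ with $\|x\|\le\tau/4$ and first-order term $\ell=d(f\circ\Phi_q)(x)$, consider the displacements $y$ on this circle. Then
\[
f(x+y)\le f(x)+\ell(y)-\lambda\tau^2/2+2M\tau^3 .
\]
Some $\phi$ makes $\ell(y)\le0$, and the set of such $\phi$ is a half-circle depending continuously on $x$. The decisive point is that we need to choose $\phi$ continuously along a one-parameter family. A circle of directions lets us push every point of a small $t$-interval down by about $\lambda\tau^2/4\ge D$. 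We do this by rotating continuously to the good half and using the fact that an interval is contractible. This is the standard "index $\ge2$ cannot be crossed by a one-dimensional family" mechanism.

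\textbf{Step 2 (patching).} Cover $S$ by finitely many open intervals $I_j$. On each $I_j$ the path stays, in the metric $d$, within $\tau/8$ of the centre $q_j$. This follows from uniform continuity of $p$ together with the chart equivalence $\frac12\le\|D\Phi\|\le2$. Refine the cover so that each point lies in at most two intervals, and use a partition of unity $\{\psi_j\}$ subordinate to it. Deform successively: on $I_j$, replace $p(t)$ by $\Phi_{q_j}\bigl(x_j(t)-\psi_j(t)y_j(t)\bigr)$, where $x_j=\Phi_{q_j}^{-1}\circ p$ and $y_j$ is the displacement from Step 1. Each deformation is continuous, fixes the path outside $I_j$, and does not increase $f$ by more than $M\tau^3$, which is much smaller than $D$. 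Where $\sum_j\psi_j=1$ on $S$, it decreases $f$ by at least $D/2$. Since $\varepsilon\le D/10$, points starting in $S$ end below $m-\varepsilon$. Points outside $S$ start below $m-\varepsilon$ and increase by at most $2M\tau^3<\varepsilon$ only on overlaps. Points that move out of the previous chart ball are avoided because the displacements are at most $2\tau\le R/50$. We must keep the drop uniform when two consecutive deformations overlap. This works because the second deformation is applied in a chart whose derivatives are controlled up to factor $2$, so the first drop is preserved up to $O(M\tau^3)$. The resulting $\tilde p\in\Gamma$ has $\max f\circ\tilde p<m$, the desired contradiction.

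\textbf{Main obstacle.} The hard part is the two-plane case: making the choice of pushing direction continuous in $t$, and then arranging that the composition of finitely many overlapping chart deformations preserves decreases already achieved. I would first try the half-circle selection with an explicit continuous angle $\phi(t)$ on each $I_j$. On an interval the open half-circles $\{\ell_t(y_\phi)<0\}$ admit a continuous selection whenever $\ell_t\ne0$, and where $\ell_t$ is small any $\phi$ works. I would then verify that the constants $\delta,\tau,D$ absorb the cubic errors. None of the constants involve the number of intervals, since each point is moved at most twice, so $\gamma$ depends only on $R,M,g$.
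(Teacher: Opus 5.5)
Your route differs from the paper's. You treat every bad point of $S$ in one finite chain of chart deformations: a fixed translation direction at centres with $\|df\|_*>3\lambda$, and a continuously chosen circle direction at centres with a negative plane. The paper instead first deforms only near the small-differential set $K$ along negative planes (\cref{lem:selection-negative}). It then removes the remaining high points, all of which have $\|df\|_*>3\lambda$, with a locally Lipschitz pseudo-gradient flow (\cref{lem:selection-flow}). Your one-pass scheme avoids the flow and its global-existence argument, and it can be made to work. As written, however, the step controlling increases of $f$ fails.

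\textbf{The increase bound.} You claim each deformation raises $f$ by at most $M\tau^3$ and that $2M\tau^3<\varepsilon$. But $\tau=\delta/100$ is fixed by $\lambda,R,M$, whereas the statement must hold for every $\varepsilon\in(0,\gamma(\lambda)]$, and small $\varepsilon$ is exactly the case used later. For $\varepsilon<2M\tau^3$, points outside $S$ lying just below $m-\varepsilon$ are no longer kept below $m$.

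The bound also misidentifies where the increase comes from. Since $M\delta\le\lambda/100$, cubic Taylor errors are absorbed by the negative quadratic term on the whole chart ball of radius $\delta$. With an exactly descending direction there would be no increase at all. The increase comes from the first-order term.

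A continuous choice with $\ell_t(y_{\phi(t)})\le0$ is impossible in general. If $\ell_t\to0$ while $\ell_t/|\ell_t|$ winds infinitely often, the closed half-circles admit no continuous selection. So, as your remark ``where $\ell_t$ is small any $\phi$ works'' concedes, you can only guarantee $\ell_t(y_{\phi(t)})\le z\tau$ for some threshold $z>0$. On the taper where $\psi_j$ is small, the quadratic gain cannot beat this:
\[
\sup_{0\le s\le1}\bigl(sz\tau-\tfrac{\lambda}{4}s^2\tau^2\bigr)=\frac{z^2}{\lambda}\qquad(z\le\lambda\tau/2).
\]

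\textbf{The fix.} Tie the threshold to $\varepsilon$. Take $z\lesssim\sqrt{\lambda\varepsilon}$; the paper uses $z_i\le\min\{\lambda\tau/16,\sqrt{\lambda\varepsilon/(200N)}\}$, so each step raises $f$ by at most $\varepsilon/(100N)$. The condition $z\le\lambda\tau/16$ keeps the drop at points of $S$ at least $D$; at overlaps, use $\psi_j^2+\psi_{j+1}^2\ge\tfrac12$. Since at most two of your steps act at any parameter, your per-step tolerance need not even involve the number of intervals.

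\textbf{Two smaller repairs.} First, after an earlier deformation the path's coordinates in the next chart have size about $4\tau$, not $\tau/4$. So state the Step~1 estimates for $\|x\|\le10\tau$; there is room, since $\delta=100\tau$. Second, parametrise the circle in $L_t$ by $\tau u_\phi/\|u_\phi\|$, or use a well-conditioned (Auerbach) basis. The tangent norms are not Euclidean, and an arbitrary basis gives no uniform quadratic gain.
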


In the following lemmas, fix $\lambda,\varepsilon,p$ as in \cref{prop:path-selection}. A chart ball without a subscript uses its centre's tangent norm.

\subsection{Metric balls and charts}\label{appsrc-path-selection-metric-access-to-each-prescribed-chart}

We begin by relating metric distance to chart coordinates, so that small deformations remain inside the charts on which derivative bounds are available.

\begin{lemma}[Metric access to a chart]\label{lem:selection-charts}
For each centre \(q\) the chart assumptions imply \begin{equation}
d(q,y)<R/2\quad\Longrightarrow\quad
y\in\Phi_q(B(0,R)),\qquad
\|\Phi_q^{-1}(y)\|_q\le2d(q,y).
\label{eq:path-selection-1}
\end{equation}
\end{lemma}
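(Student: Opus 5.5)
The plan is a continuity argument along almost-minimising curves, using the two-sided derivative bound $\tfrac12\|v\|_q\le\|D\Phi_q(x)v\|_{\Phi_q(x)}\le 2\|v\|_q$ on the whole ball $B(0,R)$.

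Fix $y$ with $d(q,y)<R/2$ and choose a piecewise $C^1$ curve $c:[0,1]\to Y$ from $q$ to $y$ whose length $\ell$ satisfies $d(q,y)\le\ell<R/2$. Let
\[
s^*=\sup\{s\in[0,1]: c([0,s])\subset\Phi_q(B(0,R/2+\ell))\}.
\]
Note that $R/2+\ell<R$. Since $\Phi_q$ is a diffeomorphism onto an open set and $c(0)=q=\Phi_q(0)$, we have $s^*>0$.

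On $[0,s^*)$ set $x(s)=\Phi_q^{-1}(c(s))$, which is piecewise $C^1$. From $D\Phi_q(x)\dot x=\dot c$ and the lower bound $\|D\Phi_q(x)v\|\ge\tfrac12\|v\|_q$ we get $\|\dot x(s)\|_q\le 2\|\dot c(s)\|_{c(s)}$. Integrating gives
\[
\|x(s)\|_q\le 2\,\mathrm{length}(c|_{[0,s]})\le 2\ell<R.
\]

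The main obstacle is to show that $s^*=1$, so that the curve never leaves the chart image; this is the only point where completeness or openness must be handled carefully. I would argue as follows. The points $x(s)$ form a Cauchy family as $s\uparrow s^*$, because the lengths of $\dot x$ are integrable. They stay in the closed ball of radius $2\ell$, which lies strictly inside $B(0,R)$. The limit $x^*$ therefore exists in the model space, and continuity of $\Phi_q$ gives $\Phi_q(x^*)=c(s^*)$. Since $\Phi_q$ is a homeomorphism onto an open set, $c(s)$ remains in the image for $s$ slightly beyond $s^*$, with coordinates of norm still below $R/2+\ell$. This contradicts maximality unless $s^*=1$.

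Hence $y=c(1)\in\Phi_q(B(0,R))$ and $\|\Phi_q^{-1}(y)\|_q\le 2\ell$. Letting $\ell\downarrow d(q,y)$ gives
\[
\|\Phi_q^{-1}(y)\|_q\le 2\,d(q,y),
\]
which is \eqref{eq:path-selection-1}.

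The factor $2$ comes from the lower bound $\tfrac12$ in the derivative comparison. Only this lower bound, together with the fact that $\Phi_q$ is a diffeomorphism onto an open set, enters the proof.
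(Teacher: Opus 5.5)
Your proposal is correct and follows essentially the same argument as the paper: lift a near-minimising curve into the chart, bound the coordinate speed by twice the metric speed using the lower derivative comparison, use completeness of $T_qY$ and openness of the chart image to show the lift cannot stop before the endpoint, and then let the curve length decrease to $d(q,y)$. Your intermediate radius $R/2+\ell$, which keeps the lifted coordinates uniformly inside $B(0,R)$, is a harmless tidying of the paper's ``first finite endpoint'' formulation.
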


\begin{proof}
Take any piecewise \(C^1\) curve from \(q\) to \(y\) of length \(L<R/2\). Its lift, wherever defined in the chart, has speed at most twice its metric speed, by the lower derivative comparison. On the initial portion of the curve its lifted coordinates therefore have norm at most \(2L<R\). If the lift had a first finite endpoint before the curve ended, its finite remaining length would make its coordinates Cauchy. Completeness of the Banach space \(T_qY\) would give a limit of norm at most \(2L<R\). Continuity of the chart and the original curve would identify its image with the curve's limiting point, and the inverse chart would extend the lift past that endpoint. This is a contradiction. The lift is defined through the endpoint and has coordinate norm at most \(2L\). Taking curve lengths tending down to \(d(q,y)\) proves \eqref{eq:path-selection-1}. The same argument applies to each piece of a piecewise \(C^1\) curve. A straight segment of chart-coordinate length \(r\) has metric length at most \(2r\), by the upper derivative comparison.
\end{proof}

\subsection{Finite-dimensional deformations}\label{appsrc-path-selection-two-elementary-finite-dimensional-ingredients}

The deformation uses two-dimensional negative spaces. The next two lemmas give uniform coordinates on such a space and a continuous choice of directions along an interval.

\begin{lemma}[A basis in a normed plane]\label{lem:selection-plane}
For every two-dimensional normed real space \(L\) there is a linear isomorphism \(A:\mathbb R^2\to L\) such that \begin{equation}
|z|/\sqrt2\le\|Az\|\le\sqrt2|z|.
\label{eq:path-selection-2}
\end{equation}
\end{lemma}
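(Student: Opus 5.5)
The plan is to use an \emph{Auerbach basis} of $L$. In such a basis the norm is squeezed between the $\ell^\infty$ and $\ell^1$ norms of the coordinates. The Euclidean comparisons $|z|_\infty\ge |z|/\sqrt2$ and $|z|_1\le\sqrt2|z|$ on $\mathbb R^2$ then give exactly the constants in \eqref{eq:path-selection-2}. An alternative is John's ellipse theorem, which gives the stronger sandwich $E\subset K\subset\sqrt2E$ for the unit ball $K$. The Auerbach route is more elementary and entirely finite-dimensional, so I will follow it.

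\emph{Construction.} First fix any linear isomorphism $\iota:\mathbb R^2\to L$. Use it to define a nonzero determinant form $\det(x,y)$ on $L$, namely the ordinary determinant of $(\iota^{-1}x,\iota^{-1}y)$. All norms on a two-dimensional space are equivalent, so the unit sphere $S=\{x\in L:\|x\|=1\}$ is compact. The function $(x,y)\mapsto|\det(x,y)|$ is continuous on $S\times S$, so it attains a maximum at some pair $(e_1,e_2)$. This maximum is positive, since $S$ spans $L$. Set $A z=z_1e_1+z_2e_2$, which is a linear isomorphism because $\det(e_1,e_2)\neq0$.

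\emph{Upper bound.} The triangle inequality gives $\|Az\|\le|z_1|+|z_2|$. Cauchy--Schwarz gives $|z_1|+|z_2|\le\sqrt2|z|$.

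\emph{Lower bound.} This is the only step that uses maximality. Define the coordinate functionals
\[
f_1(x)=\frac{\det(x,e_2)}{\det(e_1,e_2)},\qquad f_2(x)=\frac{\det(e_1,x)}{\det(e_1,e_2)}.
\]
By bilinearity they satisfy $f_i(Az)=z_i$. For $x\in S$, maximality gives $|\det(x,e_2)|\le|\det(e_1,e_2)|$, so $|f_1(x)|\le1$; the same argument gives $|f_2(x)|\le1$. By homogeneity, $|f_i(y)|\le\|y\|$ for all $y\in L$. Applying this to $y=Az$ gives $\|Az\|\ge\max_i|z_i|\ge|z|/\sqrt2$.

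There is no real obstacle. The only points needing care are the existence of the maximising pair, which follows from compactness in finite dimensions, and the bound $\|f_i\|\le1$, which follows directly from maximality of $|\det|$.
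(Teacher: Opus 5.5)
Your proof is correct and is essentially the paper's argument. Both choose an Auerbach basis by maximising the absolute value of a nonzero alternating form, and both deduce $\max_i|z_i|\le\|Az\|\le|z_1|+|z_2|$ from maximality and the triangle inequality. The only difference is that you maximise over pairs of unit vectors, whereas the paper maximises over the closed unit ball and then adds a scaling remark to see that the maximisers have norm one.
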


\begin{proof}
Choose a nonzero alternating bilinear form on \(L\), and maximise the absolute value of its value on pairs in the closed unit ball. Finite-dimensional compactness gives a maximising pair \(e_1,e_2\). The maximum is positive, and scaling either vector shows that both have norm one. If \(v=a_1e_1+a_2e_2\) has norm at most one, replacing \(e_1\) or \(e_2\) by \(v\) in the determinant shows that \(|a_1|,|a_2|\le1\). Thus \(\max(|a_1|,|a_2|)\le\|v\|\le|a_1|+|a_2|\), which proves \eqref{eq:path-selection-2}.
\end{proof}

\begin{lemma}[Continuous circle directions]\label{lem:selection-circle}
If \(I\subset\mathbb R\) is a compact interval, \(\ell:I\to(\mathbb R^2)^*\) is continuous, and \(z>0\), there is a continuous \(e:I\to S^1\) such that \begin{equation}
\ell(t)e(t)\le z\quad(t\in I).
\label{eq:path-selection-3}
\end{equation}
\end{lemma}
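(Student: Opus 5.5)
The plan is to build $e$ one short subinterval at a time, using that the set of admissible directions at each time is a nonempty open arc, and to glue the pieces by rotating along arcs.

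For $t\in I$ put
\[
U(t)=\{\theta\in S^1:\ \ell(t)\theta<z\},
\]
which is open. It is also nonempty. If $\ell(t)=0$, then $U(t)=S^1$. Otherwise $\theta\mapsto\ell(t)\theta$ attains the value $-|\ell(t)|<0<z$, so $U(t)$ is an open arc containing the closed half-circle $\{\ell(t)\theta\le0\}$. In either case $U(t)$ is connected and contains a closed half-circle.

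Next, choose a local constant direction near each time. Fix $t_0\in I$ and pick $\theta_0$ with $\ell(t_0)\theta_0\le0$. By continuity of $\ell$, the inequality $\ell(t)\theta_0<z$ holds for all $t$ in a relatively open neighbourhood of $t_0$. By compactness of $I$, and a Lebesgue-number argument, there is a subdivision $a=s_0<s_1<\dots<s_N=b$ of $I$ together with directions $\theta_1,\dots,\theta_N$ such that $\ell(t)\theta_j<z$ for $t\in[s_{j-1},s_j]$.

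The only real issue is the junctions. At $s_j$, both $\theta_j$ and $\theta_{j+1}$ lie in $U(s_j)$. Since $U(s_j)$ is connected (an arc or the whole circle), there is a path within $U(s_j)$ from $\theta_j$ to $\theta_{j+1}$. Openness is again uniform: the compact image of this path lies in $U(t)$ for all $t$ in a small neighbourhood of $s_j$. This follows from the estimate
\[
\max_{|\theta|=1}|(\ell(t)-\ell(s_j))\theta|\to0,
\]
together with the strict margin $\max\ell(s_j)\theta<z$ over the compact path.

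So shrink a neighbourhood $[s_j-\epsilon,s_j+\epsilon]$ on which all of these hold, and make it small enough that consecutive such neighbourhoods are disjoint and stay inside the adjacent subintervals. On it, define $e$ by running along the path at reparametrised speed, moving from $\theta_j$ to $\theta_{j+1}$. Outside these neighbourhoods, keep $e$ constant equal to $\theta_j$. The result is continuous and satisfies $\ell(t)e(t)<z$ everywhere, which gives \eqref{eq:path-selection-3}.

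I expect no serious obstacle. The main point is bookkeeping: the junction neighbourhoods must be chosen after the subdivision and must not overlap. As an alternative, one could define $e(t)=-\ell(t)^\sharp/|\ell(t)|$ where $|\ell(t)|>z/2$. On the closed set where $|\ell(t)|\le z/2$, every direction is admissible. The difficulty with this route is extending continuously across that set, which requires its own interval-by-interval argument. So the subdivision approach above is the one to use.
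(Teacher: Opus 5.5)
Your proof is correct, but it takes a different route from the paper's.

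The paper does not subdivide. It prescribes the steepest-descent direction \(e(t)=-\ell(t)/|\ell(t)|\) on the closed set \(C=\{t:|\ell(t)|\ge z\}\); there \(\ell e=-|\ell|\le z\) holds automatically. On the complementary intervals one has \(|\ell|<z\), so every direction is admissible. On each such interval the paper joins the two prescribed endpoint values by a shortest circle arc, or extends constantly on a gap touching an endpoint of \(I\). Since \(C\) can be an arbitrary closed set, there may be infinitely many gaps. The paper therefore has to check continuity at accumulation points of distinct gaps: their lengths shrink and their endpoint values converge, so the connecting arcs shrink too.

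Your construction trades this for a finite cover. Compactness gives finitely many pieces, each with a constant admissible direction. You bridge each junction inside the connected open arc \(U(s_j)\), with a uniform margin on a neighbourhood. So no accumulation-point analysis is needed, and you even obtain the strict inequality \(\ell e<z\). What you give up is the explicit anti-gradient choice on \(C\). Only the inequality \(\ell_i e_i\le z_i\) enters the Taylor estimate in \cref{lem:selection-negative}, so nothing is lost for the application.

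Your closing worry about the alternative route is overstated. That ``interval-by-interval'' extension is exactly the paper's proof, and it is short: off \(C\) no constraint has to be maintained, only continuity.
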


\begin{proof}
Identify the dual of \(\mathbb R^2\) with \(\mathbb R^2\) using its Euclidean norm. On the closed set \(C=\{t:|\ell(t)|\ge z\}\), prescribe \(e(t)=-\ell(t)/|\ell(t)|\). On each complementary interval with two endpoints in \(C\), connect the endpoint values by a shortest circle arc; for antipodal endpoint values choose either semicircle. On a complementary interval with one endpoint in \(C\), extend constantly, and if \(C\) is empty choose a constant value on all of \(I\). This gives a continuous extension. For completeness, continuity at accumulation points of distinct complementary intervals follows because their lengths tend to zero, both endpoints tend to the accumulation point in \(C\), and their prescribed values converge to the same circle point; the diameters of the chosen shortest arcs then tend to zero. On the complement of \(C\), inequality \eqref{eq:path-selection-3} follows from \(|\ell|<z\), and on \(C\) it follows from the prescription.
\end{proof}

\subsection{Descent near points with small differential}\label{appsrc-path-selection-decrease-the-high-level-parameters-with-small-differential}

We first treat high-action points where the differential is small. If each has a two-dimensional negative space, the following deformation lowers their action while moving the whole path only a controlled distance.

Set \[
K=\{t\in[0,1]:f(p(t))\ge m-\varepsilon,
\ \|df(p(t))\|_{p(t),*}\le3\lambda\}.
\]

\begin{lemma}[Deformation along negative planes]\label{lem:selection-negative}
Suppose that for every $t\in K$ there is a two-dimensional space $L\subset T_{p(t)}Y$ on which
$H_{p(t)}[v,v]\le-\lambda\|v\|_{p(t)}^2$.
There is a path $\beta\in\Gamma$ such that
\[
d(\beta(t),p(t))\le9\tau,\qquad
f(\beta(t))\le f(p(t))+\varepsilon/100
\]
for every $t$, and $f(\beta(t))\le f(p(t))-D+\varepsilon/100$ for $t\in K$.
\end{lemma}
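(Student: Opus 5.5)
The plan is to build $\beta$ by pushing the path, near parameters in $K$, a distance of order $\tau$ into the negative planes, using the circle of directions to make this continuous, and to leave $p$ untouched away from $K$. The first step is local. Fix $s\in K$ with centre $q=p(s)$ and plane $L_s$, and take a basis $A_s:\mathbb R^2\to L_s$ from \cref{lem:selection-plane}. For $v=A_s z$ with $|z|=1$ we have $\|v\|_q\in[1/\sqrt2,\sqrt2]$.

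By \cref{lem:selection-charts}, $p(t)$ lies in the chart for $t$ near $s$, and we write $p(t)=\Phi_q(x_t)$ with $\|x_t\|_q\le 2d(q,p(t))$ small. The competitor is
\[
\Phi_q\bigl(x_t+\sigma(t)\tau A_s e(t)\bigr),
\]
with a cutoff $\sigma\in[0,1]$ and a continuous direction $e(t)\in S^1$. Taylor's formula with the bound $M$ on $D^2$ and $D^3$ compares $f$ at this point with $f(p(t))$. The first-order term is $\sigma\tau\,\ell(t)e(t)$, where $\ell(t)=D(f\circ\Phi_q)(x_t)A_s\in(\mathbb R^2)^*$ depends continuously on $t$. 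The second-order term at $x_t$ differs from $H_q$ by at most $M\|x_t\|_q\tau^2$. Since $\|x_t\|_q\le 2\delta$ and $\delta\le\lambda/(100M)$, this error is at most $\lambda\tau^2/50$. The negative plane then contributes at most $-\tfrac12\lambda\tau^2\cdot\tfrac12$, and the cubic term is $O(M\tau^3)$, which is negligible because $\tau\le\lambda/(10^4M)$.

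\Cref{lem:selection-circle} with a small $z$ supplies $e(t)$ with $\ell(t)e(t)\le z$. Choosing $z\le\lambda\tau/100$, the first-order term becomes harmless. With $\sigma=1$ the decrease is at least $D=\lambda\tau^2/16$, and where $\sigma<1$ the net change is bounded by the first-order slack $\sigma\tau z\le\varepsilon/100$. This needs $\varepsilon$ comparable to $D/10$, which is ensured by requiring $z\le\varepsilon/(100\tau)$.

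The second step globalises. $K$ is compact because $f$ and $\|df\|_*$ are continuous along $p$; the latter uses continuity of the norms. Cover $K$ by finitely many intervals on which the local construction works, chosen via uniform continuity of $p$ in $d$ so that $d(p(t),p(s))\le\tau$.

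I expect the main obstacle to be that overlapping intervals use different centres and planes, so the displacements cannot simply be added. I would handle this by making the deformation sequential. Take a partition of unity subordinate to a cover with at most two overlapping intervals, and perform the deformations one interval at a time, each in its own chart centred on the original path. The overlap bookkeeping keeps each point within the metric access radius of \cref{lem:selection-charts}: every intermediate point stays within about $3\tau\le R/2$ of an original centre, and the total displacement is at most $9\tau$.

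To avoid splicing two descents, I would use alternating colours: odd-indexed intervals are deformed first, then even-indexed ones. Each step then applies the local lemma at the current point, which is still within $O(\tau)$ of the centre, and the $D^3$ bound keeps the Hessian negative on the transported plane. The first-order term is re-estimated by running \cref{lem:selection-circle} with the current $\ell$. The level decreases by at least $D$ on the core, where $\sigma=1$ in some colour, and increases by at most $\varepsilon/200$ per step elsewhere.

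Finally, the endpoints. Since $g\ge16\varepsilon$, $f(a),f(b)<m-\varepsilon$, so $0,1\notin K$. Keeping the cutoffs supported in a small neighbourhood of $K$ away from $0$ and $1$ then ensures $\beta(0)=a$ and $\beta(1)=b$, so $\beta\in\Gamma$.
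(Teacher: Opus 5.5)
Your proposal is correct and is essentially the paper's proof. Both arguments perform sequential deformations, each in the prescribed chart of an original path point $p(t_i)$ with $t_i\in K$, pushing by $\tau h_i(t)A_ie_i(t)$ along the negative plane, with $e_i(t)$ chosen by \cref{lem:selection-circle} from the current differential; both use cutoffs of bounded overlap, get a small increase per step, and get a drop of at least $D$ wherever a cutoff equals one. The paper bounds the per-step increase by completing the square to $2z_i^2/\lambda$ rather than discarding the quadratic term, but either works.

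Two small fixes are needed. First, the cutoffs should equal one on intervals covering $K$, like the paper's $h_i$, rather than form a partition of unity: a partition of unity leaves points of $K$ in the overlaps, and your accounting gives no decrease where $\sigma<1$. Second, closedness of $K$ should come from lower semicontinuity of $\|df\|_{*}$, since continuity of the tangent norms does not in general make the dual norm continuous. The two-colour ordering is harmless but unnecessary.
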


\begin{proof}
The set $K$ is compact. Indeed the dual norm is lower semicontinuous: in a local trivialisation it is the supremum, over nonzero fixed model vectors, of the continuous functions obtained by dividing the absolute differential evaluation by the tangent norm of that vector. Its sublevel set at \(3\lambda\) is therefore closed. Since \(\varepsilon\le g/16\), neither endpoint belongs to \(K\). By hypothesis, for each \(t\in K\) its centre \(p(t)\) has a two-dimensional plane satisfying the stated negative inequality.

If \(K\ne\varnothing\), choose an integer \(J\ge1\), and put \(h=1/J\), sufficiently small that the following hold: whenever \(|s-t|\le3h\), \[
d(p(s),p(t))<\delta/100;
\] and every closed mesh interval \([kh,(k+1)h]\) meeting \(K\), together with its one-cell enlargement \([(k-1)h,(k+2)h]\), lies in \((0,1)\). Uniform continuity of \(p\) and the positive distance of \(K\) from the endpoints ensure these choices. Retain exactly the mesh intervals meeting \(K\), and enumerate them as \(I_1,\ldots,I_N\). If \(I_i=[k_ih,(k_i+1)h]\), set \(J_i=((k_i-1)h,(k_i+2)h)\). Choose continuous \(h_i:[0,1]\to[0,1]\) equal to one on \(I_i\) and zero outside \(J_i\), linear on each of the two intervening cells. At any parameter at most three of the functions \(h_i\) are nonzero.

Choose \(t_i\in K\cap I_i\), write \(q_i=p(t_i)\), and choose a negative plane at \(q_i\). Choose its isomorphism \(A_i\) using \eqref{eq:path-selection-2}, with norm \(\|\cdot\|_{q_i}\). Write \(F_i=f\circ\Phi_{q_i}\). For every \(\|x\|_{q_i}<\delta\), the third derivative estimate gives \begin{equation}
D^2F_i(x)[v,v]\le-(\lambda-M\delta)\|v\|_{q_i}^2
\le-\tfrac\lambda2\|v\|_{q_i}^2
\quad(v\in\operatorname{range}A_i).
\label{eq:path-selection-4}
\end{equation}

Starting with \(p_0=p\), construct \(p_i\) from \(p_{i-1}\) in the chart at \(q_i\). We verify inductively that each earlier change has metric length at most \(3\tau h_j(t)\). Since at most three changes affect any fixed parameter, its total displacement from \(p(t)\) is at most \(9\tau\). For \(t\in\overline{J_i}\), this gives \[
d(p_{i-1}(t),q_i)<\delta/100+9\tau=\delta/10.
\] By \eqref{eq:path-selection-1} its coordinates \(x_i(t)=\Phi_{q_i}^{-1}(p_{i-1}(t))\) are defined and have norm less than \(\delta/5\). Define on this closed supporting interval the continuous functional \(\ell_i(t)=DF_i(x_i(t))A_i\). Choose a number \[
0<z_i\le\min\{\lambda\tau/16,\sqrt{\lambda\varepsilon/(200N)}\},
\] and use \eqref{eq:path-selection-3} to choose a continuous \(e_i(t)\in S^1\) with \(\ell_i(t)e_i(t)\le z_i\). Define \[
p_i(t)=\Phi_{q_i}(x_i(t)+\tau h_i(t)A_ie_i(t))
\quad(t\in\overline{J_i}),
\] and \(p_i(t)=p_{i-1}(t)\) elsewhere. Both definitions agree at the support boundary. The coordinate segment used has norm at most \(\delta/5+\sqrt2\tau<\delta\). Its metric length is at most \(2\sqrt2\tau h_i(t)<3\tau h_i(t)\), proving the induction assertion and well-definedness.

Taylor's formula along this segment, \eqref{eq:path-selection-2}, and \eqref{eq:path-selection-4} give \begin{equation}
f(p_i(t))-f(p_{i-1}(t))
\le z_i\tau h_i(t)-\frac\lambda8\tau^2h_i(t)^2
\le\frac{2z_i^2}{\lambda}\le\frac\varepsilon{100N}.
\label{eq:path-selection-5}
\end{equation} If \(t\in I_i\), the first expression is at most \(-\lambda\tau^2/16=-D\). Consequently the resulting path \(\beta=p_N\) satisfies \begin{equation}
d(\beta(t),p(t))\le9\tau,\qquad
f(\beta(t))\le f(p(t))+\varepsilon/100\quad(t\in[0,1]),
\label{eq:path-selection-6}
\end{equation} and \begin{equation}
f(\beta(t))\le f(p(t))-D+\varepsilon/100\quad(t\in K).
\label{eq:path-selection-7}
\end{equation} To see \eqref{eq:path-selection-7}, select any retained interval containing \(t\in K\). At its deformation step the drop is at least \(D\); the sum of every other possible increase is bounded by \(\varepsilon/100\) using \eqref{eq:path-selection-5}. This accounts for increases both before and after that step. All endpoints are fixed. If \(K=\varnothing\), set \(\beta=p\); \eqref{eq:path-selection-6} holds and \eqref{eq:path-selection-7} is vacuous, without introducing \(N\) or the numbers \(z_i\).
\end{proof}

\subsection{Construction of the descent field}\label{appsrc-path-selection-a-finite-locally-lipschitz-descent-field}

After this deformation, any point still near the minimax level comes from a part of the original path with large differential. We use those directions to construct a bounded descent field, whose flow exists globally by completeness.

\begin{lemma}[Descent away from the small-differential set]\label{lem:selection-flow}
Under the hypotheses of \cref{lem:selection-negative}, let $\beta$ be its deformed path and set
$H=\{t:f(\beta(t))\ge m-\varepsilon/2\}$ and $C=\beta(H)$.
If $H$ is nonempty, there is a continuous forward flow $\varphi_s$ fixing $a,b$, with metric speed at most one and nonincreasing $f$, such that
\[
\frac{d}{ds}f(\varphi_s(y))\le-\lambda
\]
whenever $d(\varphi_s(y),C)<\delta/16$ and $f(\varphi_s(y))\ge m-3\varepsilon/4$.
\end{lemma}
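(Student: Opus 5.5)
The plan is to show that every parameter in $H$ comes from the part of the original path where the differential is large. Then I would build a pseudo-gradient field near $C$ out of finitely many chart-constant descent directions, glued by a locally Lipschitz partition of unity, and flow along it.

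\textbf{Step 1 (large differential on $H$).} Let $t\in H$. If $t\in K$, then \cref{lem:selection-negative} and $f(p(t))\le m+\varepsilon$ give
\[
f(\beta(t))\le m+\varepsilon-D+\varepsilon/100<m-\varepsilon/2,
\]
because $\varepsilon\le D/10$. This contradicts $t\in H$, so $H\cap K=\varnothing$. Also $f(p(t))\ge f(\beta(t))-\varepsilon/100\ge m-\varepsilon$. By the definition of $K$ we must therefore have $\|df(p(t))\|_{p(t),*}>3\lambda$. Moreover $d(\beta(t),p(t))\le9\tau<\delta/10$.

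\textbf{Step 2 (local descent directions).} Fix $t\in H$ and set $q=p(t)$, $F=f\circ\Phi_q$. Choose $w\in T_qY$ with $\|w\|_q=1$ and $DF(0)w<-3\lambda$. For $\|x\|_q<\delta$, the bound $\|D^2F\|\le M$ gives
\[
DF(x)w\le-3\lambda+M\delta\le-2.9\lambda .
\]
Take the chart field $X_t(\Phi_q(x))=\tfrac12 D\Phi_q(x)w$ on $U_t=\Phi_q(B(0,\delta))$. This field is $C^2$ and has metric norm at most $1$, by the chart comparison. Along it $df[X_t]\le-1.45\lambda<-\lambda$.

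By \eqref{eq:path-selection-1}, $U_t$ contains the metric ball of radius $\delta/2$ about $q$. Since $d(\beta(t),q)<\delta/10$, it therefore contains the ball of radius $\delta/4$ about $\beta(t)$. The set $H$ is closed because $f\circ\beta$ is continuous, so $C=\beta(H)$ is compact. Hence finitely many $t_1,\dots,t_N\in H$ suffice for the sets $U_{t_j}$ to cover $\{y:d(y,C)<\delta/8\}$.

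\textbf{Step 3 (gluing and flow).} Use the metric $d$ to form locally Lipschitz partition functions $\psi_j$ subordinate to the cover, built from distance functions to the complements of the chart balls. Multiply by the following Lipschitz cutoffs:
\begin{itemize}
\item $\chi_1$, equal to $1$ where $d(\cdot,C)\le\delta/16$ and to $0$ where $d(\cdot,C)\ge\delta/8$;
\item $\chi_2(f)$, equal to $1$ on $\{f\ge m-3\varepsilon/4\}$ and to $0$ on $\{f\le m-7\varepsilon/8\}$.
\end{itemize}
Set $X=\chi_1\chi_2(f)\sum_j\psi_jX_{t_j}$. The three required properties follow from the construction.
\begin{itemize}
\item Convexity of the tangent norm gives metric speed at most $1$.
\item Each summand pairs nonpositively with $df$, so $df[X]\le0$ everywhere and $f$ is nonincreasing along the flow.
\item On the region in the statement both cutoffs equal $1$, so $df[X]\le-\lambda$ there.
\end{itemize}
The endpoints satisfy $f(a),f(b)\le m-g<m-7\varepsilon/8$, so $X$ vanishes near them and they stay fixed. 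In each chart, $X$ is locally Lipschitz in coordinates, using the local norm equivalence and the $C^2$ regularity of the pieces. The Banach Picard theorem then gives local flows. Since the speed is bounded by $1$ and $d$ is complete, a maximal forward solution has a Cauchy limit at any finite blow-up time and can be continued past it. The forward flow is therefore global, and it is continuous in $(s,y)$ by ODE dependence on data.

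\textbf{Main obstacle.} I expect the hardest step to be the regularity bookkeeping in Step 3. The partition functions are only metric-Lipschitz, while the flow requires a locally Lipschitz field in the Banach charts. This should follow from the local uniform equivalence between $d$ and chart norms, together with \cref{lem:selection-charts}, but the overlaps between different charts need care. A secondary check is that the factor-$2$ losses in the dual norms under $D\Phi_q$ still leave a margin above $\lambda$. The estimate $-1.45\lambda$ in Step 2 is meant to absorb exactly that loss.
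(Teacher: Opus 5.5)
Your proposal is correct and follows essentially the same route as the paper: you show $H\cap K=\varnothing$ to get a large differential at $p(H)$, transport finitely many fixed unit directions through $\tfrac12 D\Phi_q(\Phi_q^{-1}(y))$, glue them with metric-Lipschitz weights and a cutoff in $f$, and obtain a global flow from unit speed plus completeness. The only difference is cosmetic: the paper divides the unnormalised weights $(\delta/2-d(r_j,\cdot))_+$ by $\max\{\delta/4,S\}$, which replaces both your normalised partition of unity and the spatial cutoff $\chi_1$ (your denominator is in any case at least $3\delta/20$ on $\{d(\cdot,C)<\delta/8\}$); and the factor-$2$ loss you worry about does not occur, since the chain rule gives $df[X_t]=\tfrac12 DF(x)w$ exactly.
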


\begin{proof}
Define \[
H=\{t:f(\beta(t))\ge m-\varepsilon/2\},\qquad
C=\beta(H),\qquad Q=p(H).
\] These sets are compact, and $H$ is nonempty by hypothesis. From \eqref{eq:path-selection-7}, \(D\ge10\varepsilon\), and \(f(p(t))\le m+\varepsilon\), no \(t\in H\) belongs to \(K\). From \eqref{eq:path-selection-6}, every \(t\in H\) satisfies \[
f(p(t))\ge m-51\varepsilon/100>m-\varepsilon.
\] The definition of \(K\) therefore gives \(\|df(q)\|_{q,*}>3\lambda\) for every \(q\in Q\).

Choose a finite collection of centres \(r_1,\ldots,r_L\in Q\) whose metric balls of radius \(\delta/100\) cover \(Q\). At each centre choose a unit tangent vector \(v_j\in T_{r_j}Y\) such that \(df(r_j)[v_j]>5\lambda/2\). On the metric ball \(d(r_j,y)<\delta\), define \[
X_j(y)=-\tfrac12D\Phi_{r_j}(\Phi_{r_j}^{-1}(y))v_j.
\] This is well-defined by \eqref{eq:path-selection-1}. On the smaller ball \(d(r_j,y)<\delta/2\), its coordinate \(x\) has norm less than \(\delta\); hence \begin{equation}
\|X_j(y)\|_y\le1,\qquad
df(y)[X_j(y)]
\le-\tfrac12(5\lambda/2-M\delta)\le-\lambda.
\label{eq:path-selection-8}
\end{equation} Define nonnegative Lipschitz weights \[
w_j(y)=\max\{0,\delta/2-d(r_j,y)\},\qquad S(y)=\sum_{j=1}^Lw_j(y),
\] and define the tangent field \begin{equation}
V(y)=\frac{\sum_{j=1}^Lw_j(y)X_j(y)}{\max\{\delta/4,S(y)\}}.
\label{eq:path-selection-9}
\end{equation} In the numerator each term is defined to be zero outside its weight's support. This extension is locally Lipschitz: the distance functions are Lipschitz in the metric and locally Lipschitz in smooth coordinates; the vector field \(X_j\) is \(C^1\) on an open neighbourhood of the support, since it is defined on \(d(r_j,y)<\delta\). The denominator is bounded below by \(\delta/4\), and the sum is finite. Thus \(V\) is a globally defined, locally Lipschitz tangent field. Formula \eqref{eq:path-selection-8}, the triangle inequality, and positivity of the weights show that \(\|V(y)\|_y\le1\) and \(df(y)[V(y)]\le0\) everywhere.

If \(d(y,C)<\delta/16\), choose \(t\in H\) with \(d(y,\beta(t))<\delta/16\), and then a centre \(r_j\) with \(d(p(t),r_j)<\delta/100\). By \eqref{eq:path-selection-6}, \[
d(y,r_j)<\delta/16+9\tau+\delta/100<\delta/4.
\] It follows that \(w_j(y)>\delta/4\), so \(S(y)>\delta/4\). In \eqref{eq:path-selection-9} the denominator is then \(S(y)\), and \eqref{eq:path-selection-8} yields \begin{equation}
df(y)[V(y)]\le-\lambda
\quad\text{whenever }d(y,C)<\delta/16.
\label{eq:path-selection-10}
\end{equation} Let \(\theta:\mathbb R\to[0,1]\) be zero on \(( -\infty,m-\varepsilon]\), one on \([m-3\varepsilon/4,\infty)\), and affine on the intervening interval. The field \(Z(y)=\theta(f(y))V(y)\) is locally Lipschitz, has norm at most one, never increases \(f\), and is zero at both endpoints \(a,b\).

The forward flow \(\varphi_s\) of \(Z\) exists for all \(s\ge0\) and depends continuously on its initial point. Locally this is the Banach-space integral equation for a locally Lipschitz vector field: on a sufficiently small coordinate ball and sufficiently short time interval, iteration of its integral operator is a contraction; this gives the unique local solution and continuous dependence. For global existence, a maximal solution with finite terminal time has metric speed at most one, hence is Cauchy as time approaches that terminal time. Completeness gives a limit in \(Y\), whose local existence neighbourhood extends the solution, contradicting maximality. These observations also justify continuity of each fixed-time flow map.
\end{proof}

\subsection{The minimax contradiction}\label{appsrc-path-selection-contradiction-and-the-claimed-uniformity}

The two deformations now combine to give the contradiction: failure of the selection conclusion would produce an admissible path with maximum below the minimax level.

\begin{proof}[Proof of \cref{prop:path-selection}]
Suppose the conclusion fails. At every $t\in K$ there is then a two-dimensional negative space as in \cref{lem:selection-negative}; let $\beta$ be the path supplied by that lemma. If $H$ is empty, $\max f\circ\beta<m$ already contradicts the definition of $m$. Otherwise use the flow from \cref{lem:selection-flow}.

Set \(T=4\varepsilon/\lambda\). Since \(\varepsilon\le D/10=\lambda\delta^2/1{,}600{,}000\) and \(\delta\le1\), \[
T\le\delta^2/400{,}000<\delta/16.
\] For \(t\in H\), the trajectory starting at \(\beta(t)\in C\) stays in \(d(\cdot,C)<\delta/16\) up to time \(T\). While its value of \(f\) exceeds \(m-3\varepsilon/4\), equations \eqref{eq:path-selection-10} and the definition of \(\theta\) give derivative at most \(-\lambda\). Its initial value is at most \(m+101\varepsilon/100\), by \eqref{eq:path-selection-6}. If it did not reach \(m-3\varepsilon/4\) by time \(T\), integration would give a decrease of at least \(4\varepsilon\), contradicting that failure to reach the threshold. After reaching the threshold it cannot increase. Therefore \[
f(\varphi_T(\beta(t)))\le m-3\varepsilon/4\quad(t\in H).
\] For \(t\notin H\), nonincrease gives \(f(\varphi_T(\beta(t)))<m-\varepsilon/2\). The continuous path \(t\mapsto\varphi_T(\beta(t))\) fixes \(a,b\) and has maximum at most \(m-\varepsilon/2<m\), contradicting the definition of \(m\).

This proves the conclusion. The integer \(N\) appears only in the freely chosen tolerances \(z_i\). The constants \(\delta,\tau,D,\gamma\) are independent of \(N\) and of the path. Finally \(\gamma(\lambda)\le\lambda^3/(16{,}000{,}000{,}000M^2)\), by \(\delta\le\lambda/(100M)\), so \(\gamma(\lambda)\to0\).
\end{proof}

\section{Height-preserving minimax selection}\label{app:planar-selector}

We now apply \cref{prop:path-selection} to the regularised planar action.
First we deform the portion of a path lying close to the equator so
that it has small action. A selected point near the minimax level
must then have positive height. Adding the two coordinate dilations
gives the scalar errors needed in the energy estimate.

Let \(K\) be a nonempty compact subset of \((0,1)\), let \(c_+=\max K\), and choose \[
0<H<\sqrt{(1-c_+^2)/2},\qquad 0<H_0\le H/2.
\] Let \(\mathcal C_0\) be the degree-zero class introduced in
\cref{app:planar-window}. Use the functionals and strong norm in
\eqref{eq:planar-functionals} and \eqref{eq:planar-variation-norms}.
Put \[
\delta=1-\frac{c_+}{\sqrt{1-H^2}}>0,\qquad C_H=\frac{2}{(1-H^2)^2},
\] and let \(\vartheta:[0,\infty)\to[0,1]\) be zero on \([0,H_0]\), affine on \([H_0,2H_0]\), and one on \([2H_0,\infty)\).

\subsection{The normalised phase lift}\label{appsrc-planar-selector-the-normalized-real-lift-and-its-continuity}

For maps in a fixed equatorial belt, height and phase provide coordinates in which contraction to $e_1$ can be described explicitly. We first choose the phase consistently and check continuity of this contraction.

\begin{lemma}[Normalised phase lift and contraction]\label{lem:selector-lift}
For $u\in\mathcal C_0$, if \(h=u_3\) and \(\|h\|_\infty\le H\), there is a unique smooth real function \(\phi\), zero outside a sufficiently large disk, such that \[
  u=(\sqrt{1-h^2}\cos\phi,\sqrt{1-h^2}\sin\phi,h).
  \] For \(0\le t\le1\), define \[
  \mathcal C_tu=(\sqrt{1-t^2h^2}\cos(t\phi),
                \sqrt{1-t^2h^2}\sin(t\phi),th).
  \] The map \((t,u)\mapsto\mathcal C_tu\) is jointly continuous in \(N_\eta\) on this belt for each fixed \(\eta>0\).
\end{lemma}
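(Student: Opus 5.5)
The plan has two parts: first build the lift by covering-space lifting and fix its normalisation using the degree-zero condition, then prove joint continuity of the contraction by pointwise estimates on the explicit formula.

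\textbf{The lift.} Since $\|h\|_\infty\le H<1$, the horizontal part $u_1+iu_2$ has modulus $\sqrt{1-h^2}\ge\sqrt{1-H^2}>0$. The map $(u_1+iu_2)/\sqrt{1-h^2}$ is therefore a smooth map $\R^2\to\Sph^1$. Because $\R^2$ is simply connected, it lifts to a smooth real phase $\phi$, unique up to an additive constant in $2\pi\mathbb Z$.

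Outside a disk $B_\rho$ we have $u=e_1$, so $\phi$ takes values in $2\pi\mathbb Z$ on the connected set $\R^2\setminus B_\rho$ and is constant there. Subtracting that constant gives $\phi=0$ outside $B_\rho$. This normalisation fixes the additive constant, so $\phi$ is unique. The degree-zero hypothesis $u\in\mathcal C_0$ is not needed for existence here; it is automatic in the belt, since the image misses both poles.

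\textbf{Smoothness of the contraction.} For each fixed $t$, the map $\mathcal C_tu$ is smooth, sphere valued, and equal to $e_1$ outside $B_\rho$, with $\mathcal C_0u=e_1$ and $\mathcal C_1u=u$.

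\textbf{Continuity: reduction to the lift.} Take $(t_n,u_n)\to(t,u)$ with $N_\eta(u_n-u)\to0$, all $u_n$ in the belt. The main point is to control $\phi_n-\phi$ without a common support: $N_\eta$-convergence does not force the supports of $u_n-e_1$ to stay in a fixed disk. I would work pointwise, using $\nabla\phi=(u_1\nabla u_2-u_2\nabla u_1)/(1-h^2)$. This gives
\[
|\nabla\phi_n-\nabla\phi|\le C\bigl(|\nabla u_n-\nabla u|+|u_n-u|\,|\nabla u|\bigr),
\]
with $C$ depending only on $H$. Hence $\nabla\phi_n\to\nabla\phi$ in $L^2\cap L^4$, using uniform convergence of $u_n$ and $\nabla u\in L^2\cap L^4$.

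For uniform convergence of the phases, compare $\phi_n$ and $\phi$ on a large disk $B_\rho$. Outside $B_\rho$, $\phi=0$ and $u$ is close to $e_1$. Far out, $u_n$ is uniformly close to $e_1$ for large $n$, so $\phi_n$ stays in a small neighbourhood of $2\pi\mathbb Z$ on a connected exterior region. Since $\phi_n$ vanishes near infinity, it is small there.

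On $B_\rho$, the difference $e^{i(\phi_n-\phi)}$ is uniformly close to $1$, so $\phi_n-\phi$ is uniformly close to some element of $2\pi\mathbb Z$. Because $\R^2$ is connected, this integer is the same everywhere. Matching with the exterior region forces it to be zero. Thus $\|\phi_n-\phi\|_\infty\to0$.

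\textbf{Continuity: transfer to $\mathcal C_{t_n}u_n$.} Write $\mathcal C_tu=G(th,t\phi)$, where $G(a,b)=(\sqrt{1-a^2}\cos b,\sqrt{1-a^2}\sin b,a)$ is smooth with bounded derivatives for $|a|\le H$. The $N_\eta$ components are then controlled as follows.

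\begin{enumerate}
\item Uniform convergence follows from $\|t_nh_n-th\|_\infty\to0$ and $\|t_n\phi_n-t\phi\|_\infty\to0$.
\item The third component is $t_nh_n$, which converges in $L^2$.
\item The gradient has the form $\nabla\mathcal C_tu=\partial_aG\,t\nabla h+\partial_bG\,t\nabla\phi$. The coefficients converge uniformly and the gradients converge in $L^2\cap L^4$. The product bound \eqref{eq:thin-belt-13} then gives convergence of the gradients.
\end{enumerate}

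Membership of $\mathcal C_tu-e_1$ in $X_\eta$ is not an issue, since each $\mathcal C_tu-e_1$ is smooth and compactly supported.

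\textbf{Main obstacle.} The delicate step is the uniform convergence of the phases. Without a common support, the integer ambiguity must be pinned down by the normalisation at infinity, as described above. The decay of $\phi$ at infinity needs no rate: the uniform limit is taken on the whole plane directly, and the tails are handled by the argument that $\phi_n$ stays near $2\pi\mathbb Z$ far out.
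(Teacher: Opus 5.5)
Your proposal is correct and follows essentially the same route as the paper. Both arguments use a lift normalised to vanish near infinity, the pointwise bound on $\nabla\phi_n-\nabla\phi$ coming from the angular one-form, and the chain rule through $G$.

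For the phases, the paper packages your connectedness argument as $\phi_n-\phi=\psi_n$. Here $\psi_n$ is the principal argument of the circle map $a_n\bar a=e^{i(\phi_n-\phi)}$. The difference $\phi_n-\phi-\psi_n$ is a constant in $2\pi\mathbb Z$ that vanishes outside a disk containing both supports. This gives the slightly sharper bound $\|\phi_n-\phi\|_\infty\le C_H\|u_n-u\|_\infty$.
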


\begin{proof}
Let \(u\) be a smooth compact perturbation with \(|h|\le H<1\), and put \(a=(u_1+iu_2)/\sqrt{1-h^2}\). This is a smooth map from the simply connected plane to the circle, equal to one outside a disk. Here is a direct construction of the lift. The real one-form \[
\alpha_j=\frac{u_1\partial_j u_2-u_2\partial_j u_1}{1-h^2}
\] is closed: locally one may choose an angular coordinate for \(a\), in which \(\alpha=d(\arg a)\), and this calculation covers the plane. Its integral along a loop vanishes, since every planar loop contracts to a point and integration of a closed smooth form is invariant under such a contraction, by the two-variable fundamental theorem of calculus. Hence integration from one point outside the disk defines a single-valued smooth function \(\phi\) with \(d\phi=\alpha\). The derivative of \(ae^{-i\phi}\) vanishes, so this function is identically one. On the connected exterior of the disk the lift is zero. Two lifts differ by a constant in \(2\pi\mathbb Z\), and the exterior normalisation makes that constant zero. This also proves compact support and uniqueness. A lift may be a nonzero multiple of \(2\pi\) in a bounded hole of the original support; this is why the support assertion is formulated using an enclosing disk.

Fix a base map \(u\) in the belt and let \(u_n\to u\) in \(N_\eta\), all in the belt, with normalised lifts \(\phi_n\). The normalised circle maps \(a_n\) converge uniformly to \(a\). For large \(n\), \(a_n\bar a\) lies in a fixed proper circle arc about one; let \(\psi_n\) be its smooth principal argument in that arc. Then \(\|\psi_n\|_\infty\le C_H\|u_n-u\|_\infty\to0\). Both \(\phi_n-\phi\) and \(\psi_n\) lift \(a_n\bar a\). Their difference is a constant integer multiple of \(2\pi\); it is zero outside a disk containing the supports of the two maps. Therefore \begin{equation}
\phi_n-\phi=\psi_n,\qquad \|\phi_n-\phi\|_\infty\longrightarrow0.
\label{eq:planar-selector-6}
\end{equation} The derivative formula for \(\alpha\) and denominators bounded below by \(1-H^2\) give, for \(p=2,4\), \begin{equation}
\|\nabla\phi_n-\nabla\phi\|_p
\le C(H)\left(\|\nabla u_n-\nabla u\|_p+
\|u_n-u\|_\infty\|\nabla u\|_p\right)\longrightarrow0.
\label{eq:planar-selector-7}
\end{equation} The heights converge uniformly, in \(L^2\), and in derivative \(L^2\cap L^4\).

If also \(t_n\to t\), \eqref{eq:planar-selector-6} implies \(t_n\phi_n\to t\phi\) uniformly. The factor \((t_n-t)\phi\) tends uniformly to zero because this fixed \(\phi\) is bounded. The derivatives of \(t_n\phi_n\) converge in \(L^2\cap L^4\), and the heights \(t_nh_n\) converge in all the stated height norms. Apply the chain rule to \[
F(b,\theta)=(\sqrt{1-b^2}\cos\theta,\sqrt{1-b^2}\sin\theta,b).
\] On \(|b|\le H\), its first derivatives are bounded and uniformly continuous in \(b,\theta\), with the angle taken periodically. Products of a uniformly convergent bounded coefficient and an \(L^p\)-convergent derivative converge in \(L^p\), \(p=2,4\). The third component is exactly \(t_nh_n\), which converges in \(L^2\). These observations prove joint \(N_\eta\) continuity of \(\mathcal C\). The same proof works on any slightly wider belt still strictly inside the poles, so it also gives continuity at the gluing height \(2H_0\).
\end{proof}

\subsection{Momentum and the height deformation}\label{appsrc-planar-selector-momentum-derivative-signs-and-coercivity}

We next check how the energy and momentum change along the contraction, to show that it lowers the action.

\begin{lemma}[Action along the height contraction]\label{lem:selector-action}
For $c\in K$, $0<\eta\le1$ and the maps in \cref{lem:selector-lift},  \[
  J_{c,\eta}(u)\ge\delta E_\eta(u),\qquad
  0\le J_{c,\eta}(\mathcal C_tu)\le C_Ht^2E_\eta(u),
  \] and \(t\mapsto J_{c,\eta}(\mathcal C_tu)\) is nondecreasing.
\end{lemma}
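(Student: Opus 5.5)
We work throughout in the height/phase coordinates $u=F(h,\phi)$ supplied by \cref{lem:selector-lift}. Since $u$ is a smooth compact perturbation with $\|h\|_\infty\le H<1$, we have the pointwise identity
\[
|\nabla u|^2=\frac{|\nabla h|^2}{1-h^2}+(1-h^2)|\nabla\phi|^2 .
\]
On this belt $\chi(u_3)=1$ when $H\le1/4$, and the curvature term of \cref{prop:thin-belt} then vanishes. For larger $H$, the same identity follows from the integration by parts in the proof of \cref{prop:thin-belt}, using a global primitive of $\omega$ on the belt. In either case the momentum is $P(u)=-\int h\,\partial_1\phi$. All three claims then reduce to pointwise inequalities in $(h,\phi)$, applied to $u$ and to $\mathcal C_tu=F(th,t\phi)$, which remains in the belt with a lift of the same form.

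\emph{Lower bound.} We write $|h\,\partial_1\phi|=\bigl(|h|/\sqrt{1-h^2}\bigr)\cdot\sqrt{1-h^2}\,|\partial_1\phi|$ and apply the arithmetic--geometric mean inequality:
\[
c|P(u)|\le\frac{c_+}{\sqrt{1-H^2}}\cdot\frac12\int\bigl(h^2+(1-h^2)|\partial_1\phi|^2\bigr)\le(1-\delta)E(u).
\]
Hence $J_{c,\eta}(u)\ge E_\eta(u)-(1-\delta)E(u)\ge\delta E_\eta(u)$. Applied to $\mathcal C_tu$, the same bound gives $J_{c,\eta}(\mathcal C_tu)\ge0$.

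\emph{Upper bound.} Pointwise we have
\[
|\nabla\mathcal C_tu|^2=\frac{t^2|\nabla h|^2}{1-t^2h^2}+t^2(1-t^2h^2)|\nabla\phi|^2\le\frac{t^2|\nabla u|^2}{1-H^2},
\]
using $1-t^2h^2\ge1-H^2$ and $|\nabla\phi|^2\le|\nabla u|^2/(1-H^2)$. The height term scales by exactly $t^2$. The quartic term is bounded by $t^4(1-H^2)^{-2}|\nabla u|^4\le t^2(1-H^2)^{-2}|\nabla u|^4$. The momentum equals $P(\mathcal C_tu)=t^2P(u)$, and $|cP(u)|\le E(u)$ by the lower-bound step. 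Adding these bounds gives $J_{c,\eta}(\mathcal C_tu)\le C_Ht^2E_\eta(u)$, possibly after enlarging $C_H$ by a harmless factor. I would check that the stated constant $2(1-H^2)^{-2}$ suffices; it plausibly does.

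\emph{Monotonicity (the main step).} We differentiate $g(t)=J_{c,\eta}(\mathcal C_tu)$ under the integral, which is legitimate because everything is smooth with compact support. For the quartic part we show that $t\mapsto|\nabla\mathcal C_tu|^2$ is pointwise nondecreasing; then the quartic energy is nondecreasing. The first term $t^2|\nabla h|^2/(1-t^2h^2)$ is clearly nondecreasing in $t$. For the second term,
\[
\frac{d}{dt}\bigl[t^2(1-t^2h^2)\bigr]=2t(1-2t^2h^2)\ge0,
\]
since $2H^2<1$. For the quadratic part and the momentum, discarding the nonnegative $|\nabla h|^2$ and $|\partial_2\phi|^2$ contributions, we obtain
\[
g'(t)\ge t\int\bigl((1-2H^2)|\partial_1\phi|^2+h^2\bigr)-2tc\int|h|\,|\partial_1\phi|.
\]
We bound $2c|h||\partial_1\phi|\le h^2+c^2|\partial_1\phi|^2$. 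The hypothesis $H^2<(1-c_+^2)/2$ gives $c^2\le1-2H^2$, so $g'(t)\ge0$. The expected obstacle is exactly this absorption of the momentum derivative. It is the step that uses the precise choice of $H$, so I would verify the constants carefully there; the remaining steps are routine.
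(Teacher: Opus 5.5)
Your proposal is correct and follows essentially the paper's route. It uses the same height/phase formula for $|\nabla\mathcal C_tu|^2$, the identity $P(\mathcal C_tu)=-t^2\int h\,\partial_1\phi$, the same Cauchy--Schwarz coercivity with $\sqrt{1-H^2}$, and the same derivative computation absorbed via $c_+^2<1-2H^2$; the paper uses a weighted AM--GM there and gets $J_{c,\eta}(\mathcal C_tu)\ge0$ from monotonicity and $J_{c,\eta}(e_1)=0$, whereas you apply coercivity to $\mathcal C_tu$ directly, and both work. Your hedge on the constant is unnecessary: with $r_0=1-H^2$ your termwise bounds sum to at most $t^2\bigl((r_0^{-1}+1)E(u)+r_0^{-2}(E_\eta(u)-E(u))\bigr)\le 2r_0^{-2}t^2E_\eta(u)$, so $C_H$ works exactly as stated.
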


\begin{proof}
Set \(a=|\nabla h|^2\), \(b=|\nabla\phi|^2\), and \(k=h^2\), all pointwise functions. Direct differentiation gives \begin{equation}
|\nabla\mathcal C_tu|^2
=\frac{t^2a}{1-t^2k}+t^2(1-t^2k)b,
\label{eq:planar-selector-8}
\end{equation} and \[
u\cdot(\partial_1 u\times \partial_2 u)=\partial_1 \phi \partial_2 h-\partial_1 h\partial_2 \phi
=\partial_2 (h\partial_1 \phi)-\partial_1 (h\partial_2 \phi).
\] All functions in the last formula have compact support. Integration against \(x_2\) yields \begin{equation}
P(u)=-\int h\partial_1 \phi,\qquad P(\mathcal C_tu)=-t^2\int h\partial_1 \phi.
\label{eq:planar-selector-9}
\end{equation} The same density formula without the \(x_2\) factor has integral zero. Thus every \(\mathcal C_tu\) belongs to \(\mathcal C_0\).

For \(0<t\le1\), differentiating \eqref{eq:planar-selector-8} gives \begin{equation}
\frac{1}{2t}\frac{d}{dt}|\nabla\mathcal C_tu|^2
=\frac{a}{(1-t^2k)^2}+(1-2t^2k)b=:L_t\ge0,
\label{eq:planar-selector-10}
\end{equation} because \(H^2<1/2\). Differentiating the action, including its quartic coefficient, gives precisely \begin{equation}
\frac{d}{dt}J_{c,\eta}(\mathcal C_tu)
=t\int\left[\frac{a}{(1-t^2k)^2}+(1-2t^2k)b+h^2-2ch\partial_1 \phi\right]
+\eta t\int |\nabla\mathcal C_tu|^2L_t.
\label{eq:planar-selector-11}
\end{equation} All differentiations under these compact integrals are justified by their smooth integrands and denominators bounded below by \(1-H^2\).

Put \(\kappa=1-2H^2>c_+^2\) and \(\rho=1-c_+/\sqrt\kappa>0\). The inequality \(2c|h\partial_1 \phi|\le(c_+/\sqrt\kappa)(h^2+\kappa(\partial_1 \phi)^2)\) implies \[
h^2+(1-2t^2h^2)|\nabla\phi|^2-2ch\partial_1 \phi
\ge\rho(h^2+\kappa|\nabla\phi|^2).
\] Thus \eqref{eq:planar-selector-11} is nonnegative. Continuity at \(t=0\), where the action is zero, proves nonnegativity and monotonicity on the closed interval.

For coercivity at \(t=1\), set \(r_0=1-H^2\). Pointwise \(1-h^2\ge r_0\), so \eqref{eq:planar-selector-9} and Cauchy--Schwarz give \[
|cP(u)|\le\frac{c_+}{\sqrt{r_0}}\,
\frac12\int\bigl(h^2+(1-h^2)(\partial_1 \phi)^2\bigr)
\le(1-\delta)E(u).
\] Consequently \begin{equation}
J_{c,\eta}(u)\ge\delta E(u)+\eta Q(u)\ge\delta E_\eta(u).
\label{eq:planar-selector-12}
\end{equation} For the upper estimate, \eqref{eq:planar-selector-8} implies \(|\nabla\mathcal C_tu|^2\le t^2r_0^{-1}|\nabla u|^2\), because the height-gradient term is at most its \(t^2\)-multiple and the phase term is at most \(t^2r_0^{-1}\) times its original value. Therefore \[
E(\mathcal C_tu)\le t^2r_0^{-1}E(u),\quad
\eta Q(\mathcal C_tu)\le t^4r_0^{-2}\eta Q(u),\quad
|cP(\mathcal C_tu)|\le t^2(c_+/\sqrt{r_0})E(u).
\] Since \(t\le1\), \(r_0\le1\), and \(c_+<1\), their sum is at most \(2r_0^{-2}t^2E_\eta(u)=C_Ht^2E_\eta(u)\). The constants are independent of $c$ and $\eta$.
\end{proof}

\subsection{The deformation and its height bound}\label{appsrc-planar-selector-the-homotopy-and-the-height-witness}

The contraction can now be applied only to maps of small height, leaving the rest of each path fixed. For paths with a fixed upper action bound, points whose deformed action stays away from zero must then have a definite positive height.

\begin{proposition}[Height-preserving deformation]\label{prop:selector-deformation}
For \(0\le\sigma\le1\), define \[
  \mathcal D_\sigma u=
  \begin{cases}
  \mathcal C_{\,1-\sigma+\sigma\vartheta(\|u_3\|_\infty)}u,
                                     &\|u_3\|_\infty<2H_0,\\
  u,                                    &\|u_3\|_\infty\ge2H_0.
  \end{cases}
  \] This is a jointly strong-continuous homotopy \([0,1]\times\mathcal C_0\to\mathcal C_0\). It starts at the identity, fixes \(e_1\), fixes every map of height at least \(2H_0\), and fixes every map of negative \(J_{c,\eta}\). For every \(u\), \(J_{c,\eta}(\mathcal D_\sigma u)\) is nonincreasing in \(\sigma\). If all maps in a path equal \(e_1\) outside a common closed disk, all maps of its deformed homotopy equal \(e_1\) outside that same disk. For an arbitrary original compact support one may first enclose it in a disk.

Let \(B>0\), \(0<a\le B\), and let \(\gamma:[0,1]\to\mathcal C_0\) be a strong-continuous common-support path with \(\max_r J_{c,\eta}(\gamma(r))\le B\). Then, for every \(r\) with \(J_{c,\eta}(\mathcal D_1\gamma(r))\ge a\), \begin{equation}
\|(\mathcal D_1\gamma(r))_3\|_\infty
\ge H_0\sqrt{\frac{a\delta}{C_HB}}.
\label{eq:planar-selector-1}
\end{equation} The lower bound is independent of the path, its support, \(c\in K\), and \(\eta\in(0,1]\).
\end{proposition}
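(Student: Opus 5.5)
The plan is to verify the homotopy properties directly from \cref{lem:selector-lift,lem:selector-action}, and then derive the height bound from the quadratic upper estimate $J_{c,\eta}(\mathcal C_tu)\le C_Ht^2E_\eta(u)$ combined with the coercivity $J_{c,\eta}(u)\ge\delta E_\eta(u)$.

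\emph{Continuity and the qualitative properties.} First, $\mathcal D_\sigma u$ is well defined. When $\|u_3\|_\infty<2H_0\le H$, the map lies in the belt of \cref{lem:selector-lift}, and the contraction parameter $s(\sigma,u)=1-\sigma+\sigma\vartheta(\|u_3\|_\infty)$ lies in $[0,1]$. The function $u\mapsto\|u_3\|_\infty$ is continuous in $N_\eta$, since $N_\eta$ controls the sup norm. Hence $s$ is jointly continuous. \cref{lem:selector-lift}, applied on a slightly wider belt as noted at the end of its proof, then gives joint continuity on the open set $\{\|u_3\|_\infty<2H_0\}$. The region where $\|u_3\|_\infty$ is near $2H_0$ needs separate care. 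There $\vartheta=1$ on $[2H_0,\infty)$, so for heights in $[2H_0-\epsilon,2H_0)$ the parameter $s$ tends to $1$, and $\mathcal C_1u=u$. Together with joint continuity of $(t,u)\mapsto\mathcal C_tu$ at $t=1$, this makes the two branches glue continuously.

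The remaining properties are read off as follows.
\begin{itemize}
\item $\mathcal D_0$ is the identity, since $\mathcal C_1u=u$.
\item $e_1$ is fixed, since $h=\phi=0$.
\item Maps of height at least $2H_0$ are fixed by definition.
\item A map of negative action cannot lie in the belt, because $J_{c,\eta}\ge\delta E_\eta\ge0$ there by \cref{lem:selector-action}; so it too is fixed.
\item Monotonicity in $\sigma$ holds because $s$ is nonincreasing in $\sigma$ (as $\vartheta\le1$) and $t\mapsto J_{c,\eta}(\mathcal C_tu)$ is nondecreasing.
\item The contraction preserves the degree-zero class, as shown in the proof of \cref{lem:selector-action}.
\item Support is preserved: the normalised lift $\phi$ vanishes outside any disk containing the support of $u-e_1$, by the uniqueness argument in \cref{lem:selector-lift}. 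So $\mathcal C_tu=e_1$ off that disk.
\end{itemize}

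\emph{Height bound.} Fix $r$ and set $u=\gamma(r)$, $w=\mathcal D_1u$, so that $J_{c,\eta}(w)\ge a$. If $\|u_3\|_\infty\ge2H_0$, then $w=u$ and the height of $w$ is at least $2H_0$. This exceeds the claimed bound, because $a\le B$, $\delta<1$ and $C_H\ge2$. If $\|u_3\|_\infty<H_0$, then $\vartheta=0$, so $w=\mathcal C_0u=e_1$ and $J_{c,\eta}(w)=0<a$; this case cannot occur.

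In the middle case $H_0\le\|u_3\|_\infty<2H_0$, let $t=\vartheta(\|u_3\|_\infty)$. Then $w=\mathcal C_tu$ has height exactly $t\|u_3\|_\infty\ge tH_0$. \cref{lem:selector-action} gives
\[
a\le J_{c,\eta}(w)\le C_Ht^2E_\eta(u)\le C_Ht^2\delta^{-1}J_{c,\eta}(u)\le C_Ht^2\delta^{-1}B.
\]
Hence $t\ge\sqrt{a\delta/(C_HB)}$, and therefore $\|w_3\|_\infty\ge H_0\sqrt{a\delta/(C_HB)}$. All constants depend only on $H$, $H_0$ and $c_+$, which gives the stated uniformity.

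The main obstacle is the joint strong continuity at the gluing level $\|u_3\|_\infty=2H_0$, where the definition switches branches. I will handle it with the wider-belt continuity of $\mathcal C$ together with $\mathcal C_1=\mathrm{id}$. A smaller point is continuity of the sup-norm of the height, which is immediate from $N_\eta$.
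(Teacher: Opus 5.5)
Your proposal is correct and follows the paper's proof essentially step for step. You obtain continuity on the open sublevel set $\{\|u_3\|_\infty<2H_0\}$ from Lemma~\ref{lem:selector-lift}, and glue at height $2H_0$ using that the contraction parameter tends to one. Monotonicity and the fixed-point claims come from Lemma~\ref{lem:selector-action}. The height bound uses the same three-case split, combining $J_{c,\eta}(u)\ge\delta E_\eta(u)$ with $J_{c,\eta}(\mathcal C_tu)\le C_Ht^2E_\eta(u)$.
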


\begin{proof}
The map \(u\mapsto\|u_3\|_\infty\) is continuous in the strong topology. On its open sublevel set \(r<2H_0\), \cref{lem:selector-lift} therefore proves joint continuity of \(\mathcal D_\sigma\). If \(u_n\to u\), \(r(u)=2H_0\), and \(r(u_n)<2H_0\), the contraction parameters tend to one for every convergent sequence of \(\sigma\)'s. The lifting proof applies in a neighbourhood of this base map because \(2H_0\le H<1\). Its joint continuity gives \(\mathcal D_{\sigma_n}u_n\to u\). On \(r>2H_0\) the homotopy is the identity. At \(r=H_0\), the same joint-continuity argument applies with the continuous function \(\vartheta\), including \(t=0\) when \(\sigma=1\). This proves continuity everywhere. Each changed map and each intermediate map is smooth in the spatial variables and equals \(e_1\) outside the original enclosing disk. Formula \eqref{eq:planar-selector-9} proves zero total area for changed maps; unchanged maps already have it.

The parameter \(1-\sigma+\sigma\vartheta(r)\) is nonincreasing in \(\sigma\), so \cref{lem:selector-action} proves action nonincrease. A map with negative action cannot satisfy \(r\le H\), by \eqref{eq:planar-selector-12}. In particular it has \(r>2H_0\) and is fixed. The other fixed-point and initial-time assertions follow from the definitions.

For \eqref{eq:planar-selector-1}, fix \(u=\gamma(r)\) with deformed action at least \(a>0\). If \(r(u)\le H_0\), then \(\mathcal D_1u=e_1\), contradicting this positivity. If \(r(u)\ge2H_0\), the deformed height is at least \(2H_0\), which is greater than the right side of \eqref{eq:planar-selector-1}, since \(a\le B\), \(0<\delta<1\), and \(C_H\ge2\). It remains that \(H_0<r(u)<2H_0\). Put \(t=\vartheta(r(u))\). The original action is at most \(B\) and the map is in the belt, so \eqref{eq:planar-selector-12} gives \(E_\eta(u)\le B/\delta\). \Cref{lem:selector-action} then yields \[
a\le J_{c,\eta}(\mathcal C_tu)\le C_Ht^2 B/\delta.
\] Hence \(t\ge\sqrt{a\delta/(C_HB)}\) and \(\|(\mathcal C_tu)_3\|_\infty=tr(u)\ge H_0\sqrt{a\delta/(C_HB)}\). This proves \eqref{eq:planar-selector-1}.

The action estimates depend on the phase only through its derivatives.
\end{proof}

\subsection{Selection with two scaling parameters}

We can now apply the abstract selection theorem to the deformed path. Working on the augmented space controls both map variations and the two dilation derivatives, while the height estimate keeps the selected map away from the constant state.

Take the common compact negative endpoint \(u_-\), the number \(\bar\eta_K>0\), the strict upper gap \(\zeta_K>0\), and the widths from \cref{prop:planar-window}, reducing \(\bar\eta_K\) to at most one. The convention $\mathsf p=-P$ in that proposition gives exactly $J_{c,\eta}$. Let \(\Gamma(u_-)\) consist of the strong-continuous common-support paths in \(\mathcal C_0\) from \(e_1\) to \(u_-\), and set \[
  m_\eta(c)=\inf_{\gamma\in\Gamma(u_-)}\max_r J_{c,\eta}(\gamma(r)),
  \quad b_K=\frac{2(1-c_+)^2}{1+c_+}.
  \] Thus \(b_K\le m_\eta(c)\le8\pi-\zeta_K\) for \(c\in K\), \(0<\eta\le\bar\eta_K\). Define the positive constant \[
  h_K=H_0\sqrt{\frac{b_K\delta}{2C_H(8\pi+1)}}.
  \]

\begin{proposition}[Selection with positive height]\label{prop:planar-selector}
There is a function \(\epsilon_K:(0,1]\to(0,\infty)\), independent of \(c,\eta\), with \(\epsilon_K(\lambda)\to0\) as \(\lambda\to0\), such that the following holds. For every \(c\in K\), \(0<\eta\le\bar\eta_K\), \(0<\lambda\le1\), and \(\gamma\in\Gamma(u_-)\), if \(0<\epsilon\le\epsilon_K(\lambda)\) and \(\max_rJ_{c,\eta}(\gamma(r))\le m_\eta(c)+\epsilon\), there is a point \(u=\mathcal D_1\gamma(r_*)\) on the deformed path satisfying \begin{equation}
|J_{c,\eta}(u)-m_\eta(c)|\le\epsilon,\qquad
\|u_3\|_\infty\ge h_K.
\label{eq:planar-selector-2}
\end{equation} It is a smooth compact perturbation of \(e_1\). With the residual, Hessian and index from
\cref{sec:planar-framework}, we have \begin{equation}
|R_{c,\eta,u}[v]|\le3\lambda B_{\eta,u}(v),\qquad
i_\lambda(u;c,\eta)\le1.
\label{eq:planar-selector-3}
\end{equation} With $D_1,D_2,W_\eta$ from \eqref{eq:planar-scaling-data}, the same point satisfies \begin{equation}
|D_1|+|D_2|\le6\lambda W_\eta.
\label{eq:planar-selector-4}
\end{equation} These conclusions hold simultaneously and do not assume a common energy bound.
\end{proposition}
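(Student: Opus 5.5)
We apply \cref{prop:path-selection} on the augmented space $Y=\mathcal Y_\eta$ of \cref{prop:thin-belt-augmented}, with $f=f_{c,\eta}$ restricted to the component containing $(0,0,e_1)$. Take endpoints $a=(0,0,e_1)$ and $b=(0,0,u_-)$. The constants $R,M$ are those of \eqref{eq:thin-belt-8}--\eqref{eq:thin-belt-9}. They are independent of $c,\eta,u$ and valid at centres with $|\alpha|,|\beta|\le1$.

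\textbf{Step 1: the augmented level equals $m_\eta(c)$.} Given an augmented path $(\alpha(r),\beta(r),w(r))$, \eqref{eq:thin-belt-10} and \cref{lem:thin-belt-scaling} show that $r\mapsto\mathcal T_{\alpha(r),\beta(r)}w(r)$ is a map path in $\mathcal M^0_\eta$ with the same values of $f$. Using density of compact perturbations (\cref{lem:thin-belt-charts}) and continuity of $J_{c,\eta}$, one approximates such paths by common-support smooth paths in $\Gamma(u_-)$. This gives the augmented minimax $m\ge m_\eta(c)$. The reverse inequality follows by embedding $\gamma\mapsto(0,0,\gamma)$, since $N_\eta$-continuity follows from $\mathcal N$-continuity of common-support smooth paths after a reparametrisation or approximation. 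The gap $g=m-\max\{f(a),f(b)\}\ge b_K>0$ holds by \cref{prop:planar-window}.

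I expect the main technical obstacle to be the approximation in Step 1: showing that the infimum over all continuous paths in the completed manifold is not smaller than that over common-support smooth paths. A cleaner route avoids proving equality altogether. Work only with the given path, and note that \cref{prop:path-selection} requires only $m$ as the infimum over $\Gamma$ in $Y$. If the selected value satisfies $|f-m|\le\epsilon$ with $m\le m_\eta(c)$, we still need $m\ge m_\eta(c)-\epsilon$. For that, the density argument above is needed anyway, so I would prove it: approximate each $w(r)$ uniformly in $r$ by normalised truncations, as in the proof of \cref{lem:thin-belt-charts}.

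\textbf{Step 2: apply the selection.} Put $p(r)=(0,0,\mathcal D_1\gamma(r))$. By \cref{prop:selector-deformation}, this is a path in $\Gamma$ with $\max f\circ p\le\max J_{c,\eta}\circ\gamma\le m+\epsilon$. Choose $\epsilon_K(\lambda)=\min\{\gamma(\lambda),b_K/2,1\}$, with $\gamma(\lambda)$ built from $R,M$ and the lower bound $g\ge b_K$ (note $\gamma$ is monotone in $g$). \Cref{prop:path-selection} then yields $r_*$ with $u=\mathcal D_1\gamma(r_*)$ satisfying the following: $|J_{c,\eta}(u)-m_\eta(c)|\le\epsilon$, $\|df\|_*\le3\lambda$, and no two-dimensional subspace on which $H\le-\lambda\|\cdot\|^2$. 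The map $u$ is smooth and compactly supported because $\mathcal D_1$ preserves these properties.

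\textbf{Step 3: height.} $J_{c,\eta}(u)\ge m_\eta(c)-\epsilon\ge b_K/2$. Apply \eqref{eq:planar-selector-1} with $a=b_K/2$ and $B=8\pi+1\ge m_\eta(c)+\epsilon$; this gives exactly $\|u_3\|_\infty\ge h_K$.

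\textbf{Step 4: translate the first- and second-order information.} At $\alpha=\beta=0$ the tangent norm is $W_\eta(u)(|a|+|b|)+B_{\eta,u}(v)$. Testing $df$ on $(0,0,v)$ and using \eqref{eq:thin-belt-12} gives $|R_{c,\eta,u}[v]|\le3\lambda B_{\eta,u}(v)$. Testing on $(1,0,0)$ and $(0,1,0)$ gives $|D_1|,|D_2|\le3\lambda W_\eta$, since $\partial_\alpha f,\partial_\beta f$ at the origin are $D_1,D_2$ by \eqref{eq:thin-belt-6}. For the index, suppose a two-dimensional space $L$ of compact smooth tangent fields had $\mathcal H\le-\lambda B^2$. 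Then $\{(0,0,v):v\in L\}$ would be a negative plane for the chart Hessian $H_q$, because the chart $\Phi_q$ restricted to $a=b=0$ is $\Psi_u$ and $D^2(f\circ\Phi_q)(0)[(0,0,v)]^2=\mathcal H_{c,\eta,u}[v,v]$ by \eqref{eq:thin-belt-12}. This contradicts the selection, so $i_\lambda\le1$.
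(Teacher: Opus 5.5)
Your proposal is correct and follows the paper's proof essentially step for step. You identify the augmented width with \(m_\eta(c)\) via \eqref{eq:thin-belt-10} and density, and apply \cref{prop:path-selection} to the lifted deformed path \((0,0,\mathcal D_1\gamma)\). You then obtain the height from \eqref{eq:planar-selector-1} with \(a=b_K/2\), \(B=8\pi+1\), and read off the residual, index and \(D_1,D_2\) bounds from the directions \((0,0,v)\) and the scalar unit vectors of norm \(W_\eta(u)\).

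The only variation is in approximating completed-map paths. You apply a normalised truncation uniformly in the parameter, whereas the paper subdivides into chart neighbourhoods joined by short normalised segments. Your route works equally well, provided you reattach the endpoint \(u_-\) and note that the approximants stay in \(\mathcal M_\eta^0\), so they have zero total area by \cref{lem:thin-belt-scaling}.
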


\begin{proof}
Let $\mathcal M_\eta^0$ and
$\mathcal Y_\eta=\R^2\times\mathcal M_\eta^0$ be as in
\cref{prop:thin-belt-augmented}, with functional $f_{c,\eta}$ and complete metric
\eqref{eq:thin-belt-7}. We first check equality of the relevant widths.
Every common-support smooth path from $e_1$ lies in $\mathcal M_\eta^0$.
Conversely, a continuous path in this component can be approximated,
with endpoints fixed, by common-support smooth paths: subdivide its
compact parameter interval into spherical-chart neighbourhoods,
approximate the finitely many subdivision maps strongly by smooth compact
maps, and join consecutive approximations by the short normalised
segments. All segments stay in the prescribed neighbourhoods, have one
common compact support, and are uniformly close to the original path.
The momentum and action are continuous in these charts, so the maxima
converge. Total area is locally constant by \cref{lem:thin-belt-scaling}; these
paths therefore have degree zero. Thus the completed-map width is
$m_\eta(c)$.

If $(\alpha(r),\beta(r),u(r))$ is a path in $\mathcal Y_\eta$ joining
$(0,0,e_1)$ to $(0,0,u_-)$, then
$\mathcal T_{\alpha(r),\beta(r)}u(r)$ is a continuous completed-map path
with the same action at each parameter, by
\eqref{eq:thin-belt-10}. Conversely, every completed-map path lifts
with both scalar coordinates zero. Hence the augmented width is also
$m_\eta(c)$.

Deform the given near-optimal compact path by $\mathcal D_1$ and lift it
as $r\mapsto(0,0,\mathcal D_1\gamma(r))$. The uniform charts, metric
comparisons and second/third derivative bounds in
\cref{prop:thin-belt-augmented} verify every hypothesis of
\cref{prop:path-selection}. For sufficiently small
$\epsilon\le\epsilon_K(\lambda)$, that proposition selects an original
path point $(0,0,u)$ with
$|J(u)-m_\eta(c)|\le\epsilon$, augmented dual residual at most
$3\lambda$, and threshold index at most one. Its action is at least
$b_K/2$, while the original path maximum is at most $8\pi+1$.
Equation \eqref{eq:planar-selector-1} therefore gives the stated height
$h_K$. The permissible tolerance depends only on the uniform chart
constants and $b_K$, so is uniform for $c\in K$ and small $\eta$.

Restriction to the tangent directions $(0,0,v)$ gives the physical
residual and Hessian in \eqref{eq:planar-variation-norms}, with exactly
the norm $B_{\eta,u}(v)$. Every two-dimensional negative threshold space
of these fields would be a forbidden augmented space. This proves
\eqref{eq:planar-selector-3}. The two scalar coordinate derivatives are
$D_1,D_2$, and each scalar unit vector has norm $W_\eta(u)$.
Consequently \eqref{eq:planar-selector-4} follows. All properties hold
at the same smooth point of the deformed compact path.
\end{proof}

\section{Classification of stable one-dimensional profiles}\label{app:signed-estimate}

The local limits in the energy estimate can be one-dimensional
without having finite energy on the line. We classify all stable
entire profiles in this larger class, including the limiting cases
of the stability interval. The classification also gives a sharp pointwise comparison between
longitudinal gradient energy and anisotropy energy.

Fix \(0<c<1\). For an entire map \(q\in H^1_{\mathrm{loc}}(\mathbb R;S^2)\), let primes denote derivatives in its real variable \(x_1\), and write \(q_3=q\cdot e_3\). Suppose, in the sense of distributions, that \begin{equation}
q''+(|q'|^2+q_3^2)q-q_3e_3+cq\times q'=0.
\label{eq:signed-estimate-1}
\end{equation} Call it stable if \begin{equation}
Q_{q,c}(\xi):=\int_{\mathbb R}\bigl(|\xi'|^2-(|q'|^2+q_3^2)|\xi|^2+\xi_3^2+cq\cdot(\xi\times\xi')\bigr)\dd x_1\geq0
\label{eq:signed-estimate-2}
\end{equation} for every \(\xi\in C_c^\infty(\mathbb R;\mathbb R^3)\) with \(\xi(x_1)\cdot q(x_1)=0\) for every \(x_1\). No finite-energy assumption is imposed.

All integrals below are with respect to $x_1$, unless another variable is displayed.
\subsection{Regularity, conserved quantities, and closure of stability}

Before using stability, we establish regularity and the conserved quantities of the profile equation. These also give compactness under translations and allow stability to pass to local limits.

\begin{lemma}[Regularity and conserved quantities]\label{lem:profile-conserved}
Every distributional solution of \eqref{eq:signed-estimate-1} is smooth and has bounded derivatives of every positive order. Writing $h=q_3$, the quantities
\begin{equation}
H=\tfrac12(|q'|^2-h^2),\qquad j=e_3\cdot(q\times q')-ch
\label{eq:signed-estimate-6}
\end{equation}
are constant. Stability extends to compact $H^1$ tangent fields and is preserved under smooth local convergence of profiles and their speeds.
\end{lemma}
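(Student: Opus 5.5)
The plan has four parts: a bootstrap for regularity, pointwise differentiation for the conserved quantities, an approximation argument to extend stability to $H^1$ fields, and a convergence argument to pass stability to local limits.

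\emph{Regularity.} Write \eqref{eq:signed-estimate-1} as
\[
q''=-(|q'|^2+q_3^2)q+q_3e_3-cq\times q'.
\]
Since $q\in H^1_{\mathrm{loc}}$ with $|q|=1$, the right side lies in $L^1_{\mathrm{loc}}$. Hence $q'\in W^{1,1}_{\mathrm{loc}}$, so $q'$ is locally bounded and continuous. Then the right side is in $L^\infty_{\mathrm{loc}}$, which gives $q\in W^{2,\infty}_{\mathrm{loc}}$, and differentiating the equation repeatedly yields $q\in C^\infty$.

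\emph{Conserved quantities.} With the equation holding pointwise, differentiate $H$. Use $q\cdot q'=0$ to see that $q''\cdot q'=-q_3e_3\cdot q'\cdot(-1)$ reduces to $h h'$, the term $cq\times q'$ being orthogonal to $q'$. This gives $H'=q'\cdot q''-hh'=0$. For $j$,
\[
\frac{d}{dx_1}\,e_3\cdot(q\times q')=e_3\cdot(q\times q'').
\]
Inserting the equation, the terms parallel to $q$ vanish. The term $q\times q_3e_3$ is orthogonal to $e_3$. The remaining term is
\[
-c\,e_3\cdot\bigl(q\times(q\times q')\bigr)=-c\,e_3\cdot(-q')=c\,h',
\]
using $q\cdot q'=0$. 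This matches $(ch)'$, so $j'=0$.

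\emph{Bounded derivatives.} Since $|q|=1$, the constancy of $H$ gives $|q'|^2=2H+h^2\le 2H+1$, so $q'$ is uniformly bounded. Iterating the equation then bounds $q''$, and inductively every higher derivative, uniformly on $\mathbb R$.

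\emph{Extension of stability to compact $H^1$ fields.} Given a compactly supported $H^1$ field $\xi$ tangent to $q$, mollify to $\xi_\epsilon$ and project, setting $\Pi_q\xi_\epsilon=\xi_\epsilon-(q\cdot\xi_\epsilon)q$. This is smooth, compactly supported and tangent, because $q$ is smooth with bounded derivatives. It converges to $\xi$ in $H^1$. Every term of $Q_{q,c}$ is continuous in $H^1$ on a fixed compact set, since the coefficients are bounded, so stability passes to the limit.

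\emph{Closure under local convergence.} Suppose $q_n\to q$ smoothly on compacts, $c_n\to c$, and each $q_n$ is a stable solution. The equation passes to the limit directly. For a smooth compact tangent field $\xi$ of $q$, set $\xi_n=\Pi_{q_n}\xi$. These are tangent to $q_n$, and $\xi_n\to\xi$ in $C^1$ on the fixed support. Then $Q_{q_n,c_n}(\xi_n)\ge0$, and this converges to $Q_{q,c}(\xi)$.

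The main obstacle is the first step: justifying the bootstrap from a distributional solution with only $H^1_{\mathrm{loc}}$ regularity, where the nonlinearity $|q'|^2q$ is only $L^1$. In one dimension, $L^1$ control of $q''$ already yields continuity of $q'$, so the bootstrap closes.
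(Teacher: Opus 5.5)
Your proof is correct and follows the paper's argument essentially step for step. You bootstrap from $q''\in L^1_{\mathrm{loc}}$, dot the equation with $q'$ and cross it with $q$ for the conserved quantities, use $\Pi_q$ to extend stability to $H^1$ fields and $\Pi_{q_n}$ for closure under local limits. The only blemish is the garbled formula in the $H$ computation, which should simply read $q''\cdot q'=q_3\,e_3\cdot q'=hh'$.
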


\begin{proof}
The right-hand side of \[
q''=-(|q'|^2+q_3^2)q+q_3e_3-cq\times q'
\] belongs to \(L^1_{\mathrm{loc}}(\mathbb R)\). Integrating this distributional identity shows that \(q'\) has a locally absolutely continuous representative. In one dimension \(q\) and then \(q'\) are continuous; the displayed right-hand side is therefore continuous. Thus \(q\in C^2\). Repeated differentiation of the equation gives \(q\in C^\infty\).

For the quantities in \eqref{eq:signed-estimate-6}, the identities \(q\cdot q'=0\) and \eqref{eq:signed-estimate-1}, dotted with \(q'\), give \(H'=0\). Crossing \eqref{eq:signed-estimate-1} on the left by \(q\) gives \[
q\times q''=h(q\times e_3)+cq',
\] because \(q\times(q\times q')=-q'\). Dotting this last identity with \(e_3\) gives \(j'=0\). In particular \(|q'|^2=2H+h^2\leq 2|H|+1\). The equation bounds \(q''\) globally, and repeated differentiation bounds every higher derivative globally. The bounds depend on this particular solution and its speed, which suffices for the translation arguments used here.

Nonnegativity of \(Q_{q,c}\) extends to compactly supported \(H^1\) tangent fields. To see this, approximate such a field in \(H^1\) by smooth compact ambient fields and project each approximant by \(I-q\otimes q\); the resulting smooth tangent fields converge in \(H^1\). On their common compact support the coefficients of \eqref{eq:signed-estimate-2} are bounded, so the quadratic forms converge.

If smooth stable solutions \(q_n\) at speeds \(c_n\) converge smoothly on compact intervals to a solution \(q_\infty\), with \(c_n\to c_\infty\), then the limit is stable. For a smooth compact field \(\xi\) tangent to \(q_\infty\), use \(\xi_n=\xi-(\xi\cdot q_n)q_n\). These are compact tangent fields for \(q_n\), converge in \(C^1\), and satisfy \(Q_{q_n,c_n}(\xi_n)\to Q_{q_\infty,c_\infty}(\xi)\). Translations in \(x_1\) and constant rotations about \(e_3\) preserve the equation and stability.
\end{proof}

\subsection{A zero of a bounded Jacobi field}

Rotational symmetry produces a Jacobi field along each profile. We show that stability severely restricts its zeros: a bounded Jacobi field vanishing at one point must vanish everywhere.

Let \(B\) be the symmetric polarisation of \(Q_{q,c}\); thus, for tangent fields \(u,v\), \begin{equation}
B(u,v)=\int \left[u'\cdot v'-(|q'|^2+h^2)u\cdot v+u_3v_3
+\tfrac c2 q\cdot(u\times v'+v\times u')\right].
\label{eq:signed-estimate-7}
\end{equation} A smooth tangent field \(w\) is called Jacobi here if \(B(w,\psi)=0\) for every smooth compact tangent field \(\psi\); all the integrals in this definition have compact support. 
\begin{lemma}[Zeros of bounded Jacobi fields]\label{lem:profile-jacobi}
For a stable profile, any bounded Jacobi field that vanishes at one point is identically zero. The field $e_3\times q$ is a bounded Jacobi field. The same zero principle holds for a nonnegative scalar form $\int(a\prime{}^2+Va^2)$ with smooth potential $V$.
\end{lemma}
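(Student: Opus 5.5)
We prove the statement in three parts: the zero principle for a scalar form, the zero principle for tangent Jacobi fields, and the Jacobi property of $e_3\times q$. The scalar case contains the mechanism, so we treat it first.

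\textbf{Scalar zero principle.} Suppose $\int(\psi'^2+V\psi^2)\ge0$ for all compactly supported $\psi$. Let $a$ be a bounded solution of $-a''+Va=0$ with $a(x_0)=0$, and assume $a\not\equiv0$. Then $a'(x_0)\ne0$, by uniqueness for the ODE. First we use stability to build a positive supersolution. Nonnegativity of the form on every interval $[-R,R]$ gives a positive solution $\phi_R$ of $-\phi''+V\phi=\mu_R\phi$ with $\mu_R\ge0$, namely the Dirichlet ground state. Normalising $\phi_R(x_0+1)=1$ and letting $R\to\infty$, Harnack-type ODE bounds give a positive solution $\phi$ of $-\phi''+V\phi=0$ on all of $\mathbb R$. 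This is the Allegretto--Piepenbrink direction.

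Next we compare $a$ with $\phi$. The function $a/\phi$ has Wronskian $W=\phi a'-a\phi'$, which is a nonzero constant since $a'(x_0)\ne0$. Hence $(a/\phi)'=W/\phi^2$, so $a/\phi$ is strictly monotone and has exactly one zero, at $x_0$. Boundedness of $a$ alone does not immediately give a contradiction here. So instead we argue directly with the truncated test field $\psi=a\,\mathbf 1_{[x_0,x_1]}$, where $x_1$ is a second zero if one exists. For this field the form vanishes, so $\psi$ is a minimiser. Minimisers of a nonnegative form solve the Euler--Lagrange equation and are $C^1$, and the corner of $\psi$ at $x_1$ contradicts this unless $a'(x_1)=0$, which forces $a\equiv0$.

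When $a$ has only the single zero, we use boundedness. Take $\psi=a\,\zeta_R$ on $[x_0,\infty)$ with a logarithmic cutoff $\zeta_R$. Integrating by parts gives
\[
Q(\psi)=\int a^2\zeta_R'^2-\bigl[\zeta_R^2aa'\bigr]_{x_0}=\int a^2\zeta_R'^2,
\]
and the right side tends to $0$ because $a$ is bounded. Now perturb $\psi$ near the corner at $x_0$: rounding the kink lowers the form by a fixed amount proportional to $a'(x_0)^2$. For large $R$ this makes $Q$ negative, contradicting stability. This corner-rounding step, made uniform against the vanishing cutoff cost, is the heart of the argument.

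\textbf{Tangent Jacobi fields.} For a bounded Jacobi field $w$ with $w(x_0)=0$, we run the same argument with the test field $\psi=\zeta_R\,w\,\mathbf 1_{x\ge x_0}$. This field is tangent, lies in $H^1$ with compact support, and is admissible by \cref{lem:profile-conserved}. Because $w$ is Jacobi, integration by parts gives
\[
Q(\psi)=\int|w|^2\zeta_R'^2+\text{(boundary terms)}.
\]
The boundary terms vanish at $x_0$ since $w(x_0)=0$. The cross term $c\,q\cdot(\psi\times\psi')$ is antisymmetric, so it contributes no extra $\zeta_R'$ term beyond one of size $\int|w|^2|\zeta_R'|$. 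To make this term $o(1)$ we choose the cutoff slowly varying, using $\int|\zeta_R'|\to0$ together with $\int\zeta_R'^2\to0$. For example, take $\zeta_R$ linear in $\log\log x$; if that fails, use the boundedness of $w$ differently. Then we perturb near $x_0$ by adding $\varepsilon\chi$, where $\chi$ is tangent and compactly supported and is chosen with $B(\psi,\chi)\ne0$. Such a $\chi$ exists because $w'(x_0)\ne0$, by uniqueness for the linear Jacobi ODE: $w$ and $w'$ both vanishing at $x_0$ would force $w\equiv0$. The pairing $B(\psi,\chi)$ reduces to a jump term $\approx -w'(x_0)\cdot\chi(x_0)$, so a small $\varepsilon$ of the right sign gives $Q<0$, contradicting stability.

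\textbf{The field $e_3\times q$.} It is bounded since $|q|=1$ and tangent since $q\cdot(e_3\times q)=0$. It is Jacobi because it is the derivative of the family $R_sq$ of rotations about $e_3$, which are solutions of \eqref{eq:signed-estimate-1}. These rotations commute with the $u_3e_3$ term and with the cross product, so linearising the equation gives the Jacobi equation, which then holds weakly against compact tangent test fields.

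We expect the main obstacle to be controlling the first-order term $\int|w|^2|\zeta_R'|$ from the cross product. If no suitable cutoff makes it vanish, we will remove it by a gauge (frame) rotation that converts the form to a scalar-type one.
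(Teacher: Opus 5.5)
Your strategy is the paper's. You truncate $w$ at its zero and cut it off far away. The Jacobi identity then reduces the form of the truncation to the cutoff cost $\int|\zeta_R'|^2|w|^2=O(\|w\|_\infty^2/R)$. Finally you exploit the corner through $B(w_R,\chi)=-w'(x_0)\cdot\chi(x_0)$; your perturbation $w_R+\varepsilon\chi$ is equivalent to the paper's bound $|B(w_R,\chi)|^2\le Q(w_R)Q(\chi)$.

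However, the step you single out as the main obstacle is mishandled, so as written the tangent case does not close. There is no residual first-order term. For $\psi=\zeta_R w$ on $[x_0,\infty)$ one has $\psi\times\psi'=\zeta_R w\times(\zeta_R'w+\zeta_R w')=\zeta_R^2\,w\times w'$, because $w\times w=0$. The polarised first-order term of $B(w,\zeta_R^2w)$ is the same quantity $c\zeta_R^2\,q\cdot(w\times w')$. Hence, exactly,
\[
Q_{q,c}(\psi)=B_{[x_0,\infty)}(w,\zeta_R^2w)+\int_{x_0}^\infty|\zeta_R'|^2|w|^2 .
\]
Your proposed remedy could not work anyway. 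Every cutoff passing from $1$ to $0$ has $\int|\zeta_R'|\ge1$, a $\log\log$ profile included. So a term of size $\int|w|^2|\zeta_R'|$ would not tend to zero for a merely bounded $w$, and no gauge rotation is needed. With the identity above, an ordinary linear cutoff on $[R,2R]$ suffices. You should also record that $w'(x_0)$ is tangent, which follows by differentiating $q\cdot w=0$ at $x_0$. This is what lets you choose a tangent $\chi$ with $w'(x_0)\cdot\chi(x_0)\ne0$.

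In the scalar part, the Allegretto--Piepenbrink comparison and the second-zero case are superfluous. The half-line cutoff argument never uses how many zeros $a$ has, which is why the paper simply says the same proof applies. The comparison step is also inaccurate as stated. The limit of normalised Dirichlet ground states solves $-\phi''+V\phi=\mu\phi$ with $\mu=\lim_R\mu_R\ge0$, so $(\phi a'-a\phi')'=\mu a\phi$ and the Wronskian is constant only if $\mu=0$. Keep only your single-zero argument, which is exactly the paper's.

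For $e_3\times q$, your rotation argument is also the paper's. To match the definition of Jacobi field used here, check that $B(w,\psi)=-\int D\mathcal R_c(q)[w]\cdot\psi$ for compact tangent $\psi$, as the paper does. The multiples of $q$ drop out, and $q'\cdot(w\times\psi)=0$ converts the polarised first-order term.
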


\begin{proof}
Translate the zero of $w$ to the origin. For \(R>1\), choose a smooth function \(\chi_R\) on \([0,\infty)\), equal to one on \([0,R]\), zero on \([2R,\infty)\), and with \(|\chi_R'|\leq C/R\), for an absolute constant \(C\). Define \(w_R(x_1)=\chi_R(x_1)w(x_1)\) for \(x_1\geq0\), and \(w_R(x_1)=0\) for \(x_1<0\). Since \(w(0)=0\), this is a compact \(H^1\) tangent field. Expanding \eqref{eq:signed-estimate-7} on the positive half-line gives \[
Q_{q,c}(w_R)=B_{[0,\infty)}(w,\chi_R^2w)+\int_0^\infty |\chi_R'|^2|w|^2.
\] The first term is zero: integration by parts uses the Jacobi equation in the interior, and its boundary terms vanish at zero since \(w(0)=0\), and at infinity by the cutoff. The first-order terms introduce no extra cutoff square since \(w\times w=0\). Consequently \begin{equation}
0\leq Q_{q,c}(w_R)\leq C^2\|w\|_\infty^2/R.
\label{eq:signed-estimate-8}
\end{equation} Positivity of the quadratic form on compact \(H^1\) tangent fields implies \[
|B(w_R,\psi)|^2\leq Q_{q,c}(w_R)Q_{q,c}(\psi),
\] by applying nonnegativity to \(w_R+t\psi\) for real \(t\). For a fixed compact smooth tangent field \(\psi\), and all sufficiently large \(R\), integration by parts on the positive half-line instead gives \[
B(w_R,\psi)=-w'(0)\cdot\psi(0).
\] The possible first-order boundary term is a multiple of \(q(0)\cdot(w(0)\times\psi(0))\), and is zero. Equation \eqref{eq:signed-estimate-8} therefore gives \(w'(0)\cdot\psi(0)=0\). Differentiating \(q\cdot w=0\) at zero shows that \(w'(0)\) is tangent, and \(\psi(0)\) can be any tangent vector. Thus \(w'(0)=0\). In any smooth tangent frame the Jacobi equation is a linear second-order ordinary differential equation with identity principal matrix. Uniqueness with zero value and derivative gives \(w\equiv0\). The same proof applies to the scalar form \(\int(a'^2+V(x_1)a^2)\) with smooth real \(V\).

For completeness, the Jacobi field used first is \(w=e_3\times q\). Define \[
\mathcal R_c(q)=q''+(|q'|^2+h^2)q-he_3+cq\times q'.
\] For a smooth tangent field \(w\), its linearisation is \[
D\mathcal R_c(q)[w]=w''+(|q'|^2+h^2)w+(2q'\cdot w'+2hw_3)q-w_3e_3
+c(w\times q'+q\times w').
\] For compact tangent \(\psi\), integration by parts shows \begin{equation}
B(w,\psi)=-\int D\mathcal R_c(q)[w]\cdot\psi.
\label{eq:signed-estimate-9}
\end{equation} Indeed the multiples of \(q\) disappear, and \(q'\cdot(w\times\psi)=0\) because \(q',w,\psi\) lie in the same two-dimensional tangent plane; this identity converts the polarised first-order term in \eqref{eq:signed-estimate-7} to the first-order term in \eqref{eq:signed-estimate-9}. Rotating \(q\) about \(e_3\) gives a family of exact solutions. Differentiating this family yields \(D\mathcal R_c(q)[e_3\times q]=0\), so \(e_3\times q\) is Jacobi and bounded.
\end{proof}

\subsection{Exclusion of poles, including asymptotic approach}

Applied to the rotational Jacobi field, the preceding lemma excludes both contact with a pole and approach to a pole at infinity.

\begin{lemma}[Separation from the poles]\label{lem:profile-poles}
Every stable profile satisfies $\sqrt{1-q_3^2}\ge\rho$ on $\mathbb R$ for some $\rho>0$ depending on the profile.
\end{lemma}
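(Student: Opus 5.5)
The plan is to apply \cref{lem:profile-jacobi} to the bounded Jacobi field $w=e_3\times q$, whose length is $|w|=\sqrt{1-q_3^2}$. The claim then splits into two parts. The first is that $w$ never vanishes, which rules out contact with a pole. The second is that $|w|$ cannot tend to zero along any sequence of points, which rules out asymptotic approach to a pole.

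\emph{Step 1: no contact with a pole.} Suppose $|q_3(x_0)|=1$ at some point. Then $w(x_0)=0$, and \cref{lem:profile-jacobi} gives $w\equiv0$. Hence $q\equiv\pm e_3$ is constant. We then check that a constant polar map is not a stable solution, or not a solution at all. Substituting $q=e_3$ into \eqref{eq:signed-estimate-1} gives $0+(0+1)e_3-e_3=0$, so it is a solution, and stability must be tested. Take a tangent field $\xi=(a,b,0)$ with $a,b$ real compact functions. The quadratic form becomes
\[
\int |\xi'|^2-|\xi|^2+c\,(ab'-ba')\dd x_1 .
\]
Choosing $a=\phi\cos(kx_1)$, $b=\phi\sin(kx_1)$ with $\phi$ a wide bump, the integrand is roughly $(k^2-1\pm ck)\phi^2$. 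With $k=0$ this is $-\int\phi^2+\int\phi'^2$, which is negative once $\phi$ is wide. So the pole is unstable, a contradiction. Hence $|q_3|<1$ everywhere.

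\emph{Step 2: no asymptotic approach.} Suppose $\inf|w|=0$. Pick points $x_n$ with $|w(x_n)|\to0$; necessarily $|x_n|\to\infty$, by continuity and Step 1. Translate to $q_n=q(\cdot+x_n)$. By \cref{lem:profile-conserved}, all derivatives of $q$ are globally bounded, so Arzel\`a--Ascoli gives a subsequence converging smoothly on compacts to a solution $q_\infty$. This limit is stable, by the closure statement of the same lemma. It satisfies $|q_{\infty,3}(0)|=1$, and Step 1 applied to $q_\infty$ gives a contradiction. Therefore $\inf\sqrt{1-q_3^2}=\rho>0$.

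\emph{Main obstacle.} The delicate point is Step 1's exclusion of the constant polar profile. One must verify that $\pm e_3$ is genuinely unstable for every $c\in(0,1)$. The cross term integrates to zero for real-phase test fields $b=0$, so the plain choice $\xi=(\phi,0,0)$ with a wide bump already gives
\[
Q=\int(\phi'^2-\phi^2)<0 .
\]
I would present this simple choice rather than the rotating ansatz. Everything else follows directly from \cref{lem:profile-conserved,lem:profile-jacobi}.
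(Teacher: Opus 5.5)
Your proposal is correct and follows the paper's proof essentially step for step. The zero principle for the bounded Jacobi field $e_3\times q$ reduces pole contact to a constant polar profile, which you destabilise with the same wide-bump test field $\xi=\chi(x_1/R)e_1$ that the paper uses, and translation compactness together with closure of stability rules out asymptotic approach to a pole.
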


\begin{proof}
If a stable solution meets either pole, its bounded rotational Jacobi field vanishes at that point. The result just proved, after translating that point to zero, implies \(e_3\times q\equiv0\). Continuity then makes \(q\) a constant pole. A constant pole is unstable: for \(\xi(x_1)=\chi(x_1/R)e_1\), where \(e_1=(1,0,0)\), \(0\ne\chi\in C_c^\infty(\mathbb R)\), and \(R>0\), \[
Q_{q,c}(\xi)=R^{-1}\int|\chi'|^2-R\int|\chi|^2<0
\] for sufficiently large \(R\).

A stable solution is in fact uniformly separated from both poles. Otherwise there are \(a_n\in\mathbb R\) with \(|h(a_n)|\to1\). The derivative bounds obtained from \eqref{eq:signed-estimate-6}, compactness of the sphere, and a diagonal subsequence give smooth convergence of \(q(x_1+a_n)\) on every compact interval to an entire exact profile. Its speed is \(c\); stability passes to the limit as proved above. The limit meets a pole at zero, which is impossible. Hence for the stable solution in question there is \(\rho>0\) such that \begin{equation}
\sqrt{1-h(x_1)^2}\geq\rho\qquad(x_1\in\mathbb R).
\label{eq:signed-estimate-10}
\end{equation}
\end{proof}

\subsection{Reduction to a scalar stable equation}

Separation from the poles permits global height and phase coordinates. The conserved angular quantity eliminates the phase derivative, reducing the problem to a scalar equation whose stability controls the possible variation of the height.

\begin{lemma}[Scalar reduction and limiting helices]\label{lem:profile-scalar}
For a stable profile, let $\alpha=\arcsin q_3$, $S=\sin\alpha$ and $C=\cos\alpha$. There is a smooth real phase $\theta$ such that
\begin{equation}
q=(C\cos\theta,C\sin\theta,S),\qquad
\theta'=f(\alpha):=\frac{j+c\sin\alpha}{\cos^2\alpha}.
\label{eq:signed-estimate-11}
\end{equation}
With $j$ as in \eqref{eq:signed-estimate-6}, the height coordinate satisfies
\begin{equation}
\alpha''=SC(1-\theta'^2)-cC\theta'=W'(\alpha),\qquad
W(a)=\tfrac12\sin^2a-\frac{(j+c\sin a)^2}{2\cos^2a},
\label{eq:signed-estimate-12}
\end{equation}
and the stability inequality
\begin{equation}
\int(a'^2+W''(\alpha)a^2)\geq0\quad\hbox{for every real }a\in C_c^\infty(\mathbb R).
\label{eq:signed-estimate-15}
\end{equation}
Either $\alpha$ is constant or it is strictly monotone, with two distinct limiting heights giving stable helices with the same conserved quantity $j$.
\end{lemma}

\begin{proof}
By \cref{lem:profile-poles}, $\alpha=\arcsin q_3$ takes values in a compact subinterval of $(-\pi/2,\pi/2)$, so $C\ge\rho>0$. A smooth real phase lift $\theta$ exists on $\mathbb R$: its derivative is the angular derivative of the horizontal unit vector, and integration with a chosen initial argument produces a global lift. The definition of $j$ gives \eqref{eq:signed-estimate-11}. Projecting \eqref{eq:signed-estimate-1} onto the unit vector \((-S\cos\theta,-S\sin\theta,C)\) gives \eqref{eq:signed-estimate-12}. The equality with \(W'\) follows by differentiating its definition in \eqref{eq:signed-estimate-12} and substituting \eqref{eq:signed-estimate-11}. Here \(j\) is the fixed conserved real number from \eqref{eq:signed-estimate-6}, not a variable differentiated with \(a\).

We record the precise second variation in these coordinates. On any finite interval containing the support of a variation the local action is \begin{equation}
\int\left[\tfrac12(\alpha'^2+\cos^2\alpha\,\theta'^2+\sin^2\alpha)-c\sin\alpha\,\theta'\right].
\label{eq:signed-estimate-13}
\end{equation} Its first variation is the negative of \(\mathcal R_c(q)\) paired with the tangent variation: the two coordinate Euler expressions are \(-\alpha''+SC(1-\theta'^2)-cC\theta'\) and \(-(C^2\theta'-cS)'\), respectively. Thus at an exact solution the Hessian equals \eqref{eq:signed-estimate-2}, by \eqref{eq:signed-estimate-9}. If \(a,\beta\) are real smooth compactly supported coordinate variations of \(\alpha,\theta\), differentiating \eqref{eq:signed-estimate-13} twice gives \[
\int\left[a'^2+C^2\beta'^2-2C(2S\theta'+c)a\beta'
+\bigl((C^2-S^2)(1-\theta'^2)+cS\theta'\bigr)a^2\right].
\] Since \(f'(\alpha)=(2S\theta'+c)/C\), differentiating \eqref{eq:signed-estimate-12} as a function of \(\alpha\) gives \[
W''(\alpha)=(C^2-S^2)(1-\theta'^2)+cS\theta'-(2S\theta'+c)^2.
\] The Hessian therefore is exactly \begin{equation}
\int\left[a'^2+W''(\alpha)a^2+C^2\left(\beta'-\frac{2S\theta'+c}{C}a\right)^2\right].
\label{eq:signed-estimate-14}
\end{equation}

For each fixed real \(a\in C_c^\infty(\mathbb R)\), define \(g=(2S\theta'+c)a/C\) and \(m=\int g\). Choose a real function \(\psi\in C_c^\infty((0,1))\) with \(\int\psi=1\). For \(R>1\), place the interval \((T_R,T_R+R)\) strictly to the right of the support of \(a\), and put \[
\psi_R(x_1)=R^{-1}\psi((x_1-T_R)/R),\qquad
\beta_R(x_1)=\int_{-\infty}^{x_1}(g(t)-m\psi_R(t))\,dt.
\] The integrand has integral zero and compact support. Hence \(\beta_R\) is smooth and compactly supported, even though it may be constant on the long gap between the two supports. In \eqref{eq:signed-estimate-14} its square term is exactly \(m^2\int C^2\psi_R^2\leq m^2R^{-1}\int\psi^2\), since \(C^2\leq1\). Stability and \(R\to\infty\) yield \eqref{eq:signed-estimate-15}. No bound on the support size of an admissible variation, and no finite-energy assumption, was used.

Differentiating \eqref{eq:signed-estimate-12} shows that \(\alpha'\) is Jacobi for \eqref{eq:signed-estimate-15}: \(- (\alpha')''+W''(\alpha)\alpha'=0\). It is bounded, since \(\alpha'=h'/C\), \(q'\) is bounded, and \eqref{eq:signed-estimate-10} holds. The scalar zero result therefore says that either \(\alpha'\equiv0\) or \(\alpha'\) has no zero. In the latter case \(\alpha\) is strictly monotone. By \eqref{eq:signed-estimate-10} it has limits \(\alpha_-\) and \(\alpha_+\) in a compact subinterval of \((-\pi/2,\pi/2)\), and the two limits are different.

Both limiting heights give stable helices with the same conserved number \(j\). In detail, for any sequence \(a_n\to+\infty\), rotate \(q(x_1+a_n)\) about \(e_3\) by the angle \(-\theta(a_n)\). On compact \(x_1\)-intervals, \(\alpha(x_1+a_n)\to\alpha_+\) uniformly, and \[
\theta(x_1+a_n)-\theta(a_n)=\int_0^{x_1} f(\alpha(t+a_n))\,dt\longrightarrow f(\alpha_+)x_1.
\] The global derivative bounds permit a subsequence converging smoothly to the helix of height \(\sin\alpha_+\) and frequency \(f(\alpha_+)\). It is exact and stable by closure of the equation and stability. The same construction for \(a_n\to-\infty\) yields the other helix. Formula \eqref{eq:signed-estimate-6} is invariant under these rotations and translations and passes to the limit, giving the same \(j\) for both.
\end{proof}

\subsection{Exact stability criterion and momentum ordering for helices}

The scalar argument leaves either a constant-height profile or a strictly monotone height joining two stable helices. We determine which helices are stable and show that their conserved angular quantity determines their frequency uniquely.

\begin{lemma}[Stable helices and their ordering]\label{lem:profile-helices}
The nonpolar helices that solve \eqref{eq:signed-estimate-1} and are stable are exactly
\begin{equation}
q(x_1)=\left(\sqrt{1-h^2}\cos(\omega x_1+\gamma),\sqrt{1-h^2}\sin(\omega x_1+\gamma),h\right),\qquad
h=\frac{c\omega}{1-\omega^2},
\label{eq:signed-estimate-3}
\end{equation}
where $\gamma\in\mathbb R$, $\omega\in(-1,1)$, and
\begin{equation}
(1-\omega^2)^3\geq c^2(1+3\omega^2).
\label{eq:signed-estimate-4}
\end{equation}
Equality in \eqref{eq:signed-estimate-4} is allowed. On the resulting closed frequency interval, the conserved quantity $j$ is strictly increasing as a function of $\omega$.
\end{lemma}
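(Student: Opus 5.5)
We work in the height/phase coordinates of \cref{lem:profile-scalar}. A helix has constant height $\alpha\equiv\alpha_0$, with $S=\sin\alpha_0=h$, $C=\cos\alpha_0$, and constant frequency $\theta'=\omega$.

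\textbf{Step 1: the helix equation.} Equation \eqref{eq:signed-estimate-12} with $\alpha''=0$ gives
\[
SC(1-\omega^2)-cC\omega=0 .
\]
Since $C>0$, this says $h(1-\omega^2)=c\omega$. If $|\omega|=1$, then $\omega=0$, which is impossible. If $|\omega|>1$, then $h$ and $\omega$ have opposite signs. I will show that such helices, including those with $|h|\ge1$ which are not admissible anyway, are unstable, so they drop out in Step 2. For $|\omega|<1$ we get $h=c\omega/(1-\omega^2)$, and we must also require $|h|<1$.

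\textbf{Step 2: the stability criterion.} For a constant profile the Hessian \eqref{eq:signed-estimate-14} has constant coefficients. Minimising over $\beta$ as in the proof of \cref{lem:profile-scalar} shows that stability is equivalent to $W''(\alpha_0)\ge0$. In one direction, \eqref{eq:signed-estimate-15} with wide bump functions forces $W''\ge0$. In the other, $W''\ge0$ makes \eqref{eq:signed-estimate-14} a sum of squares. From the displayed formula,
\[
W''=(C^2-S^2)(1-\omega^2)+cS\omega-(2S\omega+c)^2 .
\]
Substitute $cS\omega=S^2(1-\omega^2)$, which follows from $c\omega=S(1-\omega^2)$. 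Then the first two terms combine to $C^2(1-\omega^2)$. Next use $c=S(1-\omega^2)/\omega$ (valid for $\omega\ne0$; the case $\omega=0$, $h=0$ is checked directly and gives $1-c^2\ge0$). This gives
\[
2S\omega+c=\frac{S(1+\omega^2)}{\omega}.
\]
Eliminating $S$ via $S=c\omega/(1-\omega^2)$ and $C^2=1-S^2$, the condition $W''\ge0$ becomes
\[
(1-\omega^2)\Bigl[1-\tfrac{c^2\omega^2}{(1-\omega^2)^2}\Bigr]\ge \frac{c^2(1+\omega^2)^2}{(1-\omega^2)^2}.
\]
Multiplying by $(1-\omega^2)^2$ and using $\omega^2(1-\omega^2)+(1+\omega^2)^2=1+3\omega^2$ gives exactly \eqref{eq:signed-estimate-4}.

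The same algebra with $|\omega|>1$ makes the left side $C^2(1-\omega^2)$ negative, so those helices are unstable. Note also that \eqref{eq:signed-estimate-4} implies $c^2\omega^2<(1-\omega^2)^2$, that is $|h|<1$, so admissibility is automatic on the stable set. The set of $\omega$ satisfying \eqref{eq:signed-estimate-4} is a closed symmetric interval: the left side decreases and the right side increases in $\omega^2$, and it contains $0$ since $c<1$.

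\textbf{Step 3: monotonicity of $j$.} From \eqref{eq:signed-estimate-11},
\[
j=C^2\omega-cS=(1-h^2)\omega-ch .
\]
Regard this as a function of $\omega$ through $h(\omega)$, and differentiate using
\[
h'=\frac{c(1+\omega^2)}{(1-\omega^2)^2}.
\]
This gives
\[
\frac{dj}{d\omega}=1-h^2-(2h\omega+c)h' .
\]
Then insert $2h\omega+c=c(1+\omega^2)/(1-\omega^2)$, which is the expression from Step 2. The derivative becomes
\[
\frac{dj}{d\omega}=\frac{W''}{1-\omega^2}
=\frac{(1-\omega^2)^3-c^2(1+3\omega^2)}{(1-\omega^2)^3}\ge0 ,
\]
with equality only at the two endpoints. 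Hence $j$ is strictly increasing on the closed interval. I expect the main obstacle to be carrying out this algebra cleanly, so that the identity $dj/d\omega\propto W''$ appears exactly. The conceptual reason is that $W'(\alpha;j)=0$ defines the helix branch, and implicit differentiation links $dj$ to $W''$.
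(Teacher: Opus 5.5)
Your proof is correct and follows essentially the same route as the paper. The common steps are the helix relation $h(1-\omega^2)=c\omega$, reduction of stability to the sign of one constant coefficient by completing the square in $\beta'$ together with remote phase compensation, the algebra leading to \eqref{eq:signed-estimate-4} with $|h|<1$ then automatic, and the explicit formula $j_c'(\omega)=1-c^2(1+3\omega^2)/(1-\omega^2)^3$. The one difference is the coordinate. You use $W''(\alpha_0)$ from \eqref{eq:signed-estimate-14}--\eqref{eq:signed-estimate-15} in the $\alpha$-coordinate, while the paper recomputes the Hessian in the $q_3$-coordinate with coefficient $M$; at a helix these agree via $W''=(1-h^2)M$. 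Your identity $dj/d\omega=W''/(1-\omega^2)$ is a tidy way to see why the stability endpoints are exactly the critical points of $j$.
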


\begin{proof}
Consider any nonpolar helix with constant height \(h\in(-1,1)\), real frequency \(\omega\), and arbitrary phase. Equation \eqref{eq:signed-estimate-12} for its constant height reduces to \begin{equation}
h(1-\omega^2)=c\omega.
\label{eq:signed-estimate-16}
\end{equation} The phase equation is satisfied because its coefficients are constant. In the height coordinate \(q_3=\sin\alpha\), the action density \eqref{eq:signed-estimate-13} is \[
\frac{(q_3')^2}{2(1-q_3^2)}+\frac{1-q_3^2}{2}\theta'^2+\frac{q_3^2}{2}-cq_3\theta'.
\] For real smooth compact variations \(a\) of \(q_3\) and \(\beta\) of \(\theta\), its Hessian at this helix is \begin{equation}
\begin{aligned}
\mathcal H[a,\beta]&:=\int_{\mathbb R}\Bigl[\frac{a'^2}{1-h^2}+(1-h^2)\beta'^2+(1-\omega^2)a^2\\
&\qquad -2(2h\omega+c)a\beta'\Bigr]\\
&=\int_{\mathbb R}\Bigl[\frac{a'^2}{1-h^2}+(1-h^2)\left(\beta'-\frac{2h\omega+c}{1-h^2}a\right)^2+Ma^2\Bigr],
\end{aligned}\label{eq:signed-estimate-17}
\end{equation} where \begin{equation}
M=1-\omega^2-\frac{(2h\omega+c)^2}{1-h^2}.
\label{eq:signed-estimate-18}
\end{equation} Every compact tangent field has such coordinate components, since the helix is nonpolar. Thus \(M\geq0\) implies stability, including \(M=0\).

Conversely, the remote phase compensation already constructed, with the coefficient in \eqref{eq:signed-estimate-17} in place of that in \eqref{eq:signed-estimate-14}, shows that stability implies \(\int(a'^2/(1-h^2)+Ma^2)\geq0\) for all real compact \(a\). If \(M<0\), choose \(a(x_1)=\chi(x_1/R)\) with a fixed nonzero \(\chi\in C_c^\infty\). The derivative term is a positive constant times \(R^{-1}\), whereas the \(M\)-term is a negative constant times \(R\), a contradiction for large \(R\). Therefore stability is equivalent to \(M\geq0\).

Equation \eqref{eq:signed-estimate-16} rules out \(\omega=\pm1\) because \(c>0\); \eqref{eq:signed-estimate-18} with \(M\geq0\) rules out \(|\omega|>1\). Thus a stable helix has \(|\omega|<1\) and \(h=c\omega/(1-\omega^2)\). Put \(t=\omega^2\) and \(k=c^2\). Substituting this height into \eqref{eq:signed-estimate-18} gives \begin{equation}
M(1-h^2)=\frac{(1-t)^3-k(1+3t)}{(1-t)^2}.
\label{eq:signed-estimate-19}
\end{equation} This proves the necessity of \eqref{eq:signed-estimate-4}. Conversely, \eqref{eq:signed-estimate-4} with \(|\omega|<1\) ensures that the defined height is nonpolar: indeed \[
h^2=\frac{kt}{(1-t)^2}\leq\frac{t(1-t)}{1+3t}<1,
\] where the last strict inequality follows from \(1+3t-t(1-t)=(1+t)^2>0\). Equations \eqref{eq:signed-estimate-16}, \eqref{eq:signed-estimate-17}, and \eqref{eq:signed-estimate-19} then prove that \eqref{eq:signed-estimate-3} is an exact stable solution.

The function \(G_c(t)=(1-t)^3-c^2(1+3t)\) has \(G_c(0)=1-c^2>0\), \(G_c(1)=-4c^2<0\), and \(G_c'(t)=-3(1-t)^2-3c^2<0\) on \([0,1]\). It therefore has a unique root \(\tau(c)\in(0,1)\). The allowed frequency interval is exactly \([-\sqrt{\tau(c)},\sqrt{\tau(c)}]\).

For a helix of the allowed form, the number in \eqref{eq:signed-estimate-6} is \begin{equation}
j_c(\omega)=(1-h^2)\omega-ch
=\omega\left(1-\frac{c^2}{(1-\omega^2)^2}\right),\qquad
j_c'(\omega)=1-\frac{c^2(1+3\omega^2)}{(1-\omega^2)^3}.
\label{eq:signed-estimate-20}
\end{equation} On the allowed interval this derivative is nonnegative, is strictly positive at every interior point, and vanishes only at the two endpoints. For any two distinct points of the closed interval, its integral between those points is strictly positive; consequently \(j_c\) is strictly increasing on the whole closed interval.
\end{proof}

The strict ordering just proved excludes the pair of limiting helices required by a nonconstant monotone height.

\begin{proposition}[Classification of stable profiles]\label{prop:profile-classification}
The stable maps satisfying \eqref{eq:signed-estimate-1} are exactly the helices \eqref{eq:signed-estimate-3} with frequencies satisfying \eqref{eq:signed-estimate-4}, including its endpoints.
\end{proposition}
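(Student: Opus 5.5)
The proof combines the preceding lemmas; no new estimate is needed. Let $q$ be a stable solution of \eqref{eq:signed-estimate-1}. By \cref{lem:profile-conserved} it is smooth, with bounded derivatives of every positive order and constant conserved quantities $H$ and $j$. By \cref{lem:profile-poles} it stays uniformly away from both poles. So \cref{lem:profile-scalar} applies: with $\alpha=\arcsin q_3$ and the global phase $\theta$ with $\theta'=f(\alpha)$, there are exactly two alternatives. Either $\alpha$ is constant, or $\alpha$ is strictly monotone with distinct limits $\alpha_-\ne\alpha_+$. The limiting helices at heights $\sin\alpha_\pm$ are then exact stable solutions with the same conserved number $j$ as $q$.

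\emph{The constant case.} If $\alpha$ is constant, then $\theta'=f(\alpha)$ is a constant $\omega$, so $q$ is a nonpolar helix of constant height and frequency $\omega$ with some phase $\gamma$. By \cref{lem:profile-helices} it is stable exactly when $|\omega|<1$, $h=c\omega/(1-\omega^2)$ and \eqref{eq:signed-estimate-4} holds. So $q$ has the form \eqref{eq:signed-estimate-3}.

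\emph{The monotone case.} Here we derive a contradiction. The two limiting helices are stable, so by \cref{lem:profile-helices} their frequencies $\omega_\pm=f(\alpha_\pm)$ lie in the closed interval $[-\sqrt{\tau(c)},\sqrt{\tau(c)}]$. Their heights are $\sin\alpha_\pm=c\omega_\pm/(1-\omega_\pm^2)$. Since $\alpha_+\ne\alpha_-$, the heights differ, hence $\omega_+\ne\omega_-$. The conserved quantity of each limit helix equals $j_c(\omega_\pm)$ from \eqref{eq:signed-estimate-20}. \Cref{lem:profile-scalar} gives $j_c(\omega_+)=j=j_c(\omega_-)$. This contradicts the strict monotonicity of $j_c$ on the closed interval, so no nonconstant monotone stable profile exists.

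\emph{The converse.} Every helix \eqref{eq:signed-estimate-3} satisfying \eqref{eq:signed-estimate-4}, including equality, is an exact stable solution. This is the sufficiency half of \cref{lem:profile-helices}.

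The only delicate point is the identification in the monotone case. We must check that the limiting helices, obtained by rotation and translation, have conserved quantity exactly $j_c(\omega_\pm)$ computed with the frequency $f(\alpha_\pm)$. We must also check that distinct heights force distinct frequencies through $h=c\omega/(1-\omega^2)$. The first holds because $j$ is invariant under these symmetries and passes to smooth local limits. The second holds because $\omega\mapsto c\omega/(1-\omega^2)$ is a function of $\omega$, so equal frequencies would give equal heights. Endpoint frequencies cause no trouble, since \cref{lem:profile-helices} asserts strict increase on the closed interval.
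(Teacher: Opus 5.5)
Your proposal is correct and follows the paper's proof. You use \cref{lem:profile-scalar} to reduce to excluding a strictly monotone height. The two stable limiting helices then share $j$, strict monotonicity of $j_c$ on the closed frequency interval and the relation $h=c\omega/(1-\omega^2)$ force $\alpha_-=\alpha_+$, and the sufficiency half of \cref{lem:profile-helices} gives the converse. The only difference is that you argue contrapositively (distinct heights give distinct frequencies, contradicting $j_c(\omega_+)=j_c(\omega_-)$), while the paper argues forward from equal $j$ to equal frequencies, heights and angles.
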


\begin{proof}
By \cref{lem:profile-scalar}, it remains to exclude a strictly monotone height. The two stable limiting helices obtained from a strictly monotone \(\alpha\) have the same \(j\), so \eqref{eq:signed-estimate-20} makes their frequencies equal. Formula \eqref{eq:signed-estimate-16}, with \(|\omega|<1\), makes their heights equal, and injectivity of sine on \((-\pi/2,\pi/2)\) makes \(\alpha_- =\alpha_+\). This contradicts strict monotonicity. Hence \(\alpha'\equiv0\), and \eqref{eq:signed-estimate-11} makes the original profile a constant-height helix. Combined with the converse proved from \eqref{eq:signed-estimate-17}, this proves the classification, including its endpoints.
\end{proof}

\subsection{Sharp signed coercivity and uniformity}

With the profiles classified, the signed coercivity bound follows by comparing their explicit gradient and height energies.

\begin{corollary}[Sharp signed coercivity]\label{prop:signed-estimate}
Let \(\tau(c)\in(0,1)\) be the unique solution of \[
(1-\tau(c))^3=c^2(1+3\tau(c)),
\] and define \[
\delta_{\mathrm{opt}}(c)=\frac{\tau(c)(3+\tau(c))}{2+\tau(c)+\tau(c)^2}.
\] Every stable profile satisfies, pointwise on \(\mathbb R\), \begin{equation}
|q'|^2-q_3^2\geq\delta_{\mathrm{opt}}(c)(|q'|^2+q_3^2).
\label{eq:signed-estimate-5}
\end{equation} The constant \(\delta_{\mathrm{opt}}(c)>0\) is the largest constant that works for all stable profiles at this speed, and \[
\delta_{\mathrm{opt}}(c)\geq\frac{1-c^2}{3(1+c^2)}.
\] Consequently, for every nonempty compact \(K\subset(0,1)\), putting \(c_+=\max K\) and \[
\delta_K=\frac{1-c_+^2}{3(1+c_+^2)}>0,
\] one has \(|q'|^2-q_3^2\geq\delta_K(|q'|^2+q_3^2)\) for every stable profile at every \(c\in K\).
\end{corollary}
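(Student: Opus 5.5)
The plan is to use \cref{prop:profile-classification}. Every stable profile is a helix \eqref{eq:signed-estimate-3} with $\omega^2=t\in[0,\tau(c)]$, where the endpoint is allowed, so both sides of \eqref{eq:signed-estimate-5} are constant along the profile. The pointwise inequality therefore reduces to a one-variable inequality in $t$. For such a helix $|q'|^2=(1-h^2)t$ and $q_3^2=h^2=kt/(1-t)^2$, where $k=c^2$.

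\textbf{Reduction to one variable.} When $t=0$ we have $h=0$ and $q'=0$, so both sides vanish and there is nothing to prove. For $t>0$ set
\[
r(t)=\frac{h^2}{(1-h^2)t}=\frac{k}{(1-t)^2-kt},
\]
whose denominator is positive because $h^2<1$ by \cref{lem:profile-helices}. Dividing, \eqref{eq:signed-estimate-5} is equivalent to $(1-r)/(1+r)\ge\delta_{\mathrm{opt}}(c)$. The function $r$ is increasing in $t$ on $[0,\tau(c)]$ because $(1-t)^2-kt$ decreases, and $(1-r)/(1+r)$ decreases in $r$. Hence the worst case is the endpoint $t=\tau=\tau(c)$.

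\textbf{The endpoint value.} At $t=\tau$ substitute $k=(1-\tau)^3/(1+3\tau)$. Using $(1+3\tau)-\tau(1-\tau)=(1+\tau)^2$ (already noted in \cref{lem:profile-helices}),
\[
(1-\tau)^2-k\tau=\frac{(1-\tau)^2(1+\tau)^2}{1+3\tau},\qquad\text{so}\qquad r(\tau)=\frac{1-\tau}{(1+\tau)^2}.
\]
Then
\[
\frac{1-r(\tau)}{1+r(\tau)}=\frac{(1+\tau)^2-(1-\tau)}{(1+\tau)^2+(1-\tau)}=\frac{\tau(3+\tau)}{2+\tau+\tau^2}=\delta_{\mathrm{opt}}(c).
\]
This proves \eqref{eq:signed-estimate-5}. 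Sharpness follows because the endpoint helix with $\omega^2=\tau(c)$ is stable (equality in \eqref{eq:signed-estimate-4} is allowed) and attains equality. Positivity of $\delta_{\mathrm{opt}}(c)$ follows from $\tau(c)>0$.

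\textbf{Explicit lower bound.} I expect this to be the only step needing care: finding a clean comparison. First, $\delta_{\mathrm{opt}}\ge\tau$, since $3+\tau\ge2+\tau+\tau^2$ is equivalent to $\tau^2\le1$. Next set $s=(1-c^2)/(3(1+c^2))\in(0,1/3)$. By convexity $(1-s)^3\ge1-3s$, so
\[
G_c(s)\ge(1-c^2)-3s(1+c^2)=0.
\]
Since $G_c$ is strictly decreasing with unique root $\tau(c)$, this gives $\tau(c)\ge s$, and hence $\delta_{\mathrm{opt}}(c)\ge s$.

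\textbf{Uniformity over $K$.} The bound $s=(1-c^2)/(3(1+c^2))$ is decreasing in $c$. So for $c\in K$ it is at least $\delta_K$, which gives the uniform statement over the compact set $K$.
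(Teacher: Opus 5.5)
Your proposal is correct and follows essentially the same route as the paper: you reduce via the classification to helices with $\omega^2=t\in[0,\tau(c)]$, show the ratio $(|q'|^2-q_3^2)/(|q'|^2+q_3^2)$ is decreasing in $t$, evaluate it at the stable endpoint $t=\tau(c)$ for the value and for sharpness, and then bound $\delta_{\mathrm{opt}}\ge\tau\ge(1-c^2)/(3(1+c^2))$ (your $r(t)=k/((1-t)^2-kt)$ is just a reparametrisation of the paper's $1-2c^2/((1-t)(1-t+c^2))$). The only cosmetic difference is that you get $\tau(c)\ge s$ by checking $G_c(s)\ge0$ via $(1-s)^3\ge1-3s$ and using monotonicity of $G_c$, whereas the paper expands the defining equation directly as $1-c^2=\tau(3+3c^2-3\tau+\tau^2)\le3(1+c^2)\tau$.
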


\begin{proof}
For \(t=\omega^2>0\), using \(|q'|^2=(1-h^2)t\) and \(h^2=c^2t/(1-t)^2\), one obtains \begin{equation}
\frac{|q'|^2-h^2}{|q'|^2+h^2}
=\frac{(1-t)^2-c^2(1+t)}{(1-t)(1-t+c^2)}
=1-\frac{2c^2}{(1-t)(1-t+c^2)}.
\label{eq:signed-estimate-21}
\end{equation} For fixed \(c>0\), the denominator in the last fraction is strictly decreasing in \(t\in[0,1)\), so the ratio is strictly decreasing. Its minimum on the allowed nonzero frequencies is attained at \(t=\tau(c)\). Substituting \(c^2=(1-\tau)^3/(1+3\tau)\) into \eqref{eq:signed-estimate-21} gives \[
\min\frac{|q'|^2-h^2}{|q'|^2+h^2}
=\frac{\tau(3+\tau)}{2+\tau+\tau^2}=\delta_{\mathrm{opt}}(c)>0.
\] Both endpoint helices are stable, nonconstant, and have positive denominator in \eqref{eq:signed-estimate-21}, so this constant cannot be improved. At \(\omega=0\), one has \(h=0\), and both sides of \eqref{eq:signed-estimate-5} are zero.

Since \(0<\tau<1\), the inequality \(3+\tau\geq 2+\tau+\tau^2\) gives \(\delta_{\mathrm{opt}}(c)\geq\tau\). Expanding its defining equation gives \[
1-c^2=\tau(3+3c^2-3\tau+\tau^2)\leq3(1+c^2)\tau,
\] because \(-3\tau+\tau^2\leq0\). Hence \(\delta_{\mathrm{opt}}(c)\geq(1-c^2)/(3(1+c^2))\). Finally, \(x\mapsto(1-x^2)/(3(1+x^2))\) decreases on \((0,1)\). This proves the asserted uniform constant for every compact \(K\subset(0,1)\), and completes the proof.
\end{proof}

\section{A uniform planar energy estimate}\label{app:energy-absorption}

We now convert the action, residual, index and dilation bounds in
\cref{prop:planar-selector} into an energy bound independent of the
regularisation parameter. The key estimate controls longitudinal
variation through the signed difference between longitudinal gradient
energy and anisotropy energy. Combining it with the dilation identities
absorbs the errors from minimax selection.

Fix a nonempty compact $K\subset(0,1)$. Use the notation of
\cref{sec:planar-framework}, and set
\[
p=|\partial_1 u|^2,\quad t=|\partial_2 u|^2,\qquad
e_\eta=(|\nabla u|^2+u_3^2)/2+\eta |\nabla u|^4/4,
\]
\[
F_\eta=p+u_3^2+\eta p^2/2,\qquad
D_\eta=p-u_3^2+3\eta p^2/2,\qquad
A_\eta(u;\Omega)=\tfrac12\int_\Omega(1+\eta |\nabla u|^2)t.
\]
Here $F_\eta$ collects the longitudinal-gradient and height contributions,
$D_\eta$ is the signed combination controlled by the one-dimensional
classification, and $A_\eta$ measures transverse variation.

We use the signed coercivity of stable one-dimensional profiles in \cref{prop:signed-estimate}. The consequence needed here is that there is \(\delta_*>0\), depending only on \(K\), such that every stable entire one-dimensional solution at a speed in \(K\) satisfies \[
|q'|^2-q_3^2\geq\delta_*(|q'|^2+q_3^2).
\] We combine this inequality with a small-amplitude estimate, a local
compactness argument, and a summation over cells. The index controls
the cells that carry a negative direction.

First, the definitions give the following pointwise and integrated identities, valid on every bounded measurable set \(\Omega\): \begin{equation}
-F_\eta\leq D_\eta\leq3F_\eta,\qquad
D_\eta-\delta F_\eta=(1-\delta)p-(1+\delta)u_3^2+\frac{3-\delta}{2}\eta p^2,
\label{eq:energy-absorption-A}
\end{equation} \begin{equation}
E_\eta(u;\Omega)=\frac12\int_\Omega F_\eta+
A_\eta(u;\Omega)-\frac{\eta}{4}\int_\Omega t^2.
\label{eq:energy-absorption-B}
\end{equation} In particular \(F_\eta\) is integrable if \(E_\eta(u)<\infty\).

\subsection{The small-amplitude linear estimate}

One possibility is that local energies tend to zero and the maps approach a constant equatorial state. The linearised equation provides the appropriate comparison in this case.

\begin{lemma}[Small-amplitude coercivity]\label{lem:energy-linear}
Put \(c_+=\max K\), \(\lambda_0=\sqrt{1-c_+^2}>0\), and \(C=\max(8,4/\lambda_0^4)\). For every \(G_0>1\), one can choose \(L>0\), depending only on \(K,G_0\), such that every entire real solution of \begin{equation}
b''-c\zeta'=0,\qquad \zeta''-\zeta+cb'=0,\qquad c\in K,
\label{eq:energy-absorption-C}
\end{equation} obeys the following implication. Define \(I_k=((k-\tfrac12)L,(k+\tfrac12)L)\) for \(k=-1,0,1\), and put \[
F_{\rm lin}=b'^2+\zeta'^2+\zeta^2,\quad D_{\rm lin}=b'^2+\zeta'^2-\zeta^2,
\quad f_k=\int_{I_k}F_{\rm lin},\quad d_0=\int_{I_0}D_{\rm lin}.
\] Then \begin{equation}
\max(f_{-1},f_1)\leq G_0f_0\quad\Longrightarrow\quad
d_0\geq\delta_{\rm lin}f_0,\qquad
\delta_{\rm lin}=\frac{\lambda_0^2}{4C}>0.
\label{eq:energy-absorption-D}
\end{equation} The constant \(\delta_{\rm lin}\) is independent of \(G_0\).
\end{lemma}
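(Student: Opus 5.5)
The plan is to solve \eqref{eq:energy-absorption-C} explicitly and then compare the pieces of the solution on the three cells. Integrating the first equation gives $b'=c\zeta+k$ for a constant $k$. Substituting into the second gives $\zeta''-\lambda^2\zeta=-ck$ with $\lambda=\sqrt{1-c^2}\ge\lambda_0$. Hence
\[
\zeta=\frac{ck}{\lambda^2}+Ae^{\lambda x_1}+Be^{-\lambda x_1},\qquad
b'=\frac{k}{\lambda^2}+c\bigl(Ae^{\lambda x_1}+Be^{-\lambda x_1}\bigr),
\]
so the solution space relevant to $F_{\rm lin},D_{\rm lin}$ is three-dimensional: a constant mode $k$ and two exponential modes.

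I would then expand $F_{\rm lin}$ and $D_{\rm lin}$ pointwise in these modes.
\begin{itemize}
\item The constant mode alone gives $D_{\rm lin}=k^2/\lambda^2$ and $F_{\rm lin}=k^2(1+c^2)/\lambda^4$. Its ratio is $\lambda^2/(1+c^2)\ge\lambda_0^2/2$, which is the source of the coercivity.
\item Each pure exponential mode has $D_{\rm lin}\equiv0$, since the coefficient of $A^2e^{2\lambda x_1}$ is $c^2+\lambda^2-1=0$, and likewise for $B$.
\item The $AB$ cross term gives the constant $-4\lambda^2AB$ in $D_{\rm lin}$ and $4c^2AB$ in $F_{\rm lin}$.
\item The remaining terms are $k$--$A$ and $k$--$B$ cross terms of the form $\mathrm{const}\cdot kAe^{\lambda x_1}$.
\end{itemize}
Thus $d_0-\tfrac{\lambda^2}{1+c^2}\cdot(\text{constant-mode part of }f_0)$ is a sum of cross terms. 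The task is to show that the growth hypothesis makes these cross terms a small fraction of $f_0$.

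The key step is the following comparison. Write $X=\int_{I_0}(A^2e^{2\lambda x_1}+B^2e^{-2\lambda x_1})$ for the exponential mass on $I_0$.
\begin{itemize}
\item The $A$-part of the same integral over $I_1$ exceeds its $I_0$ counterpart by the factor $e^{2\lambda L}$, and the $B$-part over $I_{-1}$ does the same.
\item Since $F_{\rm lin}$ dominates, up to constants depending on $\lambda_0$, the exponential mass minus cross terms, the hypothesis $\max(f_{\pm1})\le G_0f_0$ gives $X\le C'G_0e^{-2\lambda_0L}(f_0+\text{cross terms})$.
\item The cross terms are controlled through $X$. Pointwise $|AB|=|Ae^{\lambda x_1}||Be^{-\lambda x_1}|$, so $|AB|L\le X/2$. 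The $k$-cross terms are bounded by Cauchy--Schwarz by $\epsilon k^2L+\epsilon^{-1}X$.
\item Absorbing the cross terms on the right, $X\le C''G_0e^{-2\lambda_0L}f_0$ once $L$ is large depending on $G_0,K$.
\end{itemize}
With this bound, $f_0\le(1+c^2)k^2L/\lambda^4+C\,X$. Choosing $\epsilon$ and then $L$ so that every error is at most $\tfrac14\cdot\tfrac{\lambda_0^2}{C}f_0$ yields $d_0\ge\delta_{\rm lin}f_0$ with $\delta_{\rm lin}=\lambda_0^2/(4C)$. The losses of constants here account for $C=\max(8,4/\lambda_0^4)$. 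Only $L$ depends on $G_0$, so $\delta_{\rm lin}$ does not.

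The main obstacle is the bookkeeping in the comparison step. The lower bound of $f_{\pm1}$ by exponential mass must itself absorb the $k$-cross and $AB$ terms on those cells, where the constant mode contributes $k^2L$ as well. I would handle this by writing everything in terms of the two quantities $k^2L$ and $X$ and checking the resulting pair of linear inequalities directly.
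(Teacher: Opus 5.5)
Your proposal is correct. It follows the same skeleton as the paper: write the solution as a constant mode $\beta=k/\lambda^2$ plus $p_\pm e^{\pm\lambda x_1}$ (your $A,B$), note that the constant mode alone is coercive, and use the neighbouring cells, where the exponential modes are amplified by $e^{2\lambda L}$, to show these modes are negligible on $I_0$.

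Where you differ is the comparison step you identified as the obstacle. The paper removes all cross-term bookkeeping with the pointwise bound $C^{-1}Z\le F_{\rm lin}\le CZ$, where $Z=\beta^2+p_+^2e^{2\lambda x_1}+p_-^2e^{-2\lambda x_1}$. This bound comes from inverting the linear relations $b'=\beta+c\xi$, $\zeta=c\beta+\xi$, $\zeta'=\lambda\eta$, with $\xi,\eta=p_+e^{\lambda x_1}\pm p_-e^{-\lambda x_1}$. The same lower bound also follows from $(1+c^2)\beta^2+4c\beta\xi+(1+c^2)\xi^2\ge(1-c)^2(\beta^2+\xi^2)$. This is what disposes of the $k$-cross term on $I_{\pm1}$: there it is paired with the amplified mass, so it cannot be split with a small $\epsilon$ as on $I_0$.

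Because $Z$ has no cross terms, the cell integrals are explicit: $L\beta^2+WP^2$ on $I_0$ and $2L\beta^2+2W\cosh(2\lambda L)P^2$ on $I_{-1}\cup I_1$, with $W=\sinh(\lambda L)/\lambda$. The hypothesis then yields $WP^2\le L\beta^2$, hence $P^2\le\beta^2/4$ once $W\ge4L$. The paper combines this with the exact identity $D_{\rm lin}=\lambda^2(\beta^2-4p_+p_-)$. So $D_{\rm lin}$ is constant in $x_1$ and has no $k$-cross terms at all, which gives $d_0\ge\lambda_0^2\beta^2L/2$ and $f_0\le 2CL\beta^2$ directly.

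Consequently $C=\max(8,4/\lambda_0^4)$ is the equivalence constant between $F_{\rm lin}$ and $Z$, not a by-product of absorption losses. Your route, once $X=o(f_0)$ is established, actually gives $d_0\ge(\lambda^2/(1+c^2)-o(1))f_0$, which implies the stated $\delta_{\rm lin}$ a fortiori.

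The only adjustment needed in your last step is to keep the $I_0$ cross term in the upper bound, i.e. $f_0\le(1+c^2+\epsilon)\beta^2L+C_\epsilon X$. This is harmless.
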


\begin{proof}
Integrating the first equation and substituting into the second shows that for some real \(\beta,p_+,p_-\), \[
\begin{aligned}
\zeta&=c\beta+X, & b'&=\beta+cX,\\
X&=p_+e^{\lambda x_1}+p_-e^{-\lambda x_1},\\
\zeta'&=\lambda Y, & Y&=p_+e^{\lambda x_1}-p_-e^{-\lambda x_1},\\
\lambda&=\sqrt{1-c^2}\ge\lambda_0.
\end{aligned}
\] Let \(Z=\beta^2+p_+^2e^{2\lambda x_1}+p_-^2e^{-2\lambda x_1}\). The three squares in \(F_{\rm lin}\) are bounded by \(3Z,2Z,3Z\), respectively, so \(F_{\rm lin}\leq8Z\). Conversely \[
\beta=\frac{b'-c\zeta}{\lambda^2},\qquad
X=\frac{\zeta-cb'}{\lambda^2},\qquad Y=\frac{\zeta'}{\lambda},
\] imply \(Z=\beta^2+(X^2+Y^2)/2\leq4F_{\rm lin}/\lambda_0^4\). Hence \(C^{-1}Z\leq F_{\rm lin}\leq CZ\).

Write \(W=\sinh(\lambda L)/\lambda\) and \(P^2=p_+^2+p_-^2\). Integration gives \[
\int_{I_0}Z=L\beta^2+WP^2,\quad
\int_{I_{-1}\cup I_1}Z=2L\beta^2+2W\cosh(2\lambda L)P^2.
\] Thus the neighbouring-interval assumption implies \[
[\cosh(2\lambda L)-C^2G_0]WP^2\leq(C^2G_0-1)L\beta^2.
\] Choose \(L\geq4/\lambda_0\) and \(\cosh(2\lambda_0L)\geq2C^2G_0\). Then \(WP^2\leq L\beta^2\). Also \(W\geq4L\): the function \(\sinh x/x\) increases for \(x>0\), and its series at \(x=4\) exceeds \(4\). Consequently \(P^2\leq\beta^2/4\). Direct expansion gives \(D_{\rm lin}=\lambda^2(\beta^2-4p_+p_-)\geq\lambda_0^2\beta^2/2\), whereas \(f_0\leq2CL\beta^2\). This proves \eqref{eq:energy-absorption-D}. If \(\beta=0\), the preceding bound on \(WP^2\) forces \(p_+=p_-=0\), so the implication also holds.
\end{proof}

\subsection{Local rotational compactness and transfer of a negative direction}

We next consider sequences whose transverse energy and residual tend to zero. Compactly supported target rotations allow passage to a one-dimensional limit and show that any negative direction of that limit persists in the approximating maps.

\begin{lemma}[One-dimensional limits and negative directions]\label{lem:energy-rotational}
Suppose \(\eta_j>0\) tends to zero, \(c_j\in K\) tends to \(c\in K\), and smooth maps \(u_j\) are defined on open boxes exhausting \(\mathbb R^2\). Suppose their \(E_{\eta_j}\)-energies on each fixed box are bounded, their \(A_{\eta_j}\)-energies on each fixed box tend to zero, and for every fixed box \(\Omega\), \begin{equation}
r_j(\Omega):=\sup\{|R_{c_j,\eta_j}(u_j)[v]|:v\text{ compact tangent in }\Omega,\ N_{\eta_j}(v)\leq1\}\longrightarrow0.
\label{eq:energy-absorption-E}
\end{equation} Then a subsequence converges weakly in $H^1_{\rm loc}$ and strongly in $L^2_{\rm loc}$ to a smooth entire one-dimensional solution at speed $c$. Every compact negative direction for its one-dimensional quadratic form transfers to compact tangent fields for $u_j$ on one fixed box, with unit $L^2$ norm, uniformly bounded $N_{\eta_j}$ norm, and a fixed negative form bound.
\end{lemma}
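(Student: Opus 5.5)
The proof has three steps: extract a limit and show it is one-dimensional, pass to the limit in the equation through rotational test fields, and transfer negative directions. Throughout, fix an exhausting sequence of boxes and pass to a diagonal subsequence. Since $E_{\eta_j}(u_j;\Omega)$ is bounded on each box and $|u_j|=1$, a subsequence converges weakly in $H^1_{\rm loc}$ and strongly in $L^2_{\rm loc}$ (and a.e.) to a map $u\in H^1_{\rm loc}(\R^2;\Stwo)$. Because $A_{\eta_j}(u_j;\Omega)\ge\frac12\int_\Omega|\partial_2u_j|^2\to0$, weak lower semicontinuity gives $\partial_2u=0$. Hence $u(x_1,x_2)=q(x_1)$ with $q\in H^1_{\rm loc}(\R;\Stwo)$.

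To obtain the profile equation without strong gradient convergence, I would test the residual only with rotational fields $v=\psi\times u_j$, where $\psi\in C_c^\infty(\Omega;\R^3)$; these are the infinitesimal versions of the compact rotations $Q_t u_j$ from the introduction. Then $\nabla u_j:(\psi\times\nabla u_j)=0$, so
\[
R_{c_j,\eta_j,u_j}[\psi\times u_j]=\int(1+\eta_j|\nabla u_j|^2)\nabla u_j:(\nabla\psi\times u_j)+(u_j)_3(\psi\times u_j)_3+c_j\,u_j\cdot((\psi\times u_j)\times\partial_1u_j).
\]
Every term is linear in $\nabla u_j$, apart from the $\eta_j$-term. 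That term is bounded by $\eta_j\int_\Omega|\nabla u_j|^3\le C\eta_j^{1/4}(\eta_j\int_\Omega|\nabla u_j|^4)^{3/4}|\Omega|^{1/4}\to0$. The norm $N_{\eta_j}(\psi\times u_j)$ is bounded uniformly, because $\eta_j^{1/4}\|\nabla u_j\|_{L^4(\Omega)}$ is controlled by the local energy. Hence \eqref{eq:energy-absorption-E} makes the left side tend to zero. The remaining terms pass to the limit, since each pairs a weakly convergent gradient with a strongly convergent bounded factor. The limit is the conservation form $\partial_1(q\times q')+(\text{lower order})=0$, tested against arbitrary $\psi$. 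For sphere-valued $H^1$ maps this conservation form is equivalent to \eqref{eq:signed-estimate-1} in distributions, because $q\times q''=(q\times q')'$ and the normal component of the equation is automatic. \Cref{lem:profile-conserved} then gives smoothness of $q$, and the speed is $c=\lim c_j$.

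For the transfer of negative directions, let $\xi$ be compact and tangent to $q$ with $Q_{q,c}(\xi)<0$. I would set $\psi_R(x)=\chi(x_2/R)\,(q(x_1)\times\xi(x_1))$, so that $\psi_R\times q=\chi\xi$. As trial fields I would take $v_j=\psi_R\times u_j$, or the projection $\Pi_{u_j}(\chi\xi)$, normalised in $L^2$. The $x_2$-cutoff costs $O(R^{-1})$ in the form, while the one-dimensional part gives $2R\,Q_{q,c}(\xi)\int\chi^2$ asymptotically. Thus a fixed $R$ and the fixed box $[-R',R']\times[-2R,2R]$ suffice, provided $\mathcal H_{c_j,\eta_j,u_j}[v_j,v_j]\to\mathcal H$ evaluated on the limit. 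The uniform $N_{\eta_j}$ bounds follow as in the previous step.

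The main obstacle is the convergence of the quadratic-gradient terms $-(1+\eta_j|\nabla u_j|^2)|\nabla u_j|^2|v_j|^2$ and $|\nabla v_j|^2$ in the Hessian. Weak convergence gives only the wrong inequality for the negative term. I would therefore first upgrade to strong $H^1_{\rm loc}$ convergence of $\partial_1u_j$. The idea is to test the residual with $\Pi_{u_j}(\varphi(u_j-\bar u))$, where $\bar u$ is a smooth approximation of $q$. This yields $\int\varphi|\nabla(u_j-q)|^2\lesssim\int\varphi|\nabla u_j|^2|u_j-q|^2+o(1)$. To make the right side small, I would work slice by slice. On $x_2$-slices where $\int|\partial_1u_j|^2\,dx_1$ is bounded, the one-dimensional interpolation $\|f\|_\infty^2\le\|f\|_2\|f'\|_2$ together with the $L^2$ convergence gives uniform smallness of $u_j-q$. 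Smallness of $A_{\eta_j}$ transports this control across nearby slices via $|u_j(x_1,x_2)-u_j(x_1,x_2')|^2\le|x_2-x_2'|\int|\partial_2u_j|^2$. The delicate point is excluding concentration of longitudinal energy on the exceptional slices. My first attempt would be to argue that any concentration rescales to a nonconstant harmonic sphere, whose conformality forces transverse energy at least half its energy, contradicting $A_{\eta_j}\to0$.
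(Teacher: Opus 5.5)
Your first two steps are sound and match the paper: local compactness, $x_2$-independence from $A_{\eta_j}\to0$, and the conservation form obtained by testing with $\psi\times u_j$. The gap is in the transfer of negative directions.

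You correctly see that $\mathcal H_{c_j,\eta_j,u_j}[v_j,v_j]$ contains terms quadratic in $\nabla u_j$, and through $\eta_j|\nabla u_j|^4|v_j|^2$ also quartic ones. But your remedy, strong $H^1_{\rm loc}$ convergence, is not established, and the sketch does not close.
\begin{itemize}
\item Testing with $\Pi_{u_j}(\varphi(u_j-\bar u))$ only helps if $\int\varphi(1+\eta_j|\nabla u_j|^2)|\nabla u_j|^2|u_j-\bar u|^2$ converges. That amounts to excluding concentration of $|\nabla u_j|^2$ where $u_j$ is not uniformly close to $q$, which is what you are trying to prove.
\item The transport bound $|u_j(x_1,x_2)-u_j(x_1,x_2')|^2\le|x_2-x_2'|\int|\partial_2u_j(x_1,\cdot)|^2$ gives smallness only off a small set of $x_1$, not uniformly.
\item The blow-up step cannot give the contradiction you want. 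Since $\int|\partial_2u|^2$ is scale invariant in the plane and tends to zero on fixed boxes, every rescaled limit is independent of $y_2$, hence constant by finite energy. The real question is whether concentrated energy survives in the rescaled limit. Answering it needs an $\varepsilon$-regularity or compactness statement for maps that are only approximately critical: the residual is small only in the dual of $N_{\eta_j}$, a norm containing $\|v\|_\infty$, and the quartic coefficient $\eta_jL_j^2$ need not vanish at the concentration scale. Sections~\ref{app:regularizer-decay} and~\ref{app:concentration-compactness} need exact criticality for exactly this.
\item Even strong $L^2$ convergence of gradients would not give $\eta_j\int|\nabla u_j|^4|v_j|^2\to0$, since only boundedness of $\eta_j\int|\nabla u_j|^4$ is assumed.
\end{itemize}

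The missing idea is that no strong convergence is needed on rotational test fields. Take the curve $u_\tau=\exp(\tau[\Phi]_\times)u$. One has $\partial_iu_\tau=Q_\tau\partial_iu+(\partial_iQ_\tau)u$ with $Q_\tau$ orthogonal, so the $|\nabla u|^2$ terms cancel. Hence the second $\tau$-derivative $\mathcal H(u;\Phi)$ of the action, as in \eqref{eq:energy-absorption-G}, is linear in $\nabla u$, apart from $\eta$-terms bounded by $C_\Phi\eta\int(|\nabla u|^2+|\nabla u|^3)$, which vanish along your sequence.

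This curve has velocity $v=\Phi\times u$ and acceleration $a=\Phi\times(\Phi\times u)$, whereas the intrinsic Hessian uses acceleration $-|v|^2u$. Therefore
\[
\mathcal H(u;\Phi)-Q_{u,c,\eta}(v)=R_{c,\eta}(u)\bigl[a+|v|^2u\bigr].
\]
Here $a+|v|^2u$ is a compact tangent field with uniformly bounded $N_{\eta_j}$ norm, so this difference tends to zero by \eqref{eq:energy-absorption-E}.

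Consequently $Q_{u_j,c_j,\eta_j}(\Phi\times u_j)$ converges to $\mathcal H(q;\Phi)$ evaluated at $\eta=0$ and speed $c$, using only weak gradient convergence and strong convergence of the maps. This limit equals $Q_{q,c,0}(\Phi\times q)$ because $q$ is an exact solution. Your choice $\Phi=\chi(x_2/H_0)\,q\times\xi$ then finishes the proof as you intended.

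You already used $\psi\times u_j$ for the first variation. The same orthogonality cancellation you quoted from the introduction is what handles the second.
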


\begin{proof}
Local \(H^1\) compactness and a diagonal subsequence give a weak \(H^1_{\rm loc}\) and strong \(L^2_{\rm loc}\) limit \(q\). Since \(|u_j|=1\), the limit has unit length almost everywhere. Since \(A_{\eta_j}\geq\tfrac12\int|\partial_2 u_j|^2\), it is independent of \(x_2\).

For a fixed \(\Phi\in C_c^\infty(\mathbb R^2;\mathbb R^3)\), the tangent test \(v_j=\Phi\times u_j\) has uniformly bounded \(N_{\eta_j}\). This follows from \(|v_j|\leq|\Phi|\) and \(|\nabla v_j|\leq|\nabla\Phi|+|\Phi||\nabla u_j|\), the local bounds on \(\int|\nabla u_j|^2\) and \(\eta_j\int|\nabla u_j|^4\), and finite volume of the fixed support. In the residual one obtains the exact cancellation \[
R_{c_j,\eta_j}(u_j)[\Phi\times u_j]
=\int(1+\eta_j|\nabla u_j|^2)\sum_i (u_j\times\partial_i u_j)\cdot\partial_i\Phi
+(u_j)_3(u_j\times e_3)\cdot\Phi+c_j\partial_1 u_j\cdot\Phi.
\] The quartic contribution is bounded by \(C_\Phi\eta_j\int |\nabla u_j|^3\), which tends to zero because \[
\eta_j\int |\nabla u_j|^3\leq
\sqrt{\eta_j}\left(\int |\nabla u_j|^2\right)^{1/2}
\left(\eta_j\int |\nabla u_j|^4\right)^{1/2}.
\] In each remaining term a weakly convergent derivative is multiplied by a strongly convergent bounded coefficient. Hence the limit satisfies, in the one-dimensional distributional sense, \begin{equation}
(q\times q')'=q_3(q\times e_3)+cq'.
\label{eq:energy-absorption-F}
\end{equation} The right side is locally \(L^2\), so \(q\times q'\) is locally \(H^1\). Since \(q'=(q\times q')\times q\), one-dimensional Sobolev continuity makes \(q'\) continuous. Equation \eqref{eq:energy-absorption-F} and repeated differentiation then give smoothness. Crossing \eqref{eq:energy-absorption-F} with \(q\), and using \(q\cdot q''=-|q'|^2\), gives \[
q''+(|q'|^2+q_3^2)q-q_3e_3+cq\times q'=0.
\]

We also prove that any compact negative direction for the one-dimensional form at this limit transfers to a fixed compact two-dimensional region for large \(j\). For fixed \(\Phi\), let \[
\begin{gathered}
v=\Phi\times u,\quad a=\Phi\times(\Phi\times u),\\
\mathcal B_\Phi=\sum_i\bigl[|\partial_i\Phi\times u|^2+
\partial_i u\cdot((\partial_i\Phi\times\Phi)\times u)\bigr],\\
\mathcal L_\Phi=\sum_i\partial_i u\cdot(\partial_i\Phi\times u).
\end{gathered}
\] Differentiate \(R_{c,\eta}(u_\tau)[\partial_\tau u_\tau]\) at \(\tau=0\) along \(u_\tau=\exp(\tau[\Phi]_\times)u\), where \([\Phi]_\times w=\Phi\times w\). The Dirichlet density differentiates to \(\mathcal B_\Phi\); its first derivative is \(\mathcal L_\Phi\). The height contribution is \(v_3^2+u_3\,a_3\). For the momentum term, \(u\cdot((\Phi\times u)\times \partial_1 u)=\Phi\cdot \partial_1 u\), whose derivative is \(u\cdot(\Phi\times\partial_1 \Phi)\). Thus this derivative is \begin{equation}
\mathcal H(u;\Phi)=\int\mathcal B_\Phi+v_3^2+u_3\,a_3+
cu\cdot(\Phi\times\partial_1 \Phi)+\eta(|\nabla u|^2\mathcal B_\Phi+2\mathcal L_\Phi^2).
\label{eq:energy-absorption-G}
\end{equation} Differentiation along an arbitrary sphere-valued path with initial velocity \(v\) and acceleration \(a\), compared with acceleration \(-|v|^2u\), also gives directly from the definitions \begin{equation}
\mathcal H(u;\Phi)-Q_{u,c,\eta}(v)
=R_{c,\eta}(u)[a+|v|^2u].
\label{eq:energy-absorption-H}
\end{equation} The field on the right is tangent, compactly supported, and has uniformly bounded \(N_{\eta_j}\) for \(u=u_j\), by the same product bounds as for \(v_j\). The part of \eqref{eq:energy-absorption-G} without \(\eta\) is continuous under the convergence just established, because \(\mathcal B_\Phi\) contains derivatives of \(u\) only linearly. The last part has absolute value at most \(C_\Phi\eta_j\int(|\nabla u_j|^2+|\nabla u_j|^3)\), and tends to zero. Equations \eqref{eq:energy-absorption-E} and \eqref{eq:energy-absorption-H} imply \[
Q_{u_j,c_j,\eta_j}(\Phi\times u_j)\longrightarrow Q_{q,c,0}(\Phi\times q).
\]

Now let \(\xi(x_1)\) be a smooth compact tangent field with negative one-dimensional form \(Q^{1d}_{q,c}(\xi)<0\), where this form is defined in \eqref{eq:signed-estimate-2}. Fix a nonzero real \(\chi\in C_c^\infty(\mathbb R)\), and choose \(\Phi(x_1,x_2)=\chi(x_2/H_0)\,q(x_1)\times\xi(x_1)\). Here \(H_0>0\) is a fixed large number. Since \((q\times\xi)\times q=\xi\), the limiting form is \[
H_0\|\chi\|_2^2Q^{1d}_{q,c}(\xi)+H_0^{-1}\|\chi'\|_2^2\|\xi\|_2^2,
\] which is strictly negative when \(H_0\) is large enough. Its support is one fixed box. The fields \(\Phi\times u_j\) have bounded \(N_{\eta_j}\), and their \(L^2\) norms tend to the positive \(L^2\) norm of \(\chi(x_2/H_0)\xi(x_1)\). Normalising them proves the claimed transfer with a fixed negative bound and a fixed norm bound.
\end{proof}

\subsection{Choice of cells and the cylinder alternative}

The nonlinear compactness argument and the small-amplitude estimate now give a local alternative on rectangular cells. Each cell either has the required signed coercivity or can be accounted for by transverse energy, residual, a negative variation, or growth of the energy in neighbouring cells.

Choose \[
0<\delta<\min(\delta_*,\delta_{\rm lin},1/2).
\] For \(G>1\) put \(q_G=((G+1)/(G-1))^2-1\). Choose \(G\) so large that \(q_G<1\) and \(\rho_G=q_G/(1-q_G)<\delta\). Then choose \(L\) for the linear estimate \eqref{eq:energy-absorption-D} with \(G_0=4G\). These choices depend only on \(K\).

For \(\nu=(\nu_1,\nu_2)\in\mathbb Z^2\), let \[
C_\nu=(\nu_1L,\nu_2)+(-L/2,L/2)\times(-1/2,1/2).
\] For an integer \(N\geq1\), let \(\Omega_{\nu,N}\) be the open rectangular box whose closure is the union of \(\overline{C_\mu}\) with \(|\mu-\nu|_\infty\leq N\). Put \(f_\nu=\int_{C_\nu}F_\eta\) and \(d_\nu=\int_{C_\nu}D_\eta\). For a smooth map defined on \(\Omega\), define \(r(u;\Omega)\) by the supremum in \eqref{eq:energy-absorption-E}, with its own \(c,\eta\).

\begin{lemma}[The cylinder alternative]\label{lem:energy-cells}
There exist \(N\geq1\) and \(\eta_0,\varepsilon_0,\kappa,C_0>0\), depending only on \(K\), such that every smooth map with finite \(E_\eta\)-energy on \(\Omega_{\nu,N}\), every \(c\in K\), and every \(0<\eta\leq\eta_0\) satisfy at least one of \begin{equation}
\begin{array}{ll}
\mathrm{(i)}& d_\nu\geq\delta f_\nu,\\
\mathrm{(ii)}& r(u;\Omega_{\nu,N})^2>\varepsilon_0 f_\nu,\\
\mathrm{(iii)}& A_\eta(u;\Omega_{\nu,N})>\varepsilon_0 f_\nu,\\
\mathrm{(iv)}& \text{there is a compact tangent }v\text{ in }\Omega_{\nu,N}
\text{ with }\|v\|_2=1,\ N_\eta(v)\leq C_0,\ Q_{u,c,\eta}(v)\leq-\kappa,\\
\mathrm{(v)}& f_\mu\geq G^{|\mu-\nu|_1}f_\nu
\text{ for some }0<|\mu-\nu|_\infty\leq N.
\end{array}
\label{eq:energy-absorption-I}
\end{equation}
\end{lemma}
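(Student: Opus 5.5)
We argue by contradiction and compactness. Fix the constants as in the preceding paragraph ($\delta$, $G$, $L$ from \cref{lem:energy-linear} with $G_0=4G$). Suppose that for some choice of $N$ the lemma fails for every $\eta_0,\varepsilon_0,\kappa,C_0$. Then there are sequences $\eta_j\to0$, $\varepsilon_j\to0$, $\kappa_j\to0$, $C_j\to\infty$, speeds $c_j\in K$ (after a subsequence $c_j\to c\in K$), and maps $u_j$ on $\Omega_{\nu_j,N}$ violating all five alternatives. Translating, we take $\nu_j=0$. Since (i) fails, $f_0>0$. Since (v) fails, $f_\mu< G^{|\mu|_1}f_0$ on every cell of the box. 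Hence the total $F$-energy of the box is at most $C(N,G)f_0$.

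We split into two regimes according to the size of $f_0^{(j)}$ along a subsequence. If $f_0^{(j)}$ stays bounded below, we use the nonlinear route. The energy on the box is bounded: $F$ is controlled by the cells, and the transverse part $A$ is at most $\varepsilon_j f_0$ because (iii) fails. Since (ii) fails, the residual tends to zero. We apply \cref{lem:energy-rotational} on the boxes $\Omega_{0,N}$; since these are fixed boxes rather than exhausting ones, we use only its local version. This gives a limit $q(x_1)$ solving \eqref{eq:signed-estimate-1} on an interval of length $(2N+1)L$.

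The failure of (iv), with $\kappa_j\to0$ and $C_j\to\infty$, together with the transfer statement of that lemma, shows that the one-dimensional form is nonnegative on compactly supported tangent fields in that interval. We would then want \cref{prop:signed-estimate} to give $|q'|^2-q_3^2\ge\delta_*(|q'|^2+q_3^2)$ on the central cell. It would remain to pass $d_0\ge\delta f_0$ to the limit. This requires strong convergence of $\partial_1u_j$ in $L^2$ of the central cell and vanishing of $\eta_j\int p^2$ there; both should follow from the rotational test argument as in that lemma (local energy identity / monotonicity).

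The main obstacle is that \cref{prop:signed-estimate} classifies \emph{entire} stable profiles, whereas here stability holds only on a long finite interval. The first remedy is to make $N$ depend on a further compactness argument. Suppose that for every $N$ there is a stable-on-$[-NL,NL]$ solution at speed in $K$ violating the pointwise bound at $0$ by a fixed margin. By the derivative bounds of \cref{lem:profile-conserved} (uniform in terms of the cell energy), a limit $N\to\infty$ would produce an entire stable profile violating \eqref{eq:signed-estimate-5}. This gives an $N$ for which local stability already yields $\int_{C_0}(|q'|^2-q_3^2)\ge\delta\int_{C_0}(|q'|^2+q_3^2)$, because $\delta<\delta_*$.

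If $f_0^{(j)}\to0$, we rescale around the equator. Rotate about $e_3$ so that the average of $u_j$ on the box is near $e_1$. Write $u_j\approx e_1+\sqrt{f_0}\,(0,b_j,\zeta_j)$ and normalise by $\sqrt{f_0}$. The bounds on neighbouring cells give $H^1$ compactness of $(b_j,\zeta_j)$. The small residual and small transverse energy, both relative to $f_0$, make the limit satisfy \eqref{eq:energy-absorption-C} independently of $x_2$. The limit also has $f^{\rm lin}_0=1$ and $f^{\rm lin}_{\pm1}\le G f_0\cdot(\text{ratio})\le 4G$; the factor $4$ absorbs the normalisation and convergence errors. \cref{lem:energy-linear} then gives $d^{\rm lin}_0\ge\delta_{\rm lin}>\delta$. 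Strong convergence again transfers this to $d_0\ge\delta f_0$ for large $j$, contradicting the failure of (i). The quartic terms are negligible here since $\eta_j p^2\lesssim\eta_j f_0\cdot(\ldots)$.

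The mixed case, where $f_0^{(j)}$ stays bounded below but the limit $q$ is a constant equatorial map, is handled by the same linearised argument, with $f_0$ still small relative to $1$ after subtracting constants. The constants $N,\eta_0,\varepsilon_0,\kappa,C_0$ obtained from these contradictions depend only on $K$.
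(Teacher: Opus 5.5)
There is a genuine gap, and it starts with fixing $N$ before running the contradiction. With $N$ fixed, the boxes $\Omega_{0,N}$ do not exhaust the plane, and the limit $q$ lives only on an interval of length $(2N+1)L$. Failure of (iv) does not even give nonnegativity of the one-dimensional form there. The transverse cutoff must fit in a box of height $2N+1$, so you only get $Q^{1d}_{q,c}(\xi)\ge-\pi^2(2N+1)^{-2}\|\xi\|_2^2$.

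More seriously, your auxiliary claim is false. You assert that some $N$ makes stability on $[-NL,NL]$ force $\int_{C_0}(|q'|^2-q_3^2)\ge\delta\int_{C_0}(|q'|^2+q_3^2)$. Take the exact profile with $j=0$, $\alpha(0)=\epsilon$, $\alpha'(0)=0$:
\begin{itemize}
\item its height behaves like $\epsilon\cosh(\lambda x_1)$, which is the mode $\beta=0$, $p_+=p_-$ of \cref{lem:energy-linear};
\item it is stable on every interval on which $\alpha$ stays small, because $W''(\alpha)\approx1-c^2>0$ in \eqref{eq:signed-estimate-14};
\item by conservation of $H$ in \eqref{eq:signed-estimate-6}, $|q'|^2-q_3^2=2H=\sin^2\epsilon\,(c^2/\cos^2\epsilon-1)<0$ everywhere.
\end{itemize}
Letting $\epsilon\to0$ as $N\to\infty$, these profiles converge to a constant, so your extraction yields no entire counterexample to \cref{prop:signed-estimate}. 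In the lemma such profiles are excluded only by the growth alternative (v), which your one-dimensional argument discards.

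The fix is to let $N$ tend to infinity inside the same diagonal contradiction: take counterexamples with $N=j$, $\eta_0=\varepsilon_0=\kappa=1/j$ and $C_0=j$. Then $\Omega_{0,j}$ exhausts $\mathbb R^2$ and \cref{lem:energy-rotational} applies as stated. Any compact negative direction of the entire limit transfers to one fixed box. For large $j$ that box lies in $\Omega_{0,j}$, with norm at most $j$ and form at most $-1/j$. Hence the limit is genuinely stable and \cref{prop:signed-estimate} applies directly.

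You are also missing the elementary bound that drives both regimes. Since $D_\eta=F_\eta-2u_3^2+\eta p^2$, failure of (i) gives $(1-\delta)f_0<2\int_{C_0}u_3^2\le2L$. This bound does three things:
\begin{itemize}
\item It bounds $f_0$ above, and hence, with failure of (iii) and (v), the energy on every fixed box. Knowing only that ``$f_0$ stays bounded below'' gives you no compactness.
\item If $m=\lim f_0>0$, it gives $\int_{C_0}q_3^2\ge(1-\delta)m/2>0$, so the limit is never a constant equatorial map. Your ``mixed case'' is therefore vacuous, and the linearisation you propose for it has no small parameter.
\item If $m=0$, it gives $\int_{I_0}\zeta^2\ge(1-\delta)/2$, hence $f^{\rm lin}_0>1/4$. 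Together with $f^{\rm lin}_{\pm1}\le G$ from lower semicontinuity, this is exactly the hypothesis $f^{\rm lin}_{\pm1}\le4Gf^{\rm lin}_0$ of \cref{lem:energy-linear}; this is where $G_0=4G$ comes from.
\end{itemize}
Your assertion $f^{\rm lin}_0=1$ would need strong $L^2(C_0)$ convergence of $\nabla W_j$ and vanishing of $\eta_j\int_{C_0}p^2/m_j$. Neither is available.

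Finally, no strong gradient convergence is needed in either regime. By \eqref{eq:energy-absorption-A}, $D_\eta-\delta F_\eta=(1-\delta)p-(1+\delta)u_3^2+\tfrac{3-\delta}{2}\eta p^2$. Discard the nonnegative quartic term; then lower semicontinuity of $\int_{C_0}p$ and strong convergence of the heights pass the failed inequality (i) to the limit. The monotonicity argument you invoke relies on local uniform convergence. That convergence comes from the $\eta$-dependent Morrey bound and is lost as $\eta_j\to0$.
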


\begin{proof}
Suppose this claim is false. For each positive integer \(j\), take a counterexample with \(N=j\), \(\eta_0=\varepsilon_0=\kappa=1/j\), and \(C_0=j\); translate its cell to \(C_0\). Denote its parameters and map by \(c_j,\eta_j,u_j\), pass to \(c_j\to c\in K\), and put \(m_j=f_{0,j}\). Since (i) fails, \(m_j>0\). The identity \(D_\eta=F_\eta-2u_3^2+\eta p^2\) and \(|u_3|\leq1\) give \begin{equation}
(1-\delta)m_j<2\int_{C_0}(u_j)_3^2\leq2L,\qquad
0<m_j<\frac{2L}{1-\delta}.
\label{eq:energy-absorption-J}
\end{equation} Failures of (ii), (iii), and (v), together with \eqref{eq:energy-absorption-B}, imply for each fixed integer \(R\geq1\) and all \(j\geq R\) \[
r(u_j;\Omega_{0,j})^2\leq m_j/j,\quad
A_{\eta_j}(u_j;\Omega_{0,j})\leq m_j/j,
\] \begin{equation}
E_{\eta_j}(u_j;\Omega_{0,R})
\leq\left(\frac12\sum_{|\nu|_\infty\leq R}G^{|\nu|_1}+\frac1j\right)m_j.
\label{eq:energy-absorption-K}
\end{equation} Take a subsequence with \(m_j\to m\geq0\).

If \(m>0\), local rotational compactness gives a smooth entire one-dimensional profile \(q\). Strong convergence of the height and \eqref{eq:energy-absorption-J} give \(\int_{C_0}q_3^2\geq(1-\delta)m/2>0\). Failure of (i), \eqref{eq:energy-absorption-A}, and lower semicontinuity after dropping the nonnegative quartic term give \begin{equation}
(1-\delta)\int_{C_0}|q'|^2-(1+\delta)\int_{C_0}q_3^2\leq0.
\label{eq:energy-absorption-L}
\end{equation} If this profile were stable, \cref{prop:signed-estimate} would make the left side at least \((\delta_*-\delta)\int_{C_0}(|q'|^2+q_3^2)>0\). Thus it is unstable. A negative compact one-dimensional direction transfers by \eqref{eq:energy-absorption-G}--\eqref{eq:energy-absorption-H} to fields with fixed compact support, bounded norm, unit \(L^2\) norm, and a fixed negative form bound. For large \(j\), their support lies in \(\Omega_{0,j}\), their norm is at most \(j\), and their form is at most \(-1/j\). This contradicts failure of (iv).

If \(m=0\), let \(\bar u_j\) be the average of \(u_j\) over \(C_0\). Poincaré's inequality and \eqref{eq:energy-absorption-K} give \[
|\bar u_j|^2=1-\frac1L\int_{C_0}|u_j-\bar u_j|^2=1-O(m_j),
\qquad |(\bar u_j)_3|=O(\sqrt{m_j}).
\] Rotate each \(u_j\) by a constant rotation about \(e_3\) so that its average horizontal vector points toward \(e_1=(1,0,0)\). Its length is \(1-O(m_j)\), and consequently \(\|u_j-e_1\|_{H^1(C_0)}\leq C\sqrt{m_j}\). On every fixed larger box \(\Omega_{0,R}\) the same bound, with a constant \(C_R\), follows from \eqref{eq:energy-absorption-K} and the anchored Poincaré inequality \(\|v-\operatorname{avg}_{C_0}v\|_{L^2(\Omega_{0,R})}\leq C_R\|\nabla v\|_{L^2(\Omega_{0,R})}\). For example, this last inequality follows by subtracting the average on \(\Omega_{0,R}\), applying Poincaré on that box, and controlling the difference of averages by the \(L^2(C_0)\) norm. Put \(W_j=(u_j-e_1)/\sqrt{m_j}\).

The maps \(W_j\) are bounded in \(H^1\) on every fixed box. After passing to a subsequence, they converge weakly in \(H^1_{\rm loc}\) and strongly in \(L^2_{\rm loc}\). The identity \((W_j)_1=-\sqrt{m_j}|W_j|^2/2\) shows that the first component of the limit is zero. Also \(\int|\partial_2 W_j|^2\leq2/j\) on each fixed box for large \(j\), by \eqref{eq:energy-absorption-K}, so the limit is \((0,b(x_1),\zeta(x_1))\).

Test the rotational residual with \(\Phi\times u_j\), divide by \(\sqrt{m_j}\), and use \eqref{eq:energy-absorption-K}. Its absolute value tends to zero since the residual is at most \(\sqrt{m_j/j}\) times a uniformly bounded test norm. Using the expansion \(u_j=e_1+\sqrt{m_j}W_j\), we find that every unregularised nonlinear remainder has an \(L^1\) bound \(C_\Phi\sqrt{m_j}\|W_j\|_{H^1(\operatorname{supp}\Phi)}^2\), which tends to zero. The regularised remainder is bounded by \[
\frac{C_\Phi\eta_j}{\sqrt{m_j}}\int |\nabla u_j|^3
\leq C_{\Phi,R}\sqrt{\eta_jm_j}\longrightarrow0.
\] The limiting flux equation is \(e_1\times W''=\zeta(e_1\times e_3)+cW'\); its two components are exactly \eqref{eq:energy-absorption-C}. Thus \(b,\zeta\) are smooth solutions of that linear system.

Divide \eqref{eq:energy-absorption-J} by \(m_j\) and use strong height convergence to obtain \(\int_{I_0}\zeta^2\geq(1-\delta)/2\). Divide the failed inequality (i) by \(m_j\), use \eqref{eq:energy-absorption-A}, drop its quartic term, and use lower semicontinuity and strong height convergence. It follows that \[
d^{\rm lin}_0\leq\delta f^{\rm lin}_0,\qquad
f^{\rm lin}_0\geq(1-\delta)/2>1/4.
\] For the two neighbouring longitudinal cells, failure of (v) gives normalised mass at most \(G\), and lower semicontinuity gives \(f^{\rm lin}_{\pm1}\leq G\leq4Gf^{\rm lin}_0\). The choice of \(L\) and \eqref{eq:energy-absorption-D} therefore give \(d^{\rm lin}_0\geq\delta_{\rm lin}f^{\rm lin}_0\), contradicting \(\delta<\delta_{\rm lin}\). This finishes the proof of \eqref{eq:energy-absorption-I}.
\end{proof}

\subsection{Weighted norms and the global cell argument}

We now sum these local alternatives over the plane. The weighted norm estimates make it possible to control residuals and negative directions on disjoint regions without assuming a bound for the total energy.

\begin{lemma}[Weighted norms on local and disjoint supports]\label{lem:energy-norms}
Let $u$ be smooth with finite $E_\eta$. If a compact tangent field $v$ is supported in an open set $\Omega$ with $E_\eta(u;\Omega)\le M$, then
\begin{equation}
N_\eta(v)\leq B_{\eta,u}(v)\leq H(M)N_\eta(v),\qquad
H(M)=1+(\sqrt2+2)\sqrt M+(4M)^{1/4}.
\label{eq:energy-absorption-M}
\end{equation}
For every finite family of compact tangent fields $v_i$ with disjoint supports and every finite list of real numbers $a_i$,
\begin{equation}
B_{\eta,u}\left(\sum_i a_iv_i\right)^2
\leq7\sum_i a_i^2B_{\eta,u}(v_i)^2.
\label{eq:energy-absorption-N}
\end{equation}
\end{lemma}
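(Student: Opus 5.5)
The first inequality is a localised version of \eqref{eq:thin-belt-27}, and I would prove it the same way. The lower bound $N_\eta(v)\le B_{\eta,u}(v)$ holds because $B_{\eta,u}$ is $N_\eta$ plus three nonnegative terms. For the upper bound, each extra term is an integral over $\Omega$, since $v$ vanishes off its support. Pull out $\|v\|_\infty$ and use $\int_\Omega(|\nabla u|^2+u_3^2)\le2E_\eta(u;\Omega)\le2M$ to get
\[
\bigl\|\sqrt{|\nabla u|^2+u_3^2}\,v\bigr\|_2\le\sqrt{2M}\,\|v\|_\infty .
\]
Next, $\sqrt\eta\||\nabla u|^2v\|_2\le(\eta\int_\Omega|\nabla u|^4)^{1/2}\|v\|_\infty\le2\sqrt M\|v\|_\infty$, because $\frac\eta4\int_\Omega|\nabla u|^4\le M$. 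For the last term, Hölder gives
\[
\sqrt\eta\,\||\nabla u|\nabla v\|_2\le(\eta\textstyle\int_\Omega|\nabla u|^4)^{1/4}\,\eta^{1/4}\|\nabla v\|_4\le(4M)^{1/4}\eta^{1/4}\|\nabla v\|_4 .
\]
Both $\|v\|_\infty$ and $\eta^{1/4}\|\nabla v\|_4$ are at most $N_\eta(v)$, so summing gives the constant $H(M)$.

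For \eqref{eq:energy-absorption-N}, write $B_{\eta,u}$ as a sum of seven seminorms: $\|\nabla v\|_2$, $\eta^{1/4}\|\nabla v\|_4$, $\|v_3\|_2$, $\|v\|_\infty$, and the three weighted terms. Let $w=\sum_ia_iv_i$. Because the supports are disjoint, the pointwise value of $w$ and of $\nabla w$ at each point comes from a single $v_i$. This gives the following bounds for each seminorm.
\begin{itemize}
\item For the five $L^2$-type seminorms $S$, $S(w)^2=\sum_ia_i^2S(v_i)^2$ exactly.
\item For the $L^\infty$ seminorm, $\|w\|_\infty^2=\max_ia_i^2\|v_i\|_\infty^2\le\sum_ia_i^2\|v_i\|_\infty^2$.
\item For the $L^4$ seminorm, $\|\nabla w\|_4^4=\sum_i a_i^4\|\nabla v_i\|_4^4$. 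The inclusion $\ell^2\subset\ell^4$ then gives $\|\nabla w\|_4^2\le\sum_ia_i^2\|\nabla v_i\|_4^2$.
\end{itemize}
So each of the seven seminorms satisfies $S(w)^2\le\sum_ia_i^2S(v_i)^2$.

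To finish, apply Cauchy--Schwarz over the seven seminorms: $B(w)^2=(\sum_kS_k(w))^2\le7\sum_kS_k(w)^2$. Then $\sum_kS_k(w)^2\le\sum_ia_i^2\sum_kS_k(v_i)^2\le\sum_ia_i^2B(v_i)^2$, since the squares of nonnegative terms sum to at most the square of their sum. This yields the factor $7$. There is no serious obstacle here; the only step that needs care is the $L^4$ term, where the $\ell^2\subset\ell^4$ inclusion is what keeps the constant independent of the number of fields.
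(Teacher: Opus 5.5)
Your proof is correct and follows the paper's argument essentially step for step. The same three H\"older bounds on the extra weighted terms produce $H(M)$. The factor $7$ comes from the same componentwise disjoint-support inequalities: equality for the $L^2$ parts, $\ell^2\subset\ell^4$ for the $L^4$ part, and a maximum bounded by a sum of squares for the supremum part, followed by Cauchy--Schwarz over the seven components.
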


\begin{proof}
Fix the constants in \eqref{eq:energy-absorption-I}, reduce \(\eta_0\) to at most one if necessary, and let $u$ be smooth with finite $E_\eta$. If \(v\) is supported in an open set \(\Omega\) with \(E_\eta(u;\Omega)\leq M\), the three additional terms of the weighted norm obey \[
\|\sqrt{|\nabla u|^2+u_3^2}\,v\|_2\leq\sqrt{2M}\|v\|_\infty,\qquad
\sqrt\eta\||\nabla u|^2\,v\|_2\leq2\sqrt M\|v\|_\infty,
\] \[
\sqrt\eta\||\nabla u|\,\nabla v\|_2
\leq(\eta\int_\Omega |\nabla u|^4)^{1/4}\eta^{1/4}\|\nabla v\|_4
\leq(4M)^{1/4}\eta^{1/4}\|\nabla v\|_4.
\] These estimates prove \eqref{eq:energy-absorption-M}.

We also need a disjoint-support estimate. Write \(B_{\eta,u}\) as the sum of its seven nonnegative norm components, as defined in \eqref{eq:planar-variation-norms}. For fields \(v_i\) with disjoint supports and a finite list of real numbers \(a_i\), each component \(N_k\) satisfies \[
N_k\left(\sum_i a_iv_i\right)^2\leq\sum_i a_i^2N_k(v_i)^2.
\] For the \(L^2\) components this is equality. For the \(L^4\) component it is the inequality \((\sum_i a_i^4 n_i^4)^{1/2}\leq\sum_i a_i^2n_i^2\), and for the supremum component it follows by bounding a maximum by a sum of squares. Cauchy--Schwarz in the seven components therefore gives \eqref{eq:energy-absorption-N}.
\end{proof}

The norm comparisons allow the residual and index bounds to control the corresponding classes of cells. We can therefore sum the cell estimates while retaining the sign of $\int D_\eta$.

\begin{proposition}[The signed energy estimate]\label{prop:energy-signed}
There are $a_K,C_K,\eta_K,\theta_K>0$, depending only on $K$, such that for $c\in K$, $0<\eta\le\eta_K$ and $0<\theta\le\theta_K$ the following holds. If $u$ is smooth, $E_\eta(u)<\infty$, and
\begin{equation}\label{eq:energy-absorption-S1}
|R_{c,\eta,u}[v]|\le\varepsilon B_{\eta,u}(v)
\quad\text{for every compact smooth tangent field }v,
\end{equation}
where $\varepsilon\ge0$, then
\begin{equation}\label{eq:energy-absorption-S2}
\int F_\eta\le a_K\int D_\eta+
C_K\bigl(A_\eta(u)+i_\theta(u;c,\eta)+\varepsilon^2\bigr).
\end{equation}
\end{proposition}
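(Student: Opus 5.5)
The plan is to apply the cylinder alternative, \cref{lem:energy-cells}, to every cell $C_\nu$, $\nu\in\mathbb Z^2$, and then sum. Fix $N,\eta_0,\varepsilon_0,\kappa,C_0$ from that lemma and take $\eta_K\le\eta_0$. Only finitely many cells carry mass above any given positive threshold, because $E_\eta(u)<\infty$. I will split $\mathbb Z^2$ into classes $\mathcal G_{\rm i},\dots,\mathcal G_{\rm v}$ according to the first alternative that holds.

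\textbf{Good cells.} On $\mathcal G_{\rm i}$, $\delta f_\nu\le d_\nu$. Using $-F_\eta\le D_\eta$ from \eqref{eq:energy-absorption-A} on the other cells, we get
\[
\delta\sum f_\nu\le\int D_\eta+(1+\delta)\sum_{\nu\notin\mathcal G_{\rm i}}f_\nu .
\]
It therefore suffices to bound the bad mass $\sum_{\nu\notin\mathcal G_{\rm i}}f_\nu$ by $\rho\sum f_\nu+C(A_\eta+i_\theta+\varepsilon^2)$ with $(1+\delta)\rho<\delta$.

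\textbf{Transverse cells (iii).} Each box $\Omega_{\nu,N}$ overlaps at most $(2N+1)^2$ others, so summing gives $\sum_{\mathcal G_{\rm iii}}f_\nu\le\varepsilon_0^{-1}(2N+1)^2A_\eta(u)$.

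\textbf{Residual cells (ii).} I pass to a sublattice of spacing $2N+1$ to get disjoint boxes. For each box choose a compact tangent $v_\nu$ in $\Omega_{\nu,N}$ with $N_\eta(v_\nu)\le1$ and $R[v_\nu]\ge\tfrac12 r_\nu$. Set $v=\sum a_\nu v_\nu$ with $a_\nu=r_\nu$. Disjointness makes $R[v]=\sum a_\nu R[v_\nu]\ge\tfrac12\sum r_\nu^2$. By \eqref{eq:energy-absorption-N} and \eqref{eq:energy-absorption-M}, $B(v)^2\le7\sum r_\nu^2H(M_\nu)^2$ with $M_\nu=E_\eta(u;\Omega_{\nu,N})$. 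The hypothesis then gives $\sum r_\nu^2\lesssim\varepsilon^2\sup H^2$. The factor $H(M_\nu)$ is uncontrolled when the local energy is large. To handle this, I split residual cells by local energy: those with $M_\nu$ large relative to $f_\nu$ I charge to neighbours, which is exactly the growth mechanism of (v). Otherwise I replace $a_\nu$ by $r_\nu/H(M_\nu)^2$ and use $r_\nu^2>\varepsilon_0f_\nu$. This weighting is delicate, and I would settle it by normalising each $v_\nu$ in $B$ rather than in $N_\eta$.

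\textbf{Index cells (iv).} Among disjoint boxes, the fields $v_\nu$ have disjoint supports, so $Q$ is diagonal on their span. Each satisfies $Q(v_\nu)\le-\kappa\le-\kappa C_0^{-2}H(M_\nu)^{-2}B(v_\nu)^2$. When $H(M_\nu)$ is bounded, this is $\le-\theta B^2$ for $\theta\le\theta_K$, and \eqref{eq:energy-absorption-N} extends the bound to the whole span. Hence the number of such disjoint boxes is at most $i_\theta$. Each carries $f_\nu\le 2L/(1-\delta)$ by the height bound used in \eqref{eq:energy-absorption-J}, because failure of (i) forces $f_\nu$ to be comparable to $\int u_3^2$. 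This gives $\sum_{\mathcal G_{\rm iv}}f_\nu\le C\,i_\theta$. Cells with large local energy are again routed to (v).

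\textbf{Growth cells (v).} This step is the main obstacle. For $\nu\in\mathcal G_{\rm v}$ there is $\mu$ with $f_\nu\le G^{-|\mu-\nu|_1}f_\mu$. Summing over $\nu$ for fixed $\mu$ gives $\sum_{\mathcal G_{\rm v}}f_\nu\le\sum_\mu f_\mu\sum_{k\ge1}\#\{|\nu-\mu|_1=k\}G^{-k}$. For large $G$ this is $O(1/G)\sum f_\mu$, which is where $q_G$ and $\rho_G<\delta$ were designed to enter. I would refine the estimate by iterating: follow a chain $\nu\to\mu\to\cdots$ to a cell not in (v). The chain terminates because $f$ is summable, and the endpoint masses are summed geometrically, with the factor $((G+1)/(G-1))^2$ accounting for overlaps. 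This yields $\sum_{\mathcal G_{\rm v}}f_\nu\le\rho_G\sum f_\nu$. Combining all classes and choosing $\delta$ as fixed before \cref{lem:energy-cells}, the coefficient $\delta-(1+\delta)\rho_G$ stays positive after absorption. Taking $a_K=(\delta-(1+\delta)\rho_G)^{-1}$ then gives \eqref{eq:energy-absorption-S2}.
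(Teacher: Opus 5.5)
Your architecture matches the paper's: cellwise use of \cref{lem:energy-cells}, a colouring into $(2N+1)^2$ disjoint families, $\ell^2$ duality for the residual, index counting for negative cells, and parent counting for growth. There is a genuine gap, however, in the step you flag as delicate and leave open, and it is exactly where the estimate becomes independent of the total energy.

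\textbf{The missing local energy bound.} Alternatives (ii) and (iv) are stated with $N_\eta$-normalised tests, while the hypotheses of the proposition are normalised in $B_{\eta,u}$. Every conversion between the two costs a factor $H(E_\eta(u;\Omega_{\nu,N}))$, and that factor must be uniform; in particular $\theta_K$ cannot depend on $u$. Normalising in $B_{\eta,u}$, or weighting by $r_\nu/H(M_\nu)^2$, only moves the factor around: you end up bounding $\sum f_\nu/H(M_\nu)^2$ instead of $\sum f_\nu$.

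Your rerouting to (v) does work for index cells, since in your order they already fail (i) and (iii). It fails for a cell whose first alternative is (ii). Such a cell fails only (i), so it may satisfy (iii) and fail (v), with arbitrarily large $A_\eta(u;\Omega_{\nu,N})$. It then cannot be charged to its neighbours.

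The observation you need is that failure of (i), (iii) and (v) \emph{together} gives three bounds:
\begin{itemize}
\item $f_\nu\le M_0=2L/(1-\delta)$;
\item $f_{\nu+k}<G^{|k|_1}f_\nu$ for $0<|k|_\infty\le N$;
\item $A_\eta(u;\Omega_{\nu,N})\le\varepsilon_0f_\nu$.
\end{itemize}
By \eqref{eq:energy-absorption-B} these give $E_\eta(u;\Omega_{\nu,N})\le M_*$, with $M_*$ depending only on $K$.

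So assign cells in the order (i), (iii), (v), (ii), (iv). This is allowed because the transverse bound applies to any subfamily of cells satisfying (iii), and the parent-counting bound to any subfamily possessing a parent. Every residual or negative cell then has $B_{\eta,u}\le H(M_*)N_\eta$ on its box. Consequently:
\begin{itemize}
\item the $B_{\eta,u}$-normalised local residuals $\rho_\nu$ satisfy $\sum_\nu\rho_\nu^2\le7\varepsilon^2$ in each colour;
\item residual cells satisfy $f_\nu\le H(M_*)^2\rho_\nu^2/\varepsilon_0$;
\item $\theta_K=\kappa/(7H(M_*)^2C_0^2)$ works for the negative cells.
\end{itemize}
No extra weighting is needed.

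\textbf{The growth step.} This is not the main obstacle, and the chain iteration is unnecessary. Your first parent-counting bound already gives $\sum_{\mathcal G_{\rm v}}f_\nu\le q_G\int F_\eta$. The condition $(1+\delta)q_G<\delta$ is exactly the standing choice $\rho_G=q_G/(1-q_G)<\delta$. So your absorption closes with $\rho=q_G$ and
\[
a_K=(\delta-(1+\delta)q_G)^{-1},
\]
which coincides with the paper's $(1+\rho_G)/(\delta-\rho_G)$.

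Your final claim that $\delta-(1+\delta)\rho_G>0$ does not follow from $\rho_G<\delta$; it would need $\rho_G<\delta/(1+\delta)$. So the refined bound with $\rho_G$ is the wrong one to feed into the absorption; use $q_G$.
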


\begin{proof}
Fix the constants $N,\eta_0,\varepsilon_0,\kappa,C_0$ from \cref{lem:energy-cells}, reducing $\eta_0$ to at most one. Assign every lattice cell to the first alternative it satisfies in the following order: (i), (iii), (v), (ii), (iv). Thus the classes are respectively coercive, transverse, growth, residual, and negative. Denote the sums of \(f_\nu\) over these five classes by \(\mathsf C,\mathsf B_{\rm tr},\mathsf H,\mathsf B_{\rm res},\mathsf B_{\rm neg}\), and put \(\mathsf B=\mathsf B_{\rm tr}+\mathsf B_{\rm res}+\mathsf B_{\rm neg}\). These are finite nonnegative numbers since \(\sum_\nu f_\nu=\int F_\eta<\infty\). Cells of zero mass satisfy (i).

Each growth cell \(\nu\) selects one parent \(\mu\) given by (v), so \(f_\nu\leq G^{-|\mu-\nu|_1}f_\mu\). For a fixed parent, the sum of the masses of its assigned children is at most \[
f_\mu\sum_{k\in\mathbb Z^2\setminus\{0\}}G^{-|k|_1}
=q_Gf_\mu.
\] Summing these nonnegative terms yields \(\mathsf H\leq q_G(\mathsf C+\mathsf B+\mathsf H)\), and hence \(\mathsf H\leq\rho_G(\mathsf C+\mathsf B)\). Since \(D_\eta\geq-F_\eta\) everywhere, while \(d_\nu\geq\delta f_\nu\) on coercive cells, \[
\int D_\eta\geq(\delta-\rho_G)\mathsf C-(1+\rho_G)\mathsf B,\qquad
\int F_\eta\leq(1+\rho_G)(\mathsf C+\mathsf B).
\] Eliminating \(\mathsf C\) using \(\delta-\rho_G>0\) gives a signed estimate for use with the dilation identities: \begin{equation}
\int F_\eta\leq a\int D_\eta+b\mathsf B,\qquad
a=\frac{1+\rho_G}{\delta-\rho_G}>0,\quad
b=\frac{(1+\rho_G)(1+\delta)}{\delta-\rho_G}>0.
\label{eq:energy-absorption-O}
\end{equation}

Let \(K_0=(2N+1)^2\). The enlarged boxes \(\Omega_{\nu,N}\) have overlap multiplicity \(K_0\) away from their boundaries. Colour the lattice by the two coordinate residues modulo \(2N+1\); there are \(K_0\) colours, and the enlarged open boxes in each colour are disjoint. For a transverse cell, (iii) implies \(f_\nu< A_\eta(u;\Omega_{\nu,N})/\varepsilon_0\). Therefore \begin{equation}
\mathsf B_{\rm tr}\leq K_0A_\eta(u)/\varepsilon_0.
\label{eq:energy-absorption-P}
\end{equation}

Every residual or negative cell fails (i), (iii), and (v). The calculation \eqref{eq:energy-absorption-J} gives \[
f_\nu\leq M_0:=\frac{2L}{1-\delta}.
\] Failure of (v) bounds every neighbouring cell's mass by \(f_{\nu+k}\leq G^{|k|_1}f_\nu\) for \(|k|_\infty\leq N\), including \(k=0\). Failure of (iii), together with \eqref{eq:energy-absorption-B}, therefore gives \begin{equation}
E_\eta(u;\Omega_{\nu,N})\leq
M_*:=M_0\left(\frac12\sum_{|k|_\infty\leq N}G^{|k|_1}+\varepsilon_0\right).
\label{eq:energy-absorption-Q}
\end{equation} In particular this local energy bound is independent of the total energy of \(u\).

For each such cell define the weighted local residual \[
\rho_\nu=\sup\{|R_{c,\eta}(u)[v]|:v\text{ compact tangent in }\Omega_{\nu,N},
\ B_{\eta,u}(v)\leq1\}.
\] It is finite because the proposition assumes a global weighted residual bound. By \eqref{eq:energy-absorption-M} and \eqref{eq:energy-absorption-Q}, \(r(u;\Omega_{\nu,N})\leq H(M_*)\rho_\nu\). For residual cells, (ii) consequently implies \begin{equation}
f_\nu\leq H(M_*)^2\rho_\nu^2/\varepsilon_0.
\label{eq:energy-absorption-R}
\end{equation} Within each colour, the enlarged boxes are disjoint, and \(\sum_\nu\rho_\nu^2\leq7\varepsilon^2\), where the sum is over any selection of these cells in the colour. To prove this, first take a finite selection and tests \(v_\nu\) with \(B_{\eta,u}(v_\nu)\leq1\), choose their signs so that \(r_\nu=R_{c,\eta}(u)[v_\nu]\geq0\), and apply the global residual bound and \eqref{eq:energy-absorption-N}: \[
\sum_\nu r_\nu^2
=R_{c,\eta}(u)\left[\sum_\nu r_\nu v_\nu\right]
\leq\varepsilon\sqrt{7\sum_\nu r_\nu^2}.
\] Thus \(\sum_\nu r_\nu^2\leq7\varepsilon^2\). Approximate each local supremum on this finite selection, then increase the finite selections. Summing the colours and using \eqref{eq:energy-absorption-R} yields \begin{equation}
\mathsf B_{\rm res}\leq
\frac{7K_0H(M_*)^2}{\varepsilon_0}\varepsilon^2.
\label{eq:energy-absorption-S}
\end{equation}

For a negative cell choose the test provided by (iv). It has unit \(L^2\) norm, form at most \(-\kappa\), and, by \eqref{eq:energy-absorption-M}--\eqref{eq:energy-absorption-Q}, weighted norm at most \(H(M_*)C_0\). In a single colour these tests have disjoint supports and are linearly independent. The quadratic form is local, so for their finite linear combinations \begin{equation}
Q_{u,c,\eta}\left(\sum_\nu a_\nu v_\nu\right)
\leq-\kappa\sum_\nu a_\nu^2
\leq-\theta_* B_{\eta,u}\left(\sum_\nu a_\nu v_\nu\right)^2,\qquad
\theta_*=\frac{\kappa}{7H(M_*)^2C_0^2}>0,
\label{eq:energy-absorption-T}
\end{equation} where the last inequality is \eqref{eq:energy-absorption-N}. For every \(0<\theta\leq\theta_*\), the number of negative cells in each colour is therefore at most \(i_\theta(u;c,\eta)\), with the convention that if there are infinitely many, the index is infinite. Since each such cell has mass at most \(M_0\), \begin{equation}
\mathsf B_{\rm neg}\leq M_0K_0i_\theta(u;c,\eta).
\label{eq:energy-absorption-U}
\end{equation}

When the index is infinite the desired inequality is immediate in the extended real sense. Otherwise insert \eqref{eq:energy-absorption-P}, \eqref{eq:energy-absorption-S}, and \eqref{eq:energy-absorption-U} into \eqref{eq:energy-absorption-O}. Set \(\eta_K=\eta_0\), \(\theta_K=\theta_*\), \(a_K=a\), and choose \(C_K\) to be \(b\) times the maximum of the three coefficients in \eqref{eq:energy-absorption-P}, \eqref{eq:energy-absorption-S}, and \eqref{eq:energy-absorption-U}. This proves \eqref{eq:energy-absorption-S2}. Set \(L_K=L\) and \(N_K=N\). Every constant was chosen using only \(K\); the only speed requirement in the linear argument was \(\lambda_0>0\), and the only speed requirement in the nonlinear argument was \cref{prop:signed-estimate} for \(0<c<1\). There is consequently no further numerical speed restriction.
\end{proof}

\subsection{Momentum variation, the intrinsic Hessian, and dilation formulas}

The signed estimate becomes an energy bound once its longitudinal and transverse terms are expressed through the action and the dilation derivatives. We first record the variation and scaling identities that justify this conversion.

\begin{lemma}[Variations of the map and dilations]\label{lem:energy-variations}
For a smooth compact perturbation $u$ of $e_1$, the first and intrinsic second variations of $J_{c,\eta}$ are $R_{c,\eta}$ and $Q_{u,c,\eta}$. For every compact smooth tangent field $v$,
\begin{equation}
dP(u)[v]=\int u\cdot(v\times \partial_1 u).
\label{eq:energy-absorption-V2}
\end{equation}
The coordinate dilations $u_{\alpha,\beta}(x_1,x_2)=u(e^{-\alpha}x_1,e^{-\beta}x_2)$ satisfy
\begin{equation}
\begin{aligned}
&A(u_{\alpha,\beta})=e^{\alpha-\beta}A,\quad
K_1(u_{\alpha,\beta})=e^{-\alpha+\beta}K_1,\quad
V(u_{\alpha,\beta})=e^{\alpha+\beta}V,\quad
P(u_{\alpha,\beta})=e^\beta P,\\
&Q_1(u_{\alpha,\beta})=e^{-3\alpha+\beta}Q_1,\quad
Q_m(u_{\alpha,\beta})=e^{-\alpha-\beta}Q_m,\quad
Q_2(u_{\alpha,\beta})=e^{\alpha-3\beta}Q_2.
\end{aligned}
\label{eq:energy-absorption-V3}\end{equation}
The local variation identities also hold for compactly supported variations of locally defined smooth maps.
\end{lemma}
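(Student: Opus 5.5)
Most of this lemma repeats computations already done for compact perturbations of $e_1$, so the plan is to point to those and supply only the local versions. For the first and intrinsic second variations, I would invoke \cref{lem:thin-belt-variations}. There, along $w(t)=\Psi_u(tv)$ with $w_t(0)=v$ and $w_{tt}(0)=-|v|^2u$, the chart derivatives of $J_{c,\eta}$ are exactly $R_{c,\eta,u}[v]$ and $\mathcal H_{c,\eta,u}[v,v]=Q_{u,c,\eta}(v)$. The only point to add is why this counts as the \emph{intrinsic} second variation. Along any sphere-valued curve with velocity $v$ and acceleration $a$, the second derivative of the action equals $Q_{u,c,\eta}(v)+R_{c,\eta}(u)[a+|v|^2u]$, which is identity \eqref{eq:energy-absorption-H}. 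Since $a+|v|^2u$ is tangent, the second variation is independent of the chosen curve modulo the residual. It equals $Q$ for geodesic-type curves, and equals $Q$ outright at a critical point.

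For the momentum variation \eqref{eq:energy-absorption-V2}, I would repeat the computation from the proof of \cref{prop:thin-belt}. For compactly supported $u-e_1$ and $v$, differentiating the vorticity density gives
\[
\partial_t\,\omega_{u_t}(\partial_1u_t,\partial_2u_t)=\partial_1\omega_u(v,\partial_2u)+\partial_2\omega_u(\partial_1u,v),
\]
because three tangent vectors have zero triple product. We multiply by $x_2$ and integrate. The $\partial_1$ term integrates to zero, and integrating the $\partial_2$ term by parts gives $-\int\omega_u(\partial_1u,v)=\int u\cdot(v\times\partial_1u)$. All integrands are compactly supported, so nothing further needs justification.

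The dilation table \eqref{eq:energy-absorption-V3} is a change of variables $y=(e^{-\alpha}x_1,e^{-\beta}x_2)$. The Jacobian is $e^{\alpha+\beta}$, each $\partial_1$ contributes $e^{-\alpha}$, each $\partial_2$ contributes $e^{-\beta}$, and the weight $x_2$ contributes $e^{\beta}$. These exponents are exactly those in \eqref{eq:thin-belt-30} from \cref{lem:thin-belt-scaling}; for example, $Q_m$ gives $e^{\alpha+\beta}e^{-2\alpha}e^{-2\beta}=e^{-\alpha-\beta}$.

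For the local statement, let $u$ be smooth on an open set and let the variation be supported in a compact subset of it. Then every integral above may be restricted to that compact support. This covers the energy densities and also the variation of the momentum term, which by \eqref{eq:energy-absorption-V2} is $\int u\cdot(v\times\partial_1u)$ and needs no weight. So the identities hold verbatim, with no global momentum required. The only real subtlety, and the step I would write most carefully, is sign bookkeeping in the momentum integration by parts: I would check it against \eqref{eq:thin-belt-16}, where $DP_0(u)[v]=\int\omega_u(v,\partial_1u)$.
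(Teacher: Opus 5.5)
Your proposal is correct and follows essentially the same route as the paper. It writes the vorticity variation as $\partial_1\{u\cdot(v\times\partial_2u)\}+\partial_2\{u\cdot(\partial_1u\times v)\}$ and integrates against $x_2$, computes the second variation along a sphere path with acceleration $-|v|^2u$, obtains the dilation table by a change of variables with Jacobian $e^{\alpha+\beta}$, and gets the local statement because every varied density is supported in the compact support of the variation. The only differences are that the paper redoes these computations instead of citing \cref{lem:thin-belt-variations} and \cref{lem:thin-belt-scaling}, and that your curve-independence remark via \eqref{eq:energy-absorption-H} is an addition the argument does not need.
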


\begin{proof}
First let \(u\) be equal to \(e_1\) off a compact set. If \(u_\tau\) is a smooth sphere-valued variation with compactly supported change and \(v=\partial_\tau u_\tau|_0\), differentiation gives \begin{equation}
\partial_\tau\{u_\tau\cdot(\partial_1 u_\tau\times\partial_2 u_\tau)\}|_0
=\partial_1 \{u\cdot(v\times \partial_2 u)\}
+\partial_2 \{u\cdot(\partial_1 u\times v)\}.
\label{eq:energy-absorption-V1}
\end{equation} To verify this identity, expand the right side. Its mixed second derivatives cancel. Every remaining triple product of \(v,\partial_1 u,\partial_2 u\) vanishes, because these vectors are tangent to the same two-dimensional plane. The terms with derivatives of \(v\) are precisely the derivative of the expression on the left. Integration against \(x_2\), including the compact-support integration by parts, gives \eqref{eq:energy-absorption-V2}. The derivative of \(E_\eta+cP\) is therefore \(R_{c,\eta}\). Along a sphere path of initial velocity \(v\) and acceleration \(-|v|^2u\), differentiate this expression once more. The quadratic-gradient term gives \(|\nabla v|^2-|\nabla u|^2|v|^2\); the quartic term gives \(\eta |\nabla u|^2(|\nabla v|^2-|\nabla u|^2|v|^2)+2\eta(\nabla u:\nabla v)^2\); the height term gives \(v_3^2-u_3^2|v|^2\); and the momentum term gives \(c\,u\cdot(v\times \partial_1 v)\). The other momentum derivative terms vanish since \(v\cdot(v\times \partial_1 u)=0\) and \(u\cdot((-|v|^2u)\times \partial_1 u)=0\). This is exactly \(Q\). For a map only locally defined, the same calculation uses a local action difference supported in a compact subset; hence it also justifies the residual and Hessian calculations used earlier.

For \(u_{\alpha,\beta}(x_1,x_2)=u(e^{-\alpha}x_1,e^{-\beta}x_2)\), the change of variables has Jacobian \(e^{\alpha+\beta}\). This gives the scaling identities \eqref{eq:energy-absorption-V3}. In the momentum identity the factors from \(x_2,\partial_1 u,\partial_2 u\), and volume are \(e^\beta,e^{-\alpha},e^{-\beta},e^{\alpha+\beta}\). This proves its factor \(e^\beta\). Differentiating the action formed from \eqref{eq:energy-absorption-V3} proves the formulas for \(D_1,D_2\).
\end{proof}

We can now insert the dilation identities into the signed estimate and absorb the scalar errors using the weight $W_\eta$.

\begin{proposition}[Energy bounds from the scaling errors]\label{prop:energy-absorption}
Use the constants and parameter ranges of \cref{prop:energy-signed}. For any specified finite value of $P$, use
\eqref{eq:planar-scaling-data} to define $J,D_1,D_2,W_\eta$. Algebra gives
\begin{equation}\label{eq:energy-absorption-S4}
J-D_2=2A_\eta,\qquad
\int D_\eta=\int(t+\eta t^2/2-\eta pt)-2D_1\le2A_\eta-2D_1.
\end{equation}
Under the hypotheses of \cref{prop:energy-signed}, with $H_K=a_K+1+C_K/2$,
\begin{equation}\label{eq:energy-absorption-S5}
E_\eta(u)\le\frac{H_K}{2}(J-D_2)-a_KD_1
 +\frac{C_K}{2}(i_\theta+\varepsilon^2),
\end{equation}
and consequently
\begin{equation}\label{eq:energy-absorption-S6}
E_\eta(u)\le\widehat C_K\bigl(J_++|D_1|+|D_2|+i_\theta+\varepsilon^2\bigr).
\end{equation}

If, in addition, $0\le\varepsilon\le1$ and
\begin{equation}\label{eq:energy-absorption-S7}
|J|\le M,\qquad i_\theta\le I,\qquad
|D_1|+|D_2|\le\varepsilon W_\eta,
\end{equation}
then
\begin{equation}\label{eq:energy-absorption-S8}
E_\eta(u)\le C_K'(1+M+I).
\end{equation}
In particular, no common energy bound is assumed in
\eqref{eq:energy-absorption-S7}.
\end{proposition}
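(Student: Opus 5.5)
The plan is to verify the algebraic identities \eqref{eq:energy-absorption-S4} from the definitions \eqref{eq:planar-scaling-data} and then feed them into \cref{prop:energy-signed}.

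\textbf{The identities.} For the first identity, \eqref{eq:transverse-action} gives $J-D_2=T_\eta$. Moreover $T_\eta=\int(1+\eta|\nabla u|^2)t=2A_\eta(u)$ with $A_\eta=A_\eta(u;\R^2)$.

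For the second identity, write $|\nabla u|^2=p+t$, so that $|\nabla u|^4=p^2+2pt+t^2$ and hence $Q_1=\frac14\int p^2$, $Q_m=\frac12\int pt$, $Q_2=\frac14\int t^2$. Then
\[
-2D_1=\int\bigl(p-t-u_3^2\bigr)+\eta\int\Bigl(\tfrac32p^2+pt-\tfrac12t^2\Bigr).
\]
Comparing with $\int D_\eta=\int(p-u_3^2+\tfrac32\eta p^2)$ yields $\int D_\eta=-2D_1+\int(t+\eta t^2/2-\eta pt)$. The upper bound follows because
\[
t+\tfrac{\eta}{2}t^2-\eta pt\le t+\eta t^2+\eta pt=(1+\eta|\nabla u|^2)t
\]
pointwise, since $-\eta pt\le\eta pt$ and $\tfrac12t^2\le t^2$.

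\textbf{Proof of \eqref{eq:energy-absorption-S5}.} From \eqref{eq:energy-absorption-B} on $\Omega=\R^2$, dropping the nonpositive quartic term,
\[
E_\eta\le\tfrac12\int F_\eta+A_\eta.
\]
Insert \eqref{eq:energy-absorption-S2}, then bound $\int D_\eta\le2A_\eta-2D_1$ using $a_K>0$. This gives
\[
E_\eta\le(a_K+1+C_K/2)A_\eta-a_KD_1+\tfrac{C_K}{2}(i_\theta+\varepsilon^2).
\]
Replacing $A_\eta$ by $(J-D_2)/2$ gives \eqref{eq:energy-absorption-S5} with $H_K$ as stated.

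Estimate \eqref{eq:energy-absorption-S6} is then immediate. Bound $J-D_2\le J_++|D_2|$ and $-D_1\le|D_1|$, and take $\widehat C_K=\max(H_K/2,a_K,C_K/2)$.

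\textbf{The absorption step.} The point to handle carefully is absorbing the scaling errors, since $W_\eta$ itself grows with $E_\eta$ and $|P|$, and no a priori energy bound is available.

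First use $\sqrt{1+P^2}\le1+|P|$ and $|P|=|J-E_\eta|/c\le(M+E_\eta)/c_-$, where $c_-=\min K>0$. This gives $W_\eta^2\le C(1+M+E_\eta)$, with $C$ depending on $K$, hence
\[
W_\eta\le C\bigl(1+\sqrt M+\sqrt{E_\eta}\bigr).
\]
Plugging this into \eqref{eq:energy-absorption-S6}, with $\varepsilon\le1$ and $i_\theta\le I$, yields
\[
E_\eta\le\widehat C_K\bigl(M+C(1+\sqrt M+\sqrt{E_\eta})+I+1\bigr).
\]
Finally, Young's inequality $\widehat C_KC\sqrt{E_\eta}\le\tfrac12E_\eta+C'$ absorbs the square-root term into the left side. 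This is legitimate because $E_\eta<\infty$ is assumed, so the term can be subtracted. The resulting bound is $E_\eta\le C_K'(1+M+I)$.
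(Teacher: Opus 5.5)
Your proposal is correct and follows the paper's proof step for step. You use the same subtraction identities, the same insertion of \eqref{eq:energy-absorption-S2} into $E_\eta\le\frac12\int F_\eta+A_\eta$, and the same absorption via $W_\eta^2\le C(1+M+E_\eta)$ and Young's inequality; the one point left implicit is that $E_\eta<\infty$ makes every integral in the rearrangement finite (e.g.\ $\eta\int pt\le\frac\eta4\int(p+t)^2$), which the paper records explicitly.
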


\begin{proof}
For general finite values of the integrals and of \(P\), direct subtraction gives \[
J_{c,\eta}-D_2=2A+2\eta Q_m+4\eta Q_2
=\int t+\eta\int(pt+t^2)=2A_\eta.
\] Also \[
-2D_1=\int(p-t-u_3^2)+\frac\eta2\int(3p^2+2pt-t^2),
\] which rearranges to the equality in \eqref{eq:energy-absorption-S4}. Since \(p,t\ge0\), \[
2A_\eta-\int(t+\eta t^2/2-\eta pt)
=2\eta\int pt+\frac\eta2\int t^2\ge0.
\] Every integral is finite by \(E_\eta<\infty\), including \(\eta\int pt\le(\eta/4)\int(p+t)^2\). This proves \eqref{eq:energy-absorption-S4}.

By \eqref{eq:energy-absorption-B}, \(E_\eta\le\frac12\int F_\eta+A_\eta\). Insert \eqref{eq:energy-absorption-S2}, then \eqref{eq:energy-absorption-S4}, obtaining \[
E_\eta\le(a_K+1+C_K/2)A_\eta-a_KD_1
+\frac{C_K}{2}(i_\theta+\varepsilon^2).
\] Replace \(A_\eta\) by \((J_{c,\eta}-D_2)/2\) to prove \eqref{eq:energy-absorption-S5}. Replacing \(J\) by \(J_+\), and the signed scalar errors by their absolute values, proves \eqref{eq:energy-absorption-S6}.

For \eqref{eq:energy-absorption-S8}, put \(c_-=\min K>0\). From \(cP=J-E_\eta\) and \(|J|\le M\), \[
W_\eta^2=1+E_\eta+\sqrt{1+P^2}
\le2+E_\eta+\frac{M+E_\eta}{c_-}
\le C_1(K)(1+M+E_\eta).
\] Equations \eqref{eq:energy-absorption-S6} and \eqref{eq:energy-absorption-S7} give \[
E_\eta\le C_2(K)\{M+I+\varepsilon^2+
\varepsilon\sqrt{1+M+E_\eta}\}.
\] Use \(\sqrt{1+M+E_\eta}\le\sqrt{1+M}+\sqrt{E_\eta}\), \(C_2\varepsilon\sqrt{E_\eta}\le E_\eta/2+C_2^2\varepsilon^2/2\), \(\varepsilon\le1\), and \(\sqrt{1+M}\le1+M\), then move \(E_\eta/2\) to the left. This gives \eqref{eq:energy-absorption-S8}. In particular the weighted scalar residuals may be absorbed before removing \(\eta\).
\end{proof}

\subsection{Whole-plane stress identities}

For exact critical maps, compactly supported tests of the stress tensor give the whole-plane dilation identities. They also justify a spatial cutoff limit for the momentum, allowing the preceding energy estimate to be applied with zero residual and scaling errors.

\begin{proposition}[Cutoff momentum and stress identities]\label{prop:energy-stress}
Let $c\in K$ and $\eta>0$. For a smooth exact critical map of finite $E_\eta$, the cutoff momentum
\begin{equation}\label{eq:energy-absorption-S9}
P(u)=\lim_{R\to\infty}\int x_2\chi(x/R)q(u)
\end{equation}
exists and is independent of the smooth compact cutoff $\chi$ equal to
one near zero. It gives $D_1=D_2=0$ and $J=2A_\eta=T_\eta$, hence
\begin{equation}\label{eq:energy-absorption-S10}
E_\eta(u)\le C_K''\bigl(J_{c,\eta}(u)+i_\theta(u;c,\eta)\bigr).
\end{equation}
The cutoff identities hold for every fixed $\eta>0$; the energy
estimate holds for $0<\eta\le\eta_K$ and $0<\theta\le\theta_K$, with the constants from \cref{prop:energy-signed}. Absolute integrability of $x_2 q(u)$ is not required.
\end{proposition}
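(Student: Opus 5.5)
The plan is to derive everything from the stress (energy--momentum) tensor of the regularised equation, tested against compactly supported vector fields. For a smooth exact critical map $u$ of finite $E_\eta$, set $q=q(u)$. The Euler--Lagrange equation is $R_{c,\eta,u}[v]=0$ for compact tangent $v$. It is local: the momentum variation \eqref{eq:energy-absorption-V2} is a local density, by \cref{lem:energy-variations}. Hence we may test it with $v=X\cdot\nabla u$ for any $X\in C_c^\infty(\R^2;\R^2)$. This field is tangent because $u\cdot\partial_ju=0$.

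Integrating by parts should give a divergence identity
\[
\int T_{ij}(u)\,\partial_iX_j \;=\; c\int \mathcal P_j(u)\,\cdot(\text{terms in }X),
\]
where the quadratic stress $T_{ij}$ collects $(1+\eta|\nabla u|^2)\partial_iu\cdot\partial_ju$, $\delta_{ij}e_\eta$ and the height term. The momentum contribution $c\int u\cdot((X\cdot\nabla u)\times\partial_1u)$ equals $c\int X_2\,q$ up to sign, since only the $X_2\partial_2u$ part survives.

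\textbf{Step 1: cutoff momentum.} Take $X=(0,x_2\chi(x/R))$. The momentum side of the identity is exactly $c\int x_2\chi(x/R)q$. The stress side is $\int T_{i2}\partial_i(x_2\chi_R)$. This splits into $\int T_{22}\chi_R$, which converges to $\int T_{22}$ because $T\in L^1$ by finite $E_\eta$, plus $\int T_{i2}\,x_2R^{-1}(\partial_i\chi)(x/R)$. The second term is bounded by $C\int_{R\le|x|\le CR}|T|$ and tends to zero. Therefore the limit in \eqref{eq:energy-absorption-S9} exists and equals $c^{-1}\int T_{22}$, independently of $\chi$; no absolute integrability of $x_2q$ is used.

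\textbf{Step 2: dilation identities.} Test with $X=(x_1\chi_R,0)$, which gives the $x_1$-dilation. The momentum term is $c\int x_1\chi_R\,\partial_1(\ldots)$, which one must check has a vanishing contribution. The stress side converges to $\int T_{11}$ in the same way, and $T_{11}$ reproduces $-D_1$ from the table \eqref{eq:energy-absorption-V3}. Step 1 similarly expresses $\int T_{22}$ as $D_2$ with the cutoff $P$. Together these give $D_1=D_2=0$. Then \eqref{eq:energy-absorption-S4} gives $J=D_2+2A_\eta=2A_\eta$. Comparing definitions, $2A_\eta=\int(1+\eta|\nabla u|^2)t=T_\eta$.

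This cutoff $P$ must also be the $P$ of \cref{prop:thin-belt} when $u$ lies in $\mathcal M_\eta$. One route is to apply the same cutoff limit to the absolutely convergent formula \eqref{eq:thin-belt-1}. Integrating by parts its $d\mathfrak a$ part against $x_2\chi_R$ leaves an error $\int\mathfrak a_u(\partial_iu)x_2R^{-1}\partial\chi$, which is $o(1)$ because $|\mathfrak a|\lesssim|u_3||\nabla u|\in L^1$. For the statement, however, it suffices to use the cutoff value as the specified finite $P$ in \cref{prop:energy-absorption}.

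\textbf{Step 3: energy bound.} Apply \cref{prop:energy-absorption} with $\varepsilon=0$, since $R\equiv0$, and with $D_1=D_2=0$. Inequality \eqref{eq:energy-absorption-S6} gives $E_\eta\le\widehat C_K(J_++i_\theta)$. Moreover $J=T_\eta\ge0$, so $J_+=J$, which is \eqref{eq:energy-absorption-S10}. The hypothesis \eqref{eq:energy-absorption-S1} of \cref{prop:energy-signed} holds trivially. That proposition only requires smoothness and finite $E_\eta$, not compact support, and the dilation algebra in \eqref{eq:energy-absorption-S4} holds for any finite value of $P$.

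\textbf{Main obstacle.} The hardest part is Step 2: carrying out the algebra of the stress tensor with the quartic and gyroscopic terms. In particular, the momentum term in the $x_1$-test, $c\int x_1\chi_R\,u\cdot(\partial_1u\times\partial_1u)$, vanishes pointwise, but the cross terms generated by integrating by parts must be verified to match $D_1$ and $D_2$ exactly. The first thing to try is to do the computation for compactly supported dilation fields, using the identity already established for compact perturbations via \eqref{eq:thin-belt-10}, and then localise.
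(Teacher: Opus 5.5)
Your proposal is correct and follows essentially the paper's argument: the same test fields $(x_1\chi_R,0)$ and $(0,x_2\chi_R)$, the same tail estimate using $|T_{ij}|\le Ce_\eta\in L^1$, the identifications $\int T_{11}=-D_1$ and $\int T_{22}=cP$, and then \eqref{eq:energy-absorption-S4} and \eqref{eq:energy-absorption-S6} with zero residual and zero scalar errors. The difference is cosmetic: you obtain $\int T_{ij}\partial_iX_j=c\int X_2q$ by inserting $X\cdot\nabla u$ into the weak equation, while the paper first derives the strong equation and the pointwise identity $\partial_iT^\eta_{ij}=-c\delta_{j2}q$. Your ``main obstacle'' does not arise, because that identity is exact and produces no further cross terms.
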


\begin{proof}
For a smooth critical map define the \(2\)-by-\(2\) tensor \begin{equation}
T^\eta_{ij}=(1+\eta |\nabla u|^2)\,\partial_i u\cdot \partial_j u-e_\eta\delta_{ij},
\qquad i,j\in\{1,2\}.
\label{eq:energy-absorption-W1}
\end{equation} Here \(\delta_{ij}\) is the Kronecker symbol. Integration by parts in the residual says that \[
-\operatorname{div}((1+\eta |\nabla u|^2)\nabla u)+u_3\,e_3-c\,u\times \partial_1 u
\] is normal to the sphere. Taking its scalar product with \(u\), using \(u\cdot \partial_i u=0\) and \(u\cdot \partial_i^2u=-|\partial_i u|^2\), identifies its normal coefficient as \((1+\eta |\nabla u|^2)|\nabla u|^2+u_3^2\). Thus \begin{equation}
\operatorname{div}((1+\eta |\nabla u|^2)\nabla u)
+((1+\eta |\nabla u|^2)|\nabla u|^2+u_3^2)u-u_3\,e_3+c\,u\times \partial_1 u=0.
\label{eq:energy-absorption-W2}
\end{equation} Differentiating \eqref{eq:energy-absorption-W1} cancels the terms \((1+\eta |\nabla u|^2)\sum_i \partial_i u\cdot \partial_i\partial_j u\) against the gradient part of \(\partial_j e_\eta\). Therefore \begin{equation}
\partial_i T^\eta_{ij}
=(\operatorname{div}((1+\eta |\nabla u|^2)\nabla u)-u_3\,e_3)\cdot \partial_j u
=-c\,\delta_{j2}q.
\label{eq:energy-absorption-W3}
\end{equation} The sum over \(i\in\{1,2\}\) is understood. The last equality uses \eqref{eq:energy-absorption-W2} and \((u\times \partial_1 u)\cdot \partial_2 u=q\).

Each component of \(T^\eta\) is bounded in absolute value by a fixed constant times \(e_\eta\), because \(|\partial_i u\cdot \partial_j u|\le |\nabla u|^2\). In particular it belongs to \(L^1\) for finite whole-plane energy. With the cutoff in \eqref{eq:energy-absorption-S9}, test \eqref{eq:energy-absorption-W3} using the compact vector field \(X=(x_1\chi_R,0)\). It gives \begin{equation}
\int T^\eta_{11}\chi_R+\int x_1\,T^\eta_{i1}\partial_i\chi_R=0.
\label{eq:energy-absorption-W4}
\end{equation} The second integral tends to zero. Indeed \(|x_1\nabla\chi_R|\) is bounded independently of \(R\), and its support lies in \(\{r_0R\le|x|\le r_1R\}\) for some \(0<r_0<r_1<\infty\) fixed by \(\chi\). Its absolute value is at most a constant times the \(E_\eta\) energy in this annulus. Dominated convergence in the first integral yields \(\int T^\eta_{11}=0\). Direct expansion gives \[
T^\eta_{11}=\frac{p-t-u_3^2}{2}
+\frac\eta4(3p^2+2pt-t^2),
\] so this identity is \(D_1=0\).

Next take \(X=(0,x_2\chi_R)\) in \eqref{eq:energy-absorption-W3}. Since integration by parts yields \(\int T^\eta_{ij}\partial_iX_j=c\int qX_2\), one obtains \begin{equation}
c\int x_2\chi_Rq=\int T^\eta_{22}\chi_R+
\int x_2\,T^\eta_{i2}\partial_i\chi_R.
\label{eq:energy-absorption-W5}
\end{equation} The same tail estimate eliminates the second term. Consequently the limit in \eqref{eq:energy-absorption-S9} exists and is \(c^{-1}\int T^\eta_{22}\), independently of \(\chi\). Since \[
T^\eta_{22}=\frac{t-p-u_3^2}{2}
+\frac\eta4(-p^2+2pt+3t^2),
\] one has \(D_2=-\int T^\eta_{22}+cP=0\). Equations \eqref{eq:energy-absorption-S4} now yield \(J=2A_\eta\). Apply \eqref{eq:energy-absorption-S6} with zero residual and zero scalar errors to obtain \eqref{eq:energy-absorption-S10}.

For completeness this cutoff prescription is compatible with the dilation and compact variation formulas already used. In the integral for \(P(u_{\alpha,\beta})\), change variables to the original coordinates. The new cutoff is \(\widetilde\chi(x_1,x_2)=\chi(e^\alpha x_1,e^\beta x_2)\), which is still smooth, compact, and one near zero. Independence of the original limit from the cutoff gives \(P(u_{\alpha,\beta})=e^\beta P(u)\). The six other formulas in \eqref{eq:energy-absorption-V3} use absolutely integrable densities and therefore hold without cutoffs. This proves the claimed dilation derivatives even though the scaled maps need not themselves be critical. If \(u_\tau-u\) has one common compact support, then its momentum differs by the ordinary compact integral \(\int x_2(q(u_\tau)-q(u))\); this proves \eqref{eq:energy-absorption-V1}-\eqref{eq:energy-absorption-V2} for these maps as well. Thus the stress identities follow from compactly supported tests
and integrability alone, without a prescribed decay rate for derivatives.
\end{proof}

\section{Recovery of an exact regularised wave}\label{app:fixed-eta-recovery}

We now keep $\eta>0$ fixed and pass from the selected sequence to
an exact critical point. The quartic energy gives a uniform modulus
of continuity, while monotonicity of the gradient operator gives
strong local convergence. Translating a point of positive height
preserves nonconstancy in the limit.

Fix $\eta>0$, $M<\infty$, and $h_*>0$, and use the
notation of \cref{sec:planar-framework}. Suppose \(u_n\in C^\infty(\mathbb R^2;S^2)\), \(c_n\in[0,1]\), \(c_n\to c\in(0,1)\), and \(\varepsilon_n\to0\), \(\varepsilon_n\ge0\), satisfy \begin{equation}
E_\eta(u_n)\le M,\qquad
|R_{c_n,\eta,u_n}[w]|\le\varepsilon_n B_{\eta,u_n}(w)
\quad\hbox{for every smooth compact tangent }w.
\label{eq:fixed-eta-recovery-1}
\end{equation} Suppose points \(b_n\in\mathbb R^2\) satisfy \begin{equation}
|(u_n)_3(b_n)|\ge h_* .
\label{eq:fixed-eta-recovery-2}
\end{equation} 
\begin{proposition}[Recovery at fixed regularisation]\label{prop:fixed-eta-recovery}
A subsequence of \(v_n(x)=u_n(x+b_n)\) converges to a continuous sphere map \(u\) with \begin{equation}
v_n\to u\quad\hbox{locally uniformly and strongly in }W^{1,4}_{\rm loc},
\qquad
v_n\to u\quad\hbox{strongly in }W^{1,2}_{\rm loc}.
\label{eq:fixed-eta-recovery-3}
\end{equation} The limit has \(E_\eta(u)\le M\), \(|u_3(0)|\ge h_*\), is nonconstant, and solves \begin{equation}
-\int(1+\eta|\nabla u|^2)\nabla u:\nabla\Phi
+\int\bigl((1+\eta|\nabla u|^2)|\nabla u|^2+u_3^2\bigr)u\cdot\Phi
-\int u_3e_3\cdot\Phi
+c\int(u\times \partial_1 u)\cdot\Phi=0
\label{eq:fixed-eta-recovery-4}
\end{equation} for every \(\Phi\in C_c^\infty(\mathbb R^2;\mathbb R^3)\). There is a numerical constant \(C\) such that every \(v_n\) and \(u\) has \begin{equation}
|v(x)-v(y)|\le C(4M/\eta)^{1/4}|x-y|^{1/2}.
\label{eq:fixed-eta-recovery-5}
\end{equation} For every subsequence used in \eqref{eq:fixed-eta-recovery-3}, \begin{equation}
T_\eta(u)\le\liminf_nT_\eta(u_n).
\label{eq:fixed-eta-recovery-6}
\end{equation}
\end{proposition}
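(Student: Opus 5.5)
The plan is to get compactness from the quartic term, strong convergence from monotonicity, and then pass to the limit in the equation. First, \eqref{eq:fixed-eta-recovery-5}: since $\frac\eta4\int|\nabla u_n|^4\le M$, we have $\|\nabla v_n\|_4\le(4M/\eta)^{1/4}$, and Morrey's inequality in the plane gives the stated $C^{1/2}$ modulus with a numerical constant. By Arzel\`a--Ascoli and a diagonal argument, a subsequence converges locally uniformly to a continuous sphere map $u$. It also converges weakly in $W^{1,4}_{\rm loc}$ and in $W^{1,2}_{\rm loc}$, and $(v_n)_3\to u_3$ weakly in $L^2$. Uniform convergence gives $|u_3(0)|\ge h_*$. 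A finite-energy continuous constant map must have $u_3\equiv0$ (otherwise $\int u_3^2=\infty$), so $u$ is nonconstant. Weak lower semicontinuity of the convex integrands gives $E_\eta(u)\le M$ and \eqref{eq:fixed-eta-recovery-6}, since $(1+\eta|\nabla u|^2)|\partial_2u|^2$ is convex in $\nabla u$. The strong convergence in \eqref{eq:fixed-eta-recovery-3} then upgrades these to limits.

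The main step is strong local convergence of gradients. Fix a cutoff $\psi\in C_c^\infty$ and test the residual with the tangent field $w_n=\Pi_{v_n}(\psi(v_n-u))$. This is smooth after mollifying $u$; I would replace $u$ by a smooth $u^\delta$ close in $W^{1,4}$ on $\operatorname{supp}\psi$ and let $\delta\to0$ at the end. We have $B_{\eta,v_n}(w_n)\le C$, because $E_\eta(v_n)\le M$ and $w_n$ is uniformly small and bounded in $W^{1,4}$. Hence $R[w_n]\to0$. Since $v_n\cdot(v_n-u)=\frac12|v_n-u|^2$, the projection correction is $\frac12\psi|v_n-u|^2v_n$. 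Its gradient pairing with $(1+\eta|\nabla v_n|^2)\nabla v_n$ is bounded by $\|v_n-u\|_\infty$ times bounded $L^1$ quantities, so it tends to zero. The height and momentum terms likewise carry a factor $\|v_n-u\|_\infty$. This leaves
\[
\int\psi\,(1+\eta|\nabla v_n|^2)\nabla v_n:\nabla(v_n-u)\to0 .
\]
Subtract the same expression with $v_n$ replaced by $u$ in the flux; that term tends to zero by weak convergence. Strong monotonicity of $\xi\mapsto(1+\eta|\xi|^2)\xi$, namely $(a(\xi)-a(\zeta)):(\xi-\zeta)\ge|\xi-\zeta|^2+c\eta|\xi-\zeta|^4$, then gives $\nabla v_n\to\nabla u$ in $L^2\cap L^4$ on $\{\psi=1\}$. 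I expect this step to be the main obstacle: one has to keep every error term carrying either a uniform-smallness factor or a weakly null pairing. Mollifying $u$ is where care is needed.

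Finally, to pass to the equation, test the residual with $\Phi\times v_n$ for $\Phi\in C_c^\infty$, as in \cref{lem:energy-rotational}; this test has bounded $B$-norm, so the residual tends to zero. With strong $W^{1,4}_{\rm loc}$ convergence, every term passes to the limit, including the cubic flux $|\nabla v_n|^2\nabla v_n\to|\nabla u|^2\nabla u$ in $L^{4/3}_{\rm loc}$. This gives $\operatorname{div}\bigl((1+\eta|\nabla u|^2)u\times\nabla u\bigr)=u_3\,u\times e_3+c\,\partial_1u$ in distributions. To recover the full equation \eqref{eq:fixed-eta-recovery-4}, I would test the residual instead with $\Pi_{v_n}\Phi$. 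Its normal part is handled by computing the coefficient $(1+\eta|\nabla u|^2)|\nabla u|^2+u_3^2$ from $u\cdot\Delta$-type identities. Equivalently, pass to the limit directly in the tangential identity, which converges term by term by strong convergence, and then add the normal component, which is algebraically determined by $|u|=1$ and $u\cdot\partial_iu=0$.
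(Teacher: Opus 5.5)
Your proposal follows the paper's proof essentially step for step: Morrey plus Arzel\`a--Ascoli for compactness, the test $\Pi_{v_n}(\psi(v_n-u))$ together with monotonicity of $(1+\eta|\xi|^2)\xi$ for strong local convergence, and then testing with $\Pi_{v_n}\Phi$; the one procedural difference is that you legitimise the nonsmooth test by mollifying $u$, whereas the paper extends the residual bound to compactly supported $W^{1,4}$ tangent fields by density, and both work. One remark is false, though harmless: $(1+\eta|\nabla u|^2)|\partial_2u|^2$ is not convex in $\nabla u$ (the cross term $\eta|\partial_1u|^2|\partial_2u|^2$ destroys convexity even for scalar-valued gradients), so \eqref{eq:fixed-eta-recovery-6} should come, as in the paper, from strong local $W^{1,4}$ convergence: it gives $T_\eta(v_n;B_R)\to T_\eta(u;B_R)$, and nonnegativity of the density with $R\to\infty$ then yields the inequality.
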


Throughout this section set $v_n(x)=u_n(x+b_n)$; all subsequences below are taken from these translated maps.
\subsection{A uniform modulus and a weak subsequence}\label{appsrc-fixed-eta-recovery-a-uniform-modulus-and-a-weak-subsequence}

At fixed $\eta$, the quartic term gives a common modulus of continuity. Translating a point of positive height to the origin therefore preserves that height under local uniform convergence.

\begin{lemma}[Local uniform and weak convergence]\label{lem:recovery-weak}
A subsequence of $v_n$ converges locally uniformly and weakly in $W^{1,2}_{\rm loc}\cap W^{1,4}_{\rm loc}$ to a continuous sphere map $u$ with $E_\eta(u)\le M$ and $|u_3(0)|\ge h_*$. The common modulus \eqref{eq:fixed-eta-recovery-5} holds. Moreover, for any map $v$ with $E_\eta(v)\le M$ and any tangent field $w$ of finite $N_\eta$ norm,
\begin{equation}
B_{\eta,v}(w)
\le \left(1+(\sqrt2+2)\sqrt M+(4M)^{1/4}\right)N_\eta(w).
\label{eq:fixed-eta-recovery-13}
\end{equation}
\end{lemma}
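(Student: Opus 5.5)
The plan is to take the Hölder modulus from the quartic term via Morrey's inequality in two dimensions, then obtain compactness by Arzelà--Ascoli together with weak compactness in $W^{1,2}_{\rm loc}\cap W^{1,4}_{\rm loc}$, and finally check the elementary weighted-norm bound directly.

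\emph{Step 1: the modulus.} The energy bound $E_\eta(v_n)\le M$ gives $\frac\eta4\int|\nabla v_n|^4\le M$, so $\|\nabla v_n\|_4\le(4M/\eta)^{1/4}$. In dimension two, Morrey's inequality for $W^{1,p}$ with $p=4>2$ gives, for a smooth map on the plane and a numerical constant $C$,
\[
|v(x)-v(y)|\le C\|\nabla v\|_{L^4(B)}|x-y|^{1-2/4},
\]
where $B$ is a ball containing $x,y$ of radius comparable to $|x-y|$. This is exactly \eqref{eq:fixed-eta-recovery-5} for every $v_n$. Translation does not change either the energy or this bound.

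\emph{Step 2: the subsequence.} The family $v_n$ is uniformly bounded, since it is sphere-valued, and equicontinuous by Step 1. Arzelà--Ascoli and a diagonal argument over balls $B_k$ give a subsequence converging locally uniformly to a continuous $u$ with $|u|=1$. The bounds $\|\nabla v_n\|_2^2\le 2M$ and $\|\nabla v_n\|_4\le(4M/\eta)^{1/4}$ allow a further diagonal subsequence converging weakly in $W^{1,2}(B_k)\cap W^{1,4}(B_k)$; the weak limits are identified with $\nabla u$ by testing against $C_c^\infty$ functions. The height converges uniformly on compacts, so $u_3(0)=\lim(v_n)_3(0)=\lim(u_n)_3(b_n)$ satisfies $|u_3(0)|\ge h_*$. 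The modulus \eqref{eq:fixed-eta-recovery-5} passes to $u$ by pointwise limits. For $E_\eta(u)\le M$, I would use weak lower semicontinuity on each ball $B_k$ of the convex integrand $\frac12|\nabla v|^2+\frac\eta4|\nabla v|^4$, which is convex in $\nabla v$. The term $\frac12\int u_3^2$ converges on $B_k$ by uniform convergence. Letting $k\to\infty$ then gives $E_\eta(u;B_k)\le\liminf E_\eta(v_n)\le M$, and hence $E_\eta(u)\le M$.

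\emph{Step 3: the weighted norm.} For \eqref{eq:fixed-eta-recovery-13} I would repeat the argument in the proof of \cref{lem:energy-norms} with $\Omega=\R^2$. The first extra term is bounded by $\|\sqrt{|\nabla v|^2+v_3^2}\|_2\|w\|_\infty\le\sqrt{2M}\|w\|_\infty$. The second is bounded by $\sqrt\eta\||\nabla v|^2\|_2\|w\|_\infty\le 2\sqrt M\|w\|_\infty$. The third follows from Hölder's inequality, $\sqrt\eta\||\nabla v|\nabla w\|_2\le(\eta\int|\nabla v|^4)^{1/4}\eta^{1/4}\|\nabla w\|_4\le(4M)^{1/4}\eta^{1/4}\|\nabla w\|_4$. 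Each term of $N_\eta(w)$ dominates the corresponding factor, which gives the stated constant.

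No step is a serious obstacle. The only care needed is the diagonal extraction, and making sure that lower semicontinuity is applied to the whole energy, including the quartic term, rather than piece by piece in a way that breaks the convexity argument.
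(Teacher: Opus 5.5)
Your proposal is correct and follows the paper's proof essentially step by step. It uses the same ingredients: Morrey's $L^4$ estimate for the common modulus, equicontinuity plus reflexive weak compactness with diagonal extraction, lower semicontinuity on disks followed by exhaustion, and the same three H\"older bounds for the weighted norm. Your closing worry is unnecessary: lower semicontinuity may be applied to the quadratic and quartic gradient integrals separately, as the paper does, because the lower limit of a sum dominates the sum of the lower limits.
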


\begin{proof}
For a smooth scalar function \(f\) on the plane with \(\nabla f\in L^4\), the homogeneous Morrey estimate is \begin{equation}
|f(x)-f(y)|\le C\|\nabla f\|_4|x-y|^{1/2}.
\label{eq:fixed-eta-recovery-11}
\end{equation} Its componentwise application gives the vector estimate.

Translation leaves every hypothesis invariant. We therefore work with the translated maps \(v_n\) from the outset. The bound \(E_\eta(v_n)\le M\) gives \begin{equation}
\|\nabla v_n\|_2\le\sqrt{2M},\quad
\|\nabla v_n\|_4\le(4M/\eta)^{1/4},\quad
\|(v_n)_3\|_2\le\sqrt{2M}.
\label{eq:fixed-eta-recovery-12}
\end{equation} Together with \(|v_n|=1\), \eqref{eq:fixed-eta-recovery-11} gives equiboundedness and a common modulus of continuity on every disk. A finite-net argument and diagonal subsequence give local uniform convergence to a continuous \(u\). Reflexive weak compactness for the derivatives in \(L^2\) and \(L^4\), or locally on increasing disks, identifies their weak limits with \(\nabla u\) by integration against smooth test functions. Thus \(u\in W^{1,4}_{\rm loc}\), \(|u|=1\), and \(|u_3(0)|\ge h_*\). The weak lower semicontinuity of the convex derivative integrals and of the \(L^2\) height integral, first on each disk and then as the disks increase, yields \(E_\eta(u)\le M\). Passing to the uniform limit in \eqref{eq:fixed-eta-recovery-11} gives \eqref{eq:fixed-eta-recovery-5}. These arguments only use the fixed \(\eta\) bound in \eqref{eq:fixed-eta-recovery-12}.

To prove \eqref{eq:fixed-eta-recovery-13}, let $v$ have energy at most $M$. The first extra term in $B_{\eta,v}(w)$ is at most \(\sqrt{2M}\|w\|_\infty\). Its second is at most \(2\sqrt M\|w\|_\infty\). For its third, Hölder gives \[
\sqrt\eta\||\nabla v|\,\nabla w\|_2
\le(\eta\int |\nabla v|^4)^{1/4}\eta^{1/4}\|\nabla w\|_4
\le(4M)^{1/4}\eta^{1/4}\|\nabla w\|_4.
\] This proves \eqref{eq:fixed-eta-recovery-13}.
\end{proof}

\subsection{Extension to Sobolev tangent fields}\label{appsrc-fixed-eta-recovery-legality-of-the-tangent-tests}

To test the equation against differences involving the weak limit, we first extend the residual bound to compactly supported Sobolev tangent fields.

\begin{lemma}[Sobolev tangent tests]\label{lem:recovery-tests}
For each smooth $v_n$, the residual bound in \eqref{eq:fixed-eta-recovery-1} extends to all compactly supported $W^{1,4}$ tangent fields.
\end{lemma}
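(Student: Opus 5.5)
The plan is a density argument. Both sides of the inequality $|R_{c_n,\eta,v_n}[w]|\le\varepsilon_n B_{\eta,v_n}(w)$ should be shown continuous under a convergence in which smooth compact tangent fields are dense among compactly supported $W^{1,4}$ tangent fields. Fix $n$ and write $v=v_n$. Then $v$ is smooth and has finite $E_\eta$, so $\nabla v\in L^2\cap L^4$ and $v_3\in L^2$. Let $w\in W^{1,4}$ be a tangent field with compact support in a ball $B$. In the plane, Morrey's embedding gives $w\in C^{0,1/2}$, so $w$ is bounded and continuous. Since $w$ has compact support, it also lies in $W^{1,2}$ and $w_3\in L^2$. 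In particular $N_\eta(w)<\infty$.

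The approximation step is the following. Mollify $w$ to obtain $\tilde w_k\in C_c^\infty(B')$, with $B'$ a slightly larger ball. Then $\tilde w_k\to w$ in $W^{1,4}$, hence in $W^{1,2}$ on $B'$, and also uniformly by Morrey. Next project onto the tangent plane, setting $w_k=\Pi_v\tilde w_k=\tilde w_k-(v\cdot\tilde w_k)v$. These fields are smooth, compactly supported and tangent. Because $v$ is smooth and $w=\Pi_v w$, the pointwise bound $|\nabla\Pi_v f|\le C(|\nabla f|+|f||\nabla v|)$ from \cref{lem:thin-belt-charts} gives $w_k\to w$ in $W^{1,4}\cap W^{1,2}$ and uniformly. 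All supports lie in the fixed ball $B'$, where $\nabla v$ is bounded.

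The last step passes to the limit on both sides. Convergence in $N_\eta$ follows from the previous step, since each component is dominated on $B'$ by the $W^{1,4}$ norm and the sup norm. The three extra terms of $B_{\eta,v}$ carry weights $\sqrt{|\nabla v|^2+v_3^2}$, $|\nabla v|^2$ and $|\nabla v|$, all bounded on $B'$. They are therefore continuous under uniform convergence combined with $L^2$ convergence of gradients, and so $B_{\eta,v}(w_k)\to B_{\eta,v}(w)$. The residual $R_{c,\eta,v}[w]$ is the integral of $(1+\eta|\nabla v|^2)\nabla v:\nabla w+v_3w_3+c\,v\cdot(w\times\partial_1v)$. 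Every term is linear in $w$ or $\nabla w$ with locally bounded coefficients, so $R_{c,\eta,v}[w_k]\to R_{c,\eta,v}[w]$. Taking the limit in the inequality for $w_k$ gives the inequality for $w$.

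I do not expect any step to be a real obstacle. The only point needing care is that the approximants are exactly tangent, which is why the projection $\Pi_v$ is applied after mollification. Smoothness of $v$ keeps the projection bounded on the fixed support, and tangency of $w$ ensures the projected approximants still converge to $w$ itself.
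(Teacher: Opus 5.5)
Your proposal is correct and follows the paper's proof: mollify, project by $\Pi_{v_n}$ to restore exact tangency, and pass to the limit in both $B_{\eta,v_n}$ and the residual. The only difference is in justifying the limit: you use local boundedness of $\nabla v_n$ on the fixed support, which smoothness of $v_n$ provides, whereas the paper uses the global bound \eqref{eq:fixed-eta-recovery-13} and the $L^{4/3}$ integrability of the quartic flux; the latter would work even for a non-smooth finite-energy base map.
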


\begin{proof}
For a fixed smooth base map \(v_n\), any compactly supported \(W^{1,4}\) tangent field can be approximated by smooth compact tangent fields in \(N_\eta\) and in \(B_{\eta,v_n}\). To see this, extend the compact field by zero and mollify it, then multiply by a smooth cutoff equal to one on its original support and finally project by \(\Pi_{v_n}a=a-(v_n\cdot a)v_n\). On the fixed enlarged compact set, mollification converges in \(W^{1,4}\) and uniformly; uniform convergence follows either from the continuous compact representative or from \eqref{eq:fixed-eta-recovery-11} applied to the derivative difference after a fixed extension. Projection preserves smoothness and compact support, converges in these norms by the product rule, and its limit is the original tangent field. For example, \[
|\nabla\Pi_{v_n}a|\le C(|\nabla a|+|\nabla v_n||a|).
\] Convergence in \(L^4\) and uniformly on this compact set also implies the derivative \(L^2\) and height \(L^2\) convergence. Estimate \eqref{eq:fixed-eta-recovery-13} gives convergence in \(B\). The residual functional is continuous under this convergence: its highest derivative coefficient is \(\eta|\nabla v_n|^2\nabla v_n\in L^{4/3}\), and all remaining terms follow from \(L^2\) pairing or uniform convergence on the compact set. Hence \eqref{eq:fixed-eta-recovery-1} extends to all these fields.
\end{proof}

\subsection{Strong local gradients from monotonicity}\label{appsrc-fixed-eta-recovery-strong-local-gradients-from-monotonicity}

The admissible tests can now compare each map with its weak limit. Monotonicity of the regularised gradient operator upgrades weak convergence to strong convergence of the gradients.

\begin{lemma}[Strong convergence of the gradients]\label{lem:recovery-strong}
The subsequence in \cref{lem:recovery-weak} converges strongly in $W^{1,2}_{\rm loc}$ and $W^{1,4}_{\rm loc}$, and satisfies \eqref{eq:fixed-eta-recovery-6}.
\end{lemma}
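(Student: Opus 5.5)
The plan is to use the standard monotonicity argument for the $p$-Laplacian type operator $a(\xi)=(1+\eta|\xi|^2)\xi$. It satisfies
\[
\bigl(a(\xi)-a(\zeta)\bigr):(\xi-\zeta)\ge|\xi-\zeta|^2+c_0\eta|\xi-\zeta|^4
\]
for a numerical $c_0>0$; this is the usual inequality for $|\xi|^2\xi$. Fix a disk $B_\rho$ and a cutoff $\psi\in C_c^\infty(B_{2\rho})$ with $0\le\psi\le1$ and $\psi=1$ on $B_\rho$. As test field take the projected difference
\[
w_n=\psi\,\Pi_{v_n}(v_n-u)=\psi\bigl((v_n-u)-(v_n\cdot(v_n-u))v_n\bigr).
\]
This is a compactly supported $W^{1,4}$ tangent field at $v_n$, so \cref{lem:recovery-tests} applies. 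By \eqref{eq:fixed-eta-recovery-13}, $B_{\eta,v_n}(w_n)$ is bounded uniformly in $n$, and therefore $R_{c_n,\eta,v_n}[w_n]\to0$.

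\textbf{Step 1.} Expand $R_{c_n,\eta,v_n}[w_n]$. Since $|v_n|=|u|=1$, we have $v_n\cdot(v_n-u)=\tfrac12|v_n-u|^2$, which tends to zero locally uniformly. The leading term is
\[
\int\psi\,a(\nabla v_n):\nabla(v_n-u).
\]
Every other contribution tends to zero for one of two reasons. Either a factor $v_n-u$, or its square, multiplies something bounded in $L^1$: for example $|a(\nabla v_n)||\nabla v_n|$ is bounded in $L^1$ through the $W^{1,4}$ bound. Or a derivative falls on $\tfrac12|v_n-u|^2v_n$, giving $(v_n-u)\cdot\nabla(v_n-u)$ times bounded factors, which is small in $L^{4/3}\cdot L^4$ pairing. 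The height and momentum terms pair $v_n-u\to0$ uniformly with $L^2$-bounded quantities. The result is
\[
\int\psi\,a(\nabla v_n):\nabla(v_n-u)\to0.
\]

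\textbf{Step 2.} Weak convergence of $\nabla v_n$ in $L^4_{\rm loc}$, tested against the fixed field $\psi\,a(\nabla u)\in L^{4/3}$, gives
\[
\int\psi\,a(\nabla u):\nabla(v_n-u)\to0.
\]
Subtracting this from Step 1 and applying the monotonicity inequality yields
\[
\int_{B_\rho}\bigl(|\nabla v_n-\nabla u|^2+c_0\eta|\nabla v_n-\nabla u|^4\bigr)\to0.
\]
This is the strong $W^{1,2}_{\rm loc}\cap W^{1,4}_{\rm loc}$ convergence, since uniform convergence of the maps is already known from \cref{lem:recovery-weak}.

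\textbf{Step 3.} For \eqref{eq:fixed-eta-recovery-6}, strong local convergence gives, on each disk,
\[
\int_{B_\rho}(1+\eta|\nabla v_n|^2)|\partial_2v_n|^2\to\int_{B_\rho}(1+\eta|\nabla u|^2)|\partial_2u|^2.
\]
Translation invariance gives $T_\eta(v_n)=T_\eta(u_n)$. Hence
\[
\int_{B_\rho}(\cdots)_u\le\liminf_n T_\eta(u_n),
\]
and letting $\rho\to\infty$ by monotone convergence finishes the proof.

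The main obstacle is Step 1. One has to check carefully that the quartic normal term $\eta|\nabla v_n|^4\,v_n\cdot w_n$, and the derivative of the normal correction inside $a(\nabla v_n):\nabla w_n$, really vanish. The facts that make this work are $v_n\cdot\nabla v_n=0$, the identity $v_n\cdot(v_n-u)=\tfrac12|v_n-u|^2$, and uniform convergence; these should be tracked term by term, using Hölder with exponents $4/3$ and $4$. The cutoff term $a(\nabla v_n):(\nabla\psi\otimes\Pi(v_n-u))$ vanishes because of uniform convergence.
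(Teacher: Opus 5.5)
Your proposal is correct and follows the paper's proof essentially step for step. It uses the same projected test field $\chi\,\Pi_{v_n}(v_n-u)$, the Sobolev tangent-test extension together with the uniform $B_{\eta,v_n}$ bound, the monotonicity inequality for $(1+\eta|P|^2)P$ (the paper obtains $c_0=1/4$), and the same local-then-exhaustion argument for \eqref{eq:fixed-eta-recovery-6}. The one cosmetic difference is that the paper notes the term $\mathcal A(\nabla v_n):\bigl(v_n\otimes\nabla\tfrac12|v_n-u|^2\bigr)$ vanishes identically, because every column of $\mathcal A(\nabla v_n)$ is orthogonal to $v_n$; you instead estimate it as small, which also works.
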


\begin{proof}
Fix a smooth nonnegative compact cutoff \(\chi\), and write \[
d_n=v_n-u,\quad r_n=v_n\cdot d_n=1-v_n\cdot u=\tfrac12|d_n|^2,\quad
w_n=\chi\Pi_{v_n}d_n=\chi(d_n-r_nv_n).
\] On the support of \(\chi\), \(d_n\to0\) uniformly. The fields \(w_n\) are admissible by \cref{appsrc-fixed-eta-recovery-legality-of-the-tangent-tests}. The projection product estimate shows that their \(N_\eta\) norms are uniformly bounded: \(\nabla d_n\) is bounded in \(L^2\cap L^4\) on the support, \(\nabla v_n\) obeys \eqref{eq:fixed-eta-recovery-12}, and the undifferentiated \(d_n\)'s are bounded there. Their heights have compact support and uniformly bounded supremum. By \eqref{eq:fixed-eta-recovery-13}, \(B_{\eta,v_n}(w_n)\) is uniformly bounded. Therefore \(R_{c_n,\eta,v_n}[w_n]\to0\).

Put \[
\mathcal A(P)=(1+\eta|P|^2)P
\] for a real \(3\)-by-\(2\) matrix \(P\), and \(\mathcal A_n=\mathcal A(\nabla v_n)\). Every column of \(\mathcal A_n\) is orthogonal to \(v_n\). Differentiating the displayed expression for \(w_n\) thus gives the exact identity \begin{equation}
\mathcal A_n:\nabla w_n
=\chi\mathcal A_n:(\nabla v_n-\nabla u)
-\chi r_n(|\nabla v_n|^2+\eta |\nabla v_n|^4)
+\mathcal A_n:\bigl((d_n-r_nv_n)\otimes\nabla\chi\bigr).
\label{eq:fixed-eta-recovery-14}
\end{equation} The term involving \(v_n\otimes\nabla r_n\) is zero by orthogonality.

The integral of the second term in \eqref{eq:fixed-eta-recovery-14} tends to zero because \(\|r_n\|_{\infty,\operatorname{supp}\chi}\to0\) and \(\int(|\nabla v_n|^2+\eta |\nabla v_n|^4)\le6M\). The last term tends to zero because \(d_n\to0\) uniformly and \(\mathcal A_n\) is bounded in \(L^1\) on this compact set; this follows from \eqref{eq:fixed-eta-recovery-12} and Hölder. The height term in the residual tends to zero using \(\|w_n\|_\infty\to0\), the compact support, and the uniform \(L^2\) height bound. Its drift term tends to zero using \(|c_n|\le1\), \(\|w_n\|_\infty\to0\), and the local \(L^1\) bound for \(\nabla v_n\). Equation \eqref{eq:fixed-eta-recovery-14} and the vanishing residual therefore give \begin{equation}
\int\chi\,\mathcal A(\nabla v_n):(\nabla v_n-\nabla u)\longrightarrow0.
\label{eq:fixed-eta-recovery-15}
\end{equation} Weak convergence of the gradients gives \begin{equation}
\int\chi\,\mathcal A(\nabla u):(\nabla v_n-\nabla u)\longrightarrow0,
\label{eq:fixed-eta-recovery-16}
\end{equation} since \(\chi\mathcal A(\nabla u)\in L^{4/3}\) on a compact set. Subtract \eqref{eq:fixed-eta-recovery-16} from \eqref{eq:fixed-eta-recovery-15}.

For any matrices \(P,Q\), set \(m=(P+Q)/2\), \(d=P-Q\). Expansion in their Euclidean matrix inner product gives \[
(|P|^2P-|Q|^2Q):d
=(|m|^2+|d|^2/4)|d|^2+2(m:d)^2
\ge|d|^4/4.
\] Consequently \begin{equation}
(\mathcal A(P)-\mathcal A(Q)):(P-Q)
\ge |P-Q|^2+\frac\eta4|P-Q|^4.
\label{eq:fixed-eta-recovery-17}
\end{equation} The integral in \eqref{eq:fixed-eta-recovery-15} minus \eqref{eq:fixed-eta-recovery-16} tends to zero and dominates the integral of the right side of \eqref{eq:fixed-eta-recovery-17} times \(\chi\). Choosing \(\chi=1\) on any prescribed compact set proves strong local \(L^2\) and \(L^4\) convergence of the gradients. Together with uniform convergence, this is \eqref{eq:fixed-eta-recovery-3}.

For later use, every polynomial expression of total derivative degree at most four with uniformly convergent bounded map coefficients now converges in \(L^1\) on a compact set. In particular \(T_\eta(v_n;B_R)\to T_\eta(u;B_R)\) for every fixed disk. Its density is nonnegative. Hence \(T_\eta(u;B_R)\le\liminf_nT_\eta(u_n)\); increase \(R\) to obtain \eqref{eq:fixed-eta-recovery-6}.
\end{proof}

\subsection{Exact weak criticality and nonconstancy}\label{appsrc-fixed-eta-recovery-exact-weak-criticality-and-nonconstancy}

Strong local convergence now permits passage to the full weak equation, while the height at the origin guarantees nonconstancy.

\begin{proof}[Proof of \cref{prop:fixed-eta-recovery}]
Apply \cref{lem:recovery-weak,lem:recovery-strong}. It remains to pass to the equation and verify nonconstancy.

Let \(\Phi\in C_c^\infty(\mathbb R^2;\mathbb R^3)\) and take \(w_n=\Pi_{v_n}\Phi\), a smooth compact tangent field. Its \(N_\eta\), and then its \(B_{\eta,v_n}\), norm is uniformly bounded by the projection estimate, \eqref{eq:fixed-eta-recovery-12}, and \eqref{eq:fixed-eta-recovery-13}. Thus its residual tends to zero. Using \(\partial_jv_n\cdot v_n=0\) gives \[
\begin{aligned}
R_{c_n,\eta,v_n}[\Pi_{v_n}\Phi]
={}&\int(1+\eta |\nabla v_n|^2)\nabla v_n:\nabla\Phi\\
&-\int\bigl((1+\eta |\nabla v_n|^2)|\nabla v_n|^2+(v_n)_3^2\bigr)v_n\cdot\Phi\\
&+\int(v_n)_3\Phi_3-c_n\int(v_n\times\partial_1 v_n)\cdot\Phi.
\end{aligned}
\] Every integral passes to its counterpart for \(u\) by \eqref{eq:fixed-eta-recovery-3} and Hölder, including the quartic term. Multiplication by minus one gives \eqref{eq:fixed-eta-recovery-4}. Equation \eqref{eq:fixed-eta-recovery-4} also implies zero tangent residual for compact \(W^{1,4}\) tangent tests by the density argument and its local integrability estimates.

A constant sphere map of finite \(E_\eta\) on the plane has third component zero, since otherwise its height square has infinite integral. Our limit has \(|u_3(0)|\ge h_*>0\). It is therefore nonconstant.
\end{proof}

\section{Regularity of the quartic equation}\label{app:weak-regularity}

The weak critical point obtained in the preceding section is
smooth. We prove this local assertion directly for the regularised
system. The constants may depend on the fixed positive parameter
$\eta$; uniform estimates for its removal are proved separately.

Let \(\Omega\subset\mathbb R^2\) be open, and fix constants
\(\eta>0\) and \(c\in\mathbb R\). Suppose \(u\in W^{1,4}_{\mathrm{loc}}(\Omega;S^2)\) satisfies, for every \(\varphi\in C_c^\infty(\Omega;\mathbb R^3)\), \begin{equation}
\int_\Omega(1+\eta|\nabla u|^2)\nabla u:\nabla\varphi
 =\int_\Omega\bigl[(1+\eta|\nabla u|^2)|\nabla u|^2u
       +u_3^2u-u_3e_3+c\,u\times\partial_1u\bigr]\cdot\varphi.
\label{eq:weak-regularity-1}
\end{equation} 

The alternative first-variation formulation is as follows. For every \(\varphi\in C_c^\infty(\Omega;\mathbb R^3)\), set \[
 \xi=\varphi-(u\cdot\varphi)u
   =\left.\frac{d}{dt}\right|_{t=0}\frac{u+t\varphi}{|u+t\varphi|}.
\] Require \begin{equation}
\int_\Omega(1+\eta|\nabla u|^2)\nabla u:\nabla\xi
       +u_3\xi_3+c\,u\cdot(\xi\times\partial_1u)=0.
\label{eq:weak-regularity-2}
\end{equation} All terms in \eqref{eq:weak-regularity-1} and \eqref{eq:weak-regularity-2} are locally integrable.
\begin{proposition}[Interior smoothness]\label{prop:weak-regularity}
The formulations \eqref{eq:weak-regularity-1} and \eqref{eq:weak-regularity-2} are equivalent. Every map satisfying either formulation has a representative in $C^\infty(\Omega;S^2)$.
\end{proposition}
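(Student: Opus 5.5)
The plan is to prove the equivalence of \eqref{eq:weak-regularity-1} and \eqref{eq:weak-regularity-2} by a pointwise splitting of test fields, and then to obtain smoothness in three stages: $W^{2,2}$ by difference quotients, $C^{1,\alpha}$ by regularity theory for systems with Uhlenbeck structure, and $C^\infty$ by Schauder bootstrapping. All constants may depend on $\eta$, which is fixed here.

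\emph{Equivalence.} Since $u\in W^{1,4}_{\mathrm{loc}}$ is continuous in the plane by Morrey, the fields $(u\cdot\varphi)u$ and $\xi=\varphi-(u\cdot\varphi)u$ are compactly supported $W^{1,4}$ fields. Both formulations extend to such tests by density, because every coefficient lies in $L^{4/3}_{\mathrm{loc}}$ or $L^1_{\mathrm{loc}}$ against bounded fields; this is the same density argument as in \cref{lem:recovery-tests}.
\begin{itemize}
\item For a normal test $\psi u$, the identity $u\cdot\partial_iu=0$ gives $\nabla u:\nabla(\psi u)=\psi|\nabla u|^2$.
\item Hence both sides of \eqref{eq:weak-regularity-1} reduce to $\int\psi\bigl((1+\eta|\nabla u|^2)|\nabla u|^2+u_3^2-u_3^2\bigr)$, using $(u\times\partial_1u)\cdot u=0$. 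So \eqref{eq:weak-regularity-1} holds automatically on normal tests.
\item On tangent tests the normal coefficient drops out, and \eqref{eq:weak-regularity-1} becomes exactly \eqref{eq:weak-regularity-2}, after the identity $(u\times\partial_1u)\cdot\xi=-u\cdot(\xi\times\partial_1u)$.
\end{itemize}
Adding the normal and tangent parts gives the equivalence.

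\emph{Step 1: $W^{2,2}$ by difference quotients.} Fix a disk $B_{2r}\Subset\Omega$ and a cutoff $\zeta$ supported in it. Test \eqref{eq:weak-regularity-1} with $\varphi=\Delta_{-h}(\zeta^2\Delta_hu)$. Writing $\mathcal A(P)=(1+\eta|P|^2)P$, the monotonicity estimate \eqref{eq:fixed-eta-recovery-17} in its difference-quotient form yields
\[
\int\zeta^2\bigl(1+\eta(|\nabla u|^2+|\nabla u(\cdot+h)|^2)\bigr)|\Delta_h\nabla u|^2 ,
\]
bounded by lower-order terms plus the contribution of the critical term $\int|\nabla u|^4\,|\Delta_{-h}(\zeta^2\Delta_hu)|$. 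Two facts control this critical term:
\begin{itemize}
\item the absolute continuity of $\int|\nabla u|^4$, so that $\int_{B_{2r}}|\nabla u|^4$ is small for small $r$;
\item the planar Ladyzhenskaya inequality $\|f\|_4^2\le C\|f\|_2\|\nabla f\|_2$, applied to $f=\zeta|\nabla u|\,|\Delta_hu|$ and related products.
\end{itemize}
Together these should let the critical term be absorbed into the left side. The outcome is $\nabla^2u\in L^2_{\mathrm{loc}}$ and $|\nabla u|\,\nabla^2u\in L^2_{\mathrm{loc}}$. Consequently $|\nabla u|^2\in W^{1,2}_{\mathrm{loc}}$, and so $\nabla u\in L^p_{\mathrm{loc}}$ for every $p<\infty$.

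\emph{Step 2: $C^{1,\alpha}$.} The system now has the form $\operatorname{div}\mathcal A(\nabla u)=f$ with $f\in L^q_{\mathrm{loc}}$ for some $q>2$. The field $\mathcal A(P)=a(|P|)P$ has Uhlenbeck structure and $(2,4)$-growth, with $D\mathcal A\ge I$. The regularity theory of Uhlenbeck and Tolksdorf for such systems, in the version that allows $L^q$ right-hand sides, then gives $\nabla u\in C^{\alpha}_{\mathrm{loc}}$. If that theorem is not available in the needed generality, the fallback is as follows:
\begin{itemize}
\item differentiate the equation and use the Step 1 regularity to show that $w=\partial_ku$ satisfies $\operatorname{div}(D\mathcal A(\nabla u)\nabla w)=\partial_kf$;
\item prove local boundedness of $|\nabla u|$ by Moser iteration applied to the subsolution $V=1+|\nabla u|^2$, exploiting the Uhlenbeck structure;
\item once $|\nabla u|$ is bounded, treat the differentiated system as uniformly elliptic with bounded measurable coefficients in two dimensions, and obtain Hölder continuity of $w$ from Meyers' $W^{1,2+\varepsilon}$ estimate and Sobolev embedding.
\end{itemize}

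\emph{Step 3: bootstrap.} With $\nabla u\in C^\alpha$, the coefficients $D\mathcal A(\nabla u)$ of the differentiated system are $C^\alpha$ and uniformly elliptic. The right-hand side, built from $u$, $\nabla u$ and $\nabla^2u$ polynomially, lies in the next Hölder class at each stage. Schauder estimates then give $u\in C^{k,\alpha}$ for every $k$ by induction, so $u\in C^\infty(\Omega;S^2)$.

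The main obstacle I expect is Step 2, specifically the $L^\infty$ bound for $\nabla u$. The critical term $|\nabla u|^4u$ and the system structure prevent a direct use of scalar De Giorgi theory. I would first try the Moser iteration on $V$, using the sphere constraint to write the critical term as a tangential-normal pairing and, if needed, the Step 1 smallness of $\int|\nabla u|^4$ on small disks.
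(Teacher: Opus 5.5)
Your equivalence argument is correct. It is the paper's direct substitution, organised by splitting $\varphi$ into its normal and tangent parts. Step~3 is standard, and Step~2 would be an acceptable citation once $f\in L^q_{\rm loc}$ for some $q>2$ is known. The gap is Step~1, on which Steps~2 and~3 rest.

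\textbf{Why Step~1 fails.} At the outset you only know $u\in W^{1,4}_{\rm loc}$. So the forcing, with $|f|\le C(1+|\nabla u|^4)$, is merely in $L^1_{\rm loc}$.
\begin{itemize}
\item With the test $\Delta_{-h}(\zeta^2\Delta_hu)$, the critical term is $\int|\nabla u|^4\,|\Delta_{-h}(\zeta^2\Delta_hu)|$. The second difference quotient is controlled only by averages of $|\Delta_h\nabla u|$ over the segment $[x-h,x]$, plus lower-order terms.
\item The monotone left side controls $\Delta_h\nabla u$ only with weights $|\nabla u(x)|^2+|\nabla u(x+h)|^2$. Those weights sit at different points from the factor $|\nabla u(x)|^4$, so they cannot be used.
\item An unweighted H\"older pairing needs $|\nabla u|^4\in L^2_{\rm loc}$, i.e.\ $\nabla u\in L^8_{\rm loc}$, which you do not have.
\item Your Ladyzhenskaya step is circular. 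The gradient $\nabla(\zeta|\nabla u|\,|\Delta_hu|)$ contains $\zeta|\Delta_hu|\,\nabla|\nabla u|$. That is a genuine second derivative of $u$, exactly what Step~1 is trying to show is square integrable, and the difference-quotient left side does not control it.
\end{itemize}
What you have is at best an a priori estimate for solutions already known to be smooth. You give no approximation scheme that would turn it into a statement about the given weak solution, and for this sphere-constrained system solutions of a regularised problem need not converge to $u$. Without Step~1, the hypothesis $f\in L^q$, $q>2$, of your Step~2 theorem is not established, and your fallback in Step~2 also presupposes Step~1. So the real obstacle is Step~1, not the $L^\infty$ gradient bound you anticipated.

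\textbf{The missing idea.} Use the continuity of $u$, from $W^{1,4}\subset C^{0,1/2}$ in the plane, directly against the $L^1$ forcing, instead of first raising integrability. The paper does this as follows.
\begin{itemize}
\item On small disks it compares $u$ with the minimiser $h$ of $\int_{B_R}(\tfrac12|\nabla v|^2+\tfrac\eta4|\nabla v|^4)$ in $u+W^{1,4}_0(B_R)$.
\item The componentwise maximum principle gives $\|u-h\|_{L^\infty(B_R)}\le CR^{1/2}$.
\item Testing both equations with $u-h$ and using monotonicity gives $\int_{B_R}|\nabla u-\nabla h|^4\le CR^{1/2}\int_{B_R}(1+|\nabla u|^4)$. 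Here the critical term only ever meets the small quantity $\sup|u-h|$.
\item Uhlenbeck's decay for the homogeneous system (\cref{lem:regularity-homogeneous}), with constants independent of the shift $k=\eta^{-1}$, then gives Morrey decay $\int_{B_R}(1+|\nabla u|^4)\le C_\delta R^{2-\delta}$.
\item A Campanato estimate for $\nabla u$ follows, hence $u\in C^{1,\beta}_{\rm loc}$.
\end{itemize}
Only after the gradient is bounded, when the linearised operator is uniformly elliptic, does the paper use difference quotients to reach $W^{2,2}$. It then differentiates the equation and applies Schauder estimates as in your Step~3.
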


Constants below may depend on $\eta>0$, $|c|$ and the map on a relatively compact neighbourhood.
\subsection{Homogeneous-system estimates}\label{appsrc-weak-regularity-the-external-homogeneous-system-input-and-its-applicability}

We begin with estimates for the homogeneous quartic system, which serves as the local comparison equation.

We use Uhlenbeck's homogeneous-system regularity theorem
\cite[p.~222 and Theorems~3.2 and~5.4]{Uhlenbeck1977} in the following
specialised form.

\begin{lemma}[Homogeneous quartic estimates]\label{lem:regularity-homogeneous}
For \(k>0\), if \(h\in W^{1,4}(B_R;\mathbb R^3)\) solves \begin{equation}
\operatorname{div}((k+|\nabla h|^2)\nabla h)=0,
\label{eq:weak-regularity-7}
\end{equation} there are \(\gamma\in(0,1)\) and \(C<\infty\), depending only on the fixed dimensions and growth constants, such that \begin{equation}
\sup_{B_{R/2}}|\nabla h|^4
       +R^{4\gamma}[\nabla h]_{C^{0,\gamma}(B_{R/2})}^4
 \le C\fint_{B_R}(k^2+|\nabla h|^4).
\label{eq:weak-regularity-8}
\end{equation} The constants are independent of \(k\). Changing the fixed concentric radius ratios changes only \(C\). In particular, for \(0<r\le R/2\), \begin{equation}
\begin{aligned}
 \int_{B_r}|\nabla h|^4
 &\le C(r/R)^2\int_{B_R}(k^2+|\nabla h|^4),\\
 \int_{B_r}|\nabla h-(\nabla h)_{B_r}|^4
 &\le C(r/R)^{2+4\gamma}\int_{B_R}(k^2+|\nabla h|^4).
 \end{aligned}
\label{eq:weak-regularity-9}\end{equation} The notation \(f_B\) in an average is \(|B|^{-1}\int_B f\); \([\cdot]_{C^{0,\gamma}}\) is the Hölder seminorm.
\end{lemma}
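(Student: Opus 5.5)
The plan is to reduce to the normalised case $R=1$, $k=1$ by two scalings, invoke Uhlenbeck's theorem there, and then read off \eqref{eq:weak-regularity-9} from \eqref{eq:weak-regularity-8} by elementary integration.

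\emph{Scale invariance in $R$.} Put $h_R(y)=R^{-1}h(Ry)$ on $B_1$. Then $\nabla h_R(y)=\nabla h(Ry)$, so $h_R$ solves \eqref{eq:weak-regularity-7} with the same $k$. Each side of \eqref{eq:weak-regularity-8} transforms consistently: suprema and averages of $|\nabla h|^4$ are unchanged, and $R^{4\gamma}[\nabla h]^4_{C^{0,\gamma}(B_{R/2})}=[\nabla h_R]^4_{C^{0,\gamma}(B_{1/2})}$. It therefore suffices to take $R=1$.

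\emph{Scale invariance in $k$.} Write $h=k^{1/2}g$. Then
\[
(k+|\nabla h|^2)\nabla h=k^{3/2}(1+|\nabla g|^2)\nabla g,
\]
so $g$ solves the same system with $k=1$. Both sides of \eqref{eq:weak-regularity-8} are multiplied by exactly $k^2$: on the left, $\sup|\nabla h|^4=k^2\sup|\nabla g|^4$ and similarly for the Hölder seminorm; on the right, $k^2+|\nabla h|^4=k^2(1+|\nabla g|^4)$. Hence the constants are automatically independent of $k$, and everything reduces to $R=k=1$.

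\emph{The normalised case.} Here the system has the form $\operatorname{div}(\rho(|\nabla g|^2)\nabla g)=0$ with $\rho(Q)=1+Q$. This is Uhlenbeck's structure with $p=4$. One checks her growth conditions: $\rho(Q)\sim(1+Q)$ and $Q\rho'(Q)\le\rho(Q)$, and the monotonicity condition that $\rho(Q)+2Q\rho'(Q)=1+3Q$ be comparable to $\rho(Q)$. Her cited theorems (p.~222 and Theorems~3.2 and~5.4 of \cite{Uhlenbeck1977}) then give two conclusions. First, $\sup_{B_{1/2}}|\nabla g|^4\le C\int_{B_1}(1+|\nabla g|^2)^2$. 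Second, $\nabla g$ is $C^{0,\gamma}$ on $B_{1/2}$ with seminorm controlled by the same quantity, for some structural $\gamma\in(0,1)$. Since $(1+|\nabla g|^2)^2\le 2(1+|\nabla g|^4)$, this is \eqref{eq:weak-regularity-8}.

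I expect the only real obstacle to be this matching step. It requires verifying that Uhlenbeck's hypotheses hold for the $\mathbb R^3$-valued system, where the coefficient depends only on $|\nabla g|^2$. It also requires extracting her estimate in the stated interior form, possibly after a covering argument to pass between radius ratios. A different radius ratio is handled by covering $B_{R/2}$ with finitely many small balls whose doubles lie in $B_R$, which affects only $C$.

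\emph{Decay estimates.} Both parts of \eqref{eq:weak-regularity-9} follow from \eqref{eq:weak-regularity-8} for $0<r\le R/2$.

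For the first, bound the integral by the volume times the supremum:
\[
\int_{B_r}|\nabla h|^4\le\pi r^2\sup_{B_{R/2}}|\nabla h|^4\le C(r/R)^2\int_{B_R}(k^2+|\nabla h|^4).
\]

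For the second, on $B_r$ the Hölder bound gives $|\nabla h-(\nabla h)_{B_r}|\le(2r)^\gamma[\nabla h]_{C^{0,\gamma}(B_{R/2})}$. Integrating over $B_r$,
\[
\int_{B_r}|\nabla h-(\nabla h)_{B_r}|^4\le Cr^{2+4\gamma}[\nabla h]^4_{C^{0,\gamma}(B_{R/2})}\le Cr^{2+4\gamma}R^{-4\gamma}R^{-2}\int_{B_R}(k^2+|\nabla h|^4),
\]
which is $C(r/R)^{2+4\gamma}\int_{B_R}(k^2+|\nabla h|^4)$, as claimed.
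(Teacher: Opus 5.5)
Your proposal is correct and follows the same route as the paper: apply Uhlenbeck's Theorems 3.2 and 5.4 to $\rho(Q)=k+Q$, then integrate the supremum and oscillation bounds over $B_r$. Your preliminary rescalings $h_R(y)=R^{-1}h(Ry)$ and $h=k^{1/2}g$ just make explicit the $R$- and $k$-independence that the paper obtains from the uniformity of Uhlenbeck's growth constants in her shift $C=k$.

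One small correction to your matching step: in Uhlenbeck's notation the growth exponent here is $p=1$, with energy exponent $2p+2=4$, not $p=4$. Her second structural condition (1.4) should also be checked; this is immediate, since $|Q_1\rho'(Q_1)-Q_2\rho'(Q_2)|=|Q_1-Q_2|$.
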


\begin{proof}
In Uhlenbeck's notation, the unknown \(\omega\) is the vector-valued one-form \(dh\), its \(Q\) is \(|\nabla h|^2\), its scalar growth exponent \(p\) is \textbf{one} here (the energy exponent is \(2p+2=4\)), and its shift \(C\) is our \(k\). Choose \(\rho(Q)=k+Q\). The source's conditions (1.3) and (1.4) hold with a fixed growth constant, because \[
 k+Q\le\rho(Q)+2Q\rho'(Q)=k+3Q\le3(k+Q),
 \quad |Q_1\rho'(Q_1)-Q_2\rho'(Q_2)|=|Q_1-Q_2|.
\] The complex is the gradient followed by the componentwise exterior derivative; its Laplacian identity (1.2) is the Euclidean one. The source covers arbitrary weak homogeneous solutions in their natural energy space, not merely smooth maps. Theorem 3.2 bounds a quantity comparable to \((Q+k)^2\), and Theorem 5.4 bounds the Hölder seminorm of \(\omega\) by a constant times \((\sup Q+k)^{1/2}\). Applying them on nested fixed-radius disks gives \eqref{eq:weak-regularity-8}; integrating its supremum and oscillation estimates gives \eqref{eq:weak-regularity-9}.
\end{proof}

\subsection{The weak equation and comparison maps}\label{appsrc-weak-regularity-the-weak-equation-test-function-extension-and-comparison-maps}

On each small disk, we compare $u$ with the solution of the homogeneous equation having the same boundary values. The difference between them is controlled by the oscillation of $u$ and the lower-order terms in its equation.

Set \(P=\nabla u\) and \[
 \mathcal A(P)=(1+\eta|P|^2)P,\qquad
 f=(1+\eta|\nabla u|^2)|\nabla u|^2u+u_3^2u-u_3\,e_3+c\,u\times\partial_1u.
\] Thus \eqref{eq:weak-regularity-1} says \(\int\mathcal A(\nabla u):\nabla\varphi=\int f\cdot\varphi\), and \begin{equation}
|f|\le C_{\eta,c}(1+|\nabla u|^4).
\label{eq:weak-regularity-10}
\end{equation} In two dimensions \(W^{1,4}\) embeds into \(C^{0,1/2}\). In particular, \(u\) has a continuous sphere-valued representative. On every relatively compact disk, smooth compactly supported functions are dense in \(W^{1,4}_0\), and convergence in that norm implies uniform convergence. Since \(\mathcal A(\nabla u)\in L^{4/3}\) and \(f\in L^1\), the weak equation extends to every \(W^{1,4}_0\) test on that disk.

Fix a compact subset of \(\Omega\) and a larger relatively compact open neighbourhood. All disks used below have centres in the compact subset and are contained in the neighbourhood. There are finite constants \(H\ge1\) and \(R_*\in(0,1)\) such that \begin{equation}
|u(x)-u(y)|\le H|x-y|^{1/2}
\label{eq:weak-regularity-11}
\end{equation} on each of these disks of radius at most \(R_*\). For such a disk \(B_R=B_R(x_0)\), let \(h\) minimise \begin{equation}
\mathcal F_\eta(v;B_R)=\int_{B_R}\left(\frac12|\nabla v|^2+\frac\eta4|\nabla v|^4\right)
\label{eq:weak-regularity-12}
\end{equation} over \(v\in u+W^{1,4}_0(B_R;\mathbb R^3)\). This is an unconstrained Euclidean comparison map. Coercivity, weak lower semicontinuity and strict convexity give a unique minimiser. Its weak equation is \eqref{eq:weak-regularity-7} with \(k=\eta^{-1}\), after dividing by \(\eta\).

\begin{lemma}[Comparison with the homogeneous solution]\label{lem:regularity-comparison}
The comparison map $h$ satisfies
\begin{equation}
\|u-h\|_{L^\infty(B_R)}\le C H R^{1/2}.
\label{eq:weak-regularity-13}
\end{equation}
It also obeys the comparison and energy bounds
\begin{equation}
\int_{B_R}|\nabla u-\nabla h|^4
 \le C R^{1/2}\Phi_{x_0}(R),\qquad
 \Phi_{x_0}(R)=\int_{B_R(x_0)}(1+|\nabla u|^4),
\label{eq:weak-regularity-15}
\end{equation}
and
\begin{equation}
\int_{B_R}(\eta^{-2}+|\nabla h|^4)
 \le C_\eta\Phi_{x_0}(R).
\label{eq:weak-regularity-16}
\end{equation}
The constants are uniform over the chosen centres and sufficiently small radii; they may depend on $\eta,c,H$.
\end{lemma}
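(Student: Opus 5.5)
The plan is to prove the three bounds in order: first \eqref{eq:weak-regularity-13}, then \eqref{eq:weak-regularity-16}, and finally \eqref{eq:weak-regularity-15}. Each follows from the minimising property of $h$ and a comparison of the two weak equations.

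\emph{Uniform bound.} Each component $h^i$ satisfies a divergence-form equation $\operatorname{div}(a\nabla h^i)=0$ with $a=k+|\nabla h|^2>0$. Hence the weak maximum principle applies componentwise, and in fact to $e\cdot h$ for every unit vector $e$. Test with $(e\cdot h-M)_+$, which lies in $W^{1,4}_0$ when $M\ge\sup_{\partial B_R}e\cdot u$. This gives $h(B_R)\subset\overline{\mathrm{conv}}\,u(\partial B_R)$, and by \eqref{eq:weak-regularity-11} that set lies in a ball of radius $2HR^{1/2}$ about $u(x_0)$. Since $u(B_R)$ lies in the same ball, we get $\|u-h\|_{L^\infty(B_R)}\le CHR^{1/2}$.

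\emph{Energy bound.} Minimality gives $\mathcal F_\eta(h)\le\mathcal F_\eta(u)$, so
\[
\int_{B_R}|\nabla h|^4\le C_\eta\int_{B_R}(1+|\nabla u|^4).
\]
Adding $\eta^{-2}|B_R|\le C_\eta\int_{B_R}1$ yields \eqref{eq:weak-regularity-16}.

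\emph{Comparison bound.} Use $\varphi=u-h\in W^{1,4}_0(B_R)$ as a test function in the extended weak equation for $u$, and also in the equation $\int\mathcal A(\nabla h):\nabla\varphi=0$ for $h$. Subtracting the two gives
\[
\int_{B_R}(\mathcal A(\nabla u)-\mathcal A(\nabla h)):(\nabla u-\nabla h)=\int_{B_R}f\cdot(u-h).
\]
By the monotonicity inequality \eqref{eq:fixed-eta-recovery-17}, the left side dominates $\frac\eta4\int|\nabla u-\nabla h|^4$. For the right side, use \eqref{eq:weak-regularity-10} and the uniform bound just proved:
\[
\Bigl|\int_{B_R}f\cdot(u-h)\Bigr|\le C_{\eta,c}\,CHR^{1/2}\,\Phi_{x_0}(R).
\]
Dividing by $\eta/4$ gives \eqref{eq:weak-regularity-15}. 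All constants depend only on $\eta,c,H$. The smallness $R\le R_*$ is needed only so that the disks stay in the fixed neighbourhood where \eqref{eq:weak-regularity-11} holds.

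The main obstacle is the maximum principle for $h$ in the vector-valued quartic setting. The coefficient $k+|\nabla h|^2$ is the same scalar for every component, so the system decouples into scalar equations for each $e\cdot h$. The remaining delicate point is that this coefficient is only in $L^2$. Testing is still legitimate, because $(k+|\nabla h|^2)\nabla h\in L^{4/3}$ while the test gradients are in $L^4$. This justifies the truncation argument, and it gives the convex-hull containment without any smoothness of $h$.
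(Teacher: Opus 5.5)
Your proposal is correct and follows the paper's proof essentially step for step: a truncation test $(\,\cdot\,-M)_+$ in the comparison equation gives the maximum principle and hence the bound on $\|u-h\|_\infty$, minimality of $h$ gives the energy bound, and testing both equations with $u-h$ together with the monotonicity inequality gives the comparison bound. The only cosmetic difference is that you truncate $e\cdot h$ for every unit vector $e$, obtaining convex-hull containment, whereas the paper truncates each coordinate $h_j$. This changes only the numerical constant.
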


\begin{proof}
Each component of \(h\) stays between the minimum and maximum of the corresponding component of \(u\) on \(\partial B_R\). For example, the positive part of \(h_j-M_j\), where \(M_j=\max_{\partial B_R}u_j\), has zero trace and is an admissible test in its component equation. That equation gives \[
 \int_{\{h_j>M_j\}}(1+\eta|\nabla h|^2)|\nabla h_j|^2=0.
\] The tested function is therefore zero by the zero-trace Poincaré inequality. Testing the negative part gives the lower bound. Continuity up to the disk boundary is supplied by the same \(W^{1,4}\) embedding. Applying \eqref{eq:weak-regularity-11} component by component proves \eqref{eq:weak-regularity-13}.

For arbitrary real matrices \(P,Q\), \begin{equation}
(\mathcal A(P)-\mathcal A(Q)):(P-Q)
 \ge |P-Q|^2+\frac\eta4|P-Q|^4.
\label{eq:weak-regularity-14}
\end{equation} Indeed the cubic part satisfies \[
 (|P|^2P-|Q|^2Q):(P-Q)
 \ge \tfrac12(|P|^2+|Q|^2)|P-Q|^2
 \ge\tfrac14|P-Q|^4.
\] The difference in the first inequality is \((|P|^2-|Q|^2)^2/2\), and the second uses \(|P-Q|^2\le2(|P|^2+|Q|^2)\).

Test both equations with \(u-h\), subtract, and use \eqref{eq:weak-regularity-10}, \eqref{eq:weak-regularity-13}, and \eqref{eq:weak-regularity-14}. With constants uniform over the chosen centres and radii, but allowed to depend on \(\eta,c,H\), this gives \eqref{eq:weak-regularity-15}. Minimality in \eqref{eq:weak-regularity-12} and \(2\eta^{-1}|P|^2\le|P|^4+\eta^{-2}\) also give \eqref{eq:weak-regularity-16}.
\end{proof}

\subsection{Energy decay to every exponent below the dimension}\label{appsrc-weak-regularity-energy-decay-to-every-exponent-below-the-dimension}

Combining the comparison estimate with decay for the homogeneous solution yields decay of the local quartic energy.

\begin{lemma}[Decay of the local quartic energy]\label{lem:regularity-decay}
For each $\delta\in(0,2)$ there are $R_0,C_\delta>0$, uniform over centres in a fixed compact subset, such that $\Phi_{x_0}(R)\le C_\delta R^{2-\delta}$ for $0<R\le R_0$, where $\Phi_{x_0}$ is defined in \eqref{eq:weak-regularity-15}.
\end{lemma}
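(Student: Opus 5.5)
The plan is a Campanato-type iteration built from the comparison estimate of \cref{lem:regularity-comparison} and the homogeneous decay in \cref{lem:regularity-homogeneous}. Fix a centre $x_0$ in the compact set and radii $0<r\le R/2\le R_*/2$. Let $h$ be the comparison map on $B_R(x_0)$. Since $\nabla u=\nabla h+(\nabla u-\nabla h)$, we have
\[
\int_{B_r}|\nabla u|^4\le 8\int_{B_r}|\nabla h|^4+8\int_{B_R}|\nabla u-\nabla h|^4 .
\]
The first term is bounded using \eqref{eq:weak-regularity-9} with $k=\eta^{-1}$ (after dividing the equation by $\eta$), combined with \eqref{eq:weak-regularity-16}. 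This gives $C_\eta (r/R)^2\Phi_{x_0}(R)$. The second term is at most $CR^{1/2}\Phi_{x_0}(R)$ by \eqref{eq:weak-regularity-15}. Adding the trivial area term $\pi r^2\le \pi R^2$ gives
\[
\Phi_{x_0}(r)\le A\Bigl[\bigl(r/R\bigr)^2+R^{1/2}\Bigr]\Phi_{x_0}(R),
\]
with $A$ depending on $\eta,c,H$ but not on $x_0$ or on radii at most $R_*$. The case $r>R/2$ is trivial by monotonicity of $\Phi$, after enlarging $A$.

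Next I would apply the standard iteration lemma, in the form for a nondecreasing function with an $\epsilon$-perturbed decay coefficient. Given $\delta\in(0,2)$, choose $\tau\in(0,1/2)$ with $2A\tau^{2}\le\tau^{2-\delta/2}$, that is $\tau^{\delta/2}\le 1/(2A)$. Then choose $R_0\le R_*$ so that $AR_0^{1/2}\le A\tau^2$. For $R\le R_0$ this gives
\[
\Phi(\tau R)\le 2A\tau^2\Phi(R)\le \tau^{2-\delta/2}\Phi(R).
\]
Iterating yields $\Phi(\tau^kR_0)\le\tau^{k(2-\delta/2)}\Phi(R_0)$. Monotonicity interpolates between consecutive radii, giving $\Phi(R)\le C\tau^{-2}(R/R_0)^{2-\delta/2}\Phi(R_0)\le C_\delta R^{2-\delta}$. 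Here $\Phi(R_0)$ is uniformly bounded by $\int_{U}(1+|\nabla u|^4)$ over the fixed neighbourhood, because $u\in W^{1,4}_{\rm loc}$. All constants depend only on $A$, $\delta$ and this neighbourhood, which gives uniformity over the centres.

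The only delicate point is ensuring that the constants in the two inputs really are uniform. The $R^{1/2}$ smallness in \eqref{eq:weak-regularity-15} relies on the Hölder bound \eqref{eq:weak-regularity-11} holding with one constant $H$ on all the disks. The constant $C$ in \eqref{eq:weak-regularity-9} must be independent of $k=\eta^{-1}$ and of the radius; \cref{lem:regularity-homogeneous} asserts this. With both granted, the argument is routine. I expect the main care to be in checking that the perturbation term $R^{1/2}$ is small enough, relative to $\tau^2$, to close the iteration.
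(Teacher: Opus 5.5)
Your proposal is correct and follows essentially the same route as the paper: the splitting via $|P+Q|^4\le 8(|P|^4+|Q|^4)$ gives $\Phi_{x_0}(r)\le C[(r/R)^2+R^{1/2}]\Phi_{x_0}(R)$, and this is followed by the same fixed-ratio iteration, monotone interpolation between radii, and uniformity from the bound on $\Phi_{x_0}(R_0)$ over the fixed neighbourhood (working with the exponent $2-\delta/2$ instead of $2-\delta$ is only bookkeeping). One small wording fix: bound the area term as $\pi r^2=(r/R)^2|B_R|\le(r/R)^2\Phi_{x_0}(R)$, since $\pi r^2\le\pi R^2$ alone does not produce the $(r/R)^2$ factor you then use.
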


\begin{proof}
For \(0<r\le R/2\), use \(|P+Q|^4\le8(|P|^4+|Q|^4)\), \eqref{eq:weak-regularity-9}, \eqref{eq:weak-regularity-15}, and \eqref{eq:weak-regularity-16}. The area term in \(\Phi_{x_0}(r)\) is also bounded by \((r/R)^2\Phi_{x_0}(R)\). Hence \begin{equation}
\Phi_{x_0}(r)\le C\left[(r/R)^2+R^{1/2}\right]\Phi_{x_0}(R).
\label{eq:weak-regularity-17}
\end{equation} Fix \(\delta\in(0,2)\). Choose \(\theta\in(0,1/2)\) with \(C\theta^2\le\frac12\theta^{2-\delta}\), then choose \(R_0\in(0,R_*)\) with \(CR_0^{1/2}\le\frac12\theta^{2-\delta}\). Formula \eqref{eq:weak-regularity-17} implies \[
 \Phi_{x_0}(\theta R)\le\theta^{2-\delta}\Phi_{x_0}(R)
 \quad(0<R\le R_0).
\] Iteration at \(R_0,\theta R_0,\theta^2R_0,\ldots\), followed by monotonicity of \(\Phi_{x_0}\) between these radii, proves \begin{equation}
\Phi_{x_0}(R)\le C_\delta R^{2-\delta}\quad(0<R\le R_0).
\label{eq:weak-regularity-18}
\end{equation} The constants are uniform over the chosen centres, since their \(R_0\)-disk energies are bounded by the \(W^{1,4}\) norm on the fixed larger neighbourhood.
\end{proof}

\subsection{Hölder continuity of the gradient}\label{appsrc-weak-regularity-huxf6lder-continuity-of-the-gradient}

We now use the energy decay to estimate the oscillation of the gradient. The resulting bound gives a H\"older continuous representative.

\begin{lemma}[Hölder continuity of the gradient]\label{lem:regularity-holder}
The weak solution belongs to $C^{1,\beta}_{\rm loc}(\Omega)$ for some $\beta>0$.
\end{lemma}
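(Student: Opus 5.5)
We prove a Campanato estimate for $\nabla u$ by comparing, on each small disk $B_R=B_R(x_0)$, with the homogeneous comparison map $h$ of \cref{lem:regularity-comparison}. For $0<r\le R/2$, split
\[
\int_{B_r}|\nabla u-(\nabla u)_{B_r}|^4
\le C\int_{B_r}|\nabla h-(\nabla h)_{B_r}|^4+C\int_{B_R}|\nabla u-\nabla h|^4 .
\]
We bound the first term by the oscillation decay \eqref{eq:weak-regularity-9}, together with \eqref{eq:weak-regularity-16}, as $C(r/R)^{2+4\gamma}\Phi_{x_0}(R)$.

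The second term needs care. The bound \eqref{eq:weak-regularity-15} gives only $CR^{1/2}\Phi_{x_0}(R)$. Combined with \cref{lem:regularity-decay} this yields $CR^{5/2-\delta}$, whose exponent exceeds the dimension $2$ once $\delta<1/2$. So for $r\le R/2\le R_0$ we get
\[
\int_{B_r}|\nabla u-(\nabla u)_{B_r}|^4\le C(r/R)^{2+4\gamma}R^{2-\delta}+CR^{5/2-\delta}.
\]

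Now choose $r=R^{1+\sigma}$, or equivalently iterate at a fixed ratio in the standard Campanato way. Fix $\delta$ small, say $\delta<\min(1/4,2\gamma)$. The usual iteration lemma (Giaquinta's lemma for $\phi(r)\le A(r/R)^{a}\phi(R)+BR^{b}$ with $b<a$) applies with $a=2+4\gamma$ and $b=5/2-\delta$, after a slight rewriting. It gives
\[
\int_{B_r}|\nabla u-(\nabla u)_{B_r}|^4\le C r^{2+4\beta}
\]
for some $\beta>0$ (any $4\beta<\min(4\gamma,1/2-\delta)$ works), uniformly for centres in the compact set. For this iteration to be legitimate, the first term must be compared with the $u$-oscillation and not only with $\Phi$. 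We handle this by noting that $\Phi(R)\le CR^{2-\delta}$ lets the first term itself be absorbed as a power $r^{2+4\gamma}R^{-\delta-4\gamma}$. Choosing $r=R^{\kappa}$ with $\kappa>1$ close to $1$ makes both terms at most $Cr^{2+4\beta}$ for a small $\beta>0$.

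Campanato's characterisation in dimension two, with $L^4$ means and exponent $2+4\beta$, then identifies $\nabla u$ with a $C^{0,\beta}$ function on the compact set. This gives $u\in C^{1,\beta}_{\rm loc}$.

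The main obstacle is the bookkeeping that makes the comparison error $R^{1/2}\Phi(R)$ beat $r^2$. This is exactly where the almost-dimensional energy decay of \cref{lem:regularity-decay} is needed: without it, \eqref{eq:weak-regularity-15} would only give a bounded, not decaying, Campanato quotient. Uniformity of the constants over centres follows from the uniformity already stated in \cref{lem:regularity-comparison,lem:regularity-decay}.
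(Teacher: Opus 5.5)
Your proposal is correct and is essentially the paper's proof. It uses the same splitting against the comparison map $h$ and the same decay $\Phi_{x_0}(R)\le CR^{2-\delta}$ to turn both terms into pure powers of $r$. It then makes the same choice $r=R^{\kappa}$ (the paper takes $\delta=\gamma/10$ and $R=r^{9/10}$, giving $\beta=\gamma/32$) and concludes via the Campanato characterisation, which the paper writes out as a dyadic mean-value argument.

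The Giaquinta-lemma detour is unnecessary, and it is not directly applicable since the right side involves $\Phi_{x_0}(R)$ rather than the excess. Also, the admissible exponents form the window $1+\delta/(4\gamma)<\kappa<\tfrac54-\tfrac{\delta}{2}$, so $\delta$ must be small relative to $\gamma$ (for instance $\delta<\gamma/(1+2\gamma)$), not merely below $2\gamma$.
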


\begin{proof}
For an integrable matrix field \(W\), write \((W)_B=|B|^{-1}\int_BW\). Define \[
 \Psi_{x_0}(r)=\int_{B_r(x_0)}|\nabla u-(\nabla u)_{B_r(x_0)}|^4.
\] Subtracting the corresponding mean for \(\nabla h\), using Jensen's inequality for the difference of the means, and applying \eqref{eq:weak-regularity-9}, \eqref{eq:weak-regularity-15}, \eqref{eq:weak-regularity-16}, and \eqref{eq:weak-regularity-18}, we obtain \begin{equation}
\begin{aligned}
 \Psi_{x_0}(r)
 &\le C\int_{B_R}|\nabla u-\nabla h|^4
       +C\int_{B_r}|\nabla h-(\nabla h)_{B_r}|^4\\
 &\le C\left[R^{2+1/2-\delta}
       +r^{2+4\gamma}R^{-4\gamma-\delta}\right]
 \qquad(0<r\le R/2\le R_0/2). 
 \end{aligned}
\label{eq:weak-regularity-19}\end{equation} Choose \(\delta=\gamma/10\), and for sufficiently small \(r\) set \(R=r^{9/10}\). This choice satisfies \(2r\le R\le R_0\). The two powers of \(r\) on the right of \eqref{eq:weak-regularity-19} are \[
 \tfrac9{10}(\tfrac52-\tfrac\gamma{10})
       =\tfrac94-\tfrac{9\gamma}{100},\qquad
 2+4\gamma-\tfrac9{10}(4\gamma+\tfrac\gamma{10})
       =2+\tfrac{31\gamma}{100}.
\] Since \(0<\gamma<1\), both exceed \(2+\gamma/8\). Therefore \begin{equation}
\Psi_{x_0}(r)\le C r^{2+\gamma/8}=C r^{2+4\beta},
 \qquad \beta=\gamma/32>0.
\label{eq:weak-regularity-20}
\end{equation} This estimate is uniform over centres in the fixed compact subset.

For completeness, \eqref{eq:weak-regularity-20} gives a Hölder representative of \(\nabla u\) by the following mean-value argument. Hölder's inequality bounds the difference between its averages over \(B_r(x)\) and \(B_{r/2}(x)\) by \(Cr^\beta\). Summing this estimate over dyadic radii shows that the averages converge uniformly in \(x\) to a value \(V(x)\), with \(|V(x)-(\nabla u)_{B_r(x)}|\le Cr^\beta\). If \(|x-y|\le r\), compare both radius-\(r\) averages to the average on \(B_{3r}(x)\). Formula \eqref{eq:weak-regularity-20} and the fixed area ratios bound this difference by \(Cr^\beta\). Thus \(|V(x)-V(y)|\le C|x-y|^\beta\). At almost every point Lebesgue differentiation identifies \(V\) with the weak gradient. Local mollification of \(u\) now converges with first derivatives on smaller disks, so \(u\in C^{1,\beta}_{\mathrm{loc}}(\Omega)\).
\end{proof}

\subsection{Smooth bootstrap and equivalence of formulations}\label{appsrc-weak-regularity-smooth-bootstrap-and-equivalence-of-formulations}

With a H\"older continuous gradient, the equation has uniformly elliptic coefficients on compact sets, and standard bootstrapping gives smoothness. We also verify that the two weak formulations stated above are equivalent.

\begin{proof}[Proof of \cref{prop:weak-regularity}]
By \cref{lem:regularity-holder}, the weak solution has a locally Hölder continuous gradient. Once \(u\in C^{1,\beta}\), its gradient is bounded on compact subsets. The derivative of the flux is the symmetric matrix-space operator \[
 D\mathcal A(P)Z=(1+\eta|P|^2)Z+2\eta(P:Z)P.
\] It has eigenvalues between \(1\) and \(1+3\eta|P|^2\). It is locally bounded, uniformly positive, and \(C^{0,\beta}\) after evaluation at \(P=\nabla u\). The forcing \(f\), a smooth expression in \(u\) and \(\nabla u\), is locally \(C^{0,\beta}\).

Difference quotients of the divergence equation first give \(u\in W^{2,2}_{\mathrm{loc}}\). More explicitly, the difference quotient of \(\mathcal A(\nabla u)\) is the integral of \(D\mathcal A\) along the segment joining the two gradients, times their difference quotient. These coefficients have the preceding common ellipticity and boundedness constants. Test the difference equation with a cutoff squared times the difference quotient of \(u\). On the right, write the quotient of \(f\) as a distributional derivative of its bounded translated averages. Cauchy--Schwarz and absorption bound the first derivatives of the quotient locally in \(L^2\); the remaining cutoff terms are bounded by local bounds for \(\nabla u\) and \(f\). Passing to the limit proves the stated \(W^{2,2}\) regularity.

The weak derivative \(v_k=\partial_k u\) then satisfies \begin{equation}
\operatorname{div}\bigl(D\mathcal A(\nabla u)\nabla v_k\bigr)=\partial_k(-f)
\label{eq:weak-regularity-21}
\end{equation} with the sign chosen according to \(\operatorname{div}\mathcal A(\nabla u)=-f\). The right side is the divergence of a matrix field having the \(k\)-th column \(-f\) and the other column zero. The interior divergence-form Schauder estimate for a uniformly strongly elliptic linear system with \(C^{0,\beta}\) coefficients and \(C^{0,\beta}\) divergence data gives \(v_k\in C^{1,\beta}\), hence \(u\in C^{2,\beta}\). We then differentiate \eqref{eq:weak-regularity-21} repeatedly and apply the same interior linear estimates, with one higher order of coefficient and data regularity at each step. The coefficients and \(f\) are smooth functions of their arguments and the ellipticity lower bound is one throughout. This proves \(u\in C^\infty(\Omega)\).

It remains to check that variationally supplied maps fit \eqref{eq:weak-regularity-1}. Since \(|u|^2=1\) almost everywhere, the Sobolev chain rule gives \(u\cdot\partial_i u=0\). For \(\xi=\varphi-(u\cdot\varphi)u\), direct differentiation gives \[
 \nabla u:\nabla\xi
   =\nabla u:\nabla\varphi-|\nabla u|^2(u\cdot\varphi),
 \quad u_3\xi_3=u_3\varphi_3-u_3^2(u\cdot\varphi),
\] \[
 u\cdot(\xi\times\partial_1u)
   =u\cdot(\varphi\times\partial_1u)
   =-(u\times\partial_1u)\cdot\varphi.
\] Substitution shows that \eqref{eq:weak-regularity-2} is exactly \eqref{eq:weak-regularity-1}. These calculations are valid in the stated Sobolev spaces; the only quartic derivative term is locally \(L^1\), while the undifferentiated test factors are bounded. Normalised compact variations are legitimate since \(|u+t\varphi|\ge1-|t|\|\varphi\|_\infty>0\) for sufficiently small \(|t|\), and their first derivative in \(W^{1,4}\) is the displayed \(\xi\).
\end{proof}

\section{Decay of the quartic regularisation}\label{app:regularizer-decay}

We next let the regularisation parameter tend to zero along
exact smooth waves with bounded regularised energy. A scalar
differential inequality controls the coefficient seen after
rescaling. Monotonicity then gives strong convergence, and the
limiting stress identity rules out a nonzero quartic contribution.

\begin{lemma}[A scalar estimate at the concentration scale]\label{lem:scalar-gradient}
Let a smooth sphere map $v$ solve
\begin{equation}\label{eq:concentration-compactness-19}
\operatorname{div}((1+\alpha w)\nabla v)
=-(1+\alpha w)wv-\lambda^2v_3^2v+\lambda^2v_3\,e_3-dv\times\partial_1v,
\end{equation}
where $w=|\nabla v|^2$ and $\alpha,\lambda,d\ge0$.
Set $a=1+\alpha w$, $g_{ij}=\partial_i v\cdot\partial_jv$,
$B=aI+2\alpha g$, $C=B/a$, and $F=w+\alpha w^2/2$.
The eigenvalues of $C$ lie in $[1,3]$, and
\begin{equation}\label{eq:concentration-compactness-20}
-\operatorname{div}(C\nabla F)\le(4w+2\lambda^2+d^2)F.
\end{equation}
In particular, if $w\le4$ and $\lambda,d\le1$ on $B_1$, then
\begin{equation}\label{eq:concentration-compactness-22}
F(0)\le C_0\int_{B_1}F
\end{equation}
for an absolute constant $C_0$, independent of $\alpha$.
\end{lemma}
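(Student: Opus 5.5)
The plan is a Bochner-type computation for the energy density $F$ adapted to the anisotropic operator $\operatorname{div}(a\nabla\,\cdot)$, followed by a Moser iteration (or a mean-value inequality for subsolutions) with coefficients of controlled ellipticity.

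First, the eigenvalue claim. The matrix $g$ is the $2\times2$ Gram matrix of $\partial_1v,\partial_2v$. It is positive semidefinite with trace $w$, so its eigenvalues lie in $[0,w]$. Then $C=I+2\alpha g/a$ has eigenvalues in $[1,1+2\alpha w/(1+\alpha w)]\subset[1,3]$.

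Next, the differential inequality. Write the equation as $\operatorname{div}(a\nabla v)=N$, with $N$ equal to the right-hand side of \eqref{eq:concentration-compactness-19}. Differentiate in $x_k$ and dot with $\partial_kv$, summing over $k$. The key algebraic identity to aim for is
\[
\partial_i\bigl(a\,\partial_i\partial_kv\cdot\partial_kv\bigr)+\partial_i\bigl(\partial_ia\,\partial_kv\cdot\partial_kv\bigr)\ \text{reorganised as}\ \operatorname{div}(C\nabla F)\cdot(\text{factor})\ \text{plus a nonnegative Hessian term}.
\]
Concretely, $\nabla F=a\nabla w$, and $\partial_k(a\partial_iv)=a\partial_i\partial_kv+\alpha\partial_kw\,\partial_iv$. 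Expanding $\sum_k\partial_kv\cdot\partial_k\operatorname{div}(a\nabla v)$ gives
\[
\tfrac12\operatorname{div}(a\nabla w)+\alpha\,\partial_i(g_{ik}\partial_kw)
\]
minus the term $a|\nabla^2v|^2+\alpha|\nabla w|^2/2$ type terms. This is the standard computation for the $p$-Laplacian-like flux, and it should produce $\tfrac12\operatorname{div}(B\nabla w)=\tfrac12\operatorname{div}(C\nabla F)$. So the left side equals $\tfrac12\operatorname{div}(C\nabla F)$ minus a positive Hessian quantity $\mathcal Q\ge a|\nabla^2v|^2_{\rm tan}$.

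On the other side, $\sum_k\partial_kv\cdot\partial_kN$ must be bounded below. The sphere terms behave as in the harmonic-map Bochner formula. Since $v\cdot\partial_kv=0$, the term $(1+\alpha w)wv$ contributes $-(1+\alpha w)w^2$ plus the curvature-type quantity. The normal part of $\nabla^2v$ interacts with $v$ through $\partial_k v\cdot\partial_k(\cdot\,v)$, giving at worst $-2aw^2\ge-4wF$-type terms after using $aw^2\le 2wF$. The anisotropy terms give at most $C\lambda^2w\le2\lambda^2F$. For the drift, $\partial_kv\cdot(\partial_kv\times\partial_1v+v\times\partial_k\partial_1v)$: the first part vanishes, and the second is bounded by $d\,|\nabla^2v|\,w^{1/2}$. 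This is absorbed into the good Hessian term via Young's inequality, $\le a|\nabla^2v|^2/2+d^2w/2$. Collecting terms yields \eqref{eq:concentration-compactness-20}, up to the stated constants.

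The main obstacle is checking precisely that the quartic flux produces exactly $\operatorname{div}(C\nabla F)$ and a nonnegative remainder, independent of $\alpha$, while tracking the tangential/normal Hessian split. I would do this in a local orthonormal frame, mirroring Uhlenbeck's computation.

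Finally, for \eqref{eq:concentration-compactness-22}: if $w\le4$ and $\lambda,d\le1$, then $F\ge0$ is a subsolution, $-\operatorname{div}(C\nabla F)\le 18F$. Here $C$ is measurable with ellipticity constants $1$ and $3$, independent of $\alpha$. The De Giorgi--Nash--Moser local maximum principle for subsolutions, including the zeroth-order term, then gives $\sup_{B_{1/2}}F\le C_0\int_{B_1}F$ with an absolute constant $C_0$. In particular this bounds $F(0)$.
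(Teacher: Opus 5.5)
Your proposal is correct and follows the paper's proof. Your expansion yields exactly the paper's identity $\tfrac12\operatorname{div}(B\nabla w)=a|D^2v|^2+\tfrac\alpha2|\nabla w|^2-aw^2-\lambda^2v_3^2w+\lambda^2|\nabla v_3|^2-d\sum_k\partial_kv\cdot(v\times\partial_{1k}v)$, with no curvature term beyond $-aw^2$. The same Young absorption of the drift term, together with $aw\le2F$ and $w\le F$, then gives the inequality. Your mean-value step is the standard Moser local maximum principle, which the paper carries out by hand. The only slip is arithmetic: under $w\le4$ and $\lambda,d\le1$ the zeroth-order coefficient is bounded by $19$ rather than $18$, which is immaterial.
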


\begin{proof}
The eigenvalue claim follows from $0\le g\le wI$. Differentiation,
using $v\cdot\partial_kv=0$, gives
\begin{equation}\label{eq:concentration-compactness-21}
\begin{aligned}
\tfrac12\operatorname{div}(B\nabla w)
={}&a|D^2v|^2+\tfrac\alpha2|\nabla w|^2-aw^2
 -\lambda^2v_3^2w+\lambda^2|\nabla v_3|^2\\
&-d\sum_k\partial_kv\cdot(v\times\partial_{1k}v).
\end{aligned}
\end{equation}
The last term has absolute value at most
$a|D^2v|^2/2+d^2w/(2a)$. Discard the remaining nonnegative terms and use
$B\nabla w=C\nabla F$, $w\le F$, and $aw\le2F$ to obtain
\eqref{eq:concentration-compactness-20}.

For completeness, the mean-value estimate used here requires only the
displayed ellipticity and $-\operatorname{div}(C\nabla F)\le19F$.
Testing with $\zeta^2(F+\tau)^{p-1}$, $p\ge2$ and $\tau>0$, gives
\[
\int|\nabla(\zeta(F+\tau)^{p/2})|^2
\le C p^2\int(\zeta^2+|\nabla\zeta|^2)(F+\tau)^p.
\]
The disk Sobolev inequality and iteration at $p=2,4,8,\ldots$ yield
$\sup_{B_r}(F+\tau)\le C(R-r)^{-2}\|F+\tau\|_{L^2(B_R)}$
for $1/2\le r<R\le1$. Interpolation gives
\[
m(r)\le\theta m(R)+C_\theta(R-r)^{-4}\int_{B_1}(F+\tau),
\qquad m(r)=\sup_{B_r}(F+\tau).
\]
Iterate on radii increasing from $1/2$ to $3/4$ with gaps proportional
to $2^{-j}$ and choose $\theta<1/16$. The resulting geometric series
converges. Letting $\tau\downarrow0$ proves
\eqref{eq:concentration-compactness-22}.
\end{proof}

We now apply the scalar estimate to a sequence of exact regularised waves. The point is to exclude concentration at a scale where the quartic coefficient remains significant.

Let $\eta_n>0$ tend to zero and let smooth $u_n:\R^2\to S^2$ solve
\begin{equation}\label{eq:regularizer-decay-1}
\operatorname{div}((1+\eta_n|\nabla u_n|^2)\nabla u_n)
+(1+\eta_n|\nabla u_n|^2)|\nabla u_n|^2u_n
+(u_n)_3^2u_n-(u_n)_3e_3+c_nu_n\times\partial_1u_n=0,
\end{equation}
where $c_n\in[0,1]$. Assume
\begin{equation}\label{eq:regularizer-decay-2}
E(u_n)\le M,\qquad \eta_n\int|\nabla u_n|^4\le K.
\end{equation}

\begin{proposition}[Vanishing of the quartic term]\label{prop:regularizer-decay}
Under these assumptions,
\begin{equation}\label{eq:regularizer-decay-3}
\sup_{\R^2}\eta_n|\nabla u_n|^2\longrightarrow0,
\qquad \eta_n\int|\nabla u_n|^4\longrightarrow0.
\end{equation}
This conclusion uses exact smooth criticality, not just bounded energy.
\end{proposition}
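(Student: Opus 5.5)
The second limit in \eqref{eq:regularizer-decay-3} follows from the first, since
\[
\eta_n\int|\nabla u_n|^4\le\Bigl(\sup_{\R^2}\eta_n|\nabla u_n|^2\Bigr)\int|\nabla u_n|^2\le 2M\sup_{\R^2}\eta_n|\nabla u_n|^2 .
\]
So the work is to show $s_n:=\sup\eta_n|\nabla u_n|^2\to0$. I would argue by contradiction and blow up at the concentration scale. Then I would use \cref{lem:scalar-gradient} for compactness, and the stress identity of the limiting homogeneous system to exclude a nonzero quartic contribution.

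Suppose that, along a subsequence, $s_n\ge2\varepsilon_0>0$. First I would check that each $s_n$ is finite: for fixed $n$ the map is smooth with finite energy, and applying \cref{lem:scalar-gradient} on unit disks should give a bound on $|\nabla u_n|$. I do not regard this as automatic. If it fails, the point-picking below can be carried out on large disks instead.

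Next I pick points $x_n$ with $\eta_n|\nabla u_n(x_n)|^2\ge\varepsilon_0$, using a standard point-picking (doubling) argument. This should arrange that $|\nabla u_n|\le2|\nabla u_n(x_n)|$ on $B_{r_n}(x_n)$, where $r_n=1/|\nabla u_n(x_n)|$. Since $r_n^2\le\eta_n/\varepsilon_0\to0$, the scales shrink.

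Set $v_n(y)=u_n(x_n+r_ny)$. Then $v_n$ solves \eqref{eq:concentration-compactness-19} with
\[
\alpha_n=\eta_n/r_n^2\ge\varepsilon_0,\qquad \lambda_n=r_n\to0,\qquad d_n=c_nr_n\to0 .
\]
It satisfies $w_n(0)=1$ and $w_n\le4$ on $B_1$. Because $\alpha_n\int w_n^2=\eta_n\int|\nabla u_n|^4\le K$ and $\alpha_n\ge\varepsilon_0$, the coefficients $\alpha_n$ are bounded. So after passing to a subsequence, $\alpha_n\to\alpha_\infty\in[\varepsilon_0,K']$.

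The step I expect to be the main obstacle is upgrading to local compactness on all of $\R^2$, not just on $B_1$. The doubling argument only controls the gradient on $B_1$. To get control on $B_R$, I would combine \eqref{eq:concentration-compactness-22} (applied at arbitrary centres, rescaled) with the uniform bound $\int F_n\le M+K$. The idea is $\epsilon$-regularity: wherever $\int_{B_\rho}F_n$ is small, $F_n$ is pointwise bounded. Bad points can then be only finitely many, since each carries a definite amount of $F$. I would first try choosing $x_n$ to be nearly maximising among such bad points, so that $v_n$ has bounded gradient on growing disks. With gradient bounds in hand, the $C^{1,\beta}$ and bootstrap estimates of \cref{app:weak-regularity} (uniform here, since $\alpha_n$ is bounded above and below) give $C^2_{\rm loc}$ convergence to a smooth $v$. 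This limit solves
\[
\operatorname{div}((1+\alpha_\infty w)\nabla v)+(1+\alpha_\infty w)wv=0 \quad\text{on }\R^2,
\]
with $w(0)=1$ and $\int(w+\alpha_\infty w^2)<\infty$.

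For the limit $v$, the stress tensor $T_{ij}=(1+\alpha w)\partial_iv\cdot\partial_jv-(w/2+\alpha w^2/4)\delta_{ij}$ is divergence free. This is the same computation as \eqref{eq:energy-absorption-W3} with no lower-order terms. Test it with $X=x\chi(x/R)$, exactly as in \cref{prop:energy-stress}; the tail term is controlled by the energy in an annulus and vanishes as $R\to\infty$. This yields $\int\operatorname{tr}T=0$. Since $\operatorname{tr}T=(1+\alpha w)w-w-\alpha w^2/2=\alpha w^2/2$, we get $\alpha_\infty\int w^2=0$, which contradicts $w(0)=1$ and $\alpha_\infty>0$. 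Hence $s_n\to0$, and the integral statement follows from the first display.
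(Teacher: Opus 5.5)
You follow the paper's strategy: blow up where $\eta_n|\nabla u_n|^2$ stays bounded below, use \cref{lem:scalar-gradient}, pass to a nonconstant entire solution of the homogeneous regularised system, and conclude with the trace identity $\operatorname{tr}T=\alpha w^2/2$. Two steps, however, are not justified as written.

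\textbf{The bound on $\alpha_n$.} This does not follow from $\alpha_n\int w_n^2\le K$ and $\alpha_n\ge\varepsilon_0$. Those facts only give $\int w_n^2\le K/\varepsilon_0$. They do not exclude $\alpha_n\to\infty$ with $\int w_n^2\to0$ while $w_n(0)=1$. Your later appeal to regularity ``uniform since $\alpha_n$ is bounded above'' rests on this step. What is needed is the mean-value inequality \eqref{eq:concentration-compactness-22} at the blow-up point. It applies because $w_n\le4$ on $B_1$ and $\lambda_n,d_n\le1$ eventually, and gives
\[
1+\alpha_n/2=F_n(0)\le C_0\int_{B_1}F_n\le C_0(4\pi+K/2).
\]
This, rather than $\epsilon$-regularity, is the essential use of \cref{lem:scalar-gradient}. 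Note also that \eqref{eq:concentration-compactness-22} is not itself an $\epsilon$-regularity statement, since it presupposes $w\le4$ on $B_1$.

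\textbf{Compactness on the whole plane.} The obstacle you flag disappears with a better choice of centre. You then need neither finiteness of $s_n$ nor a finitely-many-bad-points argument. Take $x_n$ with $\eta_n|\nabla u_n(x_n)|^2\ge\varepsilon_0$, and let $y_n$ maximise $(1/2-|x-x_n|)|\nabla u_n(x)|$ over the compact disk $\overline{B_{1/2}(x_n)}$. Put $t_n=1/2-|y_n-x_n|$ and $L_n=|\nabla u_n(y_n)|$. Then $L_n\ge|\nabla u_n(x_n)|$, $t_nL_n\ge|\nabla u_n(x_n)|/2\to\infty$, and $|\nabla u_n|\le2L_n$ on $B_{t_n/2}(y_n)$. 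Rescaling by $1/L_n$ about $y_n$ gives $|\nabla v_n(0)|=1$ and $|\nabla v_n|\le2$ on disks of radii $t_nL_n/2\to\infty$, while still $\alpha_n=\eta_nL_n^2\ge\varepsilon_0$. Your doubling with radius exactly $1/|\nabla u_n(x_n)|$ is the case in which this radius-to-scale ratio is frozen at one. The point is to let it tend to infinity, which $\eta_n\to0$ permits.

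With these two repairs your argument goes through. The paper then takes a lighter route than your uniform $C^2$ estimates. It tests the ambient equation with $\chi(v_n-Q)$ and applies \eqref{eq:fixed-eta-recovery-17} with $\eta=\alpha_n\ge\varepsilon_0$, which gives strong $W^{1,2}_{\rm loc}\cap W^{1,4}_{\rm loc}$ convergence. That suffices to pass the exact identities $\partial_iS^n_{ij}=-d_n\delta_{j2}q(v_n)$ to the limit without any regularity of the limit. Nonconstancy comes from passing the integral bound above to the limit, instead of from $w(0)=1$. Your route through uniform $C^{1,\beta}$ and Schauder bounds, with $\alpha_n$ in a compact subset of $(0,\infty)$, is viable. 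It does, however, require checking that the constants in \cref{app:weak-regularity} are uniform, which that section does not state.
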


\subsection{Rescaling and the effective coefficient}

Suppose the quantity $\eta_n|\nabla u_n|^2$ does not vanish uniformly. Rescaling at a point of large gradient normalises the gradient, and the scalar estimate bounds the coefficient of the quartic term in the rescaled equation.

\begin{lemma}[A bounded effective coefficient]\label{lem:decay-rescaling}
If $\sup\eta_n|\nabla u_n|^2$ does not tend to zero, a subsequence admits rescalings $v_n(y)=u_n(y_n+y/L_n)$ with $L_n\to\infty$, $|\nabla v_n(0)|=1$ and $|\nabla v_n|\le2$ on disks exhausting $\mathbb R^2$. Set
\[
\begin{gathered}
\alpha_n=\eta_nL_n^2,\qquad \lambda_n=L_n^{-1},\qquad d_n=c_n/L_n,\\
w_n=|\nabla v_n|^2,\qquad F_n=w_n+\alpha_nw_n^2/2.
\end{gathered}
\]
For some $\delta>0$, the coefficients $\alpha_n\ge\delta$ converge to a finite $\alpha>0$. The rescaled maps solve
\begin{equation}\label{eq:regularizer-decay-7}
\operatorname{div}((1+\alpha_nw_n)\nabla v_n)
=-(1+\alpha_nw_n)w_nv_n-\lambda_n^2(v_n)_3^2v_n
 +\lambda_n^2(v_n)_3e_3-d_nv_n\times\partial_1v_n=:f_n,
\end{equation}
and satisfy, with $C_0$ from \cref{lem:scalar-gradient},
\begin{equation}\label{eq:regularizer-decay-9}
\alpha_n\int_{B_1}w_n^2\le K,\qquad
1+\alpha_n/2\le C_0\int_{B_1}F_n
\le C_0\left((\pi K/\alpha_n)^{1/2}+K/2\right).
\end{equation}
\end{lemma}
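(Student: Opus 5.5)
The rescaling centre is chosen by a point-picking argument, and the equation and estimates then follow by scaling and by \cref{lem:scalar-gradient}. Assume, after passing to a subsequence, that $\sup_{\R^2}\eta_n|\nabla u_n|^2\ge 2\epsilon_0>0$. Each $u_n$ is a smooth exact wave with $\eta_n\int|\nabla u_n|^4\le K$. Since $\sup|\nabla u_n|^2\ge 2\epsilon_0/\eta_n\to\infty$, the gradient blows up.

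\textbf{Choosing the centre.} We pick $y_n$ by the standard doubling (point-picking) lemma. This yields $y_n$ and $L_n=|\nabla u_n(y_n)|$ such that $|\nabla u_n|\le 2L_n$ on $B_{R_n/L_n}(y_n)$ with $R_n\to\infty$, and $\eta_nL_n^2\ge\delta>0$. For the lower bound on $\eta_nL_n^2$, we start the picking at a point where $\eta_n|\nabla u_n|^2\ge\epsilon_0$; the procedure only increases the gradient, so $\eta_nL_n^2\ge\epsilon_0=:\delta$. The procedure terminates with finite $L_n$ because $|\nabla u_n|$ is bounded: smoothness together with the finite quartic energy give this, e.g.\ via Morrey and bounded derivatives on compact sets. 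If the global supremum might not be finite, we work on large disks instead. Also $L_n\to\infty$, since $L_n^2\ge\delta/\eta_n$.

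\textbf{Rescaled equation.} Set $v_n(y)=u_n(y_n+y/L_n)$. Then $|\nabla v_n(0)|=1$, and $|\nabla v_n|\le 2$ on $B_{R_n}$. Rescaling \eqref{eq:regularizer-decay-1} by the chain rule gives \eqref{eq:regularizer-decay-7}: the gradient terms scale as $L_n^2$, the height terms gain $L_n^{-2}$, and the drift term gains $L_n^{-1}$, with $\alpha_n=\eta_nL_n^2$, $\lambda_n=1/L_n$ and $d_n=c_n/L_n$. Scale invariance of the quartic term in two dimensions gives
\[
\alpha_n\int w_n^2=\eta_n\int|\nabla u_n|^4\le K,
\]
which is the first bound in \eqref{eq:regularizer-decay-9}.

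\textbf{Lower bound.} For large $n$ we have $\lambda_n,d_n\le1$ and $w_n\le4$ on $B_1$. Hence \cref{lem:scalar-gradient} applies, and since $F_n(0)=1+\alpha_n/2$ it gives
\[
1+\alpha_n/2\le C_0\int_{B_1}F_n .
\]
For the upper bound, Cauchy--Schwarz gives
\[
\int_{B_1}w_n\le \sqrt\pi\,\Bigl(\int w_n^2\Bigr)^{1/2}\le (\pi K/\alpha_n)^{1/2},
\]
and $\tfrac{\alpha_n}{2}\int w_n^2\le K/2$. Together these complete \eqref{eq:regularizer-decay-9}.

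\textbf{Finiteness of $\alpha$.} The two-sided bound reads
\[
1+\alpha_n/2\le C_0\bigl((\pi K/\delta)^{1/2}+K/2\bigr),
\]
using $\alpha_n\ge\delta$, so $\alpha_n$ is bounded. A subsequence therefore converges to some $\alpha\in[\delta,\infty)$.

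The main obstacle is the point-picking step. It must keep the ratio $\eta_nL_n^2$ bounded below while securing the doubling bound on exhausting disks, which is why the picking has to begin at a point where $\eta_n|\nabla u_n|^2\ge\epsilon_0$.
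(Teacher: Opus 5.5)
Your proof is correct and follows the paper's argument. Both proofs use point-picking started at a point where $\eta_n|\nabla u_n|^2$ is bounded below, then rescaling, scale invariance of the quartic energy, \cref{lem:scalar-gradient} at the origin, and Cauchy--Schwarz on $B_1$. The only difference is that the paper replaces the doubling lemma by the explicit maximum of $(1/2-|x-x_n|)|\nabla u_n(x)|$ on $\overline{B_{1/2}(x_n)}$, which gives $L_n\ge|\nabla u_n(x_n)|$ and radii $t_nL_n/2\to\infty$ directly.

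One correction concerns your justification that the picking terminates. No global gradient bound is needed, and Morrey's estimate would not give one, since it controls the H\"older norm of $u_n$ rather than $\nabla u_n$. The doubling iterates stay in the compact ball $\overline{B_{2R_n/|\nabla u_n(x_n)|}(x_n)}$, where $|\nabla u_n|$ is bounded by continuity, and that is enough.
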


\begin{proof}
Suppose the first limit in \eqref{eq:regularizer-decay-3} fails. After
extraction there are $x_n\in\R^2$ and $\delta>0$ with
$\eta_n|\nabla u_n(x_n)|^2\ge\delta$.
On $\overline{B_{1/2}(x_n)}$ maximise
$(1/2-|x-x_n|)|\nabla u_n(x)|$ at an interior point $y_n$, and set
\[
t_n=1/2-|y_n-x_n|,\qquad L_n=|\nabla u_n(y_n)|.
\]
Then $t_nL_n\ge|\nabla u_n(x_n)|/2\to\infty$,
$L_n\ge|\nabla u_n(x_n)|$, and
$|\nabla u_n|\le2L_n$ on $B_{t_n/2}(y_n)$. Use the rescalings and coefficients defined in the statement; in particular,
$\alpha_n=\eta_nL_n^2\ge\delta$.
On disks of radii $t_nL_n/2\to\infty$ these maps have
$|\nabla v_n(0)|=1$, $|\nabla v_n|\le2$, and solve
\eqref{eq:regularizer-decay-7},
with $w_n\le4$.
Planar scaling and \cref{lem:scalar-gradient} give
\eqref{eq:regularizer-decay-9}. For $\alpha_n\ge1$, its last expression is at most
$C_0(\sqrt{\pi K}+K/2)$. Thus $\alpha_n$ is bounded, and a subsequence
converges to $\alpha\in[\delta,A_K]$ for some finite $A_K$.
In particular $f_n$ is uniformly bounded on every fixed disk.
\end{proof}

\subsection{Strong convergence by monotonicity}

The bounds on the rescaled maps give a weak limit. The same monotonicity argument used at fixed regularisation yields strong convergence, while the scalar estimate ensures that the limit is nonconstant.

\begin{lemma}[A nonconstant rescaled limit]\label{lem:decay-limit}
The rescalings in \cref{lem:decay-rescaling} have a subsequence converging locally uniformly and strongly in $W^{1,2}_{\rm loc}\cap W^{1,4}_{\rm loc}$ to a nonconstant Lipschitz sphere map $Q$. With $\alpha$ as in that lemma,
\begin{equation}\label{eq:regularizer-decay-11}
\frac12\int|\nabla Q|^2\le M,\qquad
\alpha\int|\nabla Q|^4\le K,
\end{equation}
and
\begin{equation}\label{eq:regularizer-positive-limit}
0<\frac{1+\alpha/2}{C_0}
\le\int_{B_1}\left(|\nabla Q|^2+\frac\alpha2|\nabla Q|^4\right).
\end{equation}
\end{lemma}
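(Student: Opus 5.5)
We will follow the fixed-$\eta$ recovery argument of \cref{prop:fixed-eta-recovery}, but now with the rescaled coefficients $\alpha_n\to\alpha\in(0,\infty)$ and exact equations \eqref{eq:regularizer-decay-7}.

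\textbf{Step 1: compactness.} The uniform Lipschitz bound $|\nabla v_n|\le2$ holds on disks exhausting $\mathbb R^2$. With $|v_n|=1$, Arzel\`a--Ascoli gives a subsequence converging locally uniformly to a Lipschitz sphere map $Q$. The gradients also converge weakly-$*$ in $L^\infty_{\rm loc}$, hence weakly in $W^{1,2}_{\rm loc}\cap W^{1,4}_{\rm loc}$. Planar scaling gives two bounds uniformly in $n$:
\[
\tfrac12\int|\nabla v_n|^2=\tfrac12\int|\nabla u_n|^2\le M,\qquad \alpha_n\int|\nabla v_n|^4=\eta_n\int|\nabla u_n|^4\le K.
\]
Weak lower semicontinuity on each disk, together with $\alpha_n\to\alpha$, then yields \eqref{eq:regularizer-decay-11}.

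\textbf{Step 2: strong convergence by monotonicity.} Fix a cutoff $\chi$ and set $d_n=v_n-Q$ and $w_n=\chi\Pi_{v_n}d_n$. Since $v_n$ solves \eqref{eq:regularizer-decay-7} exactly, testing with $w_n$ and using identity \eqref{eq:fixed-eta-recovery-14}, with $\mathcal A_n(P)=(1+\alpha_n|P|^2)P$, gives
\[
\int\chi\,\mathcal A_n(\nabla v_n):(\nabla v_n-\nabla Q)\to0.
\]
All remainder terms carry a factor $r_n$ or $d_n$, which tends to zero uniformly, against integrands bounded in $L^1$ on $\operatorname{supp}\chi$. The right-hand side $f_n$ is uniformly bounded there, and $w_n\to0$ uniformly.

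Weak convergence gives $\int\chi\,\mathcal A_n(\nabla Q):(\nabla v_n-\nabla Q)\to0$, using $\alpha_n\to\alpha$ and $\nabla Q\in L^\infty_{\rm loc}$. Subtracting the two limits and applying the monotonicity inequality \eqref{eq:fixed-eta-recovery-17} with $\eta$ replaced by $\alpha_n\ge\delta$, we get
\[
\int\chi\bigl(|\nabla v_n-\nabla Q|^2+\tfrac{\delta}4|\nabla v_n-\nabla Q|^4\bigr)\to0.
\]
This is strong convergence in $W^{1,2}_{\rm loc}\cap W^{1,4}_{\rm loc}$.

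\textbf{Step 3: the positive lower bound.} Strong local $L^2\cap L^4$ convergence of the gradients, with $\alpha_n\to\alpha$ and $w_n\le4$, gives
\[
\int_{B_1}F_n\to\int_{B_1}\bigl(|\nabla Q|^2+\tfrac\alpha2|\nabla Q|^4\bigr).
\]
The middle inequality of \eqref{eq:regularizer-decay-9} is $1+\alpha_n/2\le C_0\int_{B_1}F_n$. Passing to the limit in it gives \eqref{eq:regularizer-positive-limit}. The left side of \eqref{eq:regularizer-positive-limit} is strictly positive, so $\nabla Q\not\equiv0$ and $Q$ is nonconstant.

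\textbf{Main obstacle.} The main difficulty is Step 2, specifically checking that the remainder terms in \eqref{eq:fixed-eta-recovery-14} vanish uniformly as the coefficient $\alpha_n$ varies. The quartic remainders are controlled because $\alpha_n$ is bounded and $w_n\le4$ holds locally. So the local $L^\infty$ gradient bound replaces the global energy assumption used in \cref{lem:recovery-strong}. No residual estimate is needed, since the equation is exact.
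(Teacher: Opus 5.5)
Your proposal is correct and follows the paper's proof: Arzel\`a--Ascoli from $|\nabla v_n|\le2$, then the monotonicity inequality \eqref{eq:fixed-eta-recovery-17} with $\alpha_n\ge\delta$ for strong local convergence, then passage to the limit in the middle inequality of \eqref{eq:regularizer-decay-9}. The only difference is cosmetic: because \eqref{eq:regularizer-decay-7} holds exactly as a full vector equation, the paper tests it directly with the ambient field $\chi(v_n-Q)$, so your tangent projection and identity \eqref{eq:fixed-eta-recovery-14} are harmless but not needed (also, your $d_n=v_n-Q$ clashes with $d_n=c_n/L_n$ from \cref{lem:decay-rescaling}).
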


\begin{proof}
The gradient bound and sphere constraint give, after diagonal extraction,
local uniform and weak $W^{1,4}_{\rm loc}$ convergence to a Lipschitz
sphere map $Q$, with $|\nabla Q|\le2$.
Put $\mathcal A_n(P)=(1+\alpha_n|P|^2)P$. For any nonnegative smooth
compact cutoff $\chi$, test \eqref{eq:regularizer-decay-7} by
$\chi(v_n-Q)$. This compact Lipschitz test is legitimate by smoothing
in $W^{1,4}$. The right side and the terms differentiating $\chi$ tend
to zero: $v_n-Q\to0$ uniformly there, while $f_n$ and
$\mathcal A_n(\nabla v_n)$ are bounded. Consequently
\[
\int\chi\mathcal A_n(\nabla v_n):
                  (\nabla v_n-\nabla Q)\longrightarrow0.
\]
Also $\mathcal A_n(\nabla Q)\to\mathcal A_\alpha(\nabla Q)$ strongly
in local $L^{4/3}$, so weak gradient convergence gives the same limit
after replacing $\mathcal A_n(\nabla v_n)$ by
$\mathcal A_n(\nabla Q)$. Subtract and use the elementary monotonicity
inequality \eqref{eq:fixed-eta-recovery-17}, with $\eta=\alpha_n\ge\delta$:
\[
\int\chi\left(|\nabla v_n-\nabla Q|^2
            +\frac\delta4|\nabla v_n-\nabla Q|^4\right)\longrightarrow0.
\]
Thus gradients converge strongly in local $L^2$ and $L^4$. Scaling the
global bounds and exhausting the plane gives
\eqref{eq:regularizer-decay-11}.
Passing to the limit in the integral in
\eqref{eq:regularizer-decay-9} gives
\eqref{eq:regularizer-positive-limit}, so $Q$ is nonconstant.
\end{proof}

\subsection{The limiting stress identity}

The rescaled drift and anisotropy terms disappear in the limit. Passing the stress identities through the strong convergence then forces the remaining quartic energy to vanish, contradicting nonconstancy when its coefficient is positive.

\begin{lemma}[The limiting stress constraint]\label{lem:decay-stress}
For the limit in \cref{lem:decay-limit}, the limiting stress is integrable and divergence free, and $\alpha\int|\nabla Q|^4=0$.
\end{lemma}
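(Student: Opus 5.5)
We pass the rescaled equation \eqref{eq:regularizer-decay-7} to the limit and read off a stress identity for $Q$. Since $v_n\to Q$ locally uniformly and strongly in $W^{1,2}_{\rm loc}\cap W^{1,4}_{\rm loc}$, while $\lambda_n=L_n^{-1}\to0$, $d_n=c_n/L_n\to0$ and $\alpha_n\to\alpha$, each term of the weak form of \eqref{eq:regularizer-decay-7} converges. Hence $Q$ solves, in the weak sense,
\[
\operatorname{div}\bigl((1+\alpha|\nabla Q|^2)\nabla Q\bigr)+(1+\alpha|\nabla Q|^2)|\nabla Q|^2Q=0 .
\]
Because $|\nabla Q|\le2$ and $\alpha>0$, \cref{prop:weak-regularity} applies with $\eta=\alpha$ and $c=0$, after removing the anisotropy terms; the proof there only uses the structure of $f$. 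So $Q$ is smooth. We then form the tensor
\[
T_{ij}=(1+\alpha|\nabla Q|^2)\,\partial_iQ\cdot\partial_jQ-\Bigl(\tfrac12|\nabla Q|^2+\tfrac\alpha4|\nabla Q|^4\Bigr)\delta_{ij}.
\]
The computation leading to \eqref{eq:energy-absorption-W3}, now with no height or drift term, gives $\partial_iT_{ij}=0$. This uses that the divergence term is normal and that $Q\cdot\partial_jQ=0$. By \eqref{eq:regularizer-decay-11}, $|T_{ij}|\le C(|\nabla Q|^2+\alpha|\nabla Q|^4)\in L^1(\mathbb R^2)$, which gives integrability.

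Next we test $\partial_iT_{ij}=0$ against the compact radial fields $X=x\chi(x/R)$, exactly as in the proof of \cref{prop:energy-stress}. This gives
\[
\int T_{ii}\chi_R+\int x_j T_{ij}\,\partial_i\chi_R=0 .
\]
The boundary term is bounded by the energy of $T$ on an annulus $\{r_0R\le|x|\le r_1R\}$. That energy tends to zero as $R\to\infty$ by integrability, so $\int\operatorname{tr}T=0$. In two dimensions the trace is
\[
\operatorname{tr}T=(1+\alpha|\nabla Q|^2)|\nabla Q|^2-|\nabla Q|^2-\tfrac\alpha2|\nabla Q|^4=\tfrac\alpha2|\nabla Q|^4 .
\]
The conformally invariant Dirichlet part cancels and only the quartic part survives. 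Therefore $\alpha\int|\nabla Q|^4=0$.

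The step most likely to need care is justifying the passage to the limit in the weak equation and the tensor identity without assuming decay of $Q$. Strong local $W^{1,4}$ convergence handles the cubic terms, since each is a product of a bounded coefficient with at most four convergent gradient factors in $L^1_{\rm loc}$. Alternatively, one can derive $\partial_iT_{ij}=0$ weakly by testing the limit equation with $X\cdot\nabla Q$ for compactly supported $X$; this test is legitimate once $Q$ is $C^{1,\beta}$ by \cref{lem:regularity-holder}.

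Since $\alpha>0$, the identity $\alpha\int|\nabla Q|^4=0$ forces $\nabla Q\equiv0$. This contradicts \eqref{eq:regularizer-positive-limit}, so the lemma is the tool that finishes \cref{prop:regularizer-decay}.
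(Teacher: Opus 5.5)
Your proof is correct, but it reaches the divergence-free stress by a different route from the paper.

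\textbf{Your route.} You first pass the weak form of \eqref{eq:regularizer-decay-7} to the limit. You then invoke \cref{prop:weak-regularity} with $\eta=\alpha>0$, $c=0$ and the height terms deleted. This is harmless, since that proof uses only $|f|\le C(1+|\nabla u|^4)$ and smooth dependence of $f$ on $(u,\nabla u)$. Having made $Q$ smooth, you compute $\partial_iT_{ij}=0$ classically.

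\textbf{The paper's route.} The paper never uses regularity of $Q$. It writes the exact stress $S^n_{ij}$ of the already smooth rescaled maps, including the diagonal height term $\lambda_n^2(v_n)_3^2/2$. It notes that $\partial_iS^n_{ij}=-d_n\delta_{j2}\,v_n\cdot(\partial_1v_n\times\partial_2v_n)\to0$ in $L^1_{\rm loc}$. Strong local $L^4$ convergence of the gradients, together with $\alpha_n\to\alpha$ and $\lambda_n\to0$, gives $S^n\to S$ in $L^1_{\rm loc}$. Hence $\partial_iS_{ij}=0$ in distributions.

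\textbf{Comparison.} The paper's route is lighter: it needs only the convergence already established in \cref{lem:decay-limit}, and avoids Uhlenbeck's theorem and the bootstrap. Yours yields more than the Pohozaev step requires, namely a smooth $Q$ and a pointwise identity. From the stress identity onward the two arguments coincide: integrability via \eqref{eq:regularizer-decay-11}, the test $x_j\chi_R$, and $\operatorname{tr}S=\alpha|\nabla Q|^4/2$.

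\textbf{A small correction to your aside.} Testing the limit equation with $(X\cdot\nabla)Q$ is not legitimate merely because $Q\in C^{1,\beta}$, since that field need not lie in $H^1$. You need $Q\in W^{2,2}_{\rm loc}$. The difference-quotient step in \cref{prop:weak-regularity} supplies this, so your main line is unaffected.
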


\begin{proof}
The exact rescaled stress is
\[
S^n_{ij}=(1+\alpha_nw_n)\partial_i v_n\cdot\partial_jv_n
 -(w_n/2+\alpha_nw_n^2/4+\lambda_n^2(v_n)_3^2/2)\delta_{ij}.
\]
Differentiating and using \eqref{eq:regularizer-decay-7} gives
\[
\partial_iS^n_{ij}
 =-d_n\delta_{j2}\,v_n\cdot(\partial_1v_n\times\partial_2v_n).
\]
The right side tends to zero locally in $L^1$. Strong local $L^4$
convergence of gradients, $\alpha_n\to\alpha$, and $\lambda_n\to0$
give local $L^1$ convergence of the stresses to
\[
S_{ij}=(1+\alpha|\nabla Q|^2)\partial_iQ\cdot\partial_jQ
 -( |\nabla Q|^2/2+\alpha|\nabla Q|^4/4)\delta_{ij}.
\]
Hence $\partial_iS_{ij}=0$ distributionally. This passage uses the
identities for the smooth approximating maps; no regularity or
stationarity theorem for $Q$ is needed. By
\eqref{eq:regularizer-decay-11}, $S\in L^1(\R^2)$ and
\begin{equation}\label{eq:regularizer-decay-12}
\operatorname{tr}S=\alpha|\nabla Q|^4/2\ge0.
\end{equation}
Test its $j$-th divergence with $x_j\chi_R$, sum in $j$, and let
$R\to\infty$, where $\chi_R$ is one on $B_R$, zero outside $B_{2R}$,
and $|\nabla\chi_R|\le C/R$. The identity
\[
\int\chi_R\operatorname{tr}S
=-\int S_{ij}x_j\partial_i\chi_R
\]
has a right-hand side tending to zero by the integrable tail of $S$.
Thus $\alpha\int|\nabla Q|^4=0$.
\end{proof}

We can now complete the proof of quartic decay by combining the nonconstant limit with the stress constraint.

\begin{proof}[Proof of \cref{prop:regularizer-decay}]
If the first limit failed, \cref{lem:decay-rescaling,lem:decay-limit} would give a nonconstant $Q$ with $\alpha>0$. This contradicts \cref{lem:decay-stress}. Hence the first limit holds. Finally,
\[
\eta_n\int|\nabla u_n|^4
\le\left(\sup_{\R^2}\eta_n|\nabla u_n|^2\right)
       \int|\nabla u_n|^2
\le2M\sup_{\R^2}\eta_n|\nabla u_n|^2\longrightarrow0.
\]
\end{proof}

\section{Planar concentration and compactness}\label{app:concentration-compactness}

Once the quartic term has disappeared, every possible gradient
concentration has a harmonic rescaling limit. The stress identity derived from the travelling-wave equation
forces zero net local area at a concentration point, doubling the
usual harmonic-map energy cost. A strict bound on transverse energy
therefore gives compactness after arbitrary translations.

Fix $c\in(0,1)$. Let $\eta_n\ge0$ tend to zero and let smooth maps
$u_n:\R^2\to S^2$ solve
\begin{equation}\label{eq:concentration-compactness-1}
\operatorname{div}((1+\eta_n|\nabla u_n|^2)\nabla u_n)
+(1+\eta_n|\nabla u_n|^2)|\nabla u_n|^2u_n
+(u_n)_3^2u_n-(u_n)_3e_3+c\,u_n\times\partial_1u_n=0.
\end{equation}
Assume
\begin{equation}\label{eq:concentration-compactness-2}
E(u_n)\le M,\qquad k_n:=\eta_n\int|\nabla u_n|^4\longrightarrow0.
\end{equation}
When $\eta_n=0$, the density defining $k_n$ is identically zero, so no
global $L^4$ assumption is imposed on that term.
Put $w(v)=|\nabla v|^2$, $e(v)=(w(v)+v_3^2)/2$, and
$T_{ij}(v)=\partial_iv\cdot\partial_jv-e(v)\delta_{ij}$.

\subsection{Harmonic spheres and differential identities}\label{appsrc-concentration-compactness-inputs-and-the-regularized-differential-identities}

We first record the harmonic-map energy bound and the differential identities that relate the travelling-wave equation to its vorticity and stress.

We use \cite[Proposition~1.4]{delaire2014} for the
smoothness of a finite-energy planar distributional solution, after
establishing its full weak equation. In dimension two that result
requires no prior uniform continuity or stationarity assumption.

We also use the classification of finite-energy harmonic spheres: if \(Q:\mathbb R^2\to S^2\) is smooth and nonconstant, \(\Delta Q+|\nabla Q|^2Q=0\), and \(D(Q):=\frac12\int|\nabla Q|^2<\infty\), then \begin{equation}
D(Q)=\left|\int Q\cdot(\partial_1Q\times\partial_2Q)\right|=4\pi|d|\ge4\pi
\label{eq:concentration-compactness-14}
\end{equation} for a nonzero integer \(d\). This is the finite-energy harmonic-plane classification
in \cite[Proposition~1.1 and Lemmas~1.2 and~2.1]{deng2024}. Its Dirichlet
energy has the same factor $1/2$; its opposite orientation for the area
does not affect the absolute identity.

For one smooth solution \(v\) of the regularised equation write \(w=|\nabla v|^2\) and \(a=1+\eta w\). Its equation is \begin{equation}
\operatorname{div}(a\nabla v)=-aw v-v_3^2v+v_3\,e_3-cv\times\partial_1v.
\label{eq:concentration-compactness-15}
\end{equation} Taking the target cross product with \(v\) gives \begin{equation}
\sum_i\partial_i(a v\times\partial_i v)=v_3(v\times e_3)+c\partial_1v.
\label{eq:concentration-compactness-16}
\end{equation} The terms \(\partial_i v\times\partial_i v\) vanish, as does the component of \eqref{eq:concentration-compactness-15} parallel to \(v\).

Differentiating the regularised stress tensor $T^\eta(v)$ from
\eqref{eq:energy-absorption-W1} gives \begin{equation}
\sum_i\partial_iT^\eta_{ij}
 =\operatorname{div}(a\nabla v)\cdot\partial_jv-v_3\partial_j v_3
 =-c\delta_{j2}q(v).
\label{eq:concentration-compactness-17}
\end{equation} Indeed the derivative of \(w/2+\eta w^2/4\) is \((a/2)\partial_jw\), which cancels \(a\sum_i\partial_i v\cdot\partial_{ij}v\); all terms of \eqref{eq:concentration-compactness-15} parallel to \(v\) have zero dot product with \(\partial_jv\). For the matrix difference \(H=T^\eta-T\), \begin{equation}
H_{ij}=\eta\big(w\partial_i v\cdot\partial_j v-w^2\delta_{ij}/4\big).
\label{eq:concentration-compactness-18}
\end{equation} The Euclidean length of each column of the Gram matrix \((\partial_i v\cdot\partial_jv)_{ij}\) is at most its trace \(w\). Thus \(|(H_{1j},H_{2j})|\le(5/4)\eta w^2\le2\eta w^2\). Thus $\|T^\eta-T\|_{L^1}\le C\eta\int w^2$.

\subsection{Harmonic limits at a concentration scale}\label{appsrc-concentration-compactness-every-gradient-blowup-has-a-harmonic-rescaling-limit}

The quartic decay proved above makes the rescaled principal part approach the Laplacian. We use this to identify any gradient concentration with a nonconstant harmonic sphere, costing at least $4\pi$ in Dirichlet energy.

\begin{lemma}[Harmonic rescaling limits]\label{lem:compactness-rescaling}
Every sequence of unbounded gradients in a fixed disk admits a rescaling converging smoothly locally to a nonconstant finite-energy harmonic sphere, of Dirichlet energy at least $4\pi$. Consequently, an eventual energy bound strictly below $4\pi$ on a disk gives uniform bounds on every derivative on smaller disks.
\end{lemma}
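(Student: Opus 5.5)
Take points $x_n$ in the fixed disk with $|\nabla u_n(x_n)|\to\infty$ and apply the same point-picking as in \cref{lem:decay-rescaling}. Maximise $(r_0-|x-x_n|)|\nabla u_n(x)|$ over $\overline{B_{r_0}(x_n)}$ at $y_n$, set $L_n=|\nabla u_n(y_n)|$, and define $v_n(y)=u_n(y_n+y/L_n)$. Then $|\nabla v_n(0)|=1$, $|\nabla v_n|\le2$ on disks of radius tending to infinity, and $v_n$ solves \eqref{eq:regularizer-decay-7}. The coefficients there are $\alpha_n=\eta_nL_n^2$, $\lambda_n=L_n^{-1}\to0$ and $d_n=c/L_n\to0$.

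The key new input compared with \cref{lem:decay-rescaling} is $\alpha_n\to0$. First, \cref{prop:regularizer-decay} and the hypothesis $k_n\to0$ give $\sup\eta_n|\nabla u_n|^2\to0$; this supremum bound is what the argument uses. Since $\alpha_n=\eta_n|\nabla u_n(y_n)|^2\le\sup\eta_n|\nabla u_n|^2$, we get $\alpha_n\to0$.

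Now the rescaled system is uniformly elliptic with flux $(1+\alpha_nw_n)\nabla v_n$ and bounded right side. I would use the $C^{1,\beta}$ and Schauder machinery of \cref{prop:weak-regularity}, with constants uniform because $\alpha_n\le1$ and $w_n\le4$. This gives uniform $C^{1,\beta}$ and then $C^k$ bounds on fixed disks. Alternatively, one can bootstrap the differentiated equation \eqref{eq:weak-regularity-21}, whose ellipticity constants lie in $[1,13]$.

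Arzelà--Ascoli then gives smooth local convergence to $Q$ with $|\nabla Q(0)|=1$. Passing to the limit in \eqref{eq:regularizer-decay-7} yields $\Delta Q+|\nabla Q|^2Q=0$. Scale invariance of the Dirichlet energy gives $\frac12\int|\nabla Q|^2\le M$. By \eqref{eq:concentration-compactness-14}, $Q$ is a nonconstant harmonic sphere with energy at least $4\pi$.

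For the consequence, argue by contradiction. Suppose the eventual energy on $B_r$ is at most $4\pi-\epsilon$, but the gradients are unbounded on some smaller $B_{r'}$. Choose $r_0=r-r'$ in the point-picking, so the blow-up disks lie in $B_r$. Lower semicontinuity on the balls $B_{R/L_n}(y_n)\subset B_r$ gives $\frac12\int_{B_R}|\nabla Q|^2\le4\pi-\epsilon$ for every $R$. This contradicts $D(Q)\ge4\pi$.

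Once gradients are bounded on the smaller disk, the same uniform elliptic bootstrap, now at scale one, gives bounds on every derivative. Here I would use $\eta_n|\nabla u_n|^2\to0$ to keep ellipticity uniform and the equation coefficients bounded.

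The main obstacle is making the higher-derivative estimates uniform in $n$ for the quasilinear operator. This needs the uniform smallness of $\eta_n|\nabla u_n|^2$, so that the operator is a small perturbation of the Laplacian. I would first try the Schauder bootstrap with the explicit ellipticity bounds of $D\mathcal A$ recorded in the proof of \cref{prop:weak-regularity}.
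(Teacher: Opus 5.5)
Your point-picking, your use of \cref{prop:regularizer-decay} to get $\alpha_n\le\sup\eta_n|\nabla u_n|^2\to0$, the limit equation $\Delta Q+|\nabla Q|^2Q=0$, the appeal to \eqref{eq:concentration-compactness-14}, and the contradiction argument for the consequence all match the paper. The gap is the uniform regularity step, which is the one you flagged as the obstacle.

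The machinery of \cref{prop:weak-regularity} does not have constants that stay uniform as $\alpha_n\to0$. Its comparison bounds \eqref{eq:weak-regularity-15}--\eqref{eq:weak-regularity-16} come from the term $\frac\eta4|P-Q|^4$ in the monotonicity inequality, so they divide by $\eta$. The Uhlenbeck estimate \eqref{eq:weak-regularity-8} is used with $k=\eta^{-1}$, so its right side contains $k^2=\alpha_n^{-2}$. The constants there are explicitly allowed to depend on $\eta$, and they degenerate for small $\eta$. It is a lower bound on $\alpha_n$ they need, not $\alpha_n\le1$.

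Your alternative, bootstrapping \eqref{eq:weak-regularity-21}, presupposes the missing estimate. Schauder theory for that linear system needs the coefficients $D\mathcal A(\nabla v_n)$ to be uniformly $C^{0,\beta}$, which is a uniform $C^{1,\beta}$ bound on $v_n$. Ellipticity constants in $[1,13]$ with merely bounded measurable coefficients do not give this for systems.

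The paper supplies this bound as follows.
\begin{itemize}
\item It divides the rescaled equation by $a_n=1+\alpha_nw_n$. The principal part becomes $\Delta v_n^\ell+\frac{2\alpha_n}{a_n}\sum_{i,j,m}(\partial_iv_n^\ell)(\partial_jv_n^m)\partial_{ij}v_n^m$.
\item Since $|\nabla v_n|\le2$, this differs from the Laplacian by coefficients of size $O(\alpha_n)$, and the right side is bounded.
\item The $W^{2,p}$ estimate for the Laplacian is applied to a cutoff $\chi v_n$. The resulting term $C_p\alpha_n\|D^2(\chi v_n)\|_p$ is absorbed once $C_p\alpha_n<1/2$.
\item This gives uniform $W^{2,p}_{\rm loc}$ bounds for some $p>2$, hence uniform $C^{1,\beta}$. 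Only then does your Schauder bootstrap close.
\end{itemize}
This absorption is exactly where the smallness $\alpha_n\to0$, and not just boundedness, enters. The same argument with coefficient $\eta_n|\nabla u_n|^2\to0$ gives the original-scale derivative bounds in the consequence.

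A two-dimensional alternative would also work. Difference quotients give uniform $W^{2,2}$ bounds from $|\nabla v_n|\le2$. Meyers' higher integrability for the differentiated system, whose coefficients satisfy the Legendre condition, then gives $\nabla v_n\in W^{1,2+\epsilon}\subset C^{0,\beta}$ uniformly. Either way, some such step has to be supplied before Schauder theory applies.
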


\begin{proof}
Consider points with unbounded gradients in a fixed disk. The weighted maximum construction is as follows. On a slightly larger closed disk \(\overline{B_s(x_0)}\) maximise \[
 (s-|x-x_0|)|\nabla u_n(x)|
\] at an interior point \(y_n\). If the original unbounded-gradient points stay in a smaller disk, the maximum tends to infinity. Put \(t_n=s-|y_n-x_0|\) and \(L_n=|\nabla u_n(y_n)|\). Then \(t_nL_n\to\infty\), \(L_n\to\infty\), and \(|\nabla u_n|\le2L_n\) on \(B_{t_n/2}(y_n)\). Define \[
 v_n(y)=u_n(y_n+y/L_n),\quad
 \alpha_n=\eta_nL_n^2,\quad \lambda_n=L_n^{-1},\quad d_n=c/L_n.
\] These maps solve \eqref{eq:concentration-compactness-19}, have \(|\nabla v_n(0)|=1\), and satisfy \(w(v_n)\le4\) on disks of radii \(t_nL_n/2\to\infty\). By \cref{prop:regularizer-decay},
\begin{equation}\label{eq:concentration-compactness-25}
0\le\alpha_n=\eta_nL_n^2
\le\sup_{\R^2}\eta_n|\nabla u_n|^2\longrightarrow0.
\end{equation}
Indeed, the subsequence with $\eta_n>0$ has bounded quartic energy
by \eqref{eq:concentration-compactness-2}, so that proposition applies;
on the remaining indices $\alpha_n=0$ identically.

Since $\alpha_n\to0$, we obtain uniform local regularity by treating the rescaled operator as a perturbation of the Laplacian. Divide \eqref{eq:concentration-compactness-19} by \(a_n=1+\alpha_nw(v_n)\). In components the second-order part is \begin{equation}
\Delta v_n^\ell+
 \frac{2\alpha_n}{a_n}\sum_{i,j,m}
 (\partial_i v_n^\ell)(\partial_jv_n^m)\partial_{ij}v_n^m.
\label{eq:concentration-compactness-26}
\end{equation} Here \(\ell,m\in\{1,2,3\}\), \(i,j\in\{1,2\}\). Its difference from the componentwise Laplacian has coefficient norm at most a fixed multiple of \(\alpha_n\) on any fixed disk, since \(|\nabla v_n|\le2\). Its right side after division is uniformly bounded. For any fixed \(p\in(1,\infty)\), multiply \(v_n\) by a smooth cutoff supported in a slightly larger fixed disk and apply the componentwise Laplacian \(W^{2,p}\) estimate to that product. The term involving \(D^2v_n\) in \eqref{eq:concentration-compactness-26} is bounded by \(C_p\alpha_n\) times the second-derivative norm of the product, plus bounded terms involving the cutoff, \(v_n\) and \(\nabla v_n\). For large \(n\), \(C_p\alpha_n<1/2\), so it is absorbed on the left. This gives uniform local \(W^{2,p}\) estimates.

Choose \(p>2\) to obtain local \(C^{1,\beta}\) bounds for some \(\beta>0\). The coefficients and right side in \eqref{eq:concentration-compactness-26} are smooth functions of \(v_n,\nabla v_n\) and the bounded parameters, so they have uniform \(C^{0,\beta}\) bounds. They are uniformly strongly elliptic. Interior Hölder estimates for this system then give \(C^{2,\beta}\) bounds. Differentiation and iteration give all higher local bounds.

A diagonal subsequence converges smoothly on every compact subset of the plane to a sphere-valued harmonic map \(Q\) with \(|\nabla Q(0)|=1\). It is nonconstant. The scale invariance of the two-dimensional Dirichlet integral and \eqref{eq:concentration-compactness-2} give \(D(Q)\le M\), so \eqref{eq:concentration-compactness-14} applies. In particular every such blowup carries Dirichlet energy at least \(4\pi\).

It follows that on any fixed disk with an eventual energy bound \(m_0<4\pi\), the gradients on a smaller disk are uniformly bounded along the sequence. Otherwise the construction just given, performed within that disk, would give a harmonic \(Q\) with \(D(Q)\le m_0<4\pi\). Once the gradients are bounded, the same absorption argument, now with parameter \(\eta_n\) at the original scale, supplies uniform bounds of all derivatives on smaller disks. This is the local sequential regularity assertion we need.
\end{proof}

\subsection{The weak limit and the concentration set}\label{appsrc-concentration-compactness-weak-equation-closure-and-the-finite-concentration-set}

We now pass to the original-scale weak limit. The $4\pi$ concentration bound confines any failure of smooth convergence to finitely many points.

\begin{lemma}[Weak closure and isolated concentration]\label{lem:compactness-weak}
There are a subsequence, a smooth finite-energy solution $u$ of
\eqref{eq:main}, and a finite set $S=\{p_1,\ldots,p_N\}$ such that
\begin{equation}\label{eq:concentration-compactness-3}
u_n\rightharpoonup u\text{ in }H^1_{\rm loc},\qquad
u_n\to u\text{ in }L^2_{\rm loc}\text{ and a.e.},\qquad
u_n\to u\text{ in }C^m_{\rm loc}(\R^2\setminus S)
\end{equation}
for every integer $m\ge0$. In particular the limit obeys
\begin{equation}\label{eq:concentration-compactness-4}
-\int\nabla u:\nabla\varphi
+\int(|\nabla u|^2+u_3^2)u\cdot\varphi
-\int u_3e_3\cdot\varphi+c\int(u\times\partial_1u)\cdot\varphi=0.
\end{equation}
Against compactly supported continuous test functions, $e(u_n)dx\rightharpoonup\mu$, where
\begin{equation}
\mu=e(u)dx+\sum_{p\in S}\gamma_p\delta_p,\qquad\gamma_p\ge4\pi.
\label{eq:concentration-compactness-29}
\end{equation}
For real coefficients $\beta_p$ and $A_{ij,p}$, the vorticity and stress measures satisfy
\begin{equation}
q(u_n)dx\rightharpoonup q(u)dx+\sum_{p\in S}\beta_p\delta_p,\quad
 T_{ij}(u_n)dx\rightharpoonup T_{ij}(u)dx+\sum_{p\in S}A_{ij,p}\delta_p.
\label{eq:concentration-compactness-30}
\end{equation}
\end{lemma}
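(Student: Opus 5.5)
We will follow the standard concentration-compactness scheme for harmonic-map-type equations. The concentration set will be defined through the energy measure, smooth convergence will hold away from it by \cref{lem:compactness-rescaling}, and the weak equation will be passed to the limit using the rotational form \eqref{eq:concentration-compactness-16}.

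First, $E(u_n)\le M$ bounds $\nabla u_n$ in $L^2$ and $(u_n)_3$ in $L^2$. Rellich and a diagonal argument then give a subsequence with $u_n\rightharpoonup u$ in $H^1_{\rm loc}$, strongly in $L^2_{\rm loc}$ and a.e., with $|u|=1$. Further extraction gives weak-$*$ limits of the Radon measures $e(u_n)dx\rightharpoonup\mu$, $q(u_n)dx$ and $T_{ij}(u_n)dx$, all of total mass at most $CM$ since $|q|,|T_{ij}|\le Ce$. Define
\[
S=\{x:\mu(\{x\})\ge4\pi\}.
\]
The total mass of $\mu$ is at most $M$, so $S$ is finite with $N\le M/(4\pi)$.

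Now take $x\notin S$. Then some disk $B_{2r}(x)$ has $\mu(B_{2r}(x))<4\pi$, and hence $\int_{B_{2r}(x)}e(u_n)<4\pi$ for large $n$. Because $D\le E$ on disks, \cref{lem:compactness-rescaling} gives uniform bounds on all derivatives on $B_r(x)$. Arzel\`a--Ascoli then yields $C^m$ convergence near $x$, and a covering gives $C^m_{\rm loc}(\R^2\setminus S)$ convergence. Consequently $\mu$, the vorticity measure and the stress measure agree with $e(u)dx$, $q(u)dx$, $T(u)dx$ away from $S$. Each limit measure is therefore its absolutely continuous part plus atoms on $S$, and this gives the forms \eqref{eq:concentration-compactness-29}--\eqref{eq:concentration-compactness-30}. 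The atoms $\gamma_p$ are at least $4\pi$ by the definition of $S$; the absolutely continuous part of $\mu$ is $e(u)dx$ because $\mu$ has no diffuse singular part off the finite set $S$. Lower semicontinuity gives $E(u)\le M$.

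The weak equation comes from passing to the limit in the conservation form \eqref{eq:concentration-compactness-16} tested against $\varphi\in C_c^\infty$:
\[
-\int a_n\,u_n\times\partial_iu_n\cdot\partial_i\varphi=\int\bigl((u_n)_3(u_n\times e_3)+c\,\partial_1u_n\bigr)\cdot\varphi .
\]
The terms $u_n\times\partial_iu_n$ converge weakly in $L^2_{\rm loc}$, since they are a strongly convergent bounded factor times a weakly convergent one. The correction $\eta_n|\nabla u_n|^2u_n\times\nabla u_n$ tends to zero in $L^1$: by Cauchy--Schwarz it is bounded by $\sqrt{k_n}\,\|\nabla u_n\|_2\sup(\eta_n|\nabla u_n|^2)^{1/2}$, and in any case by \eqref{eq:concentration-compactness-2}. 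This yields $\operatorname{div}(u\times\nabla u)=u_3(u\times e_3)+c\partial_1u$ in $\mathcal D'$. The standard equivalence for sphere-valued $H^1$ maps then recovers \eqref{eq:concentration-compactness-4}: take $\Phi\times u$-type tests, or expand $\varphi=(\varphi\cdot u)u+u\times(\varphi\times u)$, using $u\cdot\nabla u=0$. This step has to be carried out carefully, because $u\times(\varphi\times u)$ is only $H^1\cap L^\infty$, so a density argument in $H^1\cap L^\infty$ test fields is needed.

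Smoothness of $u$ then follows from de Laire's planar regularity result \cite[Proposition~1.4]{delaire2014}, since $u$ is a finite-energy distributional solution. I expect the main obstacle to be this last passage from the rotational (cross-product) identity to the full equation \eqref{eq:concentration-compactness-4} with the quadratic term $|\nabla u|^2u$. The plan is to rely on the identity $\Delta u+|\nabla u|^2u=-(u\times\operatorname{div}(u\times\nabla u))$, valid distributionally for $H^1$ sphere maps, rather than attempting any strong convergence across $S$.
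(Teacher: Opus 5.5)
Your proposal is correct and follows essentially the same route as the paper. The steps match: Rellich extraction, passage to the limit in the rotational identity \eqref{eq:concentration-compactness-16} with the regularising current vanishing in $L^1$, testing the limiting identity with $u\times\varphi$ to recover \eqref{eq:concentration-compactness-4}, de Laire's result for smoothness, and the $4\pi$ threshold from \cref{lem:compactness-rescaling} to define $S$ and identify the limit measures.

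Only minor repairs are needed:
\begin{itemize}
\item The $L^1$ bound should read $\int\eta_n|\nabla u_n|^3\le\sqrt{\eta_nk_n}\,\|\nabla u_n\|_2$; your stated product has the wrong homogeneity.
\item Take the eventual bound $\int e(u_n)<4\pi$ on a slightly smaller closed disk, or use a continuous cutoff. Weak-$*$ convergence bounds the masses of closed sets from above, not those of open sets.
\item The step you flag as the main obstacle is not one. The current and right side of the limiting identity lie in $L^2_{\rm loc}$, so the identity extends by density to all compactly supported $H^1$ tests, and $u\times\varphi$ is admissible directly without any $L^\infty$ density argument.
\end{itemize}
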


\begin{proof}
The target and energy bounds give a subsequence with a limit \(u\in H^1_{\rm loc}(\mathbb R^2;S^2)\), weak in \(H^1_{\rm loc}\) and strong in \(L^2_{\rm loc}\) and a.e.; lower semicontinuity on increasing disks gives \(E(u)\le M\). The regularising part of the current in \eqref{eq:concentration-compactness-16} tends to zero even in \(L^1\) on its domain, because \begin{equation}
\int_{\mathbb R^2}\eta_n|\nabla u_n|^3
 \le\left(\eta_n\int_{\mathbb R^2}|\nabla u_n|^4\right)^{1/2}
       \left(\eta_n\int_{\mathbb R^2}|\nabla u_n|^2\right)^{1/2}
 \le\sqrt{2M\eta_n k_n}\longrightarrow0.
\label{eq:concentration-compactness-27}
\end{equation} For each fixed smooth compact test, the unregularised current \(u_n\times\partial_i u_n\) passes by strong convergence of \(u_n\) and weak convergence of its derivatives in \(L^2\). The lower terms in \eqref{eq:concentration-compactness-16} also pass, so \begin{equation}
\sum_i\partial_i(u\times\partial_i u)=u_3(u\times e_3)+c\partial_1u.
\label{eq:concentration-compactness-28}
\end{equation} Both its current and right side are locally \(L^2\). Therefore the weak identity extends to compactly supported \(H^1\) vector tests. For a given smooth compact \(\varphi\), insert the allowed test \(\psi=u\times\varphi\). The sphere constraint gives \[
 (u\times\partial_i u)\cdot\partial_i(u\times\varphi)
 =\partial_i u\cdot\partial_i\varphi-|\partial_i u|^2u\cdot\varphi,
\] \[
 u_3(u\times e_3)\cdot(u\times\varphi)
 =u_3e_3\cdot\varphi-u_3^2u\cdot\varphi,
 \qquad
 \partial_1u\cdot(u\times\varphi)=-(u\times\partial_1u)\cdot\varphi.
\] These identities turn \eqref{eq:concentration-compactness-28} into \eqref{eq:concentration-compactness-4}, with all terms integrable. \cite[Proposition~1.4]{delaire2014} now applies to this full finite-energy distributional solution and yields smoothness.

After further extraction let \(e(u_n)dx\) converge against compact continuous functions to a finite nonnegative measure \(\mu\), of total mass at most \(M\). Lower semicontinuity gives \(\mu\ge e(u)dx\). Define \[
 S=\{p:\mu(\{p\})\ge4\pi\}.
\] This is finite. If \(p\notin S\), choose a disk about \(p\) with limiting energy strictly below \(4\pi\). Using a continuous cutoff supported in a slightly larger such disk gives a uniform bound \(m_0<4\pi\) for the energies on a smaller disk. \Cref{appsrc-concentration-compactness-every-gradient-blowup-has-a-harmonic-rescaling-limit} gives local smooth convergence there after extraction. Diagonal extraction on a countable cover proves \eqref{eq:concentration-compactness-3} off \(S\) and \eqref{eq:concentration-compactness-29}. The potential terms converge strongly locally, so these atoms are entirely Dirichlet-energy defects. The bound \(|q(v)|\le|\nabla v|^2/2\le e(v)\) and analogous bounds for \(T\) permit further measure extraction. Smooth convergence off $S$ gives \eqref{eq:concentration-compactness-30} for real coefficients $\beta_p$ and $A_{ij,p}$. No diffuse defect remains away from \(S\) because convergence there is smooth.
\end{proof}

\subsection{Local neutrality and the concentration threshold}\label{appsrc-concentration-compactness-local-neutrality-and-the-doubled-cost}

The stress identity now strengthens the concentration bound. It forces the net vorticity defect at each concentration point to vanish, so the energy of a harmonic bubble must be supplemented by the cost of cancelling its signed area. Together these contributions cost at least $8\pi$.

\begin{proposition}[Local neutrality and the doubled energy cost]\label{prop:compactness-neutrality}
For the subsequence and concentration set in \cref{lem:compactness-weak}, the following measure convergences hold, with numbers $\gamma_j\ge8\pi$, against compact continuous functions:
\begin{equation}\label{eq:concentration-compactness-5}
\begin{gathered}
e(u_n)\,dx\rightharpoonup e(u)\,dx+\sum_j\gamma_j\delta_{p_j},\\
q(u_n)\,dx\rightharpoonup q(u)\,dx,\qquad
T_{il}(u_n)\,dx\rightharpoonup T_{il}(u)\,dx,
\end{gathered}
\end{equation}
\begin{equation}\label{eq:concentration-compactness-6}
\begin{gathered}
(\partial_i u_n\cdot\partial_lu_n)\,dx\rightharpoonup
(\partial_i u\cdot\partial_lu)\,dx+\delta_{il}\sum_j\gamma_j\delta_{p_j},\\
E(u)+\sum_j\gamma_j\le M.
\end{gathered}
\end{equation}
\end{proposition}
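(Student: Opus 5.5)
The plan is to derive local neutrality from the stress identity in the limit, and then combine neutrality with the harmonic-bubble classification \eqref{eq:concentration-compactness-14} to double the cost.

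\textbf{Step 1: pass the stress identity to the limit.} The identity \eqref{eq:concentration-compactness-17} for $u_n$ reads $\partial_iT^{\eta_n}_{ij}(u_n)=-c\delta_{j2}q(u_n)$. By \eqref{eq:concentration-compactness-18} and $k_n\to0$, one has $T^{\eta_n}-T\to0$ in $L^1$. Passing to the limit in distributions with \eqref{eq:concentration-compactness-30} gives
\[
\partial_i\Bigl(T_{ij}(u)+\sum_pA_{ij,p}\delta_p\Bigr)=-c\delta_{j2}\Bigl(q(u)+\sum_p\beta_p\delta_p\Bigr).
\]
The smooth limit $u$ satisfies the same identity without atoms, because it is a smooth solution of \eqref{eq:main}. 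Subtracting leaves
\[
\sum_pA_{ij,p}\partial_i\delta_p=-c\delta_{j2}\sum_p\beta_p\delta_p .
\]
First derivatives of Dirac masses are linearly independent from Dirac masses, so $\beta_p=0$ (since $c\neq0$) and $A_{ij,p}=0$ for every $p$. This gives the vorticity and stress convergences in \eqref{eq:concentration-compactness-5}.

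\textbf{Step 2: identify the Gram defect.} Write $G_{il}=\partial_iu_n\cdot\partial_lu_n$. Then $T_{il}=G_{il}-e\,\delta_{il}$, and the potential part of $e$ converges strongly. Hence the defect of $G_{il}$ at $p$ equals $\gamma_p\delta_{il}$: the trace of the Gram defect is $2\gamma_p$, and its trace-free part vanishes because $A_{p}=0$. This is \eqref{eq:concentration-compactness-6}. The bound $E(u)+\sum\gamma_j\le M$ follows by testing against cutoffs increasing to one.

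\textbf{Step 3: double the threshold to $\gamma_p\ge8\pi$.} This is the main obstacle. I would run a bubble-tree style argument near $p$ on a small disk $B_r(p)$. Since $\beta_p=0$, the signed area $\int_{B_r}q(u_n)$ tends to $\int_{B_r}q(u)=O(r^2)$, while the Dirichlet defect is $\gamma_p$.

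The first blow-up from \cref{lem:compactness-rescaling} yields a harmonic sphere of degree $d\ne0$, so it carries signed area $4\pi d$ and energy $4\pi|d|$. Neutrality forces the complementary region (the neck together with other bubbles) to carry signed area tending to $-4\pi d$. Using $|q|\le\frac12|\nabla u_n|^2$ pointwise, that complementary region therefore has Dirichlet energy at least $4\pi|d|$ in the limit.

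To make this rigorous I would choose scales $\rho_n\to0$ with $\rho_nL_n\to\infty$ so that the bubble's energy and area are captured in $B_{\rho_n}(y_n)$, up to $o(1)$ on $B_{R/L_n}(y_n)$. The annulus $B_r(p)\setminus B_{\rho_n}(y_n)$ then holds the rest. Summing gives
\[
\gamma_p\ge4\pi|d|+\bigl|-4\pi d\bigr|-o_r(1)\ge8\pi .
\]
The delicate point is to ensure that the energy counted in $\gamma_p$ is exactly the total limiting Dirichlet energy on $B_r$ minus the smooth part. This holds because $\mu(B_r)-\int_{B_r}e(u)\to\gamma_p$ as $r\to0$, so no separate neck analysis is needed: only the inequality $\int|q|\le\frac12\int|\nabla u_n|^2$ on each piece and additivity are used.
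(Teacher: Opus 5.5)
Your proposal is correct and follows essentially the same route as the paper. The limiting stress identity, by linear independence of $\delta_p$ and $\partial_i\delta_p$, removes the atoms of $q$ and $T$, and adding $e\,\delta_{il}$ gives the Gram convergence. The doubling then comes from splitting $B_r(p)$ into a bubble core and its complement, using $|q|\le\frac12|\nabla u_n|^2$ on the complement together with neutrality and $D(Q)=|\int q(Q)|=4\pi|d|$.

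The only difference is cosmetic. You pick diagonal core radii $\rho_n=R_n/L_n$, whereas the paper fixes the core $B_{R/L_n}(y_n)$, lets $n\to\infty$ and then $R\to\infty$. The paper also states explicitly two facts you use implicitly: the blow-up centres satisfy $y_n\to p$, and $\partial B_r(p)$ carries no $\mu$-mass.
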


\begin{proof}
By \eqref{eq:concentration-compactness-18} and \eqref{eq:concentration-compactness-2}, \(T^{\eta_n}(u_n)-T(u_n)\to0\) in \(L^1\). Thus \eqref{eq:concentration-compactness-17} passes to distributions using \eqref{eq:concentration-compactness-30}. Subtract the exact stress identity for the smooth limit \(u\), which follows from \eqref{eq:concentration-compactness-4} by differentiation. For each \(j\in\{1,2\}\) this gives \begin{equation}
\sum_{p\in S}\sum_i A_{ij,p}\partial_i\delta_p
 =-c\delta_{j2}\sum_{p\in S}\beta_p\delta_p.
\label{eq:concentration-compactness-31}
\end{equation} Fix \(p\in S\) and test in a disk containing no other point of \(S\). A smooth test with value zero and arbitrary gradient at \(p\) forces \(A_{ij,p}=0\) for each \(i,j\). A test equal to one near \(p\), in the \(j=2\) equation, then forces \(\beta_p=0\), since $c>0$. This proves the last two measure convergences in \eqref{eq:concentration-compactness-5}. Adding the energy measure times \(\delta_{ij}\) proves \eqref{eq:concentration-compactness-6}'s directional convergence, provisionally with \(\gamma_p\ge4\pi\).

Fix such a \(p\) and a small fixed \(r>0\) whose closed disk contains no other point of \(S\). The boundary circle has zero \(\mu\) mass. Approximation of its indicator by continuous cutoffs and the bound \(|q(u_n)|\le e(u_n)\) therefore give \begin{equation}
E(u_n;B_r(p))\to\mu(B_r(p)),\qquad
 \int_{B_r(p)}q(u_n)\to\int_{B_r(p)}q(u).
\label{eq:concentration-compactness-32}
\end{equation} The positive energy atom implies unbounded gradients in \(B_{r/4}(p)\). Apply the weighted maximum construction of \cref{appsrc-concentration-compactness-every-gradient-blowup-has-a-harmonic-rescaling-limit} in \(B_{r/2}(p)\). Its centres \(y_n\) tend to \(p\), because the gradients are bounded on every compact set away from \(S\). Its scales \(L_n\to\infty\) give a smooth harmonic limit \(Q\), and for fixed \(R>0\) the core disk \(D_n=B_{R/L_n}(y_n)\) lies in \(B_r(p)\) eventually. The area-energy inequality on the complement of this core gives \begin{equation}
E(u_n;B_r(p))\ge\frac12\int_{D_n}|\nabla u_n|^2
  +\left|\int_{B_r(p)}q(u_n)-\int_{D_n}q(u_n)\right|.
\label{eq:concentration-compactness-33}
\end{equation} Planar positive dilation preserves both core integrals. First pass \(n\to\infty\) using \eqref{eq:concentration-compactness-32} and smooth convergence on \(B_R\), then let \(R\to\infty\). Equation \eqref{eq:concentration-compactness-14} yields \[
 \mu(B_r(p))\ge D(Q)+\left|\int_{B_r(p)}q(u)-\int q(Q)\right|
 \ge8\pi-\left|\int_{B_r(p)}q(u)\right|.
\] Let \(r\downarrow0\). Smoothness of \(u\) implies the last integral tends to zero, so \(\gamma_p\ge8\pi\). The total mass bound of \(\mu\) proves the energy inequality in \eqref{eq:concentration-compactness-6}. This completes the proof.
\end{proof}

\subsection{The transverse cap and translated limits}

The strict transverse-energy cap excludes every concentration point, giving smooth local convergence even after arbitrary translations.

\begin{corollary}[Compactness below the transverse threshold]\label{prop:concentration-compactness}
Under the standing hypotheses \eqref{eq:concentration-compactness-1}--\eqref{eq:concentration-compactness-2}, use the subsequence and limit of \cref{lem:compactness-weak}. If, for some $\varepsilon>0$,
\begin{equation}\label{eq:concentration-compactness-8}
T_0(u_n)=\int|\partial_2u_n|^2\le8\pi-\varepsilon,
\end{equation}
then $S$ is empty and $T_0(u)\le8\pi-\varepsilon$. These conclusions
apply after any translations of the sequence, and
\begin{equation}\label{eq:concentration-compactness-9}
\limsup_{n\to\infty}\|\nabla u_n\|_\infty<\infty.
\end{equation}
The limit may be constant; no global energy tightness is asserted.
\end{corollary}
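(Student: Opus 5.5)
The strategy is to show that every concentration point would force at least $8\pi$ of transverse energy. Since \eqref{eq:concentration-compactness-8} caps the transverse energy strictly below this, no concentration point can exist. I will then read off the remaining conclusions from \cref{lem:compactness-weak,lem:compactness-rescaling}.

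First apply \cref{prop:compactness-neutrality} to the subsequence of \cref{lem:compactness-weak}. By \eqref{eq:concentration-compactness-6} with $i=l=2$, the measures $|\partial_2u_n|^2dx$ converge weakly against compact continuous functions to $|\partial_2u|^2dx+\sum_j\gamma_j\delta_{p_j}$, with every $\gamma_j\ge8\pi$. The total mass of a weak limit of nonnegative measures is at most the liminf of the total masses. Testing against cutoffs increasing to one therefore gives
\[
T_0(u)+\sum_j\gamma_j\le\liminf_n T_0(u_n)\le8\pi-\varepsilon .
\]
If $S$ were nonempty, the left side would be at least $8\pi$, which is a contradiction. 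Hence $S=\varnothing$ and $T_0(u)\le8\pi-\varepsilon$.

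The key input is the doubling in \eqref{eq:concentration-compactness-6}. The defect of the Gram matrix is isotropic, carrying $\gamma_j\delta_{il}$, so the transverse direction sees the full $\gamma_j$ rather than only a fraction of it. This isotropy is what the vanishing of the stress atoms $A_{ij,p}$ provides. I expect this step to be the only delicate one, but it has already been proved.

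Translation invariance is immediate. Equation \eqref{eq:concentration-compactness-1}, the bounds \eqref{eq:concentration-compactness-2} and the cap \eqref{eq:concentration-compactness-8} are unchanged by $u_n\mapsto u_n(\cdot+b_n)$ for arbitrary $b_n$. Hence the argument above applies to every translated sequence, after passing to subsequences.

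For \eqref{eq:concentration-compactness-9}, argue by contradiction. Suppose a subsequence has points $b_n$ with $|\nabla u_n(b_n)|\to\infty$. Translate so that $b_n=0$ and extract again via \cref{lem:compactness-weak}. The gradients are now unbounded in every disk about the origin. The weighted maximum construction of \cref{lem:compactness-rescaling} then yields a harmonic blowup, and the proof of \cref{prop:compactness-neutrality} (the argument leading to $\gamma_p\ge8\pi$) shows that the origin carries an energy atom, so $0\in S$. This contradicts $S=\varnothing$ for the translated sequence.

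More directly, $S=\varnothing$ gives $C^1$ convergence on $\overline{B_1}$ by \eqref{eq:concentration-compactness-3}, so $\nabla u_n$ is bounded near the origin, contradicting the choice of $b_n$. Finally, since the translations may drift to infinity, the limit may be constant, and no tightness is claimed.
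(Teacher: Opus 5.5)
Your proposal is correct and follows essentially the same route as the paper. The isotropic Gram-matrix defect in \eqref{eq:concentration-compactness-6} places the full atom $\gamma_j\ge8\pi$ in the transverse direction, which contradicts the cap, and the gradient bound follows by translating a blow-up point to the origin and using $C^1$ convergence off the now empty concentration set; your ``more direct'' argument for \eqref{eq:concentration-compactness-9} is exactly the paper's, while the first route through the weighted maximum construction is valid but unnecessary.
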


\begin{proof}
Under \eqref{eq:concentration-compactness-8}, the directional measure
convergence in \eqref{eq:concentration-compactness-6} excludes every
atom: a compact nonnegative test bounded by one and equal to one at
an atom would have limiting transverse mass at least $8\pi$.
Thus $S=\varnothing$, and exhaustion by disks gives
$T_0(u)\le8\pi-\varepsilon$.

The equation and all bounds are translation invariant, so the same
argument applies to $u_n(\cdot+b_n)$ for arbitrary centres $b_n$.
If \eqref{eq:concentration-compactness-9} failed, there would be a
subsequence and centres with $|\nabla u_n(b_n)|\to\infty$.
Apply the already proved local compactness conclusion to these
translated maps. The transverse cap leaves no concentration point,
so a further subsequence converges in $C^1$ near zero, contradicting
the chosen gradients. This proves the gradient bound and completes the compactness argument.
\end{proof}

\section{Removal of the regularisation and the planar proof}\label{app:quartic-compactness}

A nonconstant regularised wave has a positive height determined
by its speed. This estimate prevents the compact limits of the
preceding section from becoming constant. We then assemble the
construction to prove the planar part of the main theorem.

\subsection{The height estimate}\label{appsrc-quartic-compactness-the-height-estimate-on-the-plane}

To preserve nonconstancy, we need a positive height that survives as $\eta\to0$. The following estimate depends only on the speed and is obtained from the phase and height equations.

\begin{lemma}[A positive height for nonconstant waves]\label{lem:quartic-height}
Fix $c\in(0,1)$. Every nonconstant smooth finite-energy solution of
\begin{equation}\label{eq:quartic-compactness-1}
\operatorname{div}((1+\eta|\nabla v|^2)\nabla v)
+(1+\eta|\nabla v|^2)|\nabla v|^2v
+v_3^2v-v_3e_3+c\,v\times\partial_1v=0,
\qquad\eta>0,
\end{equation}
with $E_\eta(v)<\infty$ satisfies
\begin{equation}\label{eq:quartic-compactness-2}
\|v_3\|_\infty\ge\sqrt{1-c}.
\end{equation}
\end{lemma}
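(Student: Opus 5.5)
We argue by contradiction: assume $\|v_3\|_\infty<\sqrt{1-c}$ and show that $v$ is constant. Since $\sqrt{1-c}<1$, the image of $v$ stays in a closed equatorial belt away from both poles. On the simply connected plane we can therefore write
\[
v=(\sqrt{1-h^2}\cos\phi,\sqrt{1-h^2}\sin\phi,h),\qquad h=v_3,
\]
with a smooth global phase lift $\phi$. We split \eqref{eq:quartic-compactness-1} into its two tangential components. With $a=1+\eta|\nabla v|^2$, the $e_3\times v$ component is the phase (current) equation
\[
\operatorname{div}\bigl(a(1-h^2)\nabla\phi\bigr)=c\,\partial_1 h.
\]
This comes from \eqref{eq:concentration-compactness-16} dotted with $e_3$, since $e_3\cdot(v\times\partial_i v)=(1-h^2)\partial_i\phi$. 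The $(-h\cos\phi,-h\sin\phi,\sqrt{1-h^2})$ component gives the height equation
\[
\operatorname{div}\Bigl(\frac{a\nabla h}{1-h^2}\Bigr)
+\frac{a h|\nabla h|^2}{(1-h^2)^2}
+a h|\nabla\phi|^2-h-c\,\partial_1\phi=0,
\]
up to the exact form of the lower-order terms, which will be checked directly.

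The main step is an integrated identity. We test the height equation with $h\chi_R$ and the phase equation with $\phi\chi_R$, or better with $c^{-1}h\chi_R$ paired with the current, so as to eliminate the cross term $c\,h\,\partial_1\phi$. The aim is
\[
\int\Bigl[\frac{a|\nabla h|^2}{1-h^2}+h^2-a h^2|\nabla\phi|^2+c\,h\,\partial_1\phi\Bigr]\chi_R=\text{(cutoff errors)}.
\]
The $|\nabla h|^2$ terms should combine with a positive sign, because $(1-h^2)^{-1}+h^2(1-h^2)^{-2}=(1-h^2)^{-2}$ after integration by parts. The phase equation tested with $\phi\chi_R$ is not directly admissible, because $\phi$ need not decay. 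Instead we test it with $h^2\chi_R$, or use it to express $c\int h\partial_1\phi$ in terms of $\int a(1-h^2)\nabla\phi\cdot\nabla(\cdots)$. The goal is an inequality of the form
\[
\int\Bigl[h^2-c\,|h|\,|\partial_1\phi|-h^2|\nabla\phi|^2\Bigr]\le0,
\]
in which $h^2|\nabla\phi|^2\le\|h\|_\infty^2|\nabla\phi|^2$, together with a companion estimate of $\int(1-h^2)|\nabla\phi|^2$ obtained from the current equation. The constraint $h^2<1-c$ is exactly what should make the resulting quadratic form in $(h,\nabla\phi)$ positive definite. Schematically we need $c^2/4<(1-h^2)\cdot(\text{coefficient})$, and $\sqrt{1-c}$ is where $1-h^2>c$ enters. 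This forces $h\equiv0$. The phase equation then gives $\operatorname{div}(a\nabla\phi)=0$ with $\nabla\phi\in L^2$. A Liouville argument, testing with $\phi-\bar\phi_R$ on annuli, or noting that $v_3\equiv0$ and the equation give $\nabla\phi=0$, then shows that $v$ is constant, which contradicts the assumption.

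For the cutoff errors, all densities are dominated by $e_\eta$, which is integrable. We control terms like $\int|\nabla\chi_R|\,|h|\,|\nabla h|$ by energy on annuli, as in the stress argument of \cref{prop:energy-stress}. We also plan to use the Pohozaev identities $D_1=D_2=0$ from that proposition, with $cP=-c\int h\,\partial_1\phi$ available in the belt by the formula of \cref{lem:selector-action}. These should supply the needed relation between $\int h^2$, $\int|\partial_1 v|^2$ and $cP$. The hardest part is finding the right combination of multipliers that produces a coercive form under exactly the threshold $h^2<1-c$, with the quartic weight $a$ entering harmlessly; $a\ge1$ appears only in helpful positions. If the direct combination fails, our first fallback is a pointwise maximum-principle argument on the height equation at a maximum of $h$. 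There $\nabla h=0$ and the second-order term is nonpositive, giving $h\bigl(a|\nabla\phi|^2-1\bigr)\ge c\,\partial_1\phi$. Combining this with the phase equation is meant to produce the bound $h^2\ge1-c$ at a nonzero extremum.
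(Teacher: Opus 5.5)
There is a genuine gap, and it sits exactly at the step you call the hardest: you obtain only one of the two integral identities the argument needs. Your test of the height equation with $h\chi_R$ is the right first identity; it is the paper's height-scaling variation. Two corrections are forced by your own current equation: the gradient term carries weight $(1-h^2)^{-2}$, and the cross term enters with a minus sign, since both equations come from a Lagrangian density containing $-c\,h\,\partial_1\phi$. With these, the identity reads
\[
H+Z=\int a h^2|\nabla\phi|^2+c\int h\,\partial_1\phi,\qquad H=\int\frac{a|\nabla h|^2}{(1-h^2)^2},\quad Z=\int h^2 .
\]
On its own this bounds nothing. None of your proposed substitutes supplies the second identity:
\begin{itemize}
\item Testing the current equation with $h^2\chi_R$ gives only $\int a(1-h^2)h\,\nabla h\cdot\nabla\phi=0$, because the right side integrates to $\tfrac c3\int\partial_1(h^3)=0$. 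This says nothing about the phase energy.
\item The dilation identities do not help. For instance, $D_1+D_2=0$ reads $Z-c\int h\,\partial_1\phi=\tfrac\eta2\int|\nabla v|^4$. Combined with the display it yields $H+\tfrac\eta2\int|\nabla v|^4=\int a h^2|\nabla\phi|^2$, which is never an upper bound on $\int(1-h^2)|\nabla\phi|^2$.
\item The maximum-principle fallback gives a single pointwise inequality at an extremum of $h$. It cannot see the phase equation globally, so it does not close.
\end{itemize}

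The missing ingredient is the phase identity
\[
\int a(1-h^2)|\nabla\phi|^2=c\int h\,\partial_1\phi .
\]
It comes from the very test you dismissed. Test $\operatorname{div}(a(1-h^2)\nabla\phi)=c\,\partial_1h$ with $\chi_R(\phi-m_R)$, where $m_R$ is the mean of $\phi$ on $B_{2R}\setminus B_R$. The lack of decay of $\phi$ is handled by the scaled Poincar\'e inequality on that annulus, the same device you reserve for your Liouville step. The cutoff error pairs the $L^2$ part of the current with $\nabla\phi\in L^2$, and the quartic part $\eta|\nabla v|^2(1-h^2)\nabla\phi\in L^{4/3}$ with $\nabla\phi\in L^4$. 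Both pairings vanish as $R\to\infty$. Here $\nabla\phi\in L^2\cap L^4$ because $1-h^2\ge s_0:=1-\|h\|_\infty^2>0$. This is the paper's route.

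With both identities in hand, the paper sets $Q=c\int h\,\partial_1\phi$ and chains $Z\le\int a|\nabla\phi|^2\le Q/s_0$ with the Cauchy--Schwarz bound $Q\le c^2Z/s_0$. This gives $s_0^2\le c^2$ whenever $Z>0$.

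Your quadratic-form idea also closes once the phase identity is available. Adding $\mu$ times it to the height identity gives
\[
H+\int\Bigl[h^2+\bigl(\mu(1-h^2)-1\bigr)a|\nabla\phi|^2-\mu c\,h\,\partial_1\phi\Bigr]=0 .
\]
Take $\mu=2s_0/c^2$ and use $a\ge1$. The integrand is then pointwise positive definite in $(h,\nabla\phi)$ as soon as $s_0>c$. So $h\equiv0$ and $\nabla\phi\equiv0$ follow at once, and no separate Liouville step is needed.
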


\begin{proof}
Let \(v\) be a nonconstant wave as in \eqref{eq:quartic-compactness-1}, fix its parameters \(\eta>0\) and \(c\in(0,1)\), and put \(w=|\nabla v|^2\), \(b=1+\eta w\), and \(\delta=\|v_3\|_\infty\). If \(\delta=1\), \eqref{eq:quartic-compactness-2} follows. Suppose \(\delta<1\). The sphere map has a global smooth phase \(\theta\) on the simply connected plane such that \begin{equation}
v=(\rho\cos\theta,\rho\sin\theta,v_3),\qquad \rho=\sqrt{1-v_3^2},\qquad
 w=\frac{|\nabla v_3|^2}{1-v_3^2}+(1-v_3^2)|\nabla\theta|^2.
\label{eq:quartic-compactness-8}
\end{equation} Finite \(E_\eta\) implies \(\nabla v\in L^2\cap L^4\), \(v_3\in H^1\cap L^4\), and, since \(1-v_3^2\ge1-\delta^2>0\), \(\nabla\theta\in L^2\cap L^4\). The \(L^4\) bound on \(v_3\) also follows directly from \(|v_3|\le1\) and \(v_3\in L^2\).

The angular component of \eqref{eq:quartic-compactness-1} gives \begin{equation}
\operatorname{div}\big(b(1-v_3^2)\nabla\theta-cv_3(1,0)\big)=0.
\label{eq:quartic-compactness-9}
\end{equation} Let \(\chi_R\) equal one on \(B_R\), vanish outside \(B_{2R}\), and satisfy \(|\nabla\chi_R|\le C/R\), obtained by rescaling a fixed smooth cutoff. Let \(m_R\) be the average of \(\theta\) on the connected annulus \(D_R=B_{2R}\setminus B_R\). Test \eqref{eq:quartic-compactness-9} by \(\chi_R(\theta-m_R)\). Split its current as \[
 F_0=(1-v_3^2)\nabla\theta-cv_3(1,0)\in L^2,\qquad
 F_1=\eta w(1-v_3^2)\nabla\theta\in L^{4/3}.
\] The second membership follows from \(w\in L^2\) and \(\nabla\theta\in L^4\). Annulus Poincaré inequalities, scaled from a fixed annulus and using the same average \(m_R\), give a bound for the cutoff error by \[
 C\big(\|F_0\|_{L^2(D_R)}\|\nabla\theta\|_{L^2(D_R)}
       +\|F_1\|_{L^{4/3}(D_R)}\|\nabla\theta\|_{L^4(D_R)}\big)\longrightarrow0.
\] Every main integrand is in \(L^1\). Thus \begin{equation}
Q:=\int b(1-v_3^2)|\nabla\theta|^2=c\int v_3\partial_1\theta.
\label{eq:quartic-compactness-10}
\end{equation} This argument uses the two available integrability exponents separately; no bounded-gradient assumption or absolute first area moment is used.

The smooth PDE implies, for every compact smooth tangent field \(\xi\) along \(v\), \begin{equation}
\int b\sum_i\partial_i v\cdot\partial_i\xi+v_3\xi_3
             +c\,v\cdot(\xi\times\partial_1v)=0.
\label{eq:quartic-compactness-11}
\end{equation} Take \(\xi=\chi_R v_3(e_3-v_3\,v)/(1-v_3^2)\), the height-scaling variation with cutoff. In the coordinates \eqref{eq:quartic-compactness-8}, expansion of \eqref{eq:quartic-compactness-11} gives \begin{equation}
\int\chi_R\left(\frac{b|\nabla v_3|^2}{(1-v_3^2)^2}
              -bv_3^2|\nabla\theta|^2+v_3^2-cv_3\partial_1\theta\right)
 +\int\frac{bv_3\nabla v_3\cdot\nabla\chi_R}{1-v_3^2}=0.
\label{eq:quartic-compactness-12}
\end{equation} The absolute value of the second term is at most \[
 \frac{C_\delta}{R}\left(\|v_3\|_2\|\nabla v_3\|_2
          +\eta\|v_3\|_4\|\nabla v\|_4^3\right)\longrightarrow0.
\] All main terms are dominated by integrable multiples of \(w+\eta w^2+v_3^2+|v_3\partial_1\theta|\). Therefore passage to \(R\to\infty\), followed by \eqref{eq:quartic-compactness-10}, yields \begin{equation}
H+Z=\int b|\nabla\theta|^2,\qquad
 H=\int\frac{b|\nabla v_3|^2}{(1-v_3^2)^2}\ge0,
 \qquad Z=\int v_3^2.
\label{eq:quartic-compactness-13}
\end{equation} If \(Z=0\), then \(v_3=0\) and \eqref{eq:quartic-compactness-10} gives \(\nabla\theta=0\), contradicting nonconstancy. Thus \(Z>0\), and \eqref{eq:quartic-compactness-13} gives \(Q>0\). Since \(b\ge1\), Cauchy--Schwarz and \eqref{eq:quartic-compactness-10} imply \[
 Q\le c\sqrt Z\left(\int|\partial_1\theta|^2\right)^{1/2}
 \le c\sqrt{ZQ/(1-\delta^2)},
 \qquad Q\le\frac{c^2Z}{1-\delta^2}.
\] Equation \eqref{eq:quartic-compactness-13} gives \(Z\le Q/(1-\delta^2)\). Combining these inequalities gives \((1-\delta^2)^2\le c^2\), hence \(\delta^2\ge1-c\). This proves \eqref{eq:quartic-compactness-2} in its full stated class.
\end{proof}

\subsection{Compactness after translation}

We centre each wave at a point of positive height and apply the compactness result just established.

\begin{proposition}[Removal of the regularisation]\label{prop:quartic-compactness}
Fix $c\in(0,1)$. If $\eta_n\downarrow0$ and nonconstant smooth solutions
$u_n$ of \eqref{eq:quartic-compactness-1} obey
\begin{equation}\label{eq:quartic-compactness-3}
E_{\eta_n}(u_n)\le M,\qquad T_0(u_n)\le8\pi-\varepsilon
\end{equation}
for fixed $M<\infty$ and $\varepsilon>0$, then some translations
$u_n(\cdot+b_n)$ converge along a subsequence in every local $C^m$ norm
to a nonconstant finite-energy smooth solution of \eqref{eq:main}
at the same speed. The limit has $E(u)\le M$ and
$T_0(u)\le8\pi-\varepsilon$.
\end{proposition}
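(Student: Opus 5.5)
The plan is to centre each wave at a point of definite height and then apply \cref{prop:concentration-compactness}. By \cref{lem:quartic-height}, each nonconstant $u_n$ satisfies $\|(u_n)_3\|_\infty\ge\sqrt{1-c}$. Since $u_n$ is continuous, we can choose $b_n\in\R^2$ with
\[
|(u_n)_3(b_n)|\ge\tfrac12\sqrt{1-c},
\]
and set $v_n=u_n(\cdot+b_n)$. Translation preserves \eqref{eq:quartic-compactness-1}, $E_{\eta_n}$ and $T_0$.

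The next step is to check the standing hypotheses \eqref{eq:concentration-compactness-1}--\eqref{eq:concentration-compactness-2} for $v_n$.
\begin{itemize}
\item The bound $E(v_n)\le E_{\eta_n}(v_n)\le M$ is immediate.
\item The bound $\eta_n\int|\nabla v_n|^4\le4M$ holds, so \cref{prop:regularizer-decay} applies with $K=4M$. It gives $k_n=\eta_n\int|\nabla v_n|^4\to0$.
\end{itemize}

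\cref{lem:compactness-weak} then yields a subsequence and a smooth finite-energy solution $u$ of \eqref{eq:main} at speed $c$. We have $E(u)\le M$ by lower semicontinuity, and local weak $H^1$ and a.e. convergence. The convergence is $C^m_{\rm loc}$ off a finite set $S$.

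Using the transverse cap $T_0(v_n)\le8\pi-\varepsilon$, \cref{prop:concentration-compactness} gives $S=\varnothing$ and $T_0(u)\le8\pi-\varepsilon$. Hence $v_n\to u$ in every local $C^m$ norm on all of $\R^2$. The uniform gradient bound \eqref{eq:concentration-compactness-9} and the local derivative bounds of \cref{lem:compactness-rescaling} make this convergence uniform on compact sets, including near the origin.

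Nonconstancy then follows from the pointwise limit: $|u_3(0)|=\lim|(v_n)_3(0)|\ge\tfrac12\sqrt{1-c}>0$. A constant finite-energy map on the plane must have $u_3\equiv0$, since otherwise $\int u_3^2=\infty$. So $u$ is nonconstant.

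The main obstacle, I expect, is ensuring that the smooth convergence actually holds at the origin, where the height witness sits. If $0$ were a concentration point, the height would not be controlled there. This is precisely why the $8\pi$ doubling in \cref{prop:compactness-neutrality}, combined with the transverse cap, is needed to exclude atoms after translation. I would also check that \cref{lem:compactness-weak} is applied to the translated sequence, which is legitimate because all hypotheses are translation invariant. The speed is fixed at $c$ throughout, so the limit is a wave at the same speed.
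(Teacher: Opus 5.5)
Your proposal is correct and follows the paper's proof essentially step for step. You obtain quartic decay from \cref{prop:regularizer-decay} with $K=4M$, centre at a height point supplied by \cref{lem:quartic-height}, and apply \cref{prop:concentration-compactness} to the translated maps to exclude atoms, with nonconstancy coming from the height at the origin. The extra appeal to \eqref{eq:concentration-compactness-9} is harmless but unnecessary, since $S=\varnothing$ already gives $C^m_{\rm loc}$ convergence on all of $\R^2$.
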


\begin{proof}
By \cref{prop:regularizer-decay}, the first bound in
\eqref{eq:quartic-compactness-3} implies
$\eta_n\int|\nabla u_n|^4\to0$. Choose centres $b_n$ with
$|(u_n)_3(b_n)|\ge\sqrt{1-c}/2$, using
\eqref{eq:quartic-compactness-2}. Translation preserves every hypothesis
of \cref{prop:concentration-compactness}. Its transverse-cap conclusion
therefore gives a subsequence of $u_n(\cdot+b_n)$ converging smoothly on
compact sets to a solution of \eqref{eq:main}, with $E(u)\le M$ and
$T_0(u)\le8\pi-\varepsilon$. The height at the origin is at least
$\sqrt{1-c}/2$. A constant finite-energy sphere map has zero height,
so the limit is nonconstant.
\end{proof}

\subsection{Conclusion of the planar proof}\label{sec:planar-conclusion}

The ingredients for the planar construction are now in place. Selection and the energy estimate produce a bounded sequence at fixed $\eta$, and the two compactness steps give a nonconstant unregularised wave.

\begin{proof}[Proof of \cref{thm:all-dimensional} in dimension two]
Fix $c\in(0,1)$ and a compact interval $K\Subset(0,1)$ containing $c$.
The path construction and height-preserving selection in
\cref{prop:planar-path,prop:planar-window,prop:planar-selector} give, for
each sufficiently small fixed $\eta>0$, smooth compact maps $u_n$ with
\[
J_{c,\eta}(u_n)\to m_\eta(c),\qquad
0<b_K\le m_\eta(c)\le8\pi-\zeta_K,\qquad
\|(u_n)_3\|_\infty\ge h_K>0.
\]
For $\lambda_n\downarrow0$, the same maps have residual at most
$3\lambda_nB_{\eta,u_n}$, threshold index $i_{\lambda_n}\le1$, and
$|D_1|+|D_2|\le6\lambda_nW_\eta(u_n)$.
The energy estimate in \cref{prop:energy-absorption} absorbs these
weighted scalar errors and gives $E_\eta(u_n)\le M_K$, uniformly also in
small $\eta$. Hence $P(u_n)$ and $W_\eta(u_n)$ are bounded and $D_2(u_n)\to0$.
By \eqref{eq:transverse-action},
\[
T_\eta(u_n)\longrightarrow m_\eta(c)\le8\pi-\zeta_K.
\]

Translate a height point to the origin. \Cref{prop:fixed-eta-recovery}
gives a nonconstant weak critical map $u_\eta$, with strong local
$W^{1,4}$ convergence, $E_\eta(u_\eta)\le M_K$, and
$T_\eta(u_\eta)\le8\pi-\zeta_K$. The independent local regularity result
\cref{prop:weak-regularity} makes $u_\eta$ smooth. Its stress and cutoff
Pohozaev identities now follow directly from the equation, as proved in
\cref{prop:energy-stress}.

Finally choose $\eta_j\downarrow0$. These are exact smooth regularised
waves with bounded $E_{\eta_j}$ and
$T_0(u_{\eta_j})\le T_{\eta_j}(u_{\eta_j})\le8\pi-\zeta_K$.
\Cref{prop:quartic-compactness} gives, after translations and extraction,
a nonconstant finite-energy smooth solution of \eqref{eq:main} at speed
$c$. The fixed-parameter limit has produced exact waves before
$\eta_j$ tends to zero, as required by the compactness proposition.
\end{proof}

\section{Compactly supported target rotations}\label{sec:rotations}

We turn to dimensions $d\ge3$. The first step is a variational
principle that passes to a weak limit using compactly supported target rotations.
Its energy increments contain only products of weakly convergent
gradients with strongly convergent maps, so no limiting momentum
needs to be assigned.

Write $\mathcal X_d=\{u\in H^1_{\rm loc}(\R^d;S^2):E(u)<\infty\}$ and
$F_{ij}(u)=u\cdot(\partial_i u\times\partial_j u)$; the two-form
$F=\sum_{i<j}F_{ij}\,dx_i\wedge dx_j$ is locally integrable.
The following proposition isolates the hypotheses under which minimisation with
an energy cap yields the weak travelling-wave equation.

\begin{proposition}[Compactly supported target rotations]\label{prop:rotation-proof}
Fix $c\in(0,1)$ and a finite group $G\subset O(d)$ fixing
$f_1=(1,0,\ldots,0)$. Let $\rho:G\to SO(3)$ be a homomorphism satisfying
$\rho(g)e_3\in\{e_3,-e_3\}$. Let $\mathcal A\subset\mathcal X_d$ consist
of maps with $v(gx)=\rho(g)v(x)$, and let $P:\mathcal A\to\R$.
For every compact smooth field $\psi$ with
$\psi(gx)=\rho(g)\psi(x)$, set $Q_t=\exp(t[\psi\times])$.
Assume $Q_tv\in\mathcal A$ and
\begin{equation}\label{eq:rotation-increment}
P(Q_tv)-P(v)=-\int_0^t\!\int(\partial_1\psi)\cdot Q_rv\,dx\,dr
\end{equation}
for every $v\in\mathcal A$ and $t\in\R$.
Suppose the capped infimum
\[
I=\inf\{E(v)+cP(v):v\in\mathcal A,\ E(v)\le M\}
\]
is finite, and a minimising sequence $v_n$ satisfies
$E(v_n)\le M-\delta$ eventually, for some $\delta>0$.
If $v_n\rightharpoonup u$ in $H^1_{\rm loc}$ and $v_n\to u$ in
$L^2_{\rm loc}$, then $u\in\mathcal X_d$ satisfies \eqref{eq:weak}
against every compact smooth vector field, and
$E(u)\le\liminf_nE(v_n)$.

The rotation hypotheses hold for classes defined by equivariance,
$v-e_1\in W^{1,p}$, a compact exterior constant $e_1$ fixed by every
$\rho(g)$, and a prescribed distributional $dF(v)$, when
$P(v)=\int x_kF_{1k}(v)$ for one $k\ne1$.
\end{proposition}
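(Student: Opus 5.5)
The plan is to compare each $v_n$ with the rotated map $Q_tv_n$ and to pass the action increment to the limit. Fix a compact smooth equivariant $\psi$ and put $Q_t=\exp(t[\psi\times])$. Then $Q_t$ is orthogonal and $\partial_j(Q_tv)=Q_t\partial_jv+(\partial_jQ_t)v$, which gives
\[
|\partial_j(Q_tv)|^2-|\partial_jv|^2=2\,\partial_jv\cdot Q_t^{T}(\partial_jQ_t)v+|(\partial_jQ_t)v|^2 .
\]
This is linear in $\nabla v$, and its coefficients are smooth, compactly supported and bounded. The anisotropy difference $(Q_tv)_3^2-v_3^2$ involves no derivatives. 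It is supported in $\operatorname{supp}\psi$ and bounded by a constant depending only on $\psi$. The momentum increment is given by \eqref{eq:rotation-increment} and involves only $Q_rv$. Hence $\Delta_n(t):=J_c(Q_tv_n)-J_c(v_n)$ is a finite sum of integrals over a fixed compact set. Each integral pairs $\nabla v_n$ with a strongly convergent bounded coefficient, or is a bounded function of $v_n$. Weak $H^1_{\rm loc}$ and strong $L^2_{\rm loc}$ convergence therefore give $\Delta_n(t)\to\Delta(t)$, where $\Delta(t)$ is the same expression evaluated at $u$. This holds uniformly for $|t|\le1$, by dominated convergence in $r$.

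Next I will use near-minimality. The bound $|\Delta_n(t)|\le C_\psi|t|(1+\sqrt M)$ on the energy part and $E(v_n)\le M-\delta$ together show that $E(Q_tv_n)\le M$ for $|t|\le t_0$, with $t_0$ independent of $n$. Since $Q_tv_n\in\mathcal A$ by hypothesis, we get $J_c(Q_tv_n)\ge I$. Combined with $J_c(v_n)\to I$, this yields $\Delta(t)\ge0$ for $|t|\le t_0$ and $\Delta(0)=0$. The function $\Delta$ is differentiable at $0$: differentiate the explicit integrands, using $\partial_t Q_t|_0=[\psi\times]$. Hence $\Delta'(0)=0$. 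Computing the derivative, and recalling that the $\nabla Q$-squared term is quadratic in $t$, I expect
\[
\int\nabla u:\nabla(\psi\times u)+u_3(\psi\times u)_3-c\int\partial_1\psi\cdot u=0 .
\]
Integrating by parts, $-\int\partial_1\psi\cdot u=\int\psi\cdot\partial_1u$. The first terms equal $\int(u\times\partial_ju)\cdot\partial_j\psi+u_3(u\times e_3)\cdot\psi$. This is the weak form of \eqref{eq:concentration-compactness-28}, tested against equivariant $\psi$. Lower semicontinuity gives $E(u)\le\liminf E(v_n)$, so $u\in\mathcal X_d$.

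To reach arbitrary test fields I will average. For compact smooth $\psi$, set $\bar\psi(x)=|G|^{-1}\sum_g\rho(g)^{-1}\psi(gx)$, which is equivariant. The conservation-law expression $\Lambda(\psi)$ obtained above is invariant under $\psi\mapsto\rho(g)^{-1}\psi(g\cdot)$, because $u$ is equivariant. Here I need $g$ to fix $f_1$, so that $\partial_1$ commutes with the action, and $\rho(g)e_3=\pm e_3$, so that $u_3^2$ and $u_3(u\times e_3)$ transform correctly. I also need orientation bookkeeping for cross products, since $\det\rho(g)=1$. With these, $\Lambda(\psi)=\Lambda(\bar\psi)=0$. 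Then, as in the proof of \cref{lem:compactness-weak}, the identity extends to compact $H^1\cap L^\infty$ tests, and inserting $\psi=u\times\varphi$ converts it into \eqref{eq:weak}. For the final assertion about admissible classes, rotations by $Q_t$ preserve equivariance, the $W^{1,p}$ condition and the exterior constant, since $Q_t=I$ off a compact set. They also preserve $dF$, since $F(Q_tv)-F(v)$ is exact by a transgression formula. The increment formula follows by differentiating $P(Q_rv)=\int x_kF_{1k}(Q_rv)$ in $r$: one uses \eqref{eq:energy-absorption-V1} with $\partial_rQ_rv=\psi\times Q_rv$ and integrates by parts against $x_k$.

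The main obstacle is this last momentum computation. I must verify that the $r$-derivative equals exactly $-\int\partial_1\psi\cdot Q_rv$ for merely $H^1$ maps, and that $F_{1k}$ terms involving the rotation produce no cross-terms. I would first prove it for smooth maps by the pointwise identity, then approximate, using the fixed compact support of $\psi$ to control the weight $x_k$.
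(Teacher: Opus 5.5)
Your proposal is correct and follows essentially the paper's proof: you pass the local energy and momentum increments of $Q_tv_n$ to the limit by pairing weakly convergent gradients with strongly convergent coefficients, use the uniform energy margin for two-sided near-minimality so the limiting derivative vanishes, average over $G$, and test with $u\times\varphi$; for the class assertion you use the transgression identity $F_{ij}(Q_tv)-F_{ij}(v)=\partial_iB_j-\partial_jB_i$ and integrate against $x_k$. The obstacle you flag is avoidable: the paper differentiates $F_{ij}(Q_tv)$ in $t$ directly almost everywhere, which is legitimate because only first derivatives and the relation $v\cdot\partial_jv=0$ enter, whereas approximation by smooth sphere-valued maps cannot work in this class, since such maps have $dF=0$ and $F$ is continuous under $H^1$ convergence, so the prescribed nonzero $dF$ would be lost in the limit.
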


\begin{proof}
We first prove that compact rotations preserve the prescribed
current and give the stated momentum increment. We then pass the
energy increments to the weak limit and remove the symmetry
restriction on the test fields.

 The estimates
\[
|Q_t-\mathrm{Id}|\le |t||\psi|,\qquad
|\partial_iQ_t|\le |t||\partial_i\psi|,\qquad
\partial_i(Q_tv)=Q_t\partial_iv+(\partial_iQ_t)v
\]
follow from the matrix exponential and orthogonality. They preserve the
Sobolev and exterior conditions. Since target rotations preserve cross
products, $Q_t(gx)=\rho(g)Q_t(x)\rho(g)^{-1}$, so equivariance is preserved.
Differentiating the area density gives, in distributions,
\[
\partial_tF_{ij}(Q_tv)
=(\partial_i\psi)\cdot\partial_j(Q_tv)
 -(\partial_j\psi)\cdot\partial_i(Q_tv).
\]
The differentiation is valid in $L^1_{\rm loc}$ by the
$L^\infty$--$L^2$--$L^2$ product estimate. Undifferentiated $\psi$ terms
cancel by rotational invariance of scalar triple products. Consequently
\begin{equation}\label{eq:rotation-current}
F_{ij}(Q_tv)-F_{ij}(v)=\partial_iB_j-\partial_jB_i,
\qquad B_i=\int_0^t\psi\cdot\partial_i(Q_rv)\,dr.
\end{equation}
Mixed distributional derivatives commute; hence $dF(Q_tv)=dF(v)$.
The fields $B_i$ have compact support. Multiplying
\eqref{eq:rotation-current} by $x_k$, integrating with a cutoff equal to
one on that support, and using $k\ne1$ gives
\[
P(Q_tv)-P(v)=\int B_1
=-\int_0^t\!\int(\partial_1\psi)\cdot Q_rv,
\]
as required. These moments exist absolutely for the exterior-constant
maps, since $|F_{1k}(v)|\le|\nabla v|^2/2$ and its support is compact.

The admissibility and momentum-increment identities are now established.
We use the energy margin to vary a minimising sequence in both directions
and pass these local increments to its weak limit.

Now fix an equivariant $\psi$, and write $J=E+cP$ on $\mathcal A$.
The energy cap and the preceding bounds give
$|E(Q_tv_n)-E(v_n)|\le C_\psi|t|$ for $|t|\le1$, uniformly in $n$.
Thus both signs of $t$ are admissible on one fixed interval about zero,
and
\[
0\le J(v_n)-I+J(Q_tv_n)-J(v_n).
\]
Orthogonality gives
\begin{equation}\label{eq:rotation-energy}
\begin{aligned}
E(Q_tv_n)-E(v_n)
={}&\sum_i\int\partial_iv_n\cdot Q_t^T(\partial_iQ_t)v_n
 +\frac12\sum_i\int| (\partial_iQ_t)v_n|^2\\
&+\frac12\int\bigl[(e_3\cdot Q_tv_n)^2-(v_n)_3^2\bigr].
\end{aligned}
\end{equation}
All terms are local. Strong convergence of the bounded maps and weak
convergence of their gradients pass both \eqref{eq:rotation-energy}
and \eqref{eq:rotation-increment} to the limit. The limiting increment
is nonnegative on both sides of zero, so its derivative vanishes:
\begin{equation}\label{eq:rotation-first-variation}
\mathcal L_u(\psi):=
\int\sum_i(u\times\partial_i u)\cdot\partial_i\psi
 +u_3(u\times e_3)\cdot\psi+c\,\partial_1u\cdot\psi=0.
\end{equation}

The identity so far holds for equivariant test fields. Averaging over
the symmetry group removes this restriction.

The limit is equivariant by strong local convergence. For an arbitrary
compact smooth $\psi$, put
$\psi^G(x)=|G|^{-1}\sum_{g\in G}\rho(g)^{-1}\psi(gx)$.
This field is equivariant. A change of variables in
\eqref{eq:rotation-first-variation} shows
$\mathcal L_u(\rho(g)^{-1}\psi\circ g)=\mathcal L_u(\psi)$:
the gradient term uses orthogonality and the cross-product rule, the
potential term uses $\rho(g)e_3=\pm e_3$, and the travel term uses
$gf_1=f_1$. Thus $\mathcal L_u(\psi)=\mathcal L_u(\psi^G)=0$.
Its coefficients are locally $L^2$, so density extends the identity
to compactly supported $H^1$ fields. Taking $\psi=u\times\varphi$ gives
\eqref{eq:weak}, because
\[
\begin{aligned}
(u\times\partial_i u)\cdot\partial_i(u\times\varphi)
 &=\partial_i u\cdot\partial_i\varphi-|\partial_i u|^2u\cdot\varphi,\\
u_3(u\times e_3)\cdot(u\times\varphi)
 &=u_3e_3\cdot\varphi-u_3^2u\cdot\varphi,\\
\partial_1u\cdot(u\times\varphi)&=-(u\times\partial_1u)\cdot\varphi.
\end{aligned}
\]
Finally, $|u|=1$ follows from strong local convergence, and lower
semicontinuity on increasing balls gives the energy assertion.
\end{proof}

\section{A three-dimensional comparison profile}\label{app:charged-comparison}

We construct a compact three-dimensional profile carrying four
point charges. Its Dirichlet energy lies strictly below $16\pi$,
which will give the comparison gap in every dimension $d\ge3$. In this section $x=(x_1,x_2,x_3)\in\R^3$ and
\[
\begin{gathered}
D(v)=\bigl(v\cdot(\partial_2 v\times \partial_3 v),\,
           v\cdot(\partial_3 v\times \partial_1 v),\,
           v\cdot(\partial_1 v\times \partial_2 v)\bigr),\\
T(v)=\tfrac12\int_{\R^3}|\nabla v|^2,\qquad
V(v)=\tfrac12\int_{\R^3}v_3^2.
\end{gathered}
\]
Set $g(x_1,x_2,x_3)=(x_1,-x_2,-x_3)$ and
$\mathcal R=\operatorname{diag}(1,-1,-1)$ in the target.
The standard basis vectors $e_i$ also specify points in the
three-dimensional domain when they occur as subscripts of point masses.
With the area form $F$ from \cref{sec:rotations}, one has
$dF(v)=(\operatorname{div}D(v))\dd x_1\wedge \dd x_2\wedge \dd x_3$.

\begin{proposition}\label{thm:strict-comparison}
There is a sphere-valued map $w_*:\R^3\to S^2$ and $\eta_*>0$ such that
\[
w_*=e_1\quad\hbox{outside the closed unit ball},\qquad
w_*-e_1\in W^{1,p}(\R^3;\R^3)\quad(2<p<3),
\]
\[
w_*(gx)=\mathcal Rw_*(x),\qquad
\operatorname{div}D(w_*)=
4\pi(\delta_{e_2}+\delta_{-e_2}-\delta_{e_1}-\delta_{-e_1}),
\]
and
\[
0<T_*:=T(w_*)\le16\pi-\eta_*,
\qquad V_*:=V(w_*)<\infty.
\]
The divergence identity is distributional. The same map works for every
$p\in(2,3)$.
\end{proposition}

\begin{proof}
The profile is obtained by composing a normalised vector field
with a folded sphere map. We compute its charges and then estimate
the energy to retain a strict saving below $16\pi$.

\emph{Construction and Sobolev regularity.}
The zeros of the vector field specify the defects, while the fold makes
the map constant outside the ball.
For $\varepsilon>0$, define
\[
H_\varepsilon(x)=
\bigl(\varepsilon(|x|^2-1),x_1x_2,-|x|x_3\bigr),
\qquad
Q(\xi)=
\begin{cases}
(1-2\xi_1^2,-2\xi_1\xi_2,-2\xi_1\xi_3),&\xi_1<0,\\
e_1,&\xi_1\ge0,
\end{cases}
\quad \xi\in\Stwo.
\]
The map $Q$ takes values in $\Stwo$. Its two formulas agree on
$\{\xi_1=0\}$, and their bounded derivatives make $Q$ Lipschitz.
Set
\[
w_\varepsilon(x)=Q\left(\frac{H_\varepsilon(x)}{|H_\varepsilon(x)|}\right)
\quad\text{for }x\notin\{\pm e_1,\pm e_2\},
\]
and assign the value $e_1$ at the four excluded points.
The function $H_\varepsilon$ is $C^1$ on $\Rthree$, including at zero,
where the derivative of $|x|x_3$ is zero. Its zeros are precisely
$\pm e_1,\pm e_2$. At each such zero $b$, the derivative matrix
\[
B_{\varepsilon,b}=
\bigl(\partial_1H_\varepsilon(b),\partial_2H_\varepsilon(b),
\partial_3H_\varepsilon(b)\bigr)
\]
satisfies
\[
\det B_{\varepsilon,b}=2\varepsilon(b_2^2-b_1^2)\ne0.
\]
The first-order expansion at $b$ and the derivative of normalisation
therefore give, near each zero,
\[
|\nabla w_\varepsilon(x)|\le C_\varepsilon|x-b|^{-1}
\quad\text{almost everywhere}.
\]
Elsewhere the map is locally Lipschitz. For $|x|\ge1$, the first
component of $H_\varepsilon$ is nonnegative, so $w_\varepsilon=e_1$.

The displayed derivative bound is locally in $L^p$ for every $1\le p<3$.
To extend the weak derivatives across the four points, integrate by parts
outside balls of radius $\rho$ about them. Since $w_\varepsilon$ is
bounded, the boundary terms against a smooth test function are
$O(\rho^2)$ and tend to zero. Compact support of $w_\varepsilon-e_1$
then gives
\[
w_\varepsilon-e_1\in W^{1,p}(\Rthree;\Rthree)
\quad(1\le p<3),\qquad
T(w_\varepsilon)<\infty,\qquad V(w_\varepsilon)<\infty.
\]

\emph{The four charges.}
The four possible singularities are now fixed. We compute their local
degrees to identify the signs and strengths of the charges. Parametrise the target sphere
by
\[
\xi=(\tau,\sqrt{1-\tau^2}\cos\theta,
\sqrt{1-\tau^2}\sin\theta).
\]
Its outward area form is $d\theta\wedge d\tau$.
On $\tau<0$, the map $Q$ preserves $\theta$ and replaces $\tau$ by
$1-2\tau^2$; on $\tau>0$, it is constant. Thus its pulled-back area
integrates to
\[
2\pi\int_{-1}^0(-4\tau)\,d\tau=4\pi,
\]
so $Q$ has degree one. For sufficiently small $\rho$, the uniform first-order expansion of $H_\varepsilon$ gives a homotopy on $\partial B_\rho(b)$
between its normalisation and
\[
\omega\longmapsto
\frac{B_{\varepsilon,b}\omega}{|B_{\varepsilon,b}\omega|},
\qquad \omega\in\Stwo.
\]
Polar decomposition of $B_{\varepsilon,b}$ shows that the latter map has
degree $\operatorname{sgn}\det B_{\varepsilon,b}$. Multiplicativity of
degree and the area formula consequently give
\[
\int_{\partial B_\rho(b)}\Jac(w_\varepsilon)\cdot\nu
=4\pi\operatorname{sgn}\det B_{\varepsilon,b},
\]
where $\nu$ is the outward unit normal of the ball.
The area formula applies to these Lipschitz sphere maps: smooth
approximation followed by normalisation converges uniformly and strongly
in $W^{1,p}$ for each finite $p>2$. Uniform closeness preserves degree,
and strong convergence of the derivatives passes the area integral to the
limit.

For a smooth sphere-valued map, the divergence of $\Jac$ is zero, since
its three derivatives lie in a two-dimensional tangent plane.
Local smooth approximation and normalisation give the same distributional
identity for $w_\varepsilon$ away from the four zeros. Near $b$,
\[
|\Jac(w_\varepsilon)(x)|
\le\frac12|\nabla w_\varepsilon(x)|^2
\le\frac{C_\varepsilon^2}{2|x-b|^2}.
\]
If $\psi\in C_c^\infty(\Rthree)$, replacing $\psi$ by $\psi(b)$ in the
flux over $\partial B_\rho(b)$ has error
$O_\varepsilon(\rho\|\nabla\psi\|_\infty)$. Integration by parts outside
the four balls and passage to $\rho=0$ now give
\[
\operatorname{div}\Jac(w_\varepsilon)
=4\pi(\delta_{e_2}+\delta_{-e_2}-\delta_{e_1}-\delta_{-e_1})
\quad\text{in distributions}.
\]

\emph{Equivariance.}
The identities
\[
H_\varepsilon(gx)=\mathcal RH_\varepsilon(x),\qquad
Q(\mathcal R\xi)=\mathcal RQ(\xi)
\]
give $w_\varepsilon(gx)=\mathcal Rw_\varepsilon(x)$.

\emph{An exact energy identity.}
It remains to obtain the strict energy bound. We first express the energy
as $16\pi$ plus an explicit signed correction.
Write $d\omega$ for surface measure on $\Stwo$, and
$\nabla_{\Stwo}$ and $\Delta_{\Stwo}$ for the spherical gradient and
Laplace--Beltrami operator. For $\omega\in\Stwo$, put
\[
f(\omega)=\omega_1\omega_2,\qquad h(\omega)=\omega_3,\qquad
A(\omega)=f(\omega)^2+h(\omega)^2,
\qquad K_0=\int_{\Stwo}A^{-1/2}\,d\omega.
\]
The identity
\[
A=(1-\omega_1^2)(1-\omega_2^2)
\]
shows that $A$ vanishes only at $\pm e_1,\pm e_2$.
Near each of these points, one factor is bounded away from zero and the
other is comparable to the squared distance. Thus $A^{-1/2}$ is bounded
by a constant times the inverse distance, which is integrable on the
two-dimensional sphere. In particular, $0<K_0<\infty$.

For $0<r<1$, define $k=k(r)=\varepsilon(1-r^2)/r^2$.
Substitution in the formula for $w_\varepsilon$ gives
\begin{equation}\label{eq:comparison-radial-profile}
w_\varepsilon(r\omega)=\frac{(A-k^2,2kf,-2kh)}{A+k^2}.
\end{equation}
Differentiating \eqref{eq:comparison-radial-profile} yields
\[
|\partial_rw_\varepsilon|^2=\frac{4k'^2A}{(A+k^2)^2},\qquad
|\nabla_{\Stwo}w_\varepsilon|^2
=\frac{4k^2(|\nabla_{\Stwo}f|^2+|\nabla_{\Stwo}h|^2)}{(A+k^2)^2}.
\]
Projection of the Euclidean gradients onto the tangent plane gives
\[
\begin{gathered}
|\nabla_{\Stwo}h|^2=1-h^2,\qquad
|\nabla_{\Stwo}f|^2=1-h^2-4f^2,\qquad
\nabla_{\Stwo}f\cdot\nabla_{\Stwo}h=-2fh,\\
\Delta_{\Stwo}f=-6f,\qquad \Delta_{\Stwo}h=-2h.
\end{gathered}
\]
The product rule then gives
\[
\Delta_{\Stwo}A=4-8A-12f^2,\qquad
|\nabla_{\Stwo}A|^2=4A(1-A-3f^2),
\]
and therefore
\[
\Delta_{\Stwo}\log(A+k^2)
=\frac{4k^2(1-3f^2+k^2)}{(A+k^2)^2}-4.
\]
The radial identities needed below are
\[
rk'+2k=-2\varepsilon,\qquad
16\varepsilon^2-16rkk'=4r^2k'^2+16k^2,
\]
\[
8\frac{d}{dr}\frac{rk^2}{A+k^2}
=\frac{8k^2}{A+k^2}+\frac{16rkk'A}{(A+k^2)^2}.
\]
Combining these formulas over the denominator $(A+k^2)^2$ gives
\begin{equation}\label{eq:comparison-energy-density}
\begin{aligned}
r^2|\partial_rw_\varepsilon|^2+|\nabla_{\Stwo}w_\varepsilon|^2
&=\frac{4(r^2k'^2-4k^2)A+8k^2(1+h^2)}{(A+k^2)^2}\\
&=8+2\Delta_{\Stwo}\log(A+k^2)
-8\frac{d}{dr}\frac{rk^2}{A+k^2}
+16\frac{\varepsilon^2A-k^2h^2}{(A+k^2)^2}.
\end{aligned}
\end{equation}

For fixed $r$, the spherical Laplacian in
\eqref{eq:comparison-energy-density} integrates to zero because $k>0$.
The integrated radial boundary term also vanishes at both endpoints.
At zero,
\[
0\le\int_{\Stwo}\frac{rk^2}{A+k^2}\,d\omega\le4\pi r.
\]
At one, its integrand tends to zero almost everywhere and is bounded by
one, so dominated convergence applies.
The remaining terms are integrable. Indeed, $-k'=2\varepsilon/r^3
\ge2\varepsilon$, so the substitution $\lambda=k(r)$ and Tonelli's
theorem give
\begin{equation}\label{eq:comparison-positive-error}
\begin{aligned}
16\varepsilon^2\int_0^1\int_{\Stwo}
\frac{A}{(A+k^2)^2}\,d\omega\,dr
&\le8\varepsilon\int_{\Stwo}\int_0^\infty
\frac{A}{(A+\lambda^2)^2}\,d\lambda\,d\omega\\
&=2\pi\varepsilon K_0.
\end{aligned}
\end{equation}
Here the inner integral is $\pi/(4\sqrt A)$ for $A>0$.
For the negative term, we have
\[
0\le\frac{k^2h^2}{(A+k^2)^2}
\le\frac{k^2A}{(A+k^2)^2}\le\frac14.
\]
Integrating \eqref{eq:comparison-energy-density} first on compact radial
intervals and then taking their endpoints to zero and one proves
\begin{equation}\label{eq:comparison-energy-identity}
2T(w_\varepsilon)=32\pi+
16\int_0^1\int_{\Stwo}
\frac{\varepsilon^2A-k^2h^2}{(A+k^2)^2}\,d\omega\,dr.
\end{equation}

\emph{The strict saving.}
In the energy identity, the positive correction is $O(\varepsilon)$.
We now find a negative contribution of order $\sqrt\varepsilon$, which
dominates for small $\varepsilon$.
Suppose $0<\varepsilon\le1/8$.
On $\sqrt\varepsilon\le r\le2\sqrt\varepsilon$, the decreasing function
$k$ lies between $1/4-\varepsilon$ and $1-\varepsilon$, so
$1/8\le k\le1$. Since $A\le1$, this gives
\[
16\frac{k^2h^2}{(A+k^2)^2}\ge\frac{h^2}{16}
\quad\text{on that interval}.
\]
The interval has length $\sqrt\varepsilon$, and
$\int_{\Stwo}h^2\,d\omega=4\pi/3$. Its contribution to the negative
term in \eqref{eq:comparison-energy-identity}, together with the bound
\eqref{eq:comparison-positive-error} for the positive term, yields
\[
2T(w_\varepsilon)
\le32\pi-\frac{\pi}{12}\sqrt\varepsilon+2\pi\varepsilon K_0.
\]
Choose once and for all
\[
0<\varepsilon_*<\min\{1/8,(48K_0)^{-2}\},\qquad
w_*=w_{\varepsilon_*},\qquad
\eta_*=\frac{\pi}{48}\sqrt{\varepsilon_*}.
\]
After division by two, the preceding bound gives
$T_*=T(w_*)\le16\pi-\eta_*$. The preceding construction gives $V_*<\infty$, the Sobolev
memberships, the charge identity and equivariance. The nonzero charge
identity implies $T_*>0$. The choice of $\varepsilon_*$ is independent
of $p$.
\end{proof}

\section{Comparison and noncollapse in dimensions \texorpdfstring{$d\ge3$}{d >= 3}}\label{app:shell-proof}

We use the three-dimensional comparison profile to construct
charged maps in every dimension $d\ge3$. The signed configuration
has zero projected charge, which permits a common momentum estimate.
A local lower bound for collapse is strictly larger than the
comparison action. This gives a nonconstant weak limit to which the
compact-rotation argument applies.

Let \(d\ge3\) be an integer, \(0<c<1\), \(k=d-3\), \(N=d-1\), \(q=N/(N-1)\), and let \(\sigma_k\) be the area of the unit sphere \(S^k\).
For $k=0$, use $S^0=\{-1,1\}$ with counting measure, so $\sigma_0=2$.
All integrals over $S^0$ below are sums over these two points. Write \[
X=(x_1,x_4,\ldots,x_d)\in\mathbb R^{k+1},\qquad
r=|X|,\qquad x'=(x_2,\ldots,x_d).
\] Thus \(x_1\) is the travel coordinate, and \(x_2,x_3\) are the
two remaining coordinates outside the block \(X\).
When $d=3$, these conventions mean $X=x_1$, $r=|x_1|$, and $x'=(x_2,x_3)$. The vectors \(e_1,e_2,e_3\) below are target vectors in \(\mathbb R^3\), and \(S^2=\{\xi\in\mathbb R^3:|\xi|=1\}\). For a sphere-valued Sobolev map $v$, use the area form $F(v)$ from
\cref{sec:rotations} and set \[
\begin{aligned}
T_d(v)&=\frac12\int_{\mathbb R^d}|\nabla v|^2,\\
V_d(v)&=\frac12\int_{\mathbb R^d}v_3^2,\\
E&=T_d+V_d,\\
P_d(v)&=\int_{\mathbb R^d}x_2\,F_{12}(v),\\
J_{c,d}(v)&=E(v)+cP_d(v).
\end{aligned}
\] All momentum integrals below are absolutely convergent because each admissible map equals \(e_1\) outside a compact set.

Fix the map \(w_*:\mathbb R^3\to S^2\) supplied by \cref{thm:strict-comparison}, retaining the notation $\mathcal R$ and $T_*$ from that proposition. With coordinates \((\xi_1,\xi_2,\xi_3)\) on its domain, that proposition gives \[
w_*=e_1\quad\hbox{outside }B_1^3,\qquad
w_*-e_1\in W^{1,p}(\mathbb R^3)\quad(2<p<3),
\] \[
w_*(\xi_1,-\xi_2,-\xi_3)=\mathcal Rw_*(\xi_1,\xi_2,\xi_3),
\] \[
\begin{gathered}
dF(w_*)=4\pi[
\delta_{(0,1,0)}+\delta_{(0,-1,0)}
-\delta_{(1,0,0)}-\delta_{(-1,0,0)}]\dd \xi_1\wedge\dd \xi_2\wedge\dd \xi_3,\\
0<T_*<16\pi.
\end{gathered}
\] Here and below \(d\) applied to a differential form is its distributional exterior derivative.

The constants in the next inequality compare the two Dirichlet-energy
costs at scale one. The constant $B_{d,R}$ bounds the energy of the
comparison map, while $C_{d,R,b}$ is a lower bound on the limiting
energy near the prescribed defects for sequences converging locally
to $e_1$. We choose the geometry so that these costs are separated:
take \(b>2\) and \(R>2b\) such that \begin{equation}
C_{d,R,b}:=
\frac{4\pi\sigma_k\displaystyle\int_{R-1}^{R+1}r^k\,dr
      +8\pi\sigma_k(R-b)^k}
     {1+2\sqrt3/b}
>
B_{d,R}:=\sigma_k(R+1)^kT_*.
\label{eq:shell-proof-G}
\end{equation} Such choices exist for every \(d\ge3\), independently of \(c\) and \(p\). Set \(\Delta=C_{d,R,b}-B_{d,R}>0\).

For \(\lambda>0\), define the compact distributional three-form \begin{equation}
\begin{aligned}
\mathcal J_{\lambda,R}=4\pi\big[&
 \delta(r-\lambda R)\delta(x_2-\lambda)\delta(x_3)
+\delta(r-\lambda R)\delta(x_2+\lambda)\delta(x_3)\\
&-\delta(r-\lambda(R+1))\delta(x_2)\delta(x_3)
-\delta(r-\lambda(R-1))\delta(x_2)\delta(x_3)\big]\,
dr\wedge \dd x_2\wedge \dd x_3.
\end{aligned}
\label{eq:shell-proof-S}\end{equation} This notation means the following precise distribution: in polar coordinates \(X=r\theta\), \(r>0\), the pullback is the displayed distributional form, independent of \(\theta\in S^k\); it is extended by zero near \(r=0\). Equivalently, the scalar distribution \(\delta(r-a)\) against Euclidean volume integrates a test function over \(\{|X|=a\}\), and \(dr=\sum_{i=1}^{k+1}(X_i/r)\,dX_i\) on its support. Its support is the union of four disjoint embedded copies of $S^k$,
each of codimension three. For $k=0$ these are four pairs of points; their
signs are fixed by the factor $dr$, as verified below.

Let \(g(x)=(x_1,-x_2,-x_3,x_4,\ldots,x_d)\). For \(2<p<3\), let \(\mathcal A_{\lambda,R,p}^{(d)}\) be the maps \(v:\mathbb R^d\to S^2\) satisfying \[
\begin{gathered}
v-e_1\in W^{1,p}(\mathbb R^d;\mathbb R^3),\\
v=e_1\ \hbox{a.e. outside some compact set depending on }v,\qquad
E(v)<\infty,
\end{gathered}
\] \begin{equation}
dF(v)=\mathcal J_{\lambda,R},\qquad v(gx)=\mathcal Rv(x)\quad\hbox{a.e.}
\label{eq:shell-proof-A}
\end{equation} No common support and no radial symmetry in \(X\) are required in this class.

The local lower bound uses the planar symmetry observation of \cite[Lemmas~2.1--2.2]{ziener2026}: an equivariant sphere-valued map on the compactified plane with odd degree takes the opposite pole at the origin, and the resulting disk Dirichlet bound is \(8\pi-2\rho^{-2}\|w-e_3\|_2^2\) in that paper's full-energy normalisation. We give the argument below with the rotated pole \(e_1\) and
half-energy normalisation, and derive the higher-dimensional estimates
from the explicit defect geometry.

\subsection{The defect geometry and comparison map}\label{appsrc-shell-proof-defect-geometry-the-comparison-map-and-rotations}

The condition $dF(v)=\mathcal J_{\lambda,R}$ prescribes the signed
distribution of topological charges. The symmetry condition
$v(gx)=\mathcal Rv(x)$ requires simultaneous reversal of $x_2$ and $x_3$
to correspond to a rotation through $\pi$ about $e_1$ in the target sphere.
We first verify that the admissible class is nonempty by placing the
three-dimensional profile along the prescribed shells, then check that
compactly supported target rotations respecting this symmetry preserve
the class.

\begin{proposition}[The admissible comparison map]\label{prop:shell-comparison}
The source \(\mathcal J_{\lambda,R}\) is closed. The class in \eqref{eq:shell-proof-A} is nonempty and is preserved by every compact smooth target-rotation flow \begin{equation}
\partial_t v_t=\psi(x)\times v_t,\qquad v_0=v,\qquad
  \psi\in C_c^\infty(\mathbb R^d;\mathbb R^3),\quad
  \psi(gx)=\mathcal R\psi(x).
\label{eq:shell-proof-R}
\end{equation} In particular, define \[
  W_R(x)=w_*(|X|-R,x_2,x_3),\qquad
  W_{\lambda,R}(x)=W_R(x/\lambda),
  \] where \(W_R=e_1\) near \(X=0\). These maps satisfy \eqref{eq:shell-proof-A}. If \[
  T_R=T_d(W_R),\qquad V_R=V_d(W_R),
  \] then \(0<T_R\le B_{d,R}\), \(0\le V_R<\infty\), \(P_d(W_{\lambda,R})=0\), and \begin{equation}
J_{c,d}(W_{\lambda,R})
  =\lambda^{d-2}T_R+\lambda^dV_R
  \le\lambda^{d-2}(C_{d,R,b}-\Delta)+\lambda^dV_R.
\label{eq:shell-proof-T}
\end{equation}
\end{proposition}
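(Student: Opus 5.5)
I will verify the assertions in the order closedness, comparison map, rotation invariance, energy bound, momentum, action identity, reducing each to a statement about $w_*$ or to \cref{prop:rotation-proof}.

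\emph{Closedness.} In polar coordinates $X=r\theta$, the form $\mathcal J_{\lambda,R}$ is the pullback, under the submersion $x\mapsto(r,x_2,x_3)$ on $\{r>0\}$, of the three-dimensional point-mass form $4\pi[\delta_{(\lambda R,\lambda,0)}+\delta_{(\lambda R,-\lambda,0)}-\delta_{(\lambda(R+1),0,0)}-\delta_{(\lambda(R-1),0,0)}]\,dr\wedge dx_2\wedge dx_3$. A top-degree form on the three-dimensional half-space is closed, and pullback commutes with $d$. Its support avoids $r=0$ because $R-1>0$, so $\mathcal J_{\lambda,R}$ is closed on $\R^d$.

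\emph{The comparison map $W_R$.} Put $\Theta(x)=(r-R,x_2,x_3)$, a smooth submersion on $\{r>0\}$. Since $R>2$, $W_R=w_*\circ\Theta$ equals $e_1$ where $r<R-1$, in particular near $X=0$, and also outside a compact set, so $W_R$ is globally well defined.
\begin{itemize}
\item \emph{Sobolev regularity.} The bound $|\nabla W_R|\le|\nabla w_*|\circ\Theta$ and the coarea formula over $\theta\in S^k$ give $W_R-e_1\in W^{1,p}$ for $2<p<3$ and $E(W_R)<\infty$.
\item \emph{Charge.} Naturality gives $F(W_R)=\Theta^*F(w_*)$, and hence $dF(W_R)=\Theta^*dF(w_*)$. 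Pushing the charge identity of \cref{thm:strict-comparison} forward, via smooth approximation of $w_*$ away from its defects as in that proof, yields $\mathcal J_{1,R}$. The shifted point $\xi_1=\pm1$ becomes $r=R\pm1$, and the orientation $dr\wedge dx_2\wedge dx_3$ fixes the signs.
\item \emph{Equivariance.} Since $\Theta(gx)=(r-R,-x_2,-x_3)$, equivariance is inherited from that of $w_*$.
\item \emph{Scaling.} Dilating by $\lambda$ scales the source to $\mathcal J_{\lambda,R}$, because $\delta$ factors scale correctly in codimension three.
\end{itemize}

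\emph{Invariance under rotation flows.} This is exactly the last paragraph of \cref{prop:rotation-proof}, i.e.\ its proof via \eqref{eq:rotation-current}. That proof shows the flows preserve $dF$, the $W^{1,p}$ condition, the exterior constant $e_1$ (fixed by $\mathcal R$), and equivariance, using $\psi(gx)=\mathcal R\psi(x)$.

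\emph{Energy bound.} By coarea in the $X$ variables,
$$T_R=\int_{S^k}\int\frac12|\nabla w_*|^2(r-R,x_2,x_3)\,r^k\,dr\,dx_2\,dx_3\,d\theta,$$
where the $\theta$-derivatives vanish. The integrand is supported in $r\le R+1$, so $r^k\le(R+1)^k$. Hence $T_R\le\sigma_k(R+1)^kT_*=B_{d,R}$, and $T_R>0$ because $w_*$ is nonconstant. The bound $V_R<\infty$ follows the same way.

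\emph{Momentum.} I expect this to be the only delicate point, and the plan is to get $P_d(W_{\lambda,R})=0$ by symmetry. Under $g$, $x_2\mapsto-x_2$, and $F_{12}(v\circ g)$ picks up a factor $-1$ from $\partial_2$. The triple product is invariant under $\mathcal R\in SO(3)$. Hence $F_{12}(W)(gx)=-F_{12}(W)(x)$, so $x_2F_{12}$ is $g$-invariant, and this gives no cancellation. I will instead use the reflection $x_1\mapsto-x_1$: it maps $r$ to itself, so $W_R$ is invariant under it. Then $F_{12}$, containing a single $\partial_1$, is odd in $x_1$, while $x_2$ is even. The integral $\int x_2F_{12}$ converges absolutely by compact support and therefore vanishes.

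\emph{Action identity.} Scaling gives $T_d(W_{\lambda,R})=\lambda^{d-2}T_R$, $V_d(W_{\lambda,R})=\lambda^dV_R$ and $P_d=0$, so $J_{c,d}(W_{\lambda,R})=\lambda^{d-2}T_R+\lambda^dV_R$. Combined with $T_R\le B_{d,R}=C_{d,R,b}-\Delta$, this is \eqref{eq:shell-proof-T}.
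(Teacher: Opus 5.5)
Your proposal is correct and follows essentially the same route as the paper. Your pullback/naturality formulation of the charge identity matches the paper's coefficient-by-coefficient Fubini argument, and your reflection in $x_1$ matches its observation that $F_{12}(W_R)=(x_1/r)\cdot(\text{a function of }r,x_2,x_3)$. The one extra step in the paper's proof is a check that $b>2$ and $R>2b$ satisfying \eqref{eq:shell-proof-G} exist (so $\Delta>0$); the displayed inequality does not need this, since $C_{d,R,b}-\Delta=B_{d,R}$ by definition.
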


\begin{proof}
Here \(k=d-3\ge0\), \(N=d-1\ge2\), and \(q=N/(N-1)\), with the coordinates and target vectors fixed above. On \(r>0\), polar coordinates give \begin{equation}
dx=r^k\,dr\,d\theta\dd x_2\dd x_3,\qquad \theta\in S^k.
\label{eq:shell-proof-1-1}
\end{equation} For $k=0$, the coordinates are the two
half-spaces $x_1=r$ and $x_1=-r$, with $r>0$; the volume formula is the sum of
the two integrals $dr\dd x_2\dd x_3$. For oriented forms the second change of
coordinates reverses orientation. In particular, for $a>0$,
\begin{equation}\label{eq:shell-zero-sign}
\delta(|x_1|-a)\,d|x_1|=(\delta_a-\delta_{-a})\dd x_1.
\end{equation}
Thus \eqref{eq:shell-proof-S} in dimension three consists of the four
charges near $x_1=\lambda R$ and their reflected copies near $x_1=-\lambda R$
with opposite signs. This proves the prescribed signs of all eight
point masses. Every source is supported away from $r=0$.

A distributional differential form is specified by its coefficients in smooth local coordinates. Under the change \(X=r\theta\), the form \(\delta(r-a)\,dr\wedge \dd x_2\wedge \dd x_3\) has precisely these coefficients, without an extra \(r^k\) in its coordinate expression. In Euclidean coordinates its scalar delta distribution does use the Euclidean measure as stipulated in the definition of \eqref{eq:shell-proof-S}. This is the usual change-of-variables rule for distributional forms.

Each term in \(\mathcal J_{\lambda,R}\) is closed. In its polar expression, differentiating any of its scalar delta factors produces a one-form already appearing among \(dr,\dd x_2,\dd x_3\), so the exterior product is zero. Its coefficient has no angular dependence. The support stays a positive distance from \(r=0\), so extension by zero adds no boundary distribution there. Each of the four spherical supports is an oriented compact manifold
without boundary, including the signed pairs in \eqref{eq:shell-zero-sign}.

First take \(\lambda=1\). The support of \(W_R-e_1\) is contained in \begin{equation}
(r-R)^2+x_2^2+x_3^2\le1.
\label{eq:shell-proof-1-2}
\end{equation} In particular \(W_R=e_1\) for \(r<R-1\); the formula is unambiguous at \(X=0\). On any angular coordinate chart in this shell, \(W_R\) is the product of \(w_*(r-R,x_2,x_3)\) with the angular variables. The smooth change of coordinates and bounds \(0<R-1\le r\le R+1\) show that it lies in \(W^{1,p}\) for \(2<p<3\). The same fact follows directly by integrating its derivatives using \eqref{eq:shell-proof-1-1}, as \begin{equation}
|\nabla W_R|^2
=|\partial_1 w_*|^2+|\partial_2  w_*|^2+|\partial_3  w_*|^2
\quad\hbox{evaluated at }(r-R,x_2,x_3).
\label{eq:shell-proof-1-3}
\end{equation} Product Sobolev membership and changes of variables justify this formula as a weak derivative identity. Across the boundary of the shell there is no jump: \(w_*\) is a globally defined Sobolev map, equal to \(e_1\) outside its unit ball. A finite angular cover and a partition of unity therefore prove the global assertion \(W_R-e_1\in W^{1,p}\). Its height and Dirichlet energies are finite. Formula \eqref{eq:shell-proof-1-3} gives \begin{equation}
T_R=\frac{\sigma_k}{2}\int_{\mathbb R^3}(R+\xi_1)^k|\nabla w_*(\xi)|^2\dd \xi
\le\sigma_k(R+1)^kT_*.
\label{eq:shell-proof-1-4}
\end{equation} Only \(|\xi_1|\le1\) contributes here. The corresponding formula with \(w_{*,3}^2\) gives \(V_R<\infty\). Positivity of \(T_R\) follows from \(T_*>0\).

In a polar coordinate chart, all angular components of \(F(W_R)\) vanish, and its remaining components are those of \(F(w_*)\) evaluated at \((r-R,x_2,x_3)\). Taking the exterior derivative coefficient by coefficient and using the charge law of \cref{thm:strict-comparison} gives exactly \(\mathcal J_{1,R}\). This computation is valid distributionally: tests factor into an angular test and a normal-coordinate test, and Fubini reduces it to the three-dimensional distributional identity. Finite linear combinations and the angular partition of unity give general tests. It also verifies the identity across the shell boundary and near \(r=0\), where the map is constant. The target and domain reflection identities of \(w_*\) give \(W_R(gx)=\mathcal RW_R(x)\).

For its momentum, almost everywhere in the shell, \begin{equation}
F_{12}(W_R)
=\frac{x_1}{r}\,w_*(r-R,x_2,x_3)\cdot
   \big(\partial_1 w_*(r-R,x_2,x_3)\times\partial_2  w_*(r-R,x_2,x_3)\big).
\label{eq:shell-proof-1-5}
\end{equation} The last factor depends only on \(r,x_2,x_3\). The entire momentum integrand is odd in \(x_1\), and is integrable by \eqref{eq:shell-proof-1-2}--\eqref{eq:shell-proof-1-3}. Hence \(P_d(W_R)=0\). A Euclidean dilation \(x\mapsto x/\lambda\) gives the source \eqref{eq:shell-proof-S}, preserves the reflection, and gives \begin{equation}
T_d(W_{\lambda,R})=\lambda^{d-2}T_R,\qquad
V_d(W_{\lambda,R})=\lambda^dV_R,\qquad
P_d(W_{\lambda,R})=\lambda^{d-1}P_d(W_R)=0.
\label{eq:shell-proof-1-6}
\end{equation}

To choose the geometry, first take \(b>2\) with \begin{equation}
\frac{16\pi}{1+2\sqrt3/b}>T_*.
\label{eq:shell-proof-1-7}
\end{equation} For fixed such \(b\), as \(R\to\infty\), \begin{equation}
\frac{C_{d,R,b}}{\sigma_kR^k}\longrightarrow
\frac{16\pi}{1+2\sqrt3/b},\qquad
\frac{B_{d,R}}{\sigma_kR^k}\longrightarrow T_*.
\label{eq:shell-proof-1-8}
\end{equation} For $k=0$ the formulas are already
exact:
\begin{equation}\label{eq:shell-zero-gap}
T_R=2T_*,\qquad V_R=2V_*,\qquad
B_{3,R}=2T_*,\qquad C_{3,R,b}=\frac{32\pi}{1+2\sqrt3/b}.
\end{equation}
Here $W_R(x_1,x_2,x_3)=w_*(|x_1|-R,x_2,x_3)$, so $W_R-e_1$ is supported in the two
disjoint unit balls centred at $(\pm R,0,0)$ and $W_R=e_1$ near $x_1=0$.
Condition \eqref{eq:shell-proof-1-7} gives the strict gap for any $R>2b$.
For $k\ge1$, the limiting assertions follow because \(R^{-k}\int_{R-1}^{R+1}r^k\,dr\to2\), \((1-b/R)^k\to1\), and \((1+1/R)^k\to1\). Thus an \(R>2b\) satisfies \eqref{eq:shell-proof-G}. This choice is independent of \(c,p,\lambda\).

The class \eqref{eq:shell-proof-A} has exactly the prescribed-current,
Sobolev, exterior and equivariance conditions treated in
\cref{prop:rotation-proof}. Since $\mathcal Re_1=e_1$,
$\mathcal Re_3=-e_3$, and $g$ fixes the travel direction, that proposition
gives preservation under \eqref{eq:shell-proof-R} and the local momentum
increment for $P_d$.
\end{proof}

\subsection{Projected charge and the momentum potential}\label{appsrc-shell-proof-cancellation-under-projection-and-the-momentum-potential}

To control momentum independently of the support, we integrate the vorticity in the propagation direction. Cancellation of the projected charges makes the resulting transverse field a gradient, giving a scalar potential for the momentum.

\begin{lemma}[The momentum potential]\label{lem:shell-potential}
Every \(v\) in \eqref{eq:shell-proof-A} has a compactly supported momentum potential \(\mathfrak p_v\in W^{1,1}(\mathbb R^N)\) satisfying \begin{equation}
\partial_j\mathfrak p_v(x')=-\int_{\mathbb R}F_{1j}(v)(x_1,x')\dd x_1
  \quad (j=2,\ldots,d),\qquad
  P_d(v)=\int_{\mathbb R^N}\mathfrak p_v,\qquad
  \|\mathfrak p_v\|_{L^q}\le T_d(v).
\label{eq:shell-proof-P}
\end{equation}
\end{lemma}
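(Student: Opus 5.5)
We work on the transverse space $\mathbb R^N$, $N=d-1$, with coordinates $x'=(x_2,\ldots,x_d)$. The first step is to define the projected field $G_j(x')=\int_{\mathbb R}F_{1j}(v)(x_1,x')\dd x_1$ for $j=2,\ldots,d$. Since $v=e_1$ outside a compact set and $|F_{1j}(v)|\le\frac12|\nabla v|^2$, Fubini gives $G\in L^1(\mathbb R^N)$ with compact support. It also gives $\sum_j\|G_j\|_{L^1}\le\int|\nabla v|^2\cdot(\text{const})$. More precisely, $|(G_2,\ldots,G_d)|\le\int\bigl|(F_{12},\ldots,F_{1d})\bigr|\dd x_1$, and pointwise $|(F_{1j})_{j\ge2}|\le|\partial_1v|\,|\nabla'v|\le\frac12|\nabla v|^2$. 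Hence $\|G\|_{L^1}\le T_d(v)$.

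Next I will show that $G$, viewed as the one-form $\sum_j G_j\dd x_j$ on $\mathbb R^N$, is closed in distributions. Testing $dF(v)=\mathcal J_{\lambda,R}$ against forms $\phi(x')\,\dd x_1\wedge\ldots$ that are independent of $x_1$ corresponds to pushing forward along $x_1$. Concretely, I take test functions $\phi(x')\chi(x_1/L)$ with $L\to\infty$; the cutoff derivative terms vanish because the support is compact. The components of $dF$ containing $\dd x_1$ then integrate to $\partial_iG_j-\partial_jG_i$. The terms without $\dd x_1$ are killed by integrating the $x_1$-derivative. Closedness therefore reduces to checking that the $\dd x_1$-components of $\mathcal J_{\lambda,R}$ have zero pushforward. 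This is the projected charge cancellation. The two positive shells at $x_2=\pm\lambda$ have $dr$-components whose $\dd x_1$ parts are odd in $x_1$ under the $S^k$ parametrisation, and by \eqref{eq:shell-zero-sign} they cancel. The same holds for the negative shells. More directly, $\int\delta(r-a)\,(x_1/r)\,\dd x_1$ is odd in $x_1$ and integrates to zero for each fixed $x'$. So the pushforward of every term vanishes, and $dG=0$. This step is the main obstacle, because the distributional bookkeeping of the $dr$ coefficient must be done carefully, especially for $k=0$, where the shells are point pairs.

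A closed compactly supported $L^1$ one-form on $\mathbb R^N$ with $N\ge2$ is exact with compactly supported primitive. To see this, mollify $G$, integrate along rays from a point outside the support to get a primitive, and note that the primitive is constant on the connected exterior of a large ball, so we may normalise it to be zero there. Passing to the limit gives $\mathfrak p_v\in W^{1,1}$ with $\nabla\mathfrak p_v=-G$ and compact support. The Sobolev (Gagliardo–Nirenberg) inequality $\|f\|_{L^{N/(N-1)}}\le\|\nabla f\|_{L^1}$ for compactly supported $W^{1,1}$ functions, with constant at most one, then yields $\|\mathfrak p_v\|_{L^q}\le\|G\|_{L^1}\le T_d(v)$. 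For $N=2$ we use the product form of the inequality, which has constant $\le1$; the general case is likewise fine.

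Finally, for the momentum identity, $P_d(v)=\int x_2F_{12}=\int_{\mathbb R^N}x_2G_2\,\dd x'=-\int x_2\partial_2\mathfrak p_v=\int\mathfrak p_v$ by integration by parts, which is justified by compact support.
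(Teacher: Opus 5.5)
Your proposal is correct and follows essentially the paper's route. You bound the projected field in $L^1$ by $T_d(v)$, get closedness from the $(1,i,j)$ components of $dF(v)=\mathcal J_{\lambda,R}$ tested against $x_1$-independent tests (the projected source vanishes by the reflection $x_1\mapsto -x_1$ within each shell), then apply the Gagliardo--Nirenberg inequality with constant one and integrate by parts in $x_2$ for $P_d(v)=\int\mathfrak p_v$. The only variation is the primitive: the paper defines $\mathfrak p_v(x')=\int_{x_2}^\infty A_2\,dt$ directly and gets compact support from the vanishing of $\int A_2\,dx_2$, while you mollify and normalise on the connected exterior of a ball (using $N\ge2$), which is equally valid.
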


\begin{proof}
Fix an admissible \(v\), at any fixed \(\lambda>0\). Put \begin{equation}
A_j(x')=\int_{\mathbb R}F_{1j}(v)(x_1,x')\dd x_1,\qquad j=2,\ldots,d.
\label{eq:shell-proof-2-1}
\end{equation} These are compactly supported \(L^1\) functions. The elementary pointwise bound needed below is \begin{equation}
\left(\sum_{j=2}^d|F_{1j}(v)|^2\right)^{1/2}
\le|\partial_1v|\left(\sum_{j=2}^d|\partial_jv|^2\right)^{1/2}
\le\frac12|\nabla v|^2.
\label{eq:shell-proof-2-2}
\end{equation} It uses \(|v|=1\) and Cauchy--Schwarz, and therefore applies to the weak derivatives almost everywhere.

The projection of the source in the travel direction is zero: \begin{equation}
\int_{\mathbb R}\iota_{\partial_1}\mathcal J_{\lambda,R}\dd x_1=0
\quad\hbox{as a distributional two-form on }\mathbb R^N.
\label{eq:shell-proof-2-3}
\end{equation} To verify this precisely, contraction with \(\partial_1\) replaces \(dr\) by \(x_1/r\), so each term is a scalar multiple of \[
\delta(r-a)\delta(x_2-\xi_2)\delta(x_3)\frac{x_1}{r}\dd x_2\wedge \dd x_3.
\] For a test function independent of \(X_1\), its integral on the radius-\(a\) sphere is zero under the measure-preserving reflection \(X_1\mapsto-X_1\). This verifies \eqref{eq:shell-proof-2-3} without a pointwise formula
for a projected delta density. It includes the equators when $k\ge1$
and the cancellation of each reflected pair in
\eqref{eq:shell-zero-sign} when $k=0$.

For transverse indices \(i,j\), the \((1,i,j)\) component of \(dF(v)=\mathcal J_{\lambda,R}\) is \[
\partial_1F_{ij}-\partial_iF_{1j}
+\partial_jF_{1i}=(\mathcal J_{\lambda,R})_{1ij}.
\] Test with a transverse compact test times a longitudinal cutoff equal to one on this map's support and on the source. Integration in \(x_1\), followed by \eqref{eq:shell-proof-2-3}, yields \begin{equation}
\partial_iA_j=\partial_jA_i.
\label{eq:shell-proof-2-4}
\end{equation} The cutoff is used only to justify an identity for this fixed map; no estimate will involve its radius.

Define \begin{equation}
\mathfrak p_v(x')=\int_{x_2}^\infty
 A_2(t,x_3,x_4,\ldots,x_d)\,dt.
\label{eq:shell-proof-2-5}
\end{equation} Its \(x_2\)-derivative is \(-A_2\). For \(j\ne2\), \eqref{eq:shell-proof-2-4} and the vanishing of \(A_j\) for large \(x_2\) give \(\partial_j\mathfrak p_v=-A_j\) in distributions. To check compact support on the negative \(x_2\) side, put \(A_{\mathrm{tot}}(x_3,x_4,\ldots,x_d)=\int_{\mathbb R}A_2\dd x_2\). Equation \eqref{eq:shell-proof-2-4} gives every distributional derivative of \(A_{\mathrm{tot}}\) equal to zero. Thus \(A_{\mathrm{tot}}\) is constant almost everywhere on the connected Euclidean space \(\mathbb R^{N-1}\); mollification, followed by the fact that a smooth function with zero gradient is constant, proves this statement if needed. Since \(A_{\mathrm{tot}}\) is compactly supported and integrable, that constant is zero. Formula \eqref{eq:shell-proof-2-5} is therefore compactly supported and integrable. It belongs to \(W^{1,1}\), and \eqref{eq:shell-proof-2-1}--\eqref{eq:shell-proof-2-2} give \begin{equation}
\|\nabla'\mathfrak p_v\|_{L^1(\mathbb R^N)}
\le\int_{\mathbb R^d}\left(\sum_{j=2}^d|F_{1j}|^2\right)^{1/2}
\le T_d(v).
\label{eq:shell-proof-2-6}
\end{equation} Finally integration by parts in \(x_2\), with a cutoff equal to one on the support of \(\mathfrak p_v\), yields \begin{equation}
P_d(v)=\int_{\mathbb R^N}x_2 A_2\,dx'
=-\int_{\mathbb R^N}x_2\partial_2 \mathfrak p_v\,dx'
=\int_{\mathbb R^N}\mathfrak p_v\,dx'.
\label{eq:shell-proof-2-7}
\end{equation}

We include the nonsharp Sobolev inequality used here. For a smooth compactly supported real function \(f\) on \(\mathbb R^N\), the fundamental theorem in each coordinate gives \[
|f(x)|\le F_i(\widehat x_i):=\int_{\mathbb R}|\partial_i f|\,dx_i.
\] Consequently \[
|f|^{N/(N-1)}\le\prod_{i=1}^N F_i(\widehat x_i)^{1/(N-1)}.
\] For nonnegative integrable \(F_i\) independent of their \(i\)-th variable, \begin{equation}
\int_{\mathbb R^N}\prod_{i=1}^N F_i(\widehat x_i)^{1/(N-1)}
\le\prod_{i=1}^N\left(\int_{\mathbb R^{N-1}}F_i\right)^{1/(N-1)}.
\label{eq:shell-proof-2-8}
\end{equation} Here is an induction proof. For \(N=2\) the integral factors. For \(N\ge3\), integrate first in \(x_N\) and apply Hölder with \(N-1\) factors to the functions indexed by \(i<N\). This replaces those functions by \(G_i=\int_{\mathbb R}F_i\,dx_N\) on their \(N-2\) remaining variables. In the remaining \(N-1\) variables, Hölder with exponents \(N-1\) and \((N-1)/(N-2)\) separates \(F_N^{1/(N-1)}\). The second factor is the \((N-2)/(N-1)\)-power of \(\int\prod_{i<N}G_i^{1/(N-2)}\), which is bounded by the induction hypothesis. The resulting exponents are exactly those in \eqref{eq:shell-proof-2-8}. Truncation and Tonelli justify the argument if some intermediate integrals are initially infinite.

It follows that \begin{equation}
\|f\|_{L^q}\le\prod_i\|\partial_i f\|_1^{1/N}
\le\frac1N\sum_i\|\partial_i f\|_1
\le\|\nabla f\|_1.
\label{eq:shell-proof-2-9}
\end{equation} Mollification and Fatou extend this to compactly supported \(W^{1,1}\) functions. Equations \eqref{eq:shell-proof-2-6} and \eqref{eq:shell-proof-2-9} prove \eqref{eq:shell-proof-P}.
\end{proof}

\subsection{Coercivity and its local form}\label{appsrc-shell-proof-longitudinal-loops-the-global-action-bound-and-its-localization}

The potential estimate controls the momentum contribution from slices with large energy or a nonzero integer area correction. On the remaining slices, a phase representation bounds momentum directly by energy; together these estimates give small-energy coercivity.

\begin{proposition}[A support-independent momentum bound]\label{prop:shell-momentum}
Every $v\in\mathcal A_{\lambda,R,p}^{(d)}$ satisfies the following bound. For every \(0<\delta\le1/4\), \begin{equation}
|P_d(v)|\le
  \frac{E(v)}{\sqrt{1-2\delta}}
  +\delta^{-1/N}E(v)^{1+1/N}
  +\pi^{-1/(N-1)}E(v)^q .
\label{eq:shell-proof-M}
\end{equation} All these bounds are independent of the support of \(v-e_1\), of \(\lambda\), and of \(p\).
\end{proposition}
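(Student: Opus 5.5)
Using \cref{lem:shell-potential}, I write $P_d(v)=\int_{\mathbb R^N}\mathfrak p_v\,dx'$ and bound $\int|\mathfrak p_v|$ slice by slice. This follows the planar argument of \cref{lem:window-height,lem:window-belt,prop:window-momentum}, with the transverse variable $x'$ in place of $x_2$. For almost every $x'$ the longitudinal slice $\ell\mapsto v(\ell,x')$ is a closed loop (constant $e_1$ at both ends), with slice energy $e(x')=\frac12\int(|\partial_1v|^2+v_3^2)\,dx_1$. I split $\mathbb R^N$ into three classes of slices and attach one term of \eqref{eq:shell-proof-M} to each.

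\emph{Class I (belt slices with zero integer correction).} These are slices with $\sup|v_3|^2\le 2\delta$ on which the integer ambiguity vanishes. Here I lift the phase $\theta$ and integrate $-v_3\partial_1\theta$. I use the gluing argument of \cref{lem:window-belt}: the flux of $F$ through the half-space region behind the slice, closed along a filling disk in the target, differs from $\int v_3\partial_1\theta$ by an element of $2\pi\mathbb Z$ plus possible source contributions. The source charges project to zero by \eqref{eq:shell-proof-2-3}, so only the integer term survives. The integer $m(x')$ is defined on this class. On slices with $m=0$, $|\mathfrak p_v(x')|\le(1-2\delta)^{-1/2}\int|v_3||\partial_1v|\,dx_1$. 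Integrating over $x'$ and applying Cauchy--Schwarz and AM--GM gives the first term $E(v)/\sqrt{1-2\delta}$.

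\emph{Class II (height-crossing slices).} These are slices reaching $|v_3|^2>2\delta$. Exactly as in \eqref{eq:planar-window-11}, each such slice has $\int|v_3||\partial_1 v|\,dx_1\ge 2(1-\sqrt{1-2\delta})\ge 2\delta$, so its slice energy is at least $\delta$. By Chebyshev the set $S_{II}$ of these slices has measure at most $E/\delta$. Hölder with \cref{lem:shell-potential} then gives
\[
\int_{S_{II}}|\mathfrak p_v|\le|S_{II}|^{1/N}\|\mathfrak p_v\|_{L^q}\le(E/\delta)^{1/N}\,T_d(v)\le\delta^{-1/N}E^{1+1/N}.
\]
The key input is $1-1/q=1/N$.

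\emph{Class III (belt slices with $m\ne0$).} Here $|\mathfrak p_v(x')|\ge 2\pi-|b(x')|$. I would like a pointwise bound of the form $|\mathfrak p_v|\le T_d/2$, as in \eqref{eq:planar-window-8}, but no such bound is available in dimension $N\ge2$. Instead I use that on this set $|\mathfrak p_v|\ge\pi$ whenever $|b|\le\pi$; if $|b|>\pi$, the slice carries energy comparable to a constant and falls under the Chebyshev argument of Class II. Chebyshev applied to $\mathfrak p_v$ in $L^q$ gives $|\{|\mathfrak p_v|\ge\pi\}|\le\pi^{-q}T_d^q$. Then
\[
\int_{\{|\mathfrak p_v|\ge\pi\}}|\mathfrak p_v|\le|\cdot|^{1/N}\|\mathfrak p_v\|_q\le\pi^{-q/N}T_d^{1+q/N}.
\]
This is not quite the stated third term. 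The tidier route is $\int_{|\mathfrak p|\ge\pi}|\mathfrak p|\le\pi^{1-q}\int|\mathfrak p|^q\le\pi^{1-q}T_d^q$, with $1-q=-1/(N-1)$, which gives exactly $\pi^{-1/(N-1)}E^q$. I would use this version.

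\emph{Main obstacle.} The hardest step is making the dichotomy $m\ne0\Rightarrow|\mathfrak p_v|\ge\pi$ rigorous. This requires two things. First, the identity $\mathfrak p_v=b+2\pi m$ must hold on almost every slice of a merely $W^{1,p}$ map, whose transverse slices meet the codimension-three defect set $\operatorname{supp}\mathcal J_{\lambda,R}$. Second, the boundary case $|b|$ close to $\pi$ must be handled, by absorbing such slices into the large-energy class using $\delta\le1/4$.

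For the first point I would prove the identity for smooth maps by approximation off the defects, then pass to almost every slice via Fubini and strong $W^{1,p}$ convergence (which is available since $p>2$ makes slices continuous). I would use the source cancellation \eqref{eq:shell-proof-2-3} to show the defects contribute no non-integer flux.
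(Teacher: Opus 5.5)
Your decomposition and the three estimates are essentially the paper's. You use Chebyshev and H\"older with $1-1/q=1/N$ on the high-energy lines, and $\int_{\{|\mathfrak p_v|\ge\pi\}}|\mathfrak p_v|\le\pi^{1-q}\|\mathfrak p_v\|_q^q$ on lines with nonzero integer correction. On the remaining lines you use $|b_v|\le(1-2\delta)^{-1/2}\int|v_3||\partial_1v|\,dx_1$. Your first Chebyshev route already gives the stated constant, since $q/N=1/(N-1)$ and $1+q/N=q$.

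It is cleaner to split by slice energy than by height, as the paper does. On $G=\{\ell\le\delta\}$, with $\ell$ your $e(x')$, \eqref{eq:shell-proof-3-2} gives $\sup|v_3|^2\le2\delta$ and hence $|b_v|\le\delta/\sqrt{1-2\delta}<\pi$. The case $|b|>\pi$ therefore never arises. Your absorption patch is valid, since such lines have $\ell>\pi\sqrt{1-2\delta}>\delta$, but it is unnecessary.

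The genuine weakness is the step you flag as the main obstacle. As you describe it---approximate $v$ by smooth maps off the defects, then slice---it is not justified.
\begin{itemize}
\item Smooth sphere-valued approximation in all $d$ variables is impossible globally. Strong $W^{1,p}$ limits of such maps have $dF=0$, whereas $dF(v)=\mathcal J_{\lambda,R}\ne0$.
\item Away from $\operatorname{supp}\mathcal J_{\lambda,R}$, mollification followed by normalisation is unavailable, because $v$ need not be continuous there when $p<3\le d$. You would need a deep density theorem for maps smooth off codimension-three sets.
\end{itemize}

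Reverse the order, as the paper does.
\begin{itemize}
\item For almost every $(x_3,\dots,x_d)$, Fubini makes the restriction of $v$ to the $(x_1,x_2)$-plane a $W^{1,p}$ map with $p>2$. It is therefore continuous and equal to $e_1$ outside a rectangle.
\item By \eqref{eq:shell-proof-2-5}, its area over $\{t>x_2\}$ is exactly $\mathfrak p_v(x')$.
\item Mollify and normalise in these two variables. The Jacobians converge in $L^1$.
\item A further Fubini argument gives convergence on almost every horizontal line. On lines of $G$ the approximations avoid the poles, so $b_v$ converges.
\item For smooth caps, area minus $b_v$ lies in $2\pi\mathbb Z$. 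You can get this from your filling-disk gluing, or, as in the paper, from Stokes with $\omega=d(-\zeta\,d\vartheta)$ after excising small disks around the pole preimages; a small target rotation makes these regular.
\item Closedness of $2\pi\mathbb Z$ finishes the argument.
\end{itemize}

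On these planes there are no defects at all, so the source cancellation \eqref{eq:shell-proof-2-3} plays no role in this step. It is needed only to construct $\mathfrak p_v$ in \cref{lem:shell-potential}.
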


\begin{proof}
We establish the required coercivity estimates for the prescribed-current
class by a longitudinal-loop decomposition.
Write \(e=E(v)\), and for almost every \(x'\) define \begin{equation}
\ell(x')=\frac12\int_{\mathbb R}
\big(|\partial_1v(x_1,x')|^2+v_3(x_1,x')^2\big)\dd x_1.
\label{eq:shell-proof-3-1}
\end{equation} Its integral is at most \(e\). Each such longitudinal slice is absolutely continuous on bounded intervals and equals \(e_1\) near both ends of the line. For its height \(\zeta(x_1)=v_3(x_1,x')\), \begin{equation}
\sup_{x_1}|\zeta(x_1)|^2
\le2\int_{\mathbb R}|\zeta\zeta'|
\le\int_{\mathbb R}(\zeta^2+|\zeta'|^2)\le2\ell(x').
\label{eq:shell-proof-3-2}
\end{equation} Fix \(0<\delta\le1/4\), let \(h=\sqrt{1-2\delta}\), and put \(G=\{\ell\le\delta\}\). On \(G\), the slice avoids the target poles \(\pm e_3\), so it has an absolutely continuous real phase \(\vartheta\), with \[
v=(\sqrt{1-\zeta^2}\cos\vartheta,\sqrt{1-\zeta^2}\sin\vartheta,\zeta),
\qquad
\vartheta'=\frac{v_1\partial_1v_2-v_2\partial_1v_1}{v_1^2+v_2^2}.
\] The phase can be defined by integrating the last expression with an initial phase; direct differentiation recovers the first two components. Define \[
b_v(x')=-\int_{\mathbb R}\zeta\vartheta'\dd x_1\quad(x'\in G).
\] The identity \(|\partial_1v|^2=\zeta'^2/(1-\zeta^2)+(1-\zeta^2)\vartheta'^2\) implies \begin{equation}
|b_v(x')|\le h^{-1}\int_{\mathbb R}|v_3||\partial_1v|\dd x_1
\le h^{-1}\ell(x').
\label{eq:shell-proof-3-3}
\end{equation}

We next justify, for the precise Sobolev class here, \begin{equation}
\mathfrak p_v(x')-b_v(x')\in2\pi\mathbb Z
\quad\hbox{for almost every }x'\in G.
\label{eq:shell-proof-3-4}
\end{equation} Fix almost every value of \((x_3,x_4,\ldots,x_d)\). Sobolev slicing gives a \(W^{1,p}\) map in the \((x_1,x_2)\)-plane, equal to \(e_1\) outside a compact rectangle. Since \(p>2\), its continuous representative is uniformly continuous, sphere valued everywhere, and constant outside that rectangle. Componentwise convolution followed by normalisation gives smooth sphere-valued approximations, with a common larger compact support, converging uniformly and strongly in \(W^{1,p}\). Uniform convergence makes normalisation legitimate. After choosing a subsequence with summable integrated derivative errors, Fubini gives strong longitudinal \(W^{1,p}\) convergence for almost every horizontal line \(x_2\). On a line in \(G\), the limit avoids the poles by \eqref{eq:shell-proof-3-2}, so these approximations avoid the poles there as well; the phase-derivative formula and its denominator bounded away from zero show convergence of \(b_v\) on this line. The Jacobians converge in \(L^1\) on the planar rectangle, and therefore their integrals on the half-plane \(t>x_2\) converge to \(\mathfrak p_v(x')\), as defined in \eqref{eq:shell-proof-2-5}.

For a smooth cap on a rectangle in \((x_1,t)\) with bottom edge \(t=x_2\) and three other edges constant, the outward sphere area form away from its poles is \[
\omega=d\vartheta\wedge d\zeta=d(-\zeta\,d\vartheta).
\] The oriented bottom edge runs in increasing \(x_1\), so its primitive integral is \(b_v\). If the poles are regular values of the cap, remove small disks around their finitely many interior preimages and apply Stokes. Each deleted circle contributes an integer multiple of \(2\pi\) in the limit: \(\zeta\) tends to one of \(1,-1\), and the change of \(\vartheta\) is \(2\pi\) times an integer. At a regular preimage, the height error on a radius-\(\epsilon\) circle is \(O(\epsilon^2)\) and the angular variation is bounded, which justifies the limit. The area over the deleted disks tends to zero. Thus cap area minus the bottom primitive lies in \(2\pi\mathbb Z\). The general smooth cap is reduced to this case by arbitrarily small constant target rotations choosing a regular antipodal pair; Sard's theorem ensures such pairs, and compactness of the boundary curve keeps it away from the perturbed poles. The other edges remain constant. Pass to the identity rotation, then to the Sobolev limit described above. Both real quantities converge, and \(2\pi\mathbb Z\) is closed. This proves \eqref{eq:shell-proof-3-4}.

Let \(G_1=\{x'\in G:\mathfrak p_v-b_v\ne0\}\). By \eqref{eq:shell-proof-3-3}--\eqref{eq:shell-proof-3-4}, on \(G_1\), \[
|\mathfrak p_v|\ge2\pi-\delta/h\ge\pi.
\] Hence \eqref{eq:shell-proof-P} gives \begin{equation}
\int_{G_1}|\mathfrak p_v|
\le\pi^{1-q}\|\mathfrak p_v\|_q^q
\le\pi^{-1/(N-1)}e^q.
\label{eq:shell-proof-3-5}
\end{equation} Also \(|G^c|\le e/\delta\), and Hölder with \(1-1/q=1/N\) gives \begin{equation}
\int_{G^c}|\mathfrak p_v|
\le\|\mathfrak p_v\|_q |G^c|^{1/N}
\le\delta^{-1/N}e^{1+1/N}.
\label{eq:shell-proof-3-6}
\end{equation} On \(G\setminus G_1\), \(\mathfrak p_v=b_v\); integrate \eqref{eq:shell-proof-3-3} there. Together with \eqref{eq:shell-proof-2-7}, this proves
\eqref{eq:shell-proof-M}. All these estimates include $N=2$, where
$q=2$ and the two error powers are $e^{3/2}$ and $e^2$.
\end{proof}

We now choose the slice threshold and energy cap so that the momentum contribution can be absorbed into the energy.

For \(0<c<1\), put \[
\delta_c=\frac{1-c^2}{4},\quad
h_c=\sqrt{1-2\delta_c},\quad
a_c=1-\frac{c}{h_c}>0,\quad
\gamma_c=\frac{a_c}{2},
\] \begin{equation}
M_{d,c}=\min\left\{1,\ \delta_c\left(\frac{a_c}{4c}\right)^N,\
\pi\left(\frac{a_c}{4c}\right)^{N-1}\right\}.
\label{eq:shell-proof-C}
\end{equation} 

\begin{corollary}[Small-energy coercivity]\label{cor:shell-coercivity}
With the constants above, \begin{equation}
J_{c,d}(v)\ge\gamma_cE(v)
\quad\hbox{whenever }v\in\mathcal A_{\lambda,R,p}^{(d)},\ E(v)\le M_{d,c}.
\label{eq:shell-proof-Co}
\end{equation}
\end{corollary}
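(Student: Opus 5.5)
The plan is to feed the momentum bound \eqref{eq:shell-proof-M} of \cref{prop:shell-momentum} into $J_{c,d}=E+cP_d$, with the slice threshold fixed at $\delta=\delta_c$. This choice is admissible because $0<c<1$ gives $0<\delta_c\le1/4$. Writing $e=E(v)$, the bound gives
\[
J_{c,d}(v)\ge e-c|P_d(v)|\ge e\Bigl(1-\frac{c}{h_c}\Bigr)-c\,\delta_c^{-1/N}e^{1+1/N}-c\,\pi^{-1/(N-1)}e^{q}.
\]
The first term equals $a_ce$, and $a_c>0$ because $h_c^2=1-2\delta_c=(1+c^2)/2>c^2$. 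It remains to show that each of the two nonlinear error terms is at most $(a_c/4)e$ whenever $e\le M_{d,c}$. The total loss is then at most $(a_c/2)e$, leaving $J_{c,d}(v)\ge(a_c/2)e=\gamma_ce$.

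\emph{First error term.} We need $c\,\delta_c^{-1/N}e^{1/N}\le a_c/4$, which is equivalent to $e\le\delta_c(a_c/(4c))^N$. This is exactly the second entry in the minimum \eqref{eq:shell-proof-C}.

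\emph{Second error term.} Since $q-1=1/(N-1)$, we need $c\,\pi^{-1/(N-1)}e^{1/(N-1)}\le a_c/4$, which is equivalent to $e\le\pi(a_c/(4c))^{N-1}$. This is the third entry of \eqref{eq:shell-proof-C}.

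Both inequalities use only that $t\mapsto t^{s}$ is increasing for $s>0$. The case $e=0$ is trivial, since then $v=e_1$ and $P_d=0$. The entry $1$ in the minimum is not needed for this argument; it is kept for later use.

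No real obstacle arises. The only point needing care is the exponent bookkeeping, namely that $q-1=1/(N-1)$ and that the exponent in the first error term is $1/N$. This bookkeeping holds uniformly for all $N\ge2$, including $d=3$.
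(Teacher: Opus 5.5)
Your proposal is correct and is essentially the paper's proof: take $\delta=\delta_c$ in \eqref{eq:shell-proof-M}, use $h_c>c$, and use the second and third entries of \eqref{eq:shell-proof-C} to bound each nonlinear error term by $(a_c/4)E(v)$, which leaves $J_{c,d}(v)\ge\gamma_cE(v)$. Your treatment of the zero-energy case is harmless, although that case is actually vacuous, since $e_1$ does not carry the nonzero prescribed current $\mathcal J_{\lambda,R}$.
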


\begin{proof}
For \(\delta=\delta_c\), the coefficient \(h_c\) is strictly greater than \(c\). If \(e\le M_{d,c}\), the first error term of \(c\) times \eqref{eq:shell-proof-M}, divided by \(e>0\), is at most \(a_c/4\); the second has the same bound, by \eqref{eq:shell-proof-C}. Thus \[
J_{c,d}(v)\ge(1-c/h_c)e-(a_c/2)e=\gamma_ce.
\] The zero-energy case satisfies the inequality as well. This proves \eqref{eq:shell-proof-Co}.
\end{proof}

The same decomposition also gives a local estimate, separating the energy in a fixed bounded set from the contribution of its complement.

\begin{lemma}[Localised action bound]\label{lem:shell-local-action}
Let $v\in\mathcal A_{\lambda,R,p}^{(d)}$, let $B\subset\mathbb R^d$ be bounded and measurable, and write $e=E(v)$ and $e_{\rm out}=E(v;\mathbb R^d\setminus B)$. For $0<\delta\le1/4$ and $h=\sqrt{1-2\delta}$,
\begin{equation}
|P_d(v)|\le h^{-1}e_{\rm out}
+h^{-1}\|v_3\|_{L^2(B)}\|\partial_1v\|_{L^2(B)}
+\delta^{-1/N}e^{1+1/N}+\pi^{-1/(N-1)}e^q.
\label{eq:shell-proof-3-7}
\end{equation}
If $e\le H$, taking $\delta=\delta_c$ and $h=h_c$ gives
\begin{equation}
J_{c,d}(v)\ge T_d(v;B)
-\frac{c}{h_c}(2H)^{1/2}\|v_3\|_{L^2(B)}
-c\delta_c^{-1/N}H^{1+1/N}
-c\pi^{-1/(N-1)}H^q.
\label{eq:shell-proof-3-8}
\end{equation}
\end{lemma}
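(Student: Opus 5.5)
The plan is to rerun the slice decomposition from the proof of \cref{prop:shell-momentum}, changing only the treatment of the good slices. With the same notation, let $G=\{\ell\le\delta\}$, let $G_1\subset G$ be the set where $\mathfrak p_v-b_v\ne0$, and recall $P_d(v)=\int\mathfrak p_v$ from \cref{lem:shell-potential}. The bounds \eqref{eq:shell-proof-3-5} and \eqref{eq:shell-proof-3-6} for $\int_{G_1}|\mathfrak p_v|$ and $\int_{G^c}|\mathfrak p_v|$ use only the total energy $e$, so they carry over unchanged. They supply the last two terms of \eqref{eq:shell-proof-3-7}.

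On $G\setminus G_1$ we have $\mathfrak p_v=b_v$. Here I use the pointwise form of \eqref{eq:shell-proof-3-3}, namely
\[
|b_v(x')|\le h^{-1}\int_{\mathbb R}|v_3||\partial_1v|(x_1,x')\dd x_1,
\]
rather than its consequence $|b_v|\le h^{-1}\ell$. Integrating over $x'\in G\setminus G_1$ and applying Tonelli bounds the contribution by $h^{-1}\int_{\mathbb R^d}|v_3||\partial_1 v|$. I then split this integral into the part over $B$ and the part over $\mathbb R^d\setminus B$.
\begin{itemize}
\item On $B$, Cauchy--Schwarz gives the product term $\|v_3\|_{L^2(B)}\|\partial_1v\|_{L^2(B)}$.
\item Off $B$, the inequality $|v_3||\partial_1v|\le\tfrac12(v_3^2+|\partial_1v|^2)$ gives a bound by $e_{\rm out}$.
\end{itemize}
This proves \eqref{eq:shell-proof-3-7}. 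The point is that the phase bound is pointwise along each slice, so the localisation costs nothing.

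For \eqref{eq:shell-proof-3-8}, take $\delta=\delta_c$ and $h=h_c$, and write
\[
J_{c,d}(v)\ge E(v)-c|P_d(v)|\ge T_d(v;B)+e_{\rm out}-c|P_d(v)|,
\]
dropping the nonnegative potential energy inside $B$. Insert \eqref{eq:shell-proof-3-7} and treat the terms as follows.
\begin{itemize}
\item The term $c\,h_c^{-1}e_{\rm out}$ is absorbed by $e_{\rm out}$, since $c<h_c$, as noted in the proof of \cref{cor:shell-coercivity}.
\item In the product term, $\|\partial_1 v\|_{L^2(B)}\le(2e)^{1/2}\le(2H)^{1/2}$.
\item The two power terms are increasing in $e$, so $e\le H$ bounds them by $H^{1+1/N}$ and $H^q$.
\end{itemize}
This gives exactly \eqref{eq:shell-proof-3-8}.

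The only delicate step is confirming that \eqref{eq:shell-proof-3-3} really holds slice by slice before integrating. That slicewise estimate was already established in the proof of \cref{prop:shell-momentum} as the first inequality there, so the lemma follows.
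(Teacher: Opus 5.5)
Your proposal is correct and follows essentially the same route as the paper. You reuse the slice decomposition of \cref{prop:shell-momentum}, apply the slicewise form of \eqref{eq:shell-proof-3-3} only on $G\setminus G_1$, split $\int|v_3||\partial_1v|$ over $B$ and its complement using Cauchy--Schwarz and $2ab\le a^2+b^2$, and then discard the exterior energy and the local height energy using $c<h_c$, exactly as the paper does.
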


\begin{proof}
Let \(B\) be any fixed bounded measurable set in \(\mathbb R^d\), put \(e_{\rm out}=E(v;\mathbb R^d\setminus B)\), and use \eqref{eq:shell-proof-3-3} only on \(G\setminus G_1\). The integral of \(|v_3||\partial_1v|\) over its longitudinal fibres is bounded by the integral over all of \(B\), plus the integral over all of its complement. Cauchy--Schwarz on \(B\) and \(2ab\le a^2+b^2\) on the complement give \eqref{eq:shell-proof-3-7}. No support bound was used. For \(\delta=\delta_c\) and \(e\le H\), \eqref{eq:shell-proof-3-8} follows because \(1-c/h_c>0\) permits discarding the remaining exterior energy and the local height energy.
\end{proof}

\subsection{The local cost of collapse}\label{appsrc-shell-proof-a-local-geometric-lower-bound-for-collapse}

We now estimate the energy that must remain near the prescribed defects even if a sequence approaches $e_1$ locally. The first step is a planar symmetry argument, which forces an odd-degree slice to pass through $-e_1$.

\begin{lemma}[Equivariant planar slices]\label{lem:shell-planar}
A continuous map $U:\mathbb R^2\to S^2$, equal to $e_1$ outside a compact set and satisfying $U(-\xi)=\mathcal RU(\xi)$, has $U(0)=-e_1$ whenever its degree is odd. A continuous $H^1$ map $U:B_\rho^2\to S^2$ with this equivariance and $U(0)=-e_1$ satisfies
\begin{equation}
\frac12\int_{B_\rho^2}|\nabla U|^2
\ge4\pi-\rho^{-2}\int_{B_\rho^2}|U-e_1|^2.
\label{eq:shell-proof-4-1}
\end{equation}
\end{lemma}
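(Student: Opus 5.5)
For the first assertion I would use the equivariance to pass to a quotient. Here $\mathcal R=\operatorname{diag}(1,-1,-1)$ is rotation through $\pi$ about $e_1$, with fixed points $\pm e_1$ on the sphere. Since $U(0)=\mathcal RU(0)$, the value $U(0)$ is $e_1$ or $-e_1$, and it suffices to show that $U(0)=e_1$ forces even degree. I would compute the degree as $(4\pi)^{-1}\int U\cdot(\partial_1U\times\partial_2U)$ after smoothing, and compare it with the degree of the induced map between quotients. The map $\xi\mapsto-\xi$ on the compactified plane is an orientation-preserving rotation of $S^2$ with fixed points $0$ and $\infty$. The quotient $S^2/\pm$ is again a sphere, branched doubly at two points, and $S^2/\mathcal R$ is likewise a sphere branched at $\pm e_1$.

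A cleaner route is to count preimages of a regular value. Choose a regular value $y\ne\pm e_1$ of a smooth equivariant approximation. Smoothing must respect equivariance, so I would average the approximation over the group $\{1,g\}$ and then normalise; this works because the target action is by isometries and the map is uniformly close to its smoothing. Its preimages are not the fixed points $0,\infty$, because $U(0),U(\infty)\in\{\pm e_1\}$. The involution $\xi\mapsto-\xi$ maps $U^{-1}(y)$ bijectively onto $U^{-1}(\mathcal Ry)$, preserving local orientation signs, since both the domain and target involutions preserve orientation. This does not pair up the preimages of $y$ themselves.

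Instead I would use $y$ close to $e_1$ and a local degree argument. If $U(0)=e_1$ and $U(\infty)=e_1$, consider preimages of a value $y$ near $-e_1$, which lie away from $0$ and $\infty$. These preimages come in pairs, $\xi$ and $-\xi$, mapping respectively to $y$ and $\mathcal Ry$. The degree equals the signed count over $U^{-1}(y)$ and also the signed count over $U^{-1}(\mathcal Ry)$. So I would instead work on the quotient map $\bar U:S^2/\pm\to S^2/\mathcal R$, whose degree $\bar d$ satisfies $d=\bar d$. I would then compute $\bar d$ locally near the branch points: at a branch point mapping to a branch value the local contributions match, while preimages of a generic point give $2\bar d$ points upstairs counted over $\{y,\mathcal Ry\}$. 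That yields $2d=2\bar d$, which is inconclusive.

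I expect this step to be the \textbf{main obstacle}, and I would settle it with the lifting argument used in Ziener's lemma. Restrict $U$ to a loop $\gamma$ from $\xi$ to $-\xi$ avoiding $0$, namely a half circle of large radius followed by a path through the compact region. By equivariance, $U$ on the second half is $\mathcal R$ applied to the first half. Then $\mathcal R$ acts on $S^2\setminus\{\pm e_1\}\simeq$ cylinder as a half-turn, and the degree is computed as twice the winding of $U$ restricted to a half-plane boundary plus the contribution from the fixed value at $0$. If $U(0)=e_1=U(\infty)$, the map descends to the quotient half-disk picture where the half-turn contributes an even count; parity is then determined by whether the value at $0$ is $e_1$ or $-e_1$. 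I would formalise this as: $d\equiv\frac12(\text{local index at }0)+\frac12(\text{index at }\infty)\pmod 2$, where the index at a fixed point mapping to $e_1$ is even.

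For \eqref{eq:shell-proof-4-1} I would follow Ziener's argument. Put $f=\frac12|U-e_1|^2=1-U_1$. Then $\frac12|\nabla U|^2\ge|\partial_1U\times\partial_2U|$, so the energy bounds the unsigned area covered. Let $\Omega_s=\{U_1<s\}$. By continuity and $U(0)=-e_1$, the components containing $0$ are nonempty for every $s>-1$. By equivariance the image near $0$ is $\mathcal R$-invariant, and I would argue that $U(\partial B_\rho)$ must cover the sphere cap $\{U_1\le\min_{\partial B_\rho}U_1\}$ with degree-type multiplicity. The area of a cap of height $t$ is $2\pi t$. By coarea over circles $\partial B_r$ and the mean value inequality, some $r\in(\rho/2,\rho)$ satisfies $\max_{\partial B_r}|U-e_1|^2\lesssim\rho^{-2}\int|U-e_1|^2$. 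The image of $B_r$ then covers the complement of a small cap around $e_1$, whose area is $4\pi-2\pi\max_{\partial B_r}(1-U_1)$. This gives the bound with the stated constant after fixing the averaging.
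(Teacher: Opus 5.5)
Both halves of your proposal have genuine gaps, and they share a root: you never establish the parity fact on which the lemma rests.

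\emph{First assertion.} Your regular-value attempts are, as you noticed, inconclusive. The closing ``formalisation'' $d\equiv\frac12(\text{index at }0)+\frac12(\text{index at }\infty)\pmod 2$ is not a defined statement. Under the natural meaning of index (local degree at an isolated preimage), a domain fixed point sent to $e_1$ has \emph{odd} index, since $(U_2,U_3)$ on a small circle about it is an odd map into $\R^2\setminus\{0\}$. So your claim that this index is even is false, and what is missing is a mechanism that actually detects parity.

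The paper uses a rotating frame. After an equivariant homotopy making $U\equiv e_1$ near $0$ and an equivariant smoothing, put $\widetilde U(\rho,\theta)=\mathcal R_{-\theta}U(\rho,\theta)$, where $\mathcal R_\theta$ is rotation by $\theta$ about $e_1$. Equivariance makes $\widetilde U$ $\pi$-periodic in $\theta$, so it factors through $(\rho,\theta)\mapsto(\rho,2\theta)$ and has even degree. The identity $\widetilde U\cdot(\widetilde U_\rho\times\widetilde U_\theta)=U\cdot(U_\rho\times U_\theta)+\partial_\rho U_1$, together with $U_1=1$ at both radial ends, gives $\deg\widetilde U=\deg U$.

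Alternatively, you were one step away in your own regular-value paragraph: use the $\mathcal R$-fixed value $y=-e_1$ itself rather than a nearby $y$. If $U(0)=U(\infty)=e_1$, then $U^{-1}(-e_1)$ lies in the free region. An equivariant perturbation supported near free orbits makes $-e_1$ regular, and its preimages then pair off as $\xi,-\xi$ with equal signs, so the degree is even.

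\emph{The inequality \eqref{eq:shell-proof-4-1}.} Your covering argument fails twice.
\begin{enumerate}
\item The claim that $U(B_r)$ covers the complement of a cap about $e_1$ containing $U(\partial B_r)$ requires a nonzero relative degree. That is again the unproved parity statement. It is true, because the local degree at $0$ over $-e_1$ is odd and free pairs contribute evenly, but you do not argue it.
\item More decisively, it yields a bound in terms of $\max_{\partial B_r}|U-e_1|^2$, whereas averaging over $r\in(\rho/2,\rho)$ controls only $\int_{\partial B_r}|U-e_1|^2$. Two thin equivariant tongues crossing the annulus, on which $U$ leaves a fixed neighbourhood of $e_1$, contribute arbitrarily little to $\int_{B_\rho}|U-e_1|^2$ yet give an $O(1)$ maximum on every circle. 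So neither the inequality nor the coefficient $\rho^{-2}$ follows from your route.
\end{enumerate}

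The paper needs no covering at all:
\begin{itemize}
\item Since $U_2,U_3$ are antiperiodic under $\theta\mapsto\theta+\pi$, Wirtinger's inequality on each circle gives $\int_0^{2\pi}|U_\theta|^2\,d\theta\ge\int_0^{2\pi}(1-U_1^2)\,d\theta$.
\item Tangency gives $|\partial_tU_1|\le|U_t|\sqrt{1-U_1^2}$, hence $t|U_t|^2+(1-U_1^2)/t\ge2\partial_tU_1$.
\item Integrating over $B_h$ outward from $U_1(0)=-1$ yields $\frac12\int_{B_h}|\nabla U|^2\ge4\pi-\frac12\int_0^{2\pi}|U(h,\theta)-e_1|^2\,d\theta$. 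This involves a circle \emph{average}, not a maximum.
\item Multiplying by $h$ and integrating over $0<h<\rho$ gives exactly \eqref{eq:shell-proof-4-1}.
\end{itemize}
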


\begin{proof}
Let \(U:\mathbb R^2\to S^2\) be continuous, equal to \(e_1\) outside a compact set, and \(U(-\xi)=\mathcal RU(\xi)\). Its value at zero is \(e_1\) or \(-e_1\). If it were \(e_1\), a normalised straight-line homotopy with a radial cutoff in a small disk where \(U\cdot e_1>0\) would make it constant near zero, preserving equivariance and the degree of its extension to the compactified plane. Radial convolution and normalisation then approximate this modified map by smooth equivariant maps constant near zero and infinity, without changing degree: sufficiently close sphere maps are homotopic by normalising their straight-line interpolation.

For any such smooth map, write \(\xi=(\rho\cos\theta,\rho\sin\theta)\), and let \(\mathcal R_\theta\) be rotation by \(\theta\) around \(e_1\). The map \(\widetilde U(\rho,\theta)=\mathcal R_{-\theta}U(\rho,\theta)\) is \(\pi\)-periodic in \(\theta\), and factors through the degree-two map \((\rho,\theta)\mapsto(\rho,2\theta)\) of the compactified plane. Its degree is even. Direct differentiation gives \[
\widetilde U\cdot(\widetilde U_\rho\times\widetilde U_\theta)
=U\cdot(U_\rho\times U_\theta)+\partial_\rho U_1.
\] Since \(U_1=1\) at both radial endpoints, integration gives the same degree for \(U\) and \(\widetilde U\). Therefore an odd-degree equivariant planar map must satisfy \(U(0)=-e_1\).

For a continuous \(H^1\) map \(U:B_\rho^2\to S^2\) with this equivariance and \(U(0)=-e_1\), the angular components \(U_2,U_3\) are antiperiodic under \(\theta\mapsto\theta+\pi\). The one-dimensional circle Poincaré inequality, applied to these zero-mean components, gives \[
\int_0^{2\pi}|U_\theta|^2\,d\theta
\ge\int_0^{2\pi}(1-U_1^2)\,d\theta
\quad\hbox{for almost every radius}.
\] Tangency gives \(U_t\cdot(e_1-U_1U)=\partial_tU_1\), and hence \[
t|U_t|^2+\frac{1-U_1^2}{t}\ge2\partial_tU_1.
\] Integrate on annuli and let the inner radius tend to zero; continuity supplies the inner value \(-e_1\). For almost every \(h<\rho\) this gives \[
\frac12\int_{B_h^2}|\nabla U|^2
\ge4\pi-\frac12\int_0^{2\pi}|U(h,\theta)-e_1|^2\,d\theta.
\] Bound the left side by the energy on \(B_\rho^2\), multiply by \(h\), and integrate from zero to \(\rho\). The result is \eqref{eq:shell-proof-4-1}. Sobolev slicing on annuli justifies all derivatives used here. The circle Poincaré inequality itself follows, for example, by expanding a smooth zero-mean circle function in its Fourier series and comparing \(\sum_{n\ne0}|a_n|^2\) with \(\sum_{n\ne0}n^2|a_n|^2\); density gives its \(H^1\) form.
\end{proof}

The prescribed source determines the degrees of the transverse slices. Applying the planar lemma to these slices gives a lower bound for the energy in the central cylinder.

\begin{lemma}[The central cylindrical cost]\label{lem:shell-core}
For $v\in\mathcal A_{1,R,p}^{(d)}$ and $0<\rho<1$, set $\mathcal C_\rho=\{R-1<r<R+1,\ x_2^2+x_3^2<\rho^2\}$. Then
\begin{equation}
T_d(v;\mathcal C_\rho)\ge
4\pi\sigma_k\int_{R-1}^{R+1}r^k\,dr
-\rho^{-2}\int_{\mathcal C_\rho}|v-e_1|^2.
\label{eq:shell-proof-4-4}
\end{equation}
\end{lemma}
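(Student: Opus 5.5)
The plan is to slice the cylinder $\mathcal C_\rho$ by the transverse planes $(x_2,x_3)$ at fixed $X$ with $R-1<|X|<R+1$. On each slice we apply \cref{lem:shell-planar} to the restricted map. Polar coordinates \eqref{eq:shell-proof-1-1} then integrate the slice bound over $r\in(R-1,R+1)$ and $\theta\in S^k$ with weight $r^k$. Since the transverse Dirichlet energy is at most the full Dirichlet energy, this gives
\[
T_d(v;\mathcal C_\rho)\ge\int_{R-1}^{R+1}\!\int_{S^k}\Big(\tfrac12\int_{B^2_\rho}|\nabla_{x_2,x_3}v|^2\Big)r^k\,d\theta\,dr,
\]
and the error terms $\rho^{-2}\int_{B_\rho^2}|v-e_1|^2$ reassemble into $\rho^{-2}\int_{\mathcal C_\rho}|v-e_1|^2$. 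So it suffices to show that for almost every $X$ in the shell, the slice $U_X(\xi)=v(X,\xi)$ is continuous, $H^1$ and $\mathcal R$-equivariant, with $U_X(0)=-e_1$.

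Regularity and equivariance come first. Because $v-e_1\in W^{1,p}$ with $p>2$, Fubini together with Sobolev slicing shows that almost every two-dimensional slice lies in $W^{1,p}(\mathbb R^2)$, and is therefore continuous. Each such slice equals $e_1$ outside a compact set, since $v$ does. The symmetry $g$ acts on a slice as $\xi\mapsto-\xi$ and fixes $X$, so $v(gx)=\mathcal Rv(x)$ a.e.\ gives $U_X(-\xi)=\mathcal RU_X(\xi)$ for a.e.\ $X$. For a continuous representative this holds everywhere.

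The main obstacle is identifying the degree of $U_X$ as odd. I would compute it from the prescribed current. The degree of $U_X$ equals $(4\pi)^{-1}\int_{\mathbb R^2}F_{23}(v)(X,\cdot)$. By $dF(v)=\mathcal J_{1,R}$ this integral is locally constant in $X$ away from the radii $R\pm1$ and $R$. In the $(X,x_2,x_3)$ variables, the $X$-derivative of the slice integral is the $(i,2,3)$ component of $dF$ integrated over the plane, because the terms $\partial_2F_{i3}$ and $\partial_3F_{2i}$ integrate to zero by compact support. That component is the source. The source has jumps only across $r=R\pm1$, each of strength $4\pi$ on the axis $x_2=x_3=0$; the charges at $r=R$ sit at $x_2=\pm1$. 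With the signs fixed in \eqref{eq:shell-proof-S} and \eqref{eq:shell-zero-sign}, the integral is $0$ for $r<R-1$ and $\pm4\pi$ for $R-1<r<R+1$, so the slice degree is $\pm1$.

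To make this rigorous, I would test $dF(v)=\mathcal J$ against $\phi(X)\chi(x_2,x_3)$, where $\chi\equiv1$ on the support of the slices. The charges at $x_2=\pm1$ contribute only through $d\chi$, which vanishes there. This gives $\nabla_X\int F_{23}\,dx_2dx_3=4\pi(\text{radial jump measures})$ in distributions. Then, for a.e.\ $X$, the slice integral equals $4\pi$ times the degree of the continuous $W^{1,p}$ slice, by smooth approximation and normalisation as in \cref{lem:shell-planar}. Since the degree is odd, \cref{lem:shell-planar} gives $U_X(0)=-e_1$. Inequality \eqref{eq:shell-proof-4-1} on $B_\rho^2$ then holds for a.e.\ $X$ in the shell, and the integration above yields \eqref{eq:shell-proof-4-4}.
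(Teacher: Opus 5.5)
Your strategy is the paper's: slice $\mathcal C_\rho$ by the transverse $(x_2,x_3)$-planes, show that almost every slice is continuous, $H^1$, $\mathcal R$-equivariant and of odd degree, apply \cref{lem:shell-planar}, and integrate with the weight $r^k$. Testing $dF(v)=\mathcal J_{1,R}$ against $\phi(X)\chi(x_2,x_3)$ directly on $\mathbb R^{k+1}$ is a harmless variant of the paper's almost-every-$\theta$ divergence identity \eqref{eq:shell-proof-4-2}. Your regularity, equivariance and reassembly steps are fine.

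The degree computation, however, contains a false step. In the tested identity the source is multiplied by $\chi$ itself, not by $d\chi$; only the other two terms of the $(i,2,3)$ component of $dF(v)$ pick up $d\chi$. The points $(x_2,x_3)=(\pm1,0)$ lie in the region where $\chi\equiv1$, because $dF(v)\ne0$ there. So the two positive charges at $r=R$ contribute fully, and $G(X)=\int_{\mathbb R^2}F_{23}(v)(X,\cdot)$ satisfies
\[
\nabla_XG=4\pi\bigl[2\delta(r-R)-\delta(r-R-1)-\delta(r-R+1)\bigr]\frac{X}{r}
\quad\text{in }\mathcal D'(\mathbb R^{k+1}).
\]
Your version, with jumps only at $r=R\pm1$, is inconsistent with the values you state. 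Both axis charges in \eqref{eq:shell-proof-S} are negative, so two jumps of $-4\pi$ cannot give $G=0$ both for $r>R+1$ and for $r<R-1$.

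The correct computation must be normalised by $G=0$ for large $|X|$, since nothing is known near $X=0$. It gives $G=4\pi$ on $(R,R+1)$, $G=-4\pi$ on $(R-1,R)$ and $G=0$ for $r<R-1$, which is exactly \eqref{eq:shell-proof-4-3}. The jump you dropped is $8\pi$, an even multiple of $4\pi$, so parity on the shell is unaffected. With this correction the slice degree is $\pm1$ for almost every $R-1<|X|<R+1$, and the rest of your argument goes through as in the paper.
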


\begin{proof}
Now take \(v\in\mathcal A_{1,R,p}^{(d)}\).
Pull back $F(v)$ to a polar coordinate chart, or, when $k=0$,
to either half-space $x_1=\pm r$. Consider its three normal components
and the associated normal vector \[
D^\theta=(F_{23},F_{3r},F_{r2}).
\] The coefficient of \(dr\wedge \dd x_2\wedge \dd x_3\) in the exterior derivative differentiates only in \(r,x_2,x_3\), not in the angular directions. Thus for almost every \(\theta\), on the half-space \(r>0\), \begin{equation}
\operatorname{div}_{r,x_2,x_3}D^\theta
=4\pi[\delta_{(R,1,0)}+\delta_{(R,-1,0)}
-\delta_{(R+1,0,0)}-\delta_{(R-1,0,0)}].
\label{eq:shell-proof-4-2}
\end{equation} For $k=0$, this is directly the coefficient identity on each of the
two half-spaces, with the signs already checked in
\eqref{eq:shell-zero-sign}. For $k\ge1$, test the corresponding
coefficient identity in an angular chart with products of a normal
test and an angular test. Fubini gives it for almost every angular point for a countable dense family of normal tests. Local \(L^1\) integrability of the normal current extends the identity to all normal tests. A finite chart cover gives a common full-measure subset of \(S^k\). On compact subintervals of \(r>0\) the weight \(r^k\) is bounded above and below, so the required local integrability follows from \(H^1\) slicing. No angular symmetry of \(v\) was used.

For these \(\theta\), integrate \eqref{eq:shell-proof-4-2} in \(x_2,x_3\), using transverse cutoffs equal to one on this map's support. The function \[
F_\theta(r)=\int_{\mathbb R^2}F_{23}(r,\theta,x_2,x_3)\dd x_2\dd x_3
\] has derivative \(4\pi(2\delta_R-\delta_{R-1}-\delta_{R+1})\) on \(r>0\), and is zero for large \(r\). Its uniquely determined values are \begin{equation}
F_\theta=-4\pi\ \hbox{on }(R-1,R),\qquad
F_\theta=4\pi\ \hbox{on }(R,R+1),\qquad
F_\theta=0\ \hbox{elsewhere in }(0,\infty).
\label{eq:shell-proof-4-3}
\end{equation} The terminal condition determines the integration constant on the whole connected half-line; no condition at \(r=0\) is being imposed.

For almost every \(X\), the slice of \(v\) in the \((x_2,x_3)\)-variables lies in \(W^{1,p}(\mathbb R^2)\) relative to its compact exterior constant, and has a continuous sphere-valued representative. The representative is equivariant everywhere since it is so almost everywhere. Its degree is \((4\pi)^{-1}\int F_{23}\): the formula for smooth maps extends by the uniform and strong \(W^{1,p}\) smoothing used in \cref{appsrc-shell-proof-longitudinal-loops-the-global-action-bound-and-its-localization}, with degrees preserved by uniform closeness. Equations \eqref{eq:shell-proof-4-3} therefore give odd degree whenever \(R-1<|X|<R+1\), for almost every such \(X\). The planar assertion proves \(v(x_1,0,0,x_4,\ldots,x_d)=-e_1\) for these representatives.

For the cylinder $\mathcal C_\rho$ in the statement, apply \eqref{eq:shell-proof-4-1} and integrate in \(X\), using \eqref{eq:shell-proof-1-1}. Since the full Dirichlet energy dominates its \(x_2,x_3\) part, we obtain \eqref{eq:shell-proof-4-4}.
\end{proof}

The cylindrical estimate accounts for only part of the defect geometry. A second test gives an energy cost in a disjoint region; adding the two yields the required lower bound for collapse.

\begin{lemma}[The combined local cost]\label{lem:shell-cost}
For $v\in\mathcal A_{1,R,p}^{(d)}$, $0<\rho<1$ and $\mathcal K=\{R-b<r<R+b,\ |x_2|<b,\ |x_3|<b\}$, with $\mathcal C_\rho$ as in \cref{lem:shell-core},
\begin{equation}
\left(1+\frac{2\sqrt3}{b}\right)T_d(v;\mathcal K)
\ge 4\pi\sigma_k\int_{R-1}^{R+1}r^k\,dr
+8\pi\sigma_k(1-\rho)(R-b)^k
-\rho^{-2}\int_{\mathcal C_\rho}|v-e_1|^2.
\label{eq:shell-proof-4-10}
\end{equation}
Consequently, for fixed $\lambda>0$, every sequence $v_n\in\mathcal A_{\lambda,R,p}^{(d)}$ converging to $e_1$ strongly in $L^2_{\rm loc}$ satisfies
\begin{equation}
\liminf_n T_d(v_n;\lambda\mathcal K)
\ge\lambda^{d-2}C_{d,R,b}.
\label{eq:shell-proof-4-12}
\end{equation}
\end{lemma}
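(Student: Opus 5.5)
The plan is to prove \eqref{eq:shell-proof-4-10} at $\lambda=1$ by adding two lower bounds. The first is the cylinder bound of \cref{lem:shell-core}. The second is a flux bound for the normal current $D^\theta=(F_{23},F_{3r},F_{r2})$, taken through surfaces that separate the two charges $(R,\pm1,0)$ from the negative charges $(R\pm1,0,0)$. As in \cref{lem:shell-core}, I fix $\theta$ in the common full-measure set of $S^k$ on which \eqref{eq:shell-proof-4-2} holds. I work in the normal variables $(r,x_2,x_3)$ with weight $r^k\ge(R-b)^k$ on $\mathcal K$, and use $|D^\theta|\le\frac12|\nabla v|^2$ pointwise.

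\emph{Flux through boxes around the positive charges.} For $s\in(\rho,1)$ and $t\in(b/2,b)$ consider the box
\[
\mathcal B_{s,t}=(R-t,R+t)\times(s,t)\times(-t,t).
\]
It contains exactly the charge $+4\pi$ at $(R,1,0)$ and none of the negative charges, which lie on $x_2=0$. Testing \eqref{eq:shell-proof-4-2} with cutoffs converging to the indicator of the box gives, for a.e.\ $(s,t)$, total outward flux $4\pi$. This flux consists of the bottom face $\{x_2=s\}$ and five outer faces, all contained in $\mathcal K$. The mirror boxes with $x_2\in(-t,-s)$ treat the charge at $(R,-1,0)$ in the same way.

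The bottom faces $\{x_2=\pm s\}$, $\rho<s<1$, are disjoint from $\mathcal C_\rho$, and their flux is at most the $(r,x_3)$-slice Dirichlet energy. Integrating over $s$ gives
\[
\text{bottom-face contributions}\ \ge\ 8\pi(1-\rho)-\text{outer-face contributions}.
\]
Averaging the outer faces over $t\in(b/2,b)$ converts them into a volume integral of $|D^\theta|$ over $\mathcal K$ with weight $2/b$. Cauchy--Schwarz across the three face orientations then bounds this averaged term by $(2\sqrt3/b)\,T(\cdot;\mathcal K)$ in these variables.

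\emph{Combining the two bounds.} Integrating in $X$ gives
\[
T_d(v;\mathcal K\setminus\mathcal C_\rho)+\frac{2\sqrt3}{b}T_d(v;\mathcal K)\ \ge\ 8\pi\sigma_k(1-\rho)(R-b)^k .
\]
Add \eqref{eq:shell-proof-4-4} and use $T_d(v;\mathcal C_\rho)+T_d(v;\mathcal K\setminus\mathcal C_\rho)=T_d(v;\mathcal K)$. This yields \eqref{eq:shell-proof-4-10}.

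\emph{The limit statement.} For fixed $\lambda$, rescale $v_n(\lambda\,\cdot)\in\mathcal A^{(d)}_{1,R,p}$; the Dirichlet energy scales by $\lambda^{d-2}$. Strong $L^2_{\rm loc}$ convergence to $e_1$ kills the $\rho^{-2}$ term for each fixed $\rho$. Letting $\rho\downarrow0$ gives the constant $C_{d,R,b}$ defined in \eqref{eq:shell-proof-G}, which is \eqref{eq:shell-proof-4-12}.

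\emph{Expected obstacle.} The hard step is the flux bookkeeping at Sobolev regularity. One must justify flux identities on a.e.\ box faces, with a.e.\ slices, for a current that is only $L^1$ and has point sources. One must also obtain exactly the averaging constant $2\sqrt3/b$, which requires choosing which components are paired on each face.
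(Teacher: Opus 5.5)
Your proposal is correct and is essentially the paper's argument. Averaging the indicators of your boxes and their mirrors over $s\in(\rho,1)$ and $t\in(b/2,b)$ amounts to testing \eqref{eq:shell-proof-4-2} with the single Lipschitz function $\min\{(|x_2|-\rho)_+,1-\rho\}\,g(\max\{|r-R|,|x_2|,|x_3|\})$, where $g$ is the ramp equal to $1$ below $b/2$ and $0$ above $b$; this function plays exactly the role of the paper's $\phi_\rho=f_\rho(x_2)\chi(r)\zeta(x_3)$, with value $1-\rho$ at the positive charges, value $0$ at the negative ones, main gradient $\operatorname{sgn}(x_2)$ in the $x_2$-direction on $\mathcal S_\rho$, and remaining gradient of size $O(1/b)$ in $\mathcal K$, after which \eqref{eq:shell-proof-4-4} is added as you do. Testing directly with this averaged function, justified by mollification as in the paper, removes the face-trace bookkeeping you flag as the main obstacle, and its error gradient is in fact bounded by $2/b\le2\sqrt3/b$, since $|\nabla\max\{|r-R|,|x_2|,|x_3|\}|=1$ almost everywhere.
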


\begin{proof}
Define Lipschitz cutoffs \[
\chi(r)=
\begin{cases}
1,&|r-R|\le b/2,\\
2-2|r-R|/b,&b/2<|r-R|<b,\\
0,&|r-R|\ge b,
\end{cases}
\quad
\zeta(x_3)=(1-|x_3|/b)_+,
\] and \[
\eta(x_2)=
\begin{cases}
1,&|x_2|\le1,\\
(b-|x_2|)/(b-1),&1<|x_2|<b,\\
0,&|x_2|\ge b,
\end{cases}
\qquad
f_\rho(x_2)=\min\{(|x_2|-\rho)_+,1-\rho\}\eta(x_2).
\] Put \(\phi_\rho(r,x_2,x_3)=f_\rho(x_2)\chi(r)\zeta(x_3)\), and set \begin{equation}
\mathcal S_\rho=\mathcal K\cap\{\rho<|x_2|<1\}.
\label{eq:shell-proof-4-5}
\end{equation} The set \(\mathcal K\) is bounded and has closure disjoint from \(r=0\). At the positive charges \(\phi_\rho=1-\rho\), and at the negative charges it is zero. Decompose its normal gradient as \[
\nabla_{r,x_2,x_3}\phi_\rho
=(0,\operatorname{sgn}(x_2)\mathbf1_{\{\rho<|x_2|<1\}}\chi\zeta,0)+E_\rho.
\] Every component of \(E_\rho\) has absolute value at most \(2/b\), so \begin{equation}
|E_\rho|\le2\sqrt3/b,\qquad \operatorname{supp} E_\rho\subset\overline{\mathcal K}
\quad\hbox{up to null sets}.
\label{eq:shell-proof-4-6}
\end{equation} These bounds use \(b>2\), \(0\le f_\rho\le1\), \(|\chi'|\le2/b\), \(|\zeta'|\le1/b\), and \(|\eta'|\le2/b\). The main gradient term is supported on \(\mathcal S_\rho\), which is disjoint from \(\mathcal C_\rho\).

At almost every point the three derivatives \(\partial_r v,\partial_2 v,\partial_3 v\) lie in the two-dimensional tangent space of \(S^2\). Expanding them in an orthonormal tangent basis gives coefficient vectors \(\alpha,\beta\in\mathbb R^3\), with \(D^\theta=\pm\alpha\times\beta\). Therefore \begin{equation}
|D^\theta|\le\frac12(|\partial_r v|^2+|\partial_2 v|^2+|\partial_3 v|^2)=:\tau_{\rm nor},
\qquad |F_{3r}|\le\tau_{\rm nor},
\qquad
\tau_{\rm nor}\le\frac12|\nabla v|^2.
\label{eq:shell-proof-4-7}
\end{equation} Test \eqref{eq:shell-proof-4-2} with \(\phi_\rho\). The Lipschitz test is legitimate by convolution on its compact normal-coordinate support: the mollified tests converge uniformly at the four charges and their uniformly bounded gradients converge almost everywhere against the integrable current. It follows from \eqref{eq:shell-proof-4-6}--\eqref{eq:shell-proof-4-7} that, for almost every \(\theta\), \begin{equation}
8\pi(1-\rho)
\le\int_{\mathcal S_\rho^\theta}\tau_{\rm nor}\,dr\dd x_2\dd x_3
+\frac{2\sqrt3}{b}\int_{\mathcal K^\theta}\tau_{\rm nor}\,dr\dd x_2\dd x_3.
\label{eq:shell-proof-4-8}
\end{equation} Here the superscript denotes the normal slice at that \(\theta\). Multiply by \((R-b)^k\) and integrate over \(S^k\). On \(\mathcal K\), \(r^k\ge(R-b)^k\), so \eqref{eq:shell-proof-1-1}, \eqref{eq:shell-proof-4-7}, and \eqref{eq:shell-proof-4-8} give \begin{equation}
8\pi\sigma_k(1-\rho)(R-b)^k
\le T_d(v;\mathcal S_\rho)+\frac{2\sqrt3}{b}T_d(v;\mathcal K).
\label{eq:shell-proof-4-9}
\end{equation} Add \eqref{eq:shell-proof-4-4}. Since \(\mathcal C_\rho,\mathcal S_\rho\) are disjoint subsets of \(\mathcal K\), we conclude \eqref{eq:shell-proof-4-10}. All cutoffs in this estimate are fixed by the geometry. The curvature and cutoff contributions are accounted for by the factor \(1+2\sqrt3/b\).

If \(v_n\to e_1\) strongly in \(L^2_{\rm loc}\), take the lower limit in \eqref{eq:shell-proof-4-10} at fixed \(\rho\), then let \(\rho\downarrow0\). This gives \begin{equation}
\liminf_n T_d(v_n;\mathcal K)\ge C_{d,R,b}.
\label{eq:shell-proof-4-11}
\end{equation} For a sequence in the class with scale \(\lambda\), apply \eqref{eq:shell-proof-4-10} to \(v_n(\lambda\,\cdot)\). Dirichlet energy scales by \(\lambda^{d-2}\), and the local \(L^2\) convergence is preserved at fixed \(\lambda\). Hence \eqref{eq:shell-proof-4-12}.
\end{proof}

\subsection{The strict gap and the minimising sequence}\label{appsrc-shell-proof-collapse-action-the-strict-gap-and-minimizing-sequences}

Combining the local geometric bound with the localised momentum estimate gives a lower bound for the action of any sequence collapsing to $e_1$. We compare this with the explicit trial map before considering minimising sequences.

\begin{proposition}[Action cost of constant collapse]\label{prop:shell-collapse}
For fixed \(\lambda>0\), \(0<c<1\), \(2<p<3\), and \(H>0\), every sequence \(v_n\in\mathcal A_{\lambda,R,p}^{(d)}\) with \(E(v_n)\le H\) and \(v_n\to e_1\) strongly in \(L^2_{\mathrm{loc}}(\mathbb R^d)\) obeys \begin{equation}
\liminf_n J_{c,d}(v_n)\ge
  \lambda^{d-2}C_{d,R,b}
  -c\delta_c^{-1/N}H^{1+1/N}
  -c\pi^{-1/(N-1)}H^q.
\label{eq:shell-proof-L}
\end{equation}
\end{proposition}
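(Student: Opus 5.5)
The plan is to combine the localised action bound \cref{lem:shell-local-action} with the collapse cost \cref{lem:shell-cost}, taking the bounded set $B=\lambda\mathcal K$. Since $E(v_n)\le H$, inequality \eqref{eq:shell-proof-3-8} applies to every $v_n$ and gives
\[
J_{c,d}(v_n)\ge T_d(v_n;\lambda\mathcal K)
-\frac{c}{h_c}(2H)^{1/2}\|(v_n)_3\|_{L^2(\lambda\mathcal K)}
-c\delta_c^{-1/N}H^{1+1/N}-c\pi^{-1/(N-1)}H^q.
\]
The last two terms are exactly the error terms in \eqref{eq:shell-proof-L}, and they do not depend on $n$.

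Next we take the lower limit in $n$. The set $\lambda\mathcal K$ is bounded, and $v_n\to e_1$ strongly in $L^2_{\rm loc}$. Because $(e_1)_3=0$, this forces $\|(v_n)_3\|_{L^2(\lambda\mathcal K)}\to0$, so the middle height term disappears. For the leading term, \eqref{eq:shell-proof-4-12} gives $\liminf_n T_d(v_n;\lambda\mathcal K)\ge\lambda^{d-2}C_{d,R,b}$ under exactly these hypotheses: fixed $\lambda$, admissibility, and local $L^2$ convergence to $e_1$. Combining the two limits yields \eqref{eq:shell-proof-L}.

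The only point requiring care is to apply \eqref{eq:shell-proof-3-8} with $H$ as the global energy bound, and with the localisation set equal to the same bounded set $\lambda\mathcal K$ used in the collapse lemma. This works because \cref{lem:shell-local-action} permits an arbitrary bounded measurable set. The exterior energy has already been discarded there using $1-c/h_c>0$, so no support bound enters. I do not expect any real obstacle: all the work lies in the two cited lemmas, and this step only assembles them.
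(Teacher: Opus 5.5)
Your proposal is correct and matches the paper's proof: apply \eqref{eq:shell-proof-3-8} with $B=\lambda\mathcal K$ and the global energy bound $H$, observe that the local height term vanishes by $L^2_{\rm loc}$ convergence to $e_1$, and combine this with \eqref{eq:shell-proof-4-12} at fixed $\lambda$. As the paper also notes, this step needs neither momentum continuity nor a common compact support.
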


\begin{proof}
Apply \eqref{eq:shell-proof-3-8} with \(B=\lambda\mathcal K\) to a sequence as in the statement. The local height norm tends to zero, since the maps converge locally in \(L^2\) to \(e_1\). Combine with \eqref{eq:shell-proof-4-12} to obtain \eqref{eq:shell-proof-L}. This step takes \(n\to\infty\) at fixed \(\lambda\). It uses neither momentum continuity nor a common compact support.
\end{proof}

At small spatial scales, the momentum errors and anisotropy energy are of higher order than the gap between the two Dirichlet-energy constants. Choosing $\lambda$ sufficiently small therefore separates the comparison action from the collapse cost.

\begin{corollary}[A strict comparison gap]\label{cor:shell-gap}
Set \[
  K=2\gamma_c^{-1}(B_{d,R}+V_R)>0.
  \] Every \(0<\lambda\le1\) satisfying \begin{equation}
K\lambda^{d-2}<M_{d,c}/2
\label{eq:shell-proof-small1}
\end{equation} and \begin{equation}
\lambda^2V_R+
  c\delta_c^{-1/N}K^{1+1/N}\lambda^{(d-2)/N}
  +c\pi^{-1/(N-1)}K^q\lambda
  \le\Delta/2
\label{eq:shell-proof-small2}
\end{equation} has \begin{equation}
\liminf_n J_{c,d}(v_n)
  \ge J_{c,d}(W_{\lambda,R})+\tfrac12\Delta\lambda^{d-2}
\label{eq:shell-proof-gap}
\end{equation} for every sequence satisfying the hypotheses of \cref{prop:shell-collapse} with \(H=K\lambda^{d-2}\). There are arbitrarily small such \(\lambda\), for each fixed \(d,c,b,R\).
\end{corollary}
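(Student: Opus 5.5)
The plan is to insert $H=K\lambda^{d-2}$ into \eqref{eq:shell-proof-L} and compare the result with the upper bound \eqref{eq:shell-proof-T} for the trial map. The choice of $K$ and the two smallness conditions are designed so that every error term is absorbed by half of the gap $\Delta\lambda^{d-2}$.

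\textbf{Step 1: the collapse bound.} First, $K>0$ holds because $B_{d,R}\ge T_R>0$ and $\gamma_c>0$. By \cref{prop:shell-collapse} with $H=K\lambda^{d-2}$, we get
\[
\liminf_n J_{c,d}(v_n)\ge\lambda^{d-2}C_{d,R,b}
-c\delta_c^{-1/N}K^{1+1/N}\lambda^{(d-2)(1+1/N)}
-c\pi^{-1/(N-1)}K^q\lambda^{(d-2)q}.
\]

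\textbf{Step 2: factor out $\lambda^{d-2}$.} The two error powers become $\lambda^{(d-2)/N}$ and $\lambda^{(d-2)(q-1)}$. The second exponent simplifies, since $q-1=1/(N-1)=1/(d-2)$, so $\lambda^{(d-2)(q-1)}=\lambda$. These are exactly the powers appearing in \eqref{eq:shell-proof-small2}.

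\textbf{Step 3: the comparison map.} Write $C_{d,R,b}=B_{d,R}+\Delta$. Using \eqref{eq:shell-proof-T} and $T_R\le B_{d,R}$,
\[
J_{c,d}(W_{\lambda,R})+\tfrac12\Delta\lambda^{d-2}\le\lambda^{d-2}\bigl(C_{d,R,b}-\Delta+\tfrac12\Delta+\lambda^2V_R\bigr).
\]
Comparing the two bracketed expressions, \eqref{eq:shell-proof-gap} reduces to
\[
\lambda^2V_R+c\delta_c^{-1/N}K^{1+1/N}\lambda^{(d-2)/N}+c\pi^{-1/(N-1)}K^q\lambda\le\Delta/2,
\]
which is precisely \eqref{eq:shell-proof-small2}. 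Condition \eqref{eq:shell-proof-small1} plays no role in this inequality. It is listed only so that such sequences arise later below the energy cap.

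\textbf{Step 4: existence of small $\lambda$.} Every term on the left of \eqref{eq:shell-proof-small2}, and the left of \eqref{eq:shell-proof-small1}, carries a positive power of $\lambda$ because $d\ge3$. Hence both conditions hold for all sufficiently small $\lambda\in(0,1]$, which gives arbitrarily small admissible $\lambda$.

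The main obstacle is the exponent bookkeeping in Step 2, in particular the identity $(d-2)(q-1)=1$. Once that is checked, the rest is direct substitution.
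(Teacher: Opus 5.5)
Your proposal is correct and follows the paper's proof essentially step for step. You substitute $H=K\lambda^{d-2}$ into \eqref{eq:shell-proof-L}, use $(d-2)(q-1)=1$ to identify the normalised error powers, subtract the trial bound \eqref{eq:shell-proof-T} so that the gap reduces exactly to \eqref{eq:shell-proof-small2}, and use positivity of the exponents of $\lambda$ to get arbitrarily small admissible scales; your remark that \eqref{eq:shell-proof-small1} is not needed for the inequality itself, only for the later energy-cap argument in \cref{prop:shell-proof}, is also consistent with the paper.
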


\begin{proof}
For \(H=K\lambda^{d-2}\), the errors in \eqref{eq:shell-proof-L}, divided by \(\lambda^{d-2}\), are \[
c\delta_c^{-1/N}K^{1+1/N}\lambda^{(d-2)/N}
\quad\hbox{and}\quad
c\pi^{-1/(N-1)}K^q\lambda,
\] because \((d-2)(q-1)=1\). Subtract the trial estimate \eqref{eq:shell-proof-T}. Condition \eqref{eq:shell-proof-small2} gives \eqref{eq:shell-proof-gap}. Both error powers and the height power \(\lambda^2\) are strictly
positive. In dimension three they are respectively $1/2$, $1$, and $2$.
This proves the existence of arbitrarily small allowable scales
satisfying \eqref{eq:shell-proof-small1}--\eqref{eq:shell-proof-small2}.
\end{proof}

The strict gap now prevents a minimising sequence from having a constant limit. Small-energy coercivity also keeps the sequence strictly inside the energy cap.

\begin{proposition}[A nonconstant limit of minimising sequences]\label{prop:shell-proof}
For any $0<\lambda\le1$ satisfying \eqref{eq:shell-proof-small1}--\eqref{eq:shell-proof-small2}, let \[
  I_{\lambda,R,c,p}^{(d)}
  =\inf\{J_{c,d}(v):v\in\mathcal A_{\lambda,R,p}^{(d)},\
  E(v)\le M_{d,c}\}.
  \] This infimum is finite and nonnegative. Every minimising sequence has a subsequence converging weakly in \(H^1_{\mathrm{loc}}\), strongly in \(L^2_{\mathrm{loc}}\), and almost everywhere to a nonconstant \(u\in H^1_{\mathrm{loc}}(\mathbb R^d;S^2)\), with \begin{equation}
E(u)\le\gamma_c^{-1}
  \big(\lambda^{d-2}T_R+\lambda^dV_R\big),\qquad
  u-e_1\in L^{2d/(d-2)}(\mathbb R^d),\qquad
  u(gx)=\mathcal Ru(x).
\label{eq:shell-proof-NC}
\end{equation} Every sufficiently late term of the minimising sequence has energy \(<M_{d,c}/2\).
\end{proposition}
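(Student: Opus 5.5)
The plan has four steps, carried out in this order: finiteness of the infimum, the energy bound along a minimising sequence, compactness, and nonconstancy.

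\emph{Finiteness.} By \cref{prop:shell-comparison}, $W_{\lambda,R}$ lies in $\mathcal A_{\lambda,R,p}^{(d)}$ and has $E(W_{\lambda,R})=\lambda^{d-2}T_R+\lambda^dV_R\le\lambda^{d-2}(B_{d,R}+V_R)$, using $\lambda\le1$. This equals $\gamma_c K\lambda^{d-2}/2<M_{d,c}$ by \eqref{eq:shell-proof-small1}, since $\gamma_c\le1/2$. So the class is nonempty and
\[
I\le J_{c,d}(W_{\lambda,R})=\lambda^{d-2}T_R+\lambda^dV_R .
\]
On the capped class, \cref{cor:shell-coercivity} gives $J_{c,d}\ge\gamma_cE\ge0$, hence $0\le I<\infty$.

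\emph{Energy bound along a minimising sequence.} Take $J_{c,d}(v_n)\to I$. Coercivity gives
\[
\gamma_c E(v_n)\le J_{c,d}(v_n)\le \lambda^{d-2}T_R+\lambda^dV_R+o(1).
\]
Hence, eventually, $E(v_n)\le\gamma_c^{-1}(\lambda^{d-2}T_R+\lambda^dV_R)+o(1)\le K\lambda^{d-2}/2+o(1)$. This is strictly below $M_{d,c}/2$ by \eqref{eq:shell-proof-small1}, which proves the final assertion of the proposition.

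\emph{Compactness.} Uniform local $H^1$ bounds and $|v_n|=1$ give a subsequence converging weakly in $H^1_{\rm loc}$, strongly in $L^2_{\rm loc}$ and almost everywhere to some $u$. The limit has $|u|=1$ a.e., and the equivariance $u(gx)=\mathcal Ru(x)$ passes to the a.e.\ limit. Weak lower semicontinuity on increasing balls gives the energy bound in \eqref{eq:shell-proof-NC}. For the integrability of $u-e_1$, apply the Sobolev inequality to $v_n-e_1$, which is compactly supported with $\|\nabla(v_n-e_1)\|_2^2\le 2E(v_n)$; this bounds $\|v_n-e_1\|_{L^{2d/(d-2)}}$ uniformly in $n$, and Fatou under a.e.\ convergence transfers the bound to $u-e_1$.

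\emph{Nonconstancy (main step).} Suppose $u$ were constant. Finite energy forces $u_3=0$, and I expect to show that the constant must in fact be $e_1$. The route I would try first: the $L^{2d/(d-2)}$ bound on $u-e_1$ rules out every nonzero constant $u-e_1$, because such a constant is not in $L^{2d/(d-2)}(\R^d)$; so $u=e_1$. Then $v_n\to e_1$ in $L^2_{\rm loc}$ with $E(v_n)\le H:=K\lambda^{d-2}$ eventually, so \cref{cor:shell-gap} gives
\[
\liminf_n J_{c,d}(v_n)\ge J_{c,d}(W_{\lambda,R})+\tfrac12\Delta\lambda^{d-2}>I ,
\]
contradicting minimality. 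Hence $u$ is nonconstant. The step I expect to need the most care is this identification of the constant with $e_1$ via the Sobolev bound, which is why that bound is recorded in \eqref{eq:shell-proof-NC}.
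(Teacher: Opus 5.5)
Your proposal is correct and follows the paper's proof essentially step for step. You place the trial map $W_{\lambda,R}$ in the capped class, use coercivity to get the bounds $0\le I<\infty$ and the eventual energy bound, obtain the limit by Rellich compactness together with Sobolev and Fatou for $u-e_1\in L^{2d/(d-2)}$, and exclude collapse to $e_1$ with the strict gap of \cref{cor:shell-gap}. The only difference is cosmetic: you bound $J_{c,d}(v_n)$ by $J_{c,d}(W_{\lambda,R})+o(1)$ instead of by $2J_{c,d}(W_{\lambda,R})$.
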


\begin{proof}
The trial is admissible for the energy cap by \eqref{eq:shell-proof-small1} and \(\lambda\le1\): \[
E(W_{\lambda,R})\le
(B_{d,R}+V_R)\lambda^{d-2}
=\frac{\gamma_cK}{2}\lambda^{d-2}<M_{d,c}.
\] Here \(0<\gamma_c<1\). Coercivity gives \begin{equation}
0\le I_{\lambda,R,c,p}^{(d)}
\le J_{c,d}(W_{\lambda,R})<\infty.
\label{eq:shell-proof-5-1}
\end{equation} For a minimising sequence, write \(J_n=J_{c,d}(v_n)\). For all sufficiently large \(n\), \(J_n\le 2J_{c,d}(W_{\lambda,R})\), since this trial action is positive. Coercivity then gives \begin{equation}
E(v_n)\le2\gamma_c^{-1}J_{c,d}(W_{\lambda,R})
\le K\lambda^{d-2}<M_{d,c}/2.
\label{eq:shell-proof-5-2}
\end{equation} Thus the eventual cap margin is uniform. Local \(H^1\) bounds, \(|v_n|=1\), Rellich compactness on bounded balls, and a diagonal subsequence give a sphere-valued limit \(u\) with the stated weak, strong, and almost-everywhere local convergence. Lower semicontinuity on balls followed by increasing balls gives \begin{equation}
E(u)\le\liminf_nE(v_n)
\le\gamma_c^{-1}I_{\lambda,R,c,p}^{(d)}
\le\gamma_c^{-1}J_{c,d}(W_{\lambda,R}).
\label{eq:shell-proof-5-3}
\end{equation} For the middle bound, \(E(v_n)\le\gamma_c^{-1}J_n\) holds termwise and \(J_n\to I\).

Each \(v_n-e_1\) is compactly supported and in \(H^1(\mathbb R^d)\). The homogeneous Sobolev inequality \begin{equation}
\|f\|_{L^{2d/(d-2)}}\le S_d\|\nabla f\|_{L^2}
\label{eq:shell-proof-5-4}
\end{equation} for such functions holds with a finite constant depending only on the dimension. It can also be obtained from \eqref{eq:shell-proof-2-9} with \(N\) replaced by \(d\): apply that \(W^{1,1}\) inequality to \(|f|^\alpha\), with \(\alpha=2(d-1)/(d-2)\), and use Cauchy--Schwarz on \(\alpha\int |f|^{\alpha-1}|\nabla f|\), since \(2(\alpha-1)=2d/(d-2)\). Division by the resulting norm power gives \eqref{eq:shell-proof-5-4} for smooth \(f\), and approximation gives it for compactly supported \(H^1\) functions; the vector-valued case follows by applying the scalar argument to \(|f|\). Therefore \eqref{eq:shell-proof-5-2} bounds \(v_n-e_1\) in \(L^{2d/(d-2)}\), uniformly. Fatou under the almost-everywhere convergence gives the same membership for \(u-e_1\). Equivariance passes through the local convergence.

If \(u\) were constant, this global integrability would force \(u=e_1\): any other constant difference has nonzero magnitude on a set of infinite measure. The subsequence would then be a constant-collapse sequence with energy bound \(K\lambda^{d-2}\). By \eqref{eq:shell-proof-gap}, \[
I_{\lambda,R,c,p}^{(d)}
=\lim_nJ_n
\ge J_{c,d}(W_{\lambda,R})+\tfrac12\Delta\lambda^{d-2},
\] contradicting \eqref{eq:shell-proof-5-1}. Thus \(u\) is nonconstant. This proves the comparison and noncollapse estimates.
\end{proof}

\subsection{Conclusion of the higher-dimensional proof}\label{sec:higher-conclusion}

\begin{proof}[Proof of \cref{thm:all-dimensional} for $d\ge3$]
Fix $d\ge3$ and $c\in(0,1)$, and take $p=5/2$.
The comparison profile in \cref{app:charged-comparison} gives the admissible maps of
\cref{prop:shell-comparison}, with $k=d-3$ and $S^0$ interpreted as two points when $d=3$. Choose its geometry $b,R$ and a
sufficiently small scale $\lambda>0$.
By \cref{prop:shell-proof}, the capped infimum in the prescribed-current class
$\mathcal A_{\lambda,R,p}^{(d)}$ is finite, and a minimising sequence
$v_n$ has a nonconstant weak $H^1_{\rm loc}$/strong $L^2_{\rm loc}$
limit $u$. Its sufficiently late terms satisfy
$E(v_n)<M_{d,c}/2$.

Apply \cref{prop:rotation-proof} with
$G=\{\mathrm{Id},g\}$, $g(x)=(x_1,-x_2,-x_3,x_4,\ldots,x_d)$,
$\rho(g)=\mathcal R=\operatorname{diag}(1,-1,-1)$,
$M=M_{d,c}$, and $\delta=M_{d,c}/2$.
Here $g$ fixes the travel direction, $\mathcal Re_1=e_1$, and
$\mathcal Re_3=-e_3$. The momentum is
$P_d(v)=\int x_2F_{12}(v)$.
The final assertion of the rotation proposition verifies all its
variation hypotheses for this exact class and gives \eqref{eq:weak}
against arbitrary compact smooth vector fields.
The energy bound and nonconstancy follow from
\cref{prop:shell-proof}. In fact
$E(u)\le C_{d,c}\lambda^{d-2}$, and the allowed scales are arbitrarily
small for each fixed $d,c$.
\end{proof}

\section*{Acknowledgements}

The research of Z.~Li is supported by the National Natural Science
Foundation of China under grants 12422110 and 12371244. The research of
J.~Wei is partially supported by GRF of RGC of Hong Kong entitled
``On critical and supercritical Fujita equation''. R.~Juneman wishes
to thank the Institute of Mathematical Sciences at CUHK for the
excellent working environment.

The authors acknowledge the use of AI tools in the preparation of
this manuscript, including assistance with its organisation,
exposition, and proof ideas. The authors have reviewed the complete
manuscript and take full responsibility for all mathematical statements
and proofs.

\end{document}